\newif\ifprivate
\newcommand{\colbw}{bw}
\renewcommand{\TODO}[1]{}
\numberwithin{equation}{section}
\numberwithin{figure}{section}
\numberwithin{table}{section}
\numberwithin{algorithm}{section}
\renewcommand\p@enumii{} \makeatother
\newcommand{\itemref}[1]{\eqref{#1}}
\newif\ifma
\begin{document}


\title[Analysis of Width-$w$ Non-Adjacent Forms]{Analysis of Width-$w$
  Non-Adjacent Forms \\ to Imaginary Quadratic Bases}

\author{Clemens Heuberger}
\author{Daniel Krenn}

\thanks{The authors are supported by the Austrian Science Foundation FWF,
  project S9606, that is part of the Austrian National Research Network
  ``Analytic Combinatorics and Probabilistic Number Theory''.}

\keywords{$\tau$-adic expansions, non-adjacent forms, redundant digit sets,
  elliptic curve cryptography, Koblitz curves, Frobenius endomorphism, scalar
  multiplication, Hamming weight, sum of digits, fractals, fundamental domain}

\address{\parbox{12cm}{%
    Clemens Heuberger \\
    Institute of Optimisation and Discrete Mathematics (Math B) \\
    Graz University of Technology \\
    Steyrergasse 30/II, A-8010 Graz, Austria \\}} 
 
\email{\href{mailto:clemens.heuberger@tugraz.at}{clemens.heuberger@tugraz.at}}

\address{\parbox{12cm}{%
    Daniel Krenn \\
    Institute of Optimisation and Discrete Mathematics (Math B) \\
    Graz University of Technology \\
    Steyrergasse 30/II, A-8010 Graz, Austria \\}} 

\email{\href{mailto:math@danielkrenn.at}{math@danielkrenn.at} \textit{or}
  \href{mailto:krenn@math.tugraz.at}{krenn@math.tugraz.at}}


\ifprivate 
\thispagestyle{headings}\pagestyle{headings} 
\markboth{\jobname{} rev. \SVNRevision{} --- \SVNDate{} \SVNTime}%
{\jobname{} rev. \SVNRevision{} --- \SVNDate{} \SVNTime}
\else
\fi


\begin{abstract}
  We consider digital expansions to the base of $\tau$, where $\tau$ is an
  algebraic integer. For a $w \geq 2$, the set of admissible digits consists of
  $0$ and one representative of every residue class modulo $\tau^w$ which is
  not divisible by $\tau$. The resulting redundancy is avoided by imposing the
  width $w$-NAF condition, i.e., in an expansion every block of $w$ consecutive
  digits contains at most one non-zero digit. Such constructs can be
  efficiently used in elliptic curve cryptography in conjunction with Koblitz
  curves.

  The present work deals with analysing the number of occurrences of a fixed
  non-zero digit. In the general setting, we study all $w$-NAFs of given length
  of the expansion. We give an explicit expression for the expectation and the
  variance of the occurrence of such a digit in all expansions. Further a
  central limit theorem is proved.

  In the case of an imaginary quadratic $\tau$ and the digit set of minimal
  norm representatives, the analysis is much more refined: We give an
  asymptotic formula for the number of occurrence of a digit in the $w$-NAFs of
  all elements of $\Z[\tau]$ in some region (e.g.\ a disc). The main term
  coincides with the full block length analysis, but a periodic fluctuation in
  the second order term is also exhibited. The proof follows Delange's method.

  We also show that in the case of imaginary quadratic $\tau$ and $w \geq 2$,
  the digit set of minimal norm representatives leads to $w$-NAFs for
  \emph{all} elements of $\Z[\tau]$. Additionally some properties of the
  fundamental domain are stated.
\end{abstract}


\maketitle


\makeatletter
\def\@tocline#1#2#3#4#5#6#7{\relax
  \ifnum #1>\c@tocdepth 
  \else
    \par \addpenalty\@secpenalty\addvspace{#2}%
    \begingroup \hyphenpenalty\@M
    \@ifempty{#4}{%
      \@tempdima\csname r@tocindent\number#1\endcsname\relax
    }{%
      \@tempdima#4\relax
    }%
    \parindent\z@ \leftskip#3\relax \advance\leftskip\@tempdima\relax
    \rightskip\@pnumwidth plus4em \parfillskip-\@pnumwidth
    #5\leavevmode\hskip-\@tempdima
      \ifcase #1
       \or\or \hskip 1em \or \hskip 2em \else \hskip 3em \fi%
      #6\nobreak\relax
    \dotfill\hbox to\@pnumwidth{\@tocpagenum{#7}}\par
    \nobreak
    \endgroup
  \fi}
\makeatother
\tableofcontents


\section{Introduction}
\label{sec:intro-backgr}


Let $\tau\in\C$ be an algebraic integer. We consider \tauadic{} expansions for
an element of $\Ztau$ using a redundant digit set $\cD$. This means that our
expansions need not be unique without any further constraints. However, by
applying a width\nbd-$w$ non-adjacency property to the digits of a
representation, together with choosing an appropriate digit set, we gain
uniqueness. The mentioned property simply means that each block of $w$ digits
contains at most one non-zero digit.

Such expansions have a low Hamming weight, i.e., a low number of non-zero
digits. This is of interest in elliptic curve cryptography: There, scalar
multiples of points can be computed by using \tauadic{}-expansions, where
$\tau$ corresponds to the Frobenius endomorphism. See
Section~\ref{sec:background} for a more detailled discussion.

The aim of this paper is to give a precise analysis of the expected number of
non-zeros in \tauadic{} expansions of elements in $\Ztau$, corresponding to the
expected number of costly curve operations. Several random models can be
considered.

The easiest model is to consider all expansions of given length to be equally
likely; this is called the ``full block length'' model. The result for
arbitrary algebraic integers is given in Theorem~\vref{th:w-naf-distribution}.
The appropriateness of this random model becomes debatable when looking at the
set of complex numbers admitting such an expansion of given length: This is the
intersection of the lattice of integers in the number field with a fractal set,
a scaled version of the ``Fundamental domain'', cf.\ Figure~\vref{fig:w-weta}.

A more natural choice seems to be to consider the expansions of all integers in
$\Ztau$ whose absolute value is bounded by some $N$. The main result of this
paper (Theorem~\vref{thm:countdigits}) is exactly such a result, where we
assume $\tau$ to be a imaginary quadratic number. Theorem~\ref{thm:countdigits}
is, in fact, more general: instead of considering all integers within a scaled
version of the unit circle, we consider all integers contained in a scaled copy
of some set $U$. Instead of counting the number of non-zeros, we count the
number of occurrences of each digit. The full block length analysis result will
indeed be needed to prove Theorem~\ref{thm:countdigits}.


For given $\tau$ and block length $w\ge 2$, several digit sets could be chosen.
A rather natural choice was proposed by
Solinas~\cite{Solinas:1997:improved-algorithm,Solinas:2000:effic-koblit}:
Consider the residue classes modulo $\tau^w$ in $\Ztau$. As digit set, we use
zero and a minimal norm representative from each residue class not divisible by
$\tau$. Now let $z\in\Ztau$ with $z=\sum_{j=0}^{\ell-1} z_j \tau^j$. This
expansion is a width\nbd-$w$ \tauadic{} non-adjacent form, or \wNAF{} for
short, if each block of $w$ consecutive digits $z_j \ldots z_{j+w-1}$ contains
at most one non-zero digit. The name ``non-adjacent form'' goes back to
Reitwiesner~\cite{Reitwiesner:1960}.


It is commonly known that such expansions, if they exist, are unique, whereas
the existence was only known for special cases, see
Section~\ref{sec:background}. In this paper in
Section~\ref{sec:exist-uniqu-nafs} we show that, for imaginary quadratic $\tau$
and $w\geq2$, every element of $\Ztau$ admits a unique \wNAF{}, see
Theorem~\vref{thm:wnaf-exist-unique}.  Additionally a simple algorithm for
calculating those expansions is given.


The full block length analysis is carried out in
Section~\ref{sec:full-block-length}: We define a random variable $X_{n,w,\eta}$
for the number of occurrences of $\eta$ in all \wNAF{}s of a fixed length
$n$. It is assumed that all those \wNAF{}s are equally likely. For an arbitrary
algebraic integer $\tau$ Theorem~\ref{th:w-naf-distribution} gives explicit
expressions for the expectation and the variance of
$X_{n,w,\eta}$. Asymptotically we get $\expect{X_{n,w,\eta}} \asymptotic e_w n$
and $\variance{X_{n,w,\eta}} \asymptotic v_w n$ for constants $e_w$ and $v_w$
depending on $w$ and the norm of $\tau$. The proof uses a regular expression
describing the \wNAF{}s. This will then be translated into a generating
function. Further in this theorem it is shown that $X_{n,w,\eta}$ satisfies a
central limit theorem.


The main result is the refined analysis described above: For imaginary
quadratic $\tau$, we count the number of occurrences $Z_{\tau,w,\eta}$ of the
non-zero digit $\eta$, when we look at all \wNAF{}s contained in $NU$ for a
given positive $N$ and a region $U\subseteq\C$ (e.g.\ the unit disc).  In
Theorem~\vref{thm:countdigits}, we prove that $Z_{\tau,w,\eta} \asymptotic e_w
N^2 \lmeas{U} \log_{\abs\tau}N$. This is not surprising, since intuitively
there are about $N^2 \lmeas{U}$ \wNAF{}s in the region $NU$, and each of them
can be represented as a \wNAF{} with length $ \log_{\abs\tau}N$. We even get a
more precise result. If the region is ``nice'', there is a periodic oscillation
of order $N^2$ in the formula.

The structure of the result --- main term, oscillation term, smaller error term
--- is not uncommon in the context of digits counting. For instance, a setting
similar to ours can be found in Heuberger and
Prodinger~\cite{Heuberger-Prodinger:2006:analy-alter}. There base $2$ and
special digit sets are used, and \wNAF[2]{}s are considered. The result has the
same structure as ours. Another example can be found in Grabner, Heuberger and
Prodinger~\cite{Grabner-Heuberger-Prodinger:2004:distr-results-pairs} for joint
expansions.

As in these examples, we follow the ideas of
Delange~\cite{Delange:1975:chiffres} to prove the statements. Before finally be
able to prove the main result in Section~\ref{sec:counting-digits-region}, we
have to collect various auxiliary results.

Our digit set of minimal norm representatives is characterised in terms of the
Voronoi cell of $0$ in the lattice $\Ztau$. The required estimates are shown in
Section~\ref{sec:voronoi}. The digit set itself as well as \wNAF{}s are then
defined and discussed in Section~\ref{sec:nafs}. Apart from expansions of
elements in $\Ztau$, we will also discuss infinite expansions of elements of
$\C$, as these will be needed in our geometric
arguments. Section~\ref{sec:full-block-length} is devoted to the full block
length analysis.  In Section~\ref{sec:bounds-value} we give bounds connecting
the absolute value and the length of a \wNAF{}. This allows us to prove the
existence (Theorem~\vref{thm:wnaf-exist-unique}) of \wNAF{}s in
Section~\ref{sec:exist-uniqu-nafs}. Further in
Theorem~\vref{thm:C-has-wnaf-exp} we get that every element of $\C$ has a
\wNAF{}-expansion of the form
$\xi_{\ell-1}\ldots\xi_1\xi_0\bfldot\xi_{-1}\xi_{-2}\ldots$, where the right
hand side of the \taupoint{} is allowed to be of infinite length.  In
Section~\ref{sec:fundamental-domain} we consider numbers of the form
$0\bfldot\xi_{-1}\xi_{-2}\ldots$. The set of all values of such numbers is
called the fundamental domain $\cF$. It is shown that $\cF$ is compact and its
boundary has Hausdorff dimension smaller than $2$. Further a tiling property
with scaled versions of $\cF$ is given for the complex plane.  In
Section~\ref{sec:cell-rounding-op}, we develop a suitable notion of
``fractional value''. Occurrence of the digit $\eta$ at arbitrary position can
be characterised in terms of the so-called ``characteristic sets'' which are
introduced in Section~\ref{sec:sets-w_eta}.


While the main focus of this paper lies on imaginary quadratic bases and the
digit set of minimal norm representatives, some of the results, e.g.\ the full
block length analysis (Theorem~\vref{th:w-naf-distribution}), are valid in a
more general setting. A more detailed overview on the requirements on $\tau$
and digit set~$\cD$ for the different sections, definitions, theorems, etc.\
can be found in Table~\ref{tab:overview-requirements}.


\begin{table}[t]
  \centering
  \begin{tabular}{l|l|c|c}
    \hline
    & short description & $\tau$ & digit set $\cD$ \\
    \hline
    Section~\ref{sec:voronoi} & Voronoi cells 
    & i-q & ------ \\
    Lemma~\vref{lem:complete-res-sys} & complete residue system 
    & alg & ------ \\
    Definition~\vref{def:min-norm-digit-set} & minimal norm representatives
    digit set 
    & i-q & ------ \\
    Definition~\vref{def:wnaf} & width-$w$ non-adjacent forms 
    & gen & fin \\
    Proposition~\vref{pro:value-continuous} & continuity of $\NAFvaluename$
    & gen & fin \\
    Definition~\vref{def:wnads} & width-$w$ non-adjacent digit set 
    & gen & fin \\ 
    Theorem~\vref{th:w-naf-distribution} & full block length distribution 
    theorem
    & alg & RRS \\
    Section~\ref{sec:bounds-value} & bounds for the value
    & i-q & MNR \\
    Theorem~\vref{thm:wnaf-exist-unique} & existence theorem for lattice points
    & i-q & MNR \\
    Theorem~\vref{thm:C-has-wnaf-exp} & existence theorem for $\C$
    & i-q & MNR \\
    Definition~\vref{def:fund-domain} & fundamental domain $\cF$
    & gen & fin \\
    Proposition~\vref{pro:fund-domain-compact} & compactness of the fundamental
    domain 
    & gen & fin \\
    Corollary~\vref{cor:complex-plane-tiling} & tiling property
    & i-q & MNR \\
    Remark~\vref{rem:ifs} & iterated function system
    & gen & fin \\
    Proposition~\vref{pro:char-boundary} & characterisation of the boundary
    & i-q & MNR \\
    Proposition~\vref{pro:boundary-fund-dom-dim-upper} & upper bound for the
    dimension of $\boundary*{\cF}$
    & i-q & MNR \\
    Section~\ref{sec:cell-rounding-op} & cell rounding operations
    & i-q & ------ \\
    Section~\ref{sec:sets-w_eta} & characteristic sets
    & i-q & MNR \\
    Theorem~\vref{thm:countdigits} & counting the occurences of a
    digit 
    & i-q & MNR \\
    \hline
  \end{tabular}

  \begin{minipage}{0.48\linewidth}
    \vspace*{1em}
    \begin{center}
      Abbreviations for $\tau$ \par
      (general: $\tau\in\C$ with $\abs\tau>1$) \par
      \begin{tabular}{cp{5cm}}
        \hline
        gen & $\tau\in\C$ \\
        alg & $\tau$ algebraic integer \\
        i-q & $\tau$ imaginary quadratic algebraic integer \\
        \hline
      \end{tabular}
    \end{center}
  \end{minipage}
  \begin{minipage}{0.48\linewidth}
    \vspace*{1em}
    \begin{center}
      Abbreviations for digit sets \par
      (general: $\cD\subseteq\Ztau$, $0\in\cD$) \par
      \begin{tabular}{cp{5cm}}
        \hline
        fin & finite digit set \\
        RRS & reduced residue system digit set \\
        MNR & minimal norm representatives digit set \\
        \hline
      \end{tabular}
    \end{center}
  \end{minipage}

  \caption{Overview of requirements.}
  \label{tab:overview-requirements}
\end{table}




\section{Background}
\label{sec:background}

In this section, we outline the connection to cryptographic applications which
motivated our study.

As a first example, we consider the elliptic curve
\begin{equation*}
  \cE_{3} \colon Y^2 = X^3 - X - \mu \qquad\text{with $\mu\in\set{-1,1}$}
\end{equation*}
defined over $\F_3$. This curve was studied by
Koblitz~\cite{Koblitz:1998:ellip-curve}. We are interested in the group
$\f{\cE_3}{\F_{3^m}}$ of rational points over a field extension $\F_{3^m}$ of
$\F_3$ for an $m\in\N$. The Frobenius endomorphism
\begin{equation*}
  \varphi \colon \f{\cE_3}{\F_{3^m}} \fto \f{\cE_3}{\F_{3^m}} , \quad
  \left(x,y\right) \fmapsto \left(x^3,y^3\right)
\end{equation*}
satisfies the relation $\varphi^2 - 3 \mu \varphi + 3 = 0$. So $\varphi$ may be
identified with the imaginary quadratic number $\tau = \frac{3}{2}\mu +
\frac{1}{2}\sqrt{-3}$, which is a solution of the mentioned relation. Thus we
have an isomorphism between $\Ztau$ and the endomorphism ring of
$\f{\cE_3}{\F_{3^m}}$.
 
Let $z\in\Ztau$ and $P\in\f{\cE_3}{\F_{3^m}}$. If we write the element $z$ as
$\sum_{j=0}^{\ell-1} z_j \tau^j$ for some digits $z_j$ belonging to a digit set
$\cD$, then we can compute the action $zP$ as $\sum_{j=0}^{\ell-1} z_j
\f{\varphi^j}{P}$ via a Horner scheme. The resulting Frobenius-and-add
method~\cite{Koblitz:1992:cm,Solinas:1997:improved-algorithm,Solinas:2000:effic-koblit}
is much faster than the classic double-and-add scalar multiplication.

So we are interested in a \tauadic{} expansion for an element of $\Ztau$ such
that the mentioned computation of the action is as efficient as possible. The
main computational effort are point additions, and we need one addition per
non-zero element of the expansions. 

But usually fewer non-zero coefficients means larger digit sets and thus a
higher pre-computation effort. So for optimal performance, a balance between
digit set size and number of non-zeros has to be found.

Another example is the elliptic curve
\begin{equation*}
    \cE_{2} \colon Y^2 + XY = X^3 + aX^2 + 1 \qquad\text{with $a\in\set{0,1}$}
\end{equation*}
defined over $\F_2$, cf.\ Koblitz~\cite{Koblitz:1992:cm}. There we get the
relation $\varphi^2-\mu\varphi+2=0$ with $\mu=(-1)^{1-a}$ for the Frobenius
endomorphism $\varphi$, and thus $\tau = \frac{1}{2}\mu +
\frac{1}{2}\sqrt{-7}$.


For the $\tau$ corresponding to $\cE_3$ and $w\geq2$, existence of \wNAF{}s was
shown in Koblitz~\cite{Koblitz:1998:ellip-curve} and Blake, Kumar Murty and
Xu~\cite{Blake-Kumar-Xu:2005:effic-algor}, for the $\tau$ corresponding to
$\cE_2$ and $w\geq2$ in Solinas~\cite{Solinas:2000:effic-koblit} and Blake,
Kumar Murty and Xu~\cite{Blake-Murty-Xu:2005:naf}. Some other $\tau$ are
handled in Blake, Kumar Murty and Xu~\cite{Blake-Murty-Xu:ta:nonad-radix}.


\section{Voronoi Cells}
\label{sec:voronoi}


Let $\tau\in\C$ be an algebraic integer, imaginary quadratic, i.e., $\tau$ is
solution of an equation $\tau^2 - p \tau + q = 0$ with $p,q\in\Z$, such that
$4q-p^2>0$. 

We will use the digit set of minimal norm representatives. In order to describe
this digit set, we will rewrite the minimality condition in terms of the
Voronoi cell for the lattice $\Ztau$, cf.\ Gordon~\cite{Gordon:1998}. 


\begin{definition}[Voronoi Cell]
  \label{def:voronoi}

  We set
  \begin{equation*}
    V := \set*{z\in\C}{ \forall y\in\Ztau : \abs{z} \leq \abs{z-y}}.
  \end{equation*}
  $V$ is the \emph{Voronoi cell for $0$} corresponding to the set $\Ztau$. Let
  $u \in \Ztau$. We define the \emph{Voronoi cell for $u$} as
  \begin{equation*}
    V_u := u + V = \set*{u+z}{z \in V}
    = \set*{z\in\C}{ \forall y\in\Ztau : \abs{z-u} \leq \abs{z-y}}.  
  \end{equation*}
  The point $u$ is called \emph{centre of the Voronoi cell} or \emph{lattice
    point corresponding to the Voronoi cell}.
\end{definition}


An example of a Voronoi cell in a lattice $\Ztau$ is shown in
Figure~\vref{fig:voronoi}. Whenever the word ``cells'' is used in this paper,
these Voronoi cells or scaled Voronoi cells will be meant.


\begin{figure}
  \centering
  \includegraphics{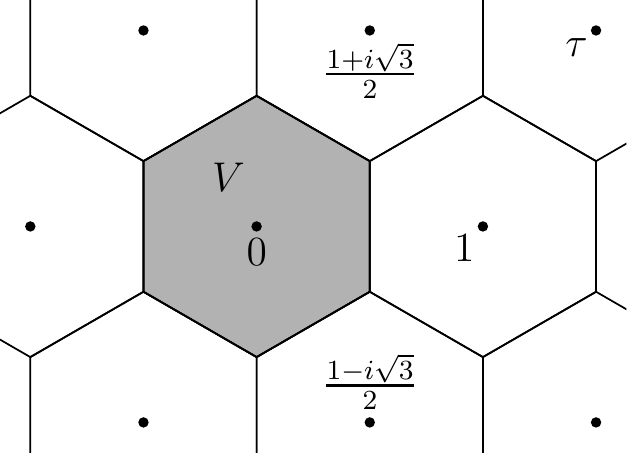}
  \caption[Voronoi cell $V$ for $0$]{Voronoi cell $V$ for $0$ corresponding to
    the set $\Ztau$ with $\tau = \frac{3}{2}+\frac{1}{2}\sqrt{-3}$.}
  \label{fig:voronoi}
\end{figure}


Two neighbouring Voronoi cells have at most a subset of their boundary in
common. This can be a problem, when we tile the plane with Voronoi
cells and want that each point is in exactly one cell. To fix this problem we
define a restricted version of $V$. This is very similar to the construction
used in Avanzi, Heuberger and Prodinger~\cite{Avanzi-Heuberger-Prodinger:2010:arith-of}.


\begin{definition}[Restricted Voronoi Cell]
  \label{def:restr-voronoi}

  Let $V_u$ be a Voronoi cell as above and $u$ its centre. Let
  $v_0,\dots,v_{m-1}$ with appropriate $m\in\N$ be the vertices of $V_u$
  labelled counter-clockwise. We denote the midpoint of the line segment from
  $v_k$ to $v_{k+1}$ by $v_{k+1/2}$, and we use the convention that the indices
  are meant modulo $m$.

  The \emph{restricted Voronoi cell} $\wt{V}_u$ consists of
  \begin{itemize}
  \item the interior of $V_u$,
  \item the line segments from $v_{k+1/2}$ (excluded) to $v_{k+1}$ (excluded)
    for all $k$,
  \item the points $v_{k+1/2}$ for $k\in\set{0,\dots,\floor{\frac{m}{2}}-1}$,
    and
  \item the points $v_k$ for $k\in\set{1,\dots,\floor{\frac{m}{3}}}$.
  \end{itemize}
  Again we set $\wt{V} := \wt{V}_0$.
\end{definition}


In Figure~\vref{fig:voronoi-restr} the restricted Voronoi cell for $0$ is
shown. The second condition is used, because it benefits symmetries. The third
condition is just to make the midpoints unique. Obviously, other
rules\footnote{The rule has to make sure that the complex plane can be covered
  entirely and with no overlaps by restricted Voronoi cells, i.e., the
  condition $\C = \biguplus_{z\in\Ztau} V_z$ has to be fulfilled.} could have
been used to define the restricted Voronoi cell.

As a generalisation of the usual fractional part of elements in $\R$ with
respect to the integers, we define the fractional part of an element of $\C$
corresponding to the restricted Voronoi cell $\wt{V}$ and thus corresponding to
the lattice $\Ztau$.


\begin{figure}
  \centering
  \includegraphics{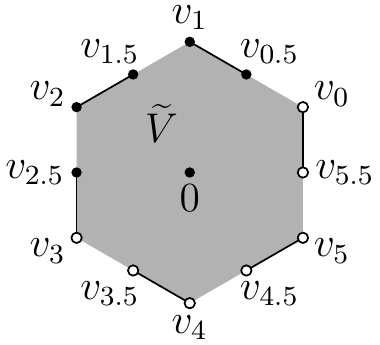}
  \caption[Restricted Voronoi cell $\wt{V}$ for $0$]{Restricted Voronoi cell
    $\wt{V}$ for $0$ corresponding to the set $\Ztau$ with $\tau =
    \frac{3}{2}+\frac{1}{2}\sqrt{-3}$.}
  \label{fig:voronoi-restr}
\end{figure}


\begin{definition}[Fractional Part in $\Ztau$]
  \label{def:frac-voronoi}

  Let $z\in\C$, $z = u + v$ with $u \in \Ztau$ and $v \in \wt{V}$. Then we
  define the \emph{fractional part corresponding to the lattice $\Ztau$} by
  $\fracpartZtau{z} := v$.

\end{definition}


This definition is valid, because of the construction of the restricted Voronoi
cell. The fractional part of a point $z\in\C$ simply means, to search for the
nearest lattice point $u$ of $\Ztau$ and returning the difference $z-u$.


Throughout this paper we will use the following notation for discs in the
complex plane.


\begin{definition}[Opened and Closed Discs]
  Let $z\in\C$ and $r\geq0$. The \emph{open disc $\ball{z}{r}$ with centre $z$
    and radius $r$} is denoted by
  \begin{equation*}
    \ball{z}{r} := \set*{y\in\C}{\abs{z-y} < r}
  \end{equation*}
  and the \emph{closed disc $\ball*{z}{r}$ with centre $z$ and radius $r$} by
  \begin{equation*}
    \ball*{z}{r} := \set*{y\in\C}{\abs{z-y} \leq r}.
  \end{equation*}
  The disc $\ball{0}{1}$ is called \emph{unit disc}.
\end{definition}


We will need suitable bounds for the digits in our digit set. These require
precise knowledge on the Voronoi cells, such as the position of the vertices
and bounds for the size of $V$. Such information is derived in the following
proposition.


\begin{proposition}[Properties of Voronoi Cells]
  \label{pro:voronoi-prop}
  
  We get the following properties:

  \begin{enumerate}[(a)]

  \item The vertices of $V$ are given by
    \begin{align*}
      v_0 &= 1/2 + \frac{i}{2\im{\tau}} 
      \left( \im{\tau}^2 + \fracpart{\re{\tau}}^2 
        - \fracpart{\re{\tau}} \right), \\
      v_1 &= \fracpart{\re{\tau}}-\frac12 + \frac{i}{2\im{\tau}} 
      \left( \im{\tau}^2 - \fracpart{\re{\tau}}^2 
        + \fracpart{\re{\tau}} \right), \\
      v_2 &= -1/2 + \frac{i}{2\im{\tau}}   
      \left( \im{\tau}^2 + \fracpart{\re{\tau}}^2 
        - \fracpart{\re{\tau}} \right)
      = v_0 - 1, \\
      v_3 &= -v_0, \\
      v_4 &= -v_1 \\
      \intertext{and}
      v_5 &= -v_2.
    \end{align*}
    All vertices have the same absolute value. If $\re{\tau}\in\Z$, then
    $v_1=v_2$ and $v_4=v_5$, i.e., the hexagon degenerates to a rectangle.

  \item The Voronoi-cell $V$ is convex.

  \item \label{enu:voronoi-bounds} 
    We get the bounds
    \begin{equation*}
      \ball*{0}{\textstyle\frac12} 
      \subseteq V \subseteq \ball*{0}{\abs{\tau} c_V}
    \end{equation*}
    with $c_V=\sqrt{\frac{7}{12}}$.

  \item The Lebesgue measure of $V$ in the complex plane is
    \begin{equation*}
      \lmeas{V} = \abs{\im{\tau}}.
    \end{equation*}
    
  \item The inclusion $\tau^{-1} V \subseteq V$ holds.

  \end{enumerate}
\end{proposition}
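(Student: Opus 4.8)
The plan is to treat part~(a) first, since it carries essentially all the work; parts (b)--(e) then drop out quickly. Write $s:=\fracpart{\re{\tau}}$ and $t:=\im{\tau}>0$; from $\tau^2-p\tau+q=0$ one has $\re{\tau}=p/2$ and $t^2=(4q-p^2)/4$, and since $p,q\in\Z$ with $4q-p^2>0$ we get $4q-p^2\ge3$, hence $t^2\ge3/4$, with $t^2\ge1$ whenever $p$ is even (in which case $s=0$). As a preliminary I would record that every nonzero $y=a+b\tau\in\Ztau$ ($a,b\in\Z$) satisfies $\abs{y}\ge1$: for $b=0$ this is clear; for $\abs{b}\ge2$ we have $\abs{y}^2\ge4t^2\ge3$; and for $\abs{b}=1$ we have $\abs{y}^2=(a\pm p/2)^2+t^2$, which is $\ge1$ because either $t^2\ge1$, or $p$ is odd and then $(a\pm p/2)^2\ge1/4$ while $t^2\ge3/4$.

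For part~(a): since $\floor{\re{\tau}}\in\Z$, replacing $\tau$ by $\tau-\floor{\re{\tau}}$ changes neither $\Ztau$ nor $V$, so we may assume $0\le\re{\tau}<1$, i.e.\ $s\in\set{0,\tfrac12}$; note the stated vertex formulas only involve $s$ and $t$, which are unaffected by this normalisation. Now $V=\bigcap_{y\in\Ztau\setminus\set{0}}H_y$, where $H_y:=\set*{z\in\C}{\abs{z}\le\abs{z-y}}=\set*{z\in\C}{\re(\bar{z}y)\le\abs{y}^2/2}$ is the closed half-plane on the $0$-side of the perpendicular bisector $L_y$ of $0$ and $y$. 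Write $P$ for the hexagon with vertices $v_0,\dots,v_5$. A short case check --- equivalently, the standard description of the Voronoi cell of a planar lattice via a reduced basis, here $\set{1,\tau}$ with $\re{\tau}\ge0$ --- shows that $V$ is already cut out by the six half-planes belonging to $y\in\set{\pm1,\pm\tau,\pm(\tau-1)}$: only finitely many lattice vectors $y$ satisfy $\tfrac12\abs{y}\le\abs{v_0}$, and for these one checks directly, while for the remaining $y$ the line $L_y$ lies outside $\ball*{0}{\abs{v_0}}$, whence $H_y\supseteq\ball*{0}{\tfrac12\abs{y}}\supseteq P$. Solving, for each $k$, the pair of linear equations describing the two bisectors incident to $v_k$ yields the stated coordinates; the symmetry $z\mapsto-z$ of $\Ztau$ gives $v_3=-v_0$, $v_4=-v_1$, $v_5=-v_2$. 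A direct computation from the formulas gives $\abs{v_0}^2=\abs{v_1}^2=\tfrac14+(t^2+s^2-s)^2/(4t^2)$, so all six vertices have equal modulus. Finally, if $\re{\tau}\in\Z$ then $s=0$, and the formulas collapse to $v_1=v_2=-\tfrac12+\tfrac t2 i$ and $v_4=v_5=\tfrac12-\tfrac t2 i$, so the hexagon degenerates to the rectangle with vertices $\pm\tfrac12\pm\tfrac t2 i$.

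Parts (b)--(e) are then short. For~(b), $V$ is an intersection of the convex half-planes $H_y$, hence convex. For~(c): $\ball*{0}{1/2}\subseteq V$ follows from the preliminary bound, since for $\abs{z}\le\tfrac12$ and $0\ne y\in\Ztau$ we get $\abs{z-y}\ge\abs{y}-\abs{z}\ge1-\tfrac12\ge\abs{z}$; and by convexity $V$ is the convex hull of $v_0,\dots,v_5$, so by the equal-modulus property $V\subseteq\ball*{0}{\abs{v_0}}$, where (using $0<t^2+s^2-s\le t^2$, then $\im{\tau}^2\le\abs{\tau}^2$ and $\abs{\tau}\ge1$) one estimates $\abs{v_0}^2=\tfrac14+(t^2+s^2-s)^2/(4t^2)\le\tfrac14(1+t^2)\le\tfrac12\abs{\tau}^2\le\tfrac7{12}\abs{\tau}^2$, giving the upper bound with $c_V=\sqrt{7/12}$. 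For~(d), the translates $V+y$ ($y\in\Ztau$) tile the plane, so $\lmeas{V}$ equals the covolume of $\Ztau$, namely the area $\abs{\im(\bar1\cdot\tau)}=\abs{\im{\tau}}$ of the parallelogram spanned by $1$ and $\tau$. For~(e): if $z\in V$, then for every $y\in\Ztau$ also $\tau y\in\Ztau$ (since $\Ztau$ is a ring), hence $\abs{z}\le\abs{z-\tau y}$; dividing by $\abs{\tau}$ gives $\abs{\tau^{-1}z}\le\abs{\tau^{-1}z-y}$ for all $y\in\Ztau$, i.e.\ $\tau^{-1}z\in V$.

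The only genuine obstacle is the identification of the Voronoi-relevant vectors in~(a) --- arguing that the six bisectors $L_{\pm1},L_{\pm\tau},L_{\pm(\tau-1)}$ already determine $V$ and that no other lattice vector contributes a facet. Once this reduction is in place, the explicit vertex coordinates, the equal-modulus identity (a short but somewhat tedious computation), and parts (b)--(e) are routine.
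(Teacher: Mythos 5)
Your proposal is correct and follows essentially the same route as the paper: part~(a) is established by intersecting the perpendicular bisectors of $0$ with $\pm1$, $\pm\wt\tau$, $\pm(\wt\tau-1)$ and ruling out all other lattice vectors $y$ by comparing $\tfrac12\abs{y}$ with the common vertex modulus (the finite residual check you defer is exactly the case analysis the paper carries out), and part~(e) is verbatim the paper's argument. Your treatments of (b)--(d) (intersection of half-planes for convexity, the triangle inequality plus $\abs{y}\ge 1$ for the inball, and the covolume/tiling argument for the area) are minor, slightly cleaner variants of the paper's reasoning and are all valid.
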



Before we start with the proof of this proposition, we add some remarks on the
constant $c_V$. Solinas~\cite{Solinas:2000:effic-koblit} uses the Voronoi cell
for the special $\tau=\frac12+\frac12\sqrt{-7}$ with $c_V=\sqrt{\frac27}$. The
upper bound $c_V=\sqrt{\frac{7}{12}}$ in \itemref{enu:voronoi-bounds} of the
proposition is not sharp. Indeed, with some effort, one could prove
$c_V=\sqrt{\frac{3}{8}}$. A smaller $c_V$ leads to better bounds in
Section~\ref{sec:bounds-value}. Further, the set of ``problematic values'' ---
those arise in the proof of the upper bound in
Proposition~\vref{pro:upper-bound-fracnafs} and the lower bound in
Proposition~\vref{pro:lower-bound-fracnafs}, as well as in the existence result
in Theorem~\vref{thm:wnaf-exist-unique} --- is decreased. This means fewer
configurations have to be checked separately. As some of the computational
verifications would still be necessary even with $c_V=\sqrt{\frac{3}{8}}$, the
improvement does not seem to outweigh the effort.


Now back to the proof of Proposition~\vref{pro:voronoi-prop}. We will use some
properties of Voronoi cells there, which can, for example, be found in
Aurenhammer~\cite{Aurenhammer:1991:voron-diagr}.


\begin{proof}
  \begin{enumerate}[(a)]

  \item Since $V$ is point-symmetric with respect to $0$, we get $v_0=-v_3$,
    $v_1=-v_4$ and $v_2=-v_5$. Thus we suppose without loss of generality
    $\im{\tau} > 0$. Strict greater holds, because $\tau$ is imaginary
    quadratic. Even more, we get $\im{\tau} \geq \frac{\sqrt{3}}{2}$, since
    \begin{equation*}
      \tau=\frac{p}{2} \pm \frac{i}{2} \sqrt{4q-p^2}
    \end{equation*}
    is solution of $\tau^2 - p \tau + q = 0$ for $p,q\in\Z$ and either $4q-p^2
    \equiv 0 \pmod{4}$ or $4q-p^2 \equiv -1 \pmod{4}$.

    All elements of the lattice $\Ztau$ can be written as $a+b\tau$, since
    $\tau$ is quadratic. We have to consider the neighbours of $0$ in the
    lattice. The Voronoi cell is the area enclosed by the line segment bisectors
    of the lines from each neighbour to zero, see
    Figure~\ref{fig:voronoi-points}. 

    \begin{figure}
      \centering
      \includegraphics{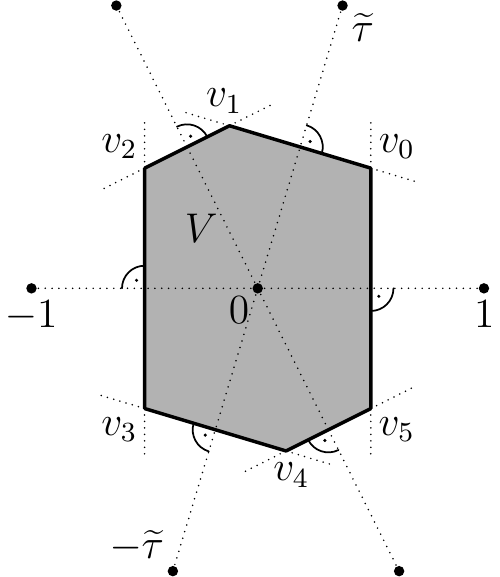}
      \caption[Construction of the Voronoi cell $V$ for $0$]{Construction of the
        Voronoi cell $V$ for $0$. The picture shows a general situation. Since
        $\tau$ is an imaginary quadratic algebraic integer, we will have
        $\re{\wt\tau}\in\set{0,\frac12}$.}
      \label{fig:voronoi-points}
    \end{figure}

    Clearly $\re{v_0}=\frac12$ and $\re{v_2}=-\frac12$, since $-1$ and $1$ are
    neighbours. Set $\wt\tau = \fracpart{\re{\tau}} + i \im{\tau}$. Consider the
    line from $0$ to $\wt\tau$ with midpoint $\frac12 \wt\tau$. We get
    \begin{equation*} 
      v_0 = \frac12 \wt\tau - x_A i \wt\tau
    \end{equation*}
    and
    \begin{equation*} 
      v_1 = \frac12 \wt\tau + x_B i \wt\tau
    \end{equation*}
    for some $x_A\in\R_{\geq0}$ and $x_B\in\R_{\geq0}$. Analogously, for the
    line from $0$ to $\wt\tau-1$, we have
    \begin{equation*} 
      v_1 = \frac12 \left(\wt\tau-1\right) - x_C i \left(\wt\tau-1\right)
    \end{equation*}
    and
    \begin{equation*} 
      v_2 = \frac12 \left(\wt\tau-1\right) + x_D i \left(\wt\tau-1\right).
    \end{equation*}
    for some $x_C\in\R_{\geq0}$ and $x_D\in\R_{\geq0}$. Solving this system of
    linear equations leads to the desired result.  An easy calculation shows
    that $\abs{v_0}=\abs{v_1}=\abs{v_2}$.  

    Until now, we have constructed the Voronoi cell of the points
    \begin{equation*}
      P := \set{0,1,-1,\wt\tau,\wt\tau-1,-\wt\tau,-(\wt\tau-1)}.
    \end{equation*}
    We want to rule out all other points, i.e., make sure, that none of the
    other points changes the already constructed cell. So let $z=x+iy\in\Ztau$
    and consider $\frac{z}{2}$. Because of symmetry reasons, we can assume
    $x\geq0$ and $y\geq0$. Clearly all points $z\in\Z$ with $z\geq2$ do not
    change the Voronoi cell, since $\frac{z}{2}>\frac12$ and the corresponding
    line segment bisector is vertical. So we can assume $y>0$.
 
    Now we will proceed in the following way. A point $z$ can be ruled out, if
    the absolute value of $\frac{z}{2}$ is larger than
    \begin{equation*}
      R = \abs{v_0} = \abs{v_1} = \abs{v_2}.
    \end{equation*}
    Let $\tau=a+ib$. If $\fracpart{a}=0$, then $R^2=\frac14
    \left(1+b^2\right)$. We claim that
    \begin{equation*}
      R^2 < \frac{x^2+y^2}{4} 
      \equivalent 1+b^2 < x^2+y^2.
    \end{equation*}
    Since $y>0$, we have $y \geq b$. If $y=b$, then points with $x>1$ need not
    be taken into account. But the remaining points are already in $P$ (at least
    using symmetry and $\wt\tau+1$ instead of $\wt\tau-1$). If $y\geq2b$, then
    all points except the ones with $x=0$ can be ruled out, since $1-b^2 \leq 1
    - \frac34 = \frac14 < x$. But the points $z$ with $x=0$ can be ruled out,
    too, because there is already the point $ib$ in $P$.

    So let $\fracpart{a}=\frac12$. Then $R^2=\frac14
    \left(\frac12+b^2+\frac{1}{16b^2}\right)$ and we claim that
    \begin{equation*}
      R^2 < \frac{x^2+y^2}{4} 
      \equivalent \frac12+b^2+\frac{1}{16b^2} < x^2+y^2.
    \end{equation*}
    If $y=b$, then $x>\sqrt{\frac{7}{12}}$ suffices to rule out a point $z$,
    since $b\geq\frac{\sqrt{3}}{2}$. But the only point $z$ with
    $x\leq\sqrt{\frac{7}{12}}$ is $\frac12+ib$, which is already in $P$. If
    $y\geq2b$, then $\frac12-b^2+\frac{1}{16b^2} \leq
    \frac12-\frac34+\frac1{12}<0$, so all points can be ruled out.
    
  \item Follows directly from the fact that all vertices have the same absolute
    value.

  \item From
    \begin{equation*}
      v_0 = 1/2 + \frac{i}{2\im{\wt\tau}} 
      \underbrace{\left( \im{\wt\tau}^2 + \re{\wt\tau}^2 - \re{\wt\tau}
        \right)}_{\leq \im{\wt\tau}^2}
    \end{equation*}
    we obtain
    \begin{equation*}
      \frac{\abs{v_0}}{\abs{\wt\tau}} 
      \leq \frac{\abs{1+ i \im{\wt\tau}}}{2\abs{\wt\tau}} 
      \leq \frac{\im{\wt\tau}}{2\abs{\wt\tau}} 
      \sqrt{\frac{1}{\im{\wt\tau}^2} + 1}
      \leq \sqrt{\frac{7}{12}} =: c_V
    \end{equation*}
    since $\frac{\sqrt3}{2} \leq \im{\wt\tau} \leq \abs{\wt\tau}$. Therefore
    $V\subseteq\ball*{0}{\abs{\tau} c_V}$.

    Since $0\leq\re{\wt\tau}\leq1$, we see that $\im{v_1} \geq \im{v_0} =
    \im{v_2}$. By construction, the line from $0$ to $\wt\tau$ intersects the
    line from $v_0$ to $v_1$ at $\frac12 \wt\tau$, so $\frac12 \abs{\wt\tau}$ is
    an upper bound for the largest circle inside $V$. Analogously we get
    $\frac12 \abs{\wt\tau-1}$ as a bound, and from the line from $0$ to $1$ we
    get $\frac12$. Since $\wt\tau$ and $\wt\tau-1$ are lattice points and not
    zero, their norms are at least $1$, so $\ball*{0}{\frac12}$ is inside $V$.

  \item The area of $V$ can be calculated easily, because
    $\im{v_0}=\im{v_2}$. Thus, splitting up the region in a rectangle and a
    triangle and using symmetry, the result follows.

  \item Let $x\in\tau^{-1} V$. Thus $x=\tau^{-1}z$ for an appropriate $z \in
    V$. For every $y\in\Ztau$ we obtain
    \begin{equation*}
      \abs{x} = \abs{\tau^{-1}} \abs{z} 
      \leq \abs{\tau^{-1}} \abs{z-y} = \abs{x - \tau^{-1} y}.
    \end{equation*}
    For an arbitrary $u \in \Ztau$ we can choose $y = \tau u$, and therefore
    $\abs{x} \leq \abs{x-u}$, i.e., $x \in V$. \qedhere

  \end{enumerate}
\end{proof}


We make an extensive use of Voronoi cells throughout this article, especially
in Section~\ref{sec:cell-rounding-op}. There we define cell rounding operations
which are working on subsets of the complex plane.


\section{Digit Sets and Non-Adjacent Forms}
\label{sec:nafs}


In this section $\tau\in\C$ will be an algebraic integer with $\abs{\tau}>1$,
and let $w\in\N$ with $w\geq2$. Further let $\normsymbol\colon\Ztau\fto\Z$
denote the norm function. We want to build a numeral system for the elements of
$\Ztau$ with base $\tau$. Thus we need a digit set $\cD$, which will be a finite
subset of $\Ztau$ containing $0$.


\begin{definition}[Reduced Residue Digit Set]
  \label{def:red-residue-digit-set}

  Let $\cD\subseteq\Ztau$. The set $\cD$ is called a \emph{reduced residue digit
    set modulo $\tau^w$}, if it consists of $0$ and exactly one
  representative for each residue class of $\Ztau$ modulo $\tau^w$ that is not
  divisible by $\tau$.
\end{definition}

From now on suppose $\cD$ is a reduced residue digit set modulo $\tau^w$. The
following two auxiliary results are well-known\footnote{Although those results
  are well-known, we were not able to find a reference. Any hints are
  welcome.}; we include a proof for the sake of completeness.


\begin{lemma}
  \label{le:divisibility-lemma}

  Let $c$ be a rational integer. Then $\tau$ divides $c$ in $\Z[\tau]$ if and
  only if $\normtau$ divides $c$ in $\Z$.
\end{lemma}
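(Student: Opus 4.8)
The plan is to use the fact that $\tau$ is an imaginary quadratic algebraic integer, so it is a root of $\tau^2 - p\tau + q = 0$ with $p, q \in \Z$, and $N(\tau) = \tau \bar\tau = q$. The key observation is that the ring $\Z[\tau]$ has $\{1, \tau\}$ as a $\Z$-basis, so every element is uniquely of the form $a + b\tau$ with $a, b \in \Z$.

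First I would prove the easy direction: if $N(\tau) = q$ divides $c$ in $\Z$, then writing $c = qk = \tau\bar\tau k$ and noting $\bar\tau = p - \tau \in \Z[\tau]$, we get $c = \tau \cdot (p-\tau) k \in \tau\Z[\tau]$, so $\tau \mid c$. (Alternatively, from $\tau^2 - p\tau + q = 0$ we get $q = \tau(p - \tau)$, so $q \in \tau\Z[\tau]$, hence $qk \in \tau\Z[\tau]$.)

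For the converse, suppose $\tau \mid c$ in $\Z[\tau]$, say $c = \tau(a + b\tau)$ for some $a, b \in \Z$. Using $\tau^2 = p\tau - q$, expand: $c = a\tau + b\tau^2 = a\tau + b(p\tau - q) = -bq + (a + bp)\tau$. Since $c \in \Z$ and $\{1,\tau\}$ is a $\Z$-basis of $\Z[\tau]$, comparing coefficients gives $c = -bq$ and $a + bp = 0$. In particular $c = -bq$, so $q = N(\tau)$ divides $c$ in $\Z$.

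I do not expect any real obstacle here; the only mild subtlety is making sure the norm function referred to in the lemma is indeed $N(a+b\tau) = (a+b\tau)(a+b\bar\tau) = a^2 + abp + b^2 q$, so that $N(\tau) = q$, matching the polynomial's constant term — this should already be fixed by the conventions set up earlier in Section~\ref{sec:nafs}. One should also record $\im\tau > 0$ (hence $4q - p^2 > 0$, so $q \geq 1$ and the lattice is genuinely two-dimensional) to guarantee that $\{1, \tau\}$ really is a basis and the coefficient comparison is legitimate.
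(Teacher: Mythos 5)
Your proof is correct and uses the same idea as the paper: the easy direction via $q=\tau(p-\tau)$ from the minimal polynomial, and the converse by expanding $\tau(a+b\tau)$, reducing with $\tau^2=p\tau-q$, and comparing coefficients in the $\Z$-basis of $\Z[\tau]$. The only caveat is that the paper states and proves this lemma for an algebraic integer $\tau$ of arbitrary degree $d$ (Section~\ref{sec:nafs} assumes only that $\tau$ is an algebraic integer, and the lemma is invoked in Lemma~\ref{lem:complete-res-sys} in that generality), writing elements as $\sum_{j=0}^{d-1}x_j\tau^j$ and concluding $c=-x_{d-1}a_0$ with $a_0=(-1)^d\normtau$; your coefficient-comparison argument is exactly the $d=2$ instance and extends verbatim to that setting, so you should either drop the imaginary-quadratic hypothesis or note that nothing in the argument uses it beyond the degree.
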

\begin{proof}
  From the minimal polynomial, it is clear that $\tau$ divides $\normtau$ in
  $\Z[\tau]$, so $\normtau \mid c$ implies $\tau\mid c$.

  For the converse direction, assume that $\tau\cdot\left(\sum_{j=0}^{d-1}x_j
    \tau^j\right)=c$ for some rational integers $x_j$. Here $d$ is the degree
  of $\tau$. Write the minimal polynomial of $\tau$ as
  \begin{equation*}
    \tau^d+\sum_{j=0}^{d-1}a_j\tau^j=0.
  \end{equation*}
  Thus we obtain
  \begin{equation*}
    c=-x_{d-1} a_0+\sum_{j=1}^{d-1}(x_{j-1}-x_{d-1}a_j)\tau^j.
  \end{equation*}
  Comparing coefficients in $\tau^j$ yields $c=-x_{d-1}a_0$, which implies that
  $a_0=(-1)^d \normtau$ divides $c$ in $\Z$, as required.
\end{proof}


Next, we determine the cardinality of $\cD$ by giving an explicit system of
representatives of the residue classes.


\begin{lemma}
  \label{lem:complete-res-sys}

  A complete residue system modulo $\tau^w$ is given by
  \begin{equation}\label{eq:residue-system}
    \sum_{j=0}^{w-1}a_j\tau^j\text{ with }a_j\in\{0,\ldots, \abs{\normtau}-1\}\text{
      for }0\le j<w.
  \end{equation}
  In particular, there are $\abs{\normtau}^w$ residue classes modulo $\tau^w$ in
  $\Z[\tau]$.
  
  A representative $\sum_{j=0}^{w-1}a_j\tau^j$ with $a_j\in\{0,\ldots,
  \abs{\normtau}-1\}$ is divisible by $\tau$ if and only if $a_0=0$. In particular,
  the cardinality of $\cD$ equals $\abs{\normtau}^{w-1}(\abs{\normtau}-1)+1$.
\end{lemma}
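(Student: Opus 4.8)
The plan is an induction on $w$, the only external inputs being Lemma~\ref{le:divisibility-lemma} and the description of $\Z[\tau]$ as $\Z[X]/(m(X))$, where $m$ is the minimal polynomial of $\tau$ and $d=\deg m$. The key preliminary observation is that $\Z[\tau]/\tau\Z[\tau]\cong\Z/\abs{\normtau}\Z$: reducing $\Z[\tau]\cong\Z[X]/(m(X))$ additionally modulo $X$ gives $\Z[\tau]/(\tau)\cong\Z[X]/(m(X),X)=\Z/(m(0))\Z$, and $m(0)=(-1)^d\normtau$ (as already used in the proof of Lemma~\ref{le:divisibility-lemma}). Hence $\{0,1,\dots,\abs{\normtau}-1\}$ is a complete and irredundant set of representatives modulo $\tau$; this is the case $w=1$ of \eqref{eq:residue-system}.

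For the inductive step (existence), given $z\in\Z[\tau]$ I would pick $a_0\in\{0,\dots,\abs{\normtau}-1\}$ with $z\equiv a_0\pmod{\tau}$ by the case $w=1$, write $z-a_0=\tau z'$ for some $z'\in\Z[\tau]$ (legitimate since $\tau$ is a non-zero-divisor in the domain $\Z[\tau]$), apply the induction hypothesis to $z'$ modulo $\tau^{w-1}$ to obtain $z'\equiv\sum_{j=0}^{w-2}a_{j+1}\tau^j\pmod{\tau^{w-1}}$, and multiply by $\tau$ and add $a_0$ to get $z\equiv\sum_{j=0}^{w-1}a_j\tau^j\pmod{\tau^w}$. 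For irredundancy, suppose two such expressions are congruent modulo $\tau^w$; reducing modulo $\tau$ forces equality of the $a_0$'s by the case $w=1$, after which one cancels $a_0$, divides by $\tau$ (again using that $\tau$ is a non-zero-divisor and $\tau^w\Z[\tau]=\tau\cdot\tau^{w-1}\Z[\tau]$), and invokes the induction hypothesis. Existence and irredundancy together say the $\abs{\normtau}^w$ expressions in \eqref{eq:residue-system} form a complete residue system, so there are $\abs{\normtau}^w$ residue classes modulo $\tau^w$.

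It remains to treat divisibility by $\tau$ and to count $\cD$. If $a_0=0$, then $\sum_{j=0}^{w-1}a_j\tau^j=\tau\sum_{j=0}^{w-2}a_{j+1}\tau^j$ is divisible by $\tau$. Conversely, if $\tau\mid\sum_{j=0}^{w-1}a_j\tau^j$, then $\tau\mid a_0$, and by Lemma~\ref{le:divisibility-lemma} this means $\abs{\normtau}\mid a_0$ in $\Z$, which for $0\le a_0<\abs{\normtau}$ forces $a_0=0$. Thus exactly the $\abs{\normtau}^{w-1}$ residue classes with $a_0=0$ are divisible by $\tau$, leaving $\abs{\normtau}^{w-1}(\abs{\normtau}-1)$ classes not divisible by $\tau$; adding the digit $0$ yields $\abs{\cD}=\abs{\normtau}^{w-1}(\abs{\normtau}-1)+1$.

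The main obstacles are mild: the identification $\Z[\tau]/(\tau)\cong\Z/\abs{\normtau}\Z$ (which anchors both the base case and the translation of ``one representative per residue class not divisible by $\tau$'' into a numerical count) and the repeated appeal to $\tau$ being a non-zero-divisor when ``dividing by $\tau$'' in the induction; everything else is bookkeeping. As an alternative to the irredundancy induction, one could note that multiplication by $\tau^w$ on the free $\Z$-module $\Z[\tau]$ of rank $d$ has determinant $\normtau^w$, so $[\Z[\tau]:\tau^w\Z[\tau]]=\abs{\normtau}^w$ directly, and a surjection (furnished by the existence part) from an $\abs{\normtau}^w$-element set onto an $\abs{\normtau}^w$-element set is automatically a bijection; I would keep the elementary induction as the primary argument and mention this only as a remark.
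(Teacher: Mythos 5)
Your proof is correct. It differs from the paper's argument in organization rather than in substance: both ultimately rest on the fact that a rational integer is divisible by $\tau$ exactly when it is divisible by $\normtau$ (Lemma~\ref{le:divisibility-lemma}), equivalently on the identification $\Z[\tau]/(\tau)\cong\Z/\abs{\normtau}\Z$ that you make explicit as your base case. For existence, the paper argues directly: it expands $z$ in the $\Z$-basis $1,\tau,\dots,\tau^{d-1}$ and subtracts suitable multiples of the minimal polynomial to force the coefficients $a_0,\dots,a_{w-1}$ into $\{0,\dots,\abs{\normtau}-1\}$, whereas you peel off one digit at a time by induction on $w$. For irredundancy, the paper picks the minimal index $j_0$ at which two representatives differ, divides by $\tau^{j_0}$, and applies the divisibility lemma; your induction performs exactly the same cancellation, just packaged recursively (and you are right to flag that the division steps use that $\tau$ is a non-zero-divisor). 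The divisibility-by-$\tau$ criterion and the resulting count of $\cD$ are handled identically in both versions. Your closing remark that $[\Z[\tau]:\tau^w\Z[\tau]]=\abs{\normtau}^w$ follows from the determinant of multiplication by $\tau^w$ on the rank-$d$ free module $\Z[\tau]$ is a clean independent confirmation of the cardinality that the paper does not use; as you note, it would let one deduce irredundancy from existence by a counting argument alone.
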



\begin{proof}
  Every element $z$ of $\Z[\tau]$ can be written as 
  \begin{equation*}
    z=x\tau^w+\sum_{j=0}^{w-1} a_j\tau^j
  \end{equation*}
  for some $a_j\in\{0,\ldots, \abs{\normtau}-1\}$ and an appropriate
  $x\in\Z[\tau]$: Take the expansion of $z$ with respect to the $\Z$-basis
  $\tau^j$, $0\le j<d$ and subtract appropriate multiples of the minimal
  polynomial of $\tau$ in order to enforce $0\le a_j<\abs{\normtau}$ for $0\le
  j\le w-1$.  This shows that \eqref{eq:residue-system} indeed covers all
  residue classes modulo $\tau^w$.

  Assume that $\sum_{j=0}^{w-1}a_j\tau^j\equiv \sum_{j=0}^{w-1}b_j\tau^j
  \pmod{\tau^w}$ for some $a_j$, $b_j\in\{0,\ldots,\abs{\normtau}-1\}$, but
  $a_j\neq b_j$ for some $j$. We choose $0\le j_0\le w-1$ minimal such that
  $a_{j_0}\neq b_{j_0}$. We obtain
  \begin{equation*}
    \sum_{j=j_0}^{w-1}a_j \tau^{j-j_0}\equiv 
    \sum_{j=j_0}^{w-1} b_j\tau^{j-j_0} \pmod{\tau^{w-j_0}},
  \end{equation*}
  which implies that $a_{j_0}\equiv b_{j_0} \pmod{\tau}$.  By
  Lemma~\vref{le:divisibility-lemma}, this implies that $a_{j_0}=b_{j_0}$,
  contradiction. Thus \eqref{eq:residue-system} is indeed a complete system of
  residues modulo $\tau^w$.

  From Lemma~\vref{le:divisibility-lemma} we also see that exactly the
  $\abs{\normtau}^{w-1}$ residue classes $\sum_{j=1}^{w-1} a_j\tau^j$ are
  divisible by $\tau$. We conclude that
  $\card*{\cD}=\abs{\normtau}^{w-1}(\abs{\normtau}-1)+1$.
\end{proof}


Since our digit set $\cD$ is constructed of residue classes, we want a
uniqueness in choosing the representative. We have the following definition,
where the restricted Voronoi $\wt{V}$ for the point $0$ from
Definition~\vref{def:restr-voronoi} is used. 


\begin{definition}[Representatives of Minimal Norm]
  \label{def:min-norm}

  Let $\tau$ be an algebraic integer, imaginary quadratic, and let
  $\eta\in\Ztau$ be not divisible by $\tau$. Then $\eta$ is called a
  \emph{representative of minimal norm of its residue class}, if $\eta \in
  \tau^w \wt{V}$.
\end{definition}

With this definition we can define the following digit set, cf.\
Solinas~\cite{Solinas:1997:improved-algorithm,Solinas:2000:effic-koblit} or
Blake, Kumar Murty and Xu~\cite{Blake-Kumar-Xu:2005:effic-algor}.

\begin{definition}[Minimal Norm Representatives Digit Set]
  \label{def:min-norm-digit-set}

  Let $\tau$ be an algebraic integer, imaginary quadratic, and let $\cD$ be a
  reduced residue digit set modulo $\tau^w$ consisting of representatives of
  minimum norm of its residue classes. Then we will call such a digit set
  \emph{minimal norm representatives digit set modulo $\tau^w$}.
\end{definition}

From now on we will suppose that our digit set $\cD$ is a minimal norm
representatives digit set modulo $\tau^w$. Some examples are shown in
Figure~\vref{fig:digit-sets}. There are also other definitions of a minimal
norm representative digit set, as discussed in the following remark.


\begin{figure}
  \centering \subfloat[Digit set for $\tau=\frac{1}{2} + \frac{1}{2} \sqrt{-7}$
  and $w=2$.]{
    \includegraphics[width=0.20\linewidth]{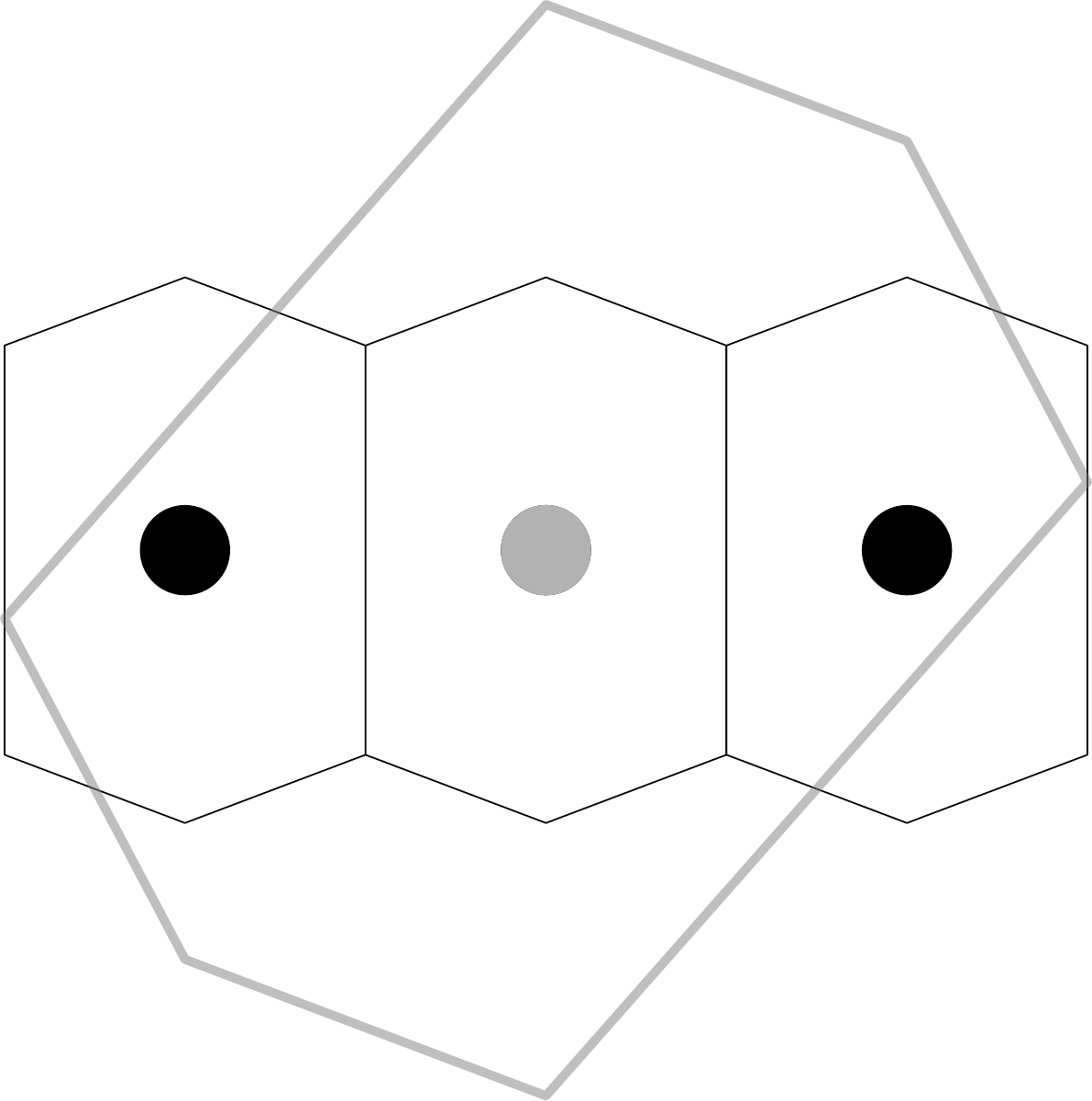}
    \label{fig:ds:1}}
  \quad 
  \subfloat[Digit set for $\tau=1 + \sqrt{-1}$ and $w=4$.]{
    \includegraphics[width=0.20\linewidth]{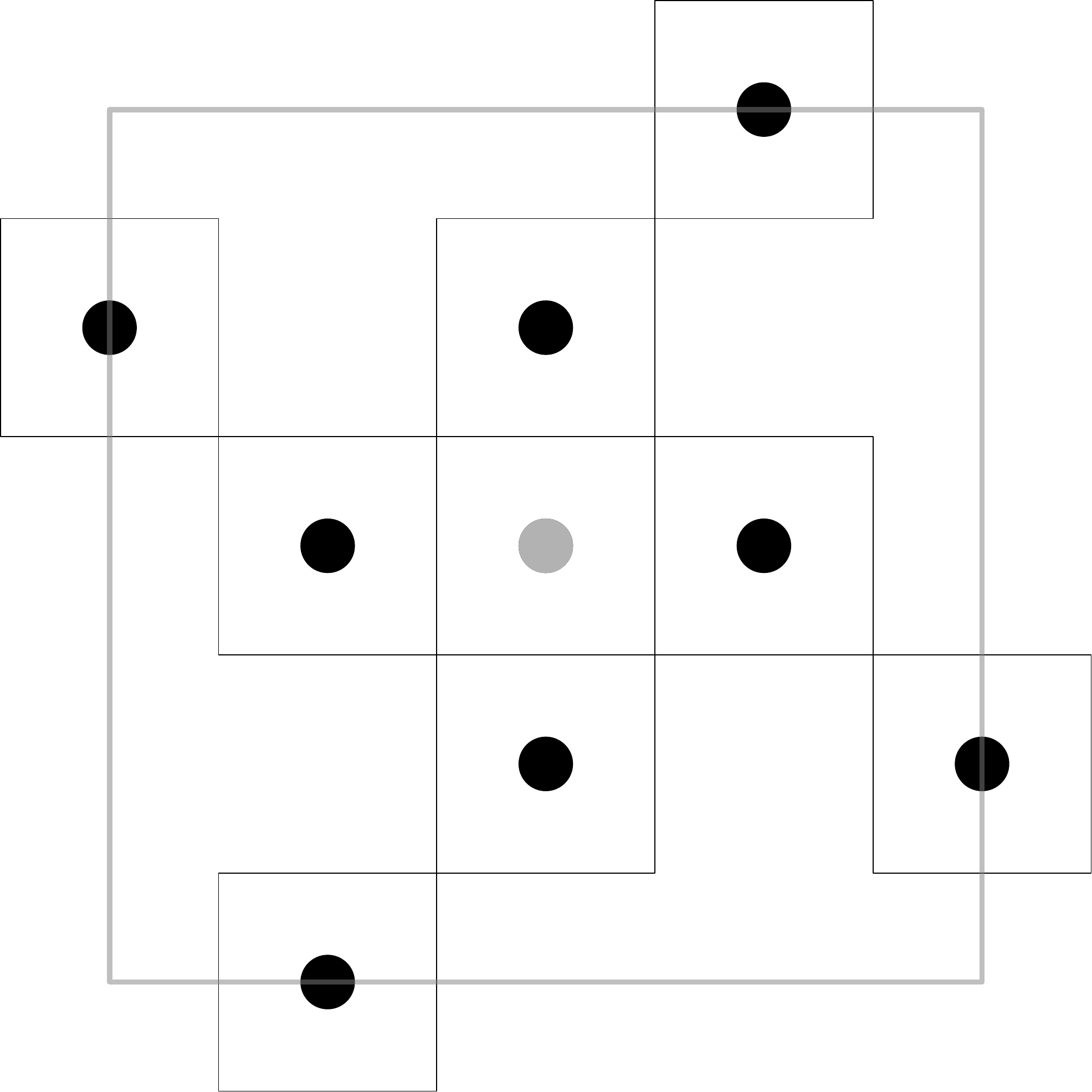}
    \label{fig:ds:2}}
  \quad
  \subfloat[Digit set for $\tau=\frac{3}{2} + \frac{1}{2}
  \sqrt{-3}$ and $w=2$.]{
    \includegraphics[width=0.20\linewidth]{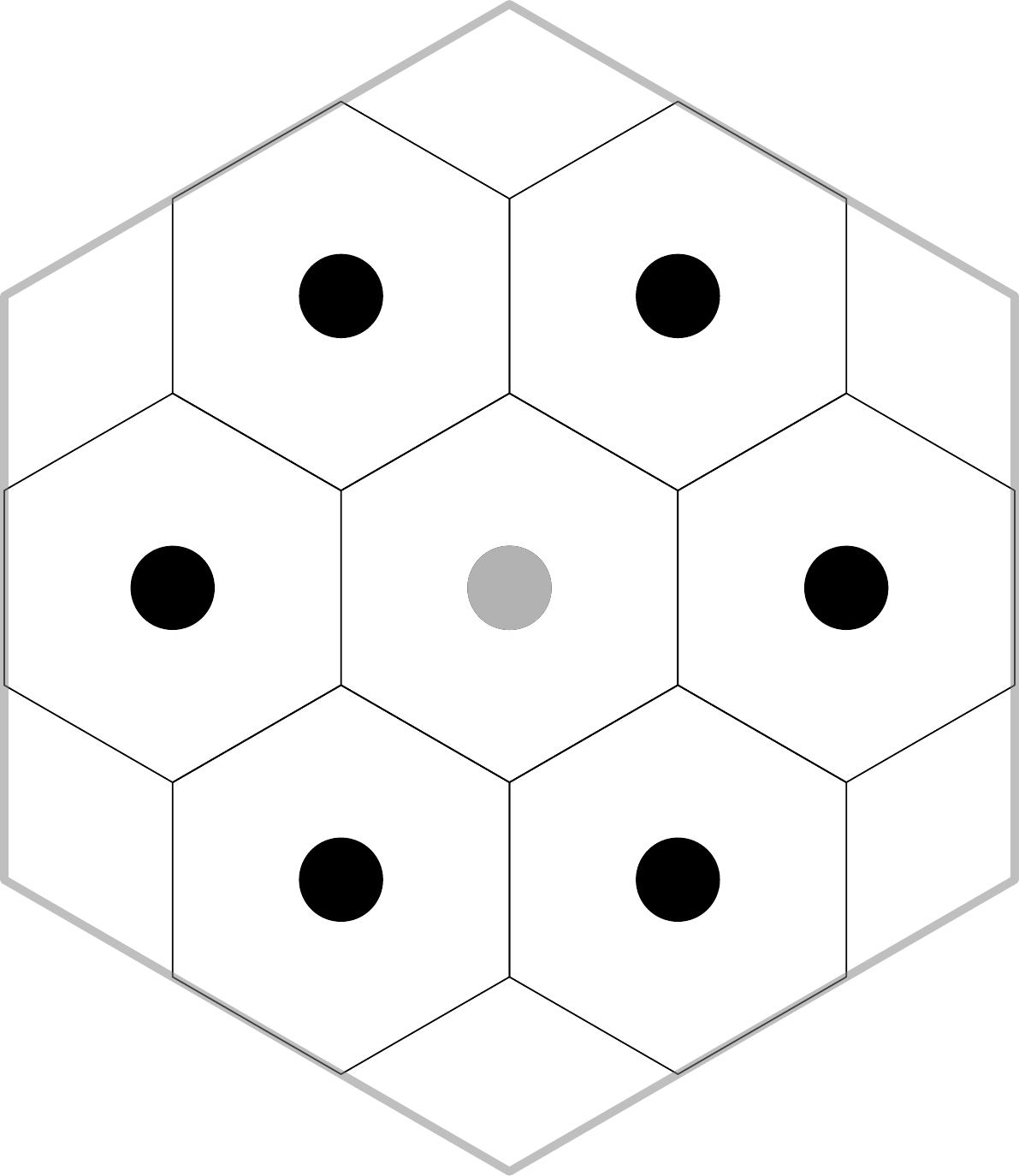}
    \label{fig:ds:3}}
  \quad
  \subfloat[Digit set for $\tau=\frac{3}{2} + \frac{1}{2}
  \sqrt{-3}$ and $w=3$.]{
    \includegraphics[width=0.20\linewidth]{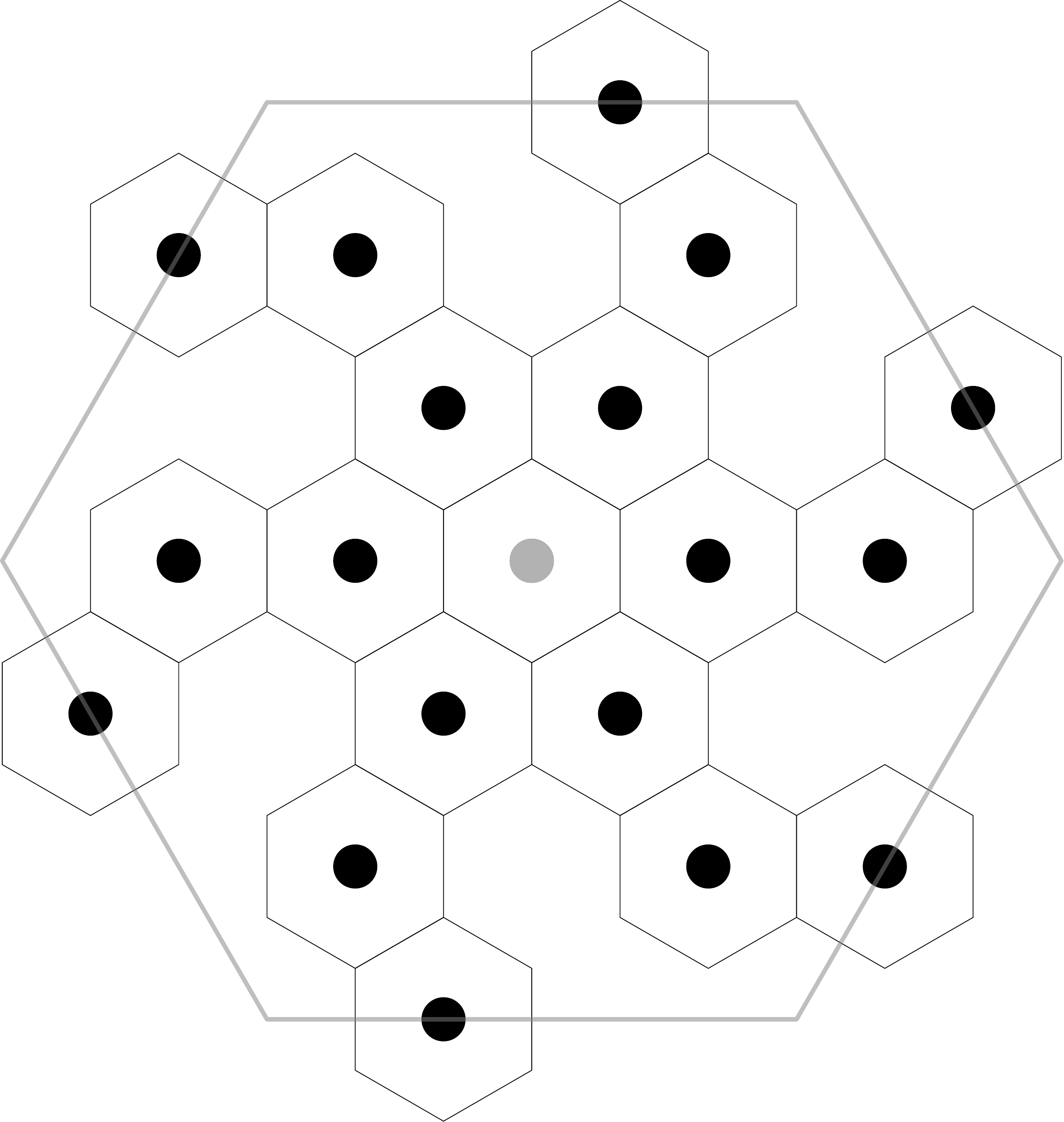}
    \label{fig:ds:4}}

  \caption[Digit sets for different $\tau$ and $w$]{Minimal norm representatives
    digit sets modulo $\tau^w$ for different $\tau$ and $w$. For each digit
    $\eta$, the corresponding Voronoi cell $V_\eta$ is drawn. The large scaled
    Voronoi cell is $\tau^w V$.}
  \label{fig:digit-sets}
\end{figure}


\begin{remark}
  \label{rem:choice-digit-set-voronoi-boundary}

  The definition of a representative of minimal norm,
  Definition~\vref{def:min-norm} --- and therefore the definition of a minimal
  norm representative digit set, Definition~\vref{def:min-norm-digit-set} ---
  depends on the definition of the restricted Voronoi cell $\wt{V}$,
  Definition~\ref{def:restr-voronoi}. There we had some freedom choosing which
  part of the boundary is included in $\wt{V}$, cf.\ the remarks after
  Definition~\ref{def:restr-voronoi}. 

  We point out that all results given here in this article are valid
  for any admissible configuration of the restricted Voronoi cell, although
  only the case corresponding to Definition~\ref{def:restr-voronoi} will be
  considered 
  (to prevent the paper from getting
  any longer). For most of the proofs given, this makes no difference, but
  there are some exceptions. The cases with ``problematic values'' in
  Sections~\ref{sec:bounds-value} and~\ref{sec:exist-uniqu-nafs} are solved
  algorithmically, and therefore the results clearly depend on choice of the
  digit set. Luckily the final results are true in each situation.
\end{remark}


The following remark summarises some basic properties of minimal norm
representatives and the defined digit sets.

\begin{remark}
  \label{rem:digitsets}

  Let $\tau$ be an algebraic integer, imaginary quadratic. We have the following
  equivalence. The condition
  \begin{equation*}
    \abs{\eta} \leq \abs{\xi} \text{ for all $\xi\in\Ztau$ with
      $\eta \equiv \xi \pmod{\tau^w}$}
  \end{equation*}
  is fulfilled, if and only if $\eta \in \tau^w V$. The advantage of using the
  restricted Voronoi cell in Definition~\vref{def:min-norm} is that also points
  on the boundary are handled uniquely. 

  Further we get for all $\eta\in\cD$ that $\abs{\eta} \leq \abs{\tau}^{w+1}
  c_V$. On the other side, if an element of $\Ztau$, which is not divisible by
  $\tau$, has absolute value less than $\frac12 \abs\tau^w$, cf.\
  Proposition~\vref{pro:voronoi-prop}, it is a digit. See also
  Lemma~\vref{lem:complete-res-sys}.

  Since $\cD \subseteq \Ztau$, all non-zero digits have absolute value at least
  $1$.
\end{remark}


We can assume that $0\leq\arg\tau\leq\frac{\pi}{2}$. Using any other $\tau$
lead to the same digit sets, except some mirroring at the real axis, imaginary
axis, or at the origin. By adapting the definition of the boundary of the
restricted Voronoi cell, Definition~\vref{def:restr-voronoi}, these mirroring
effects can be handled.


Now we are ready to define the numbers built with our digit set $\cD$. 


\begin{definition}[Width-$w$ \tauadic{} Non-Adjacent Forms]
  \label{def:wnaf}

  Let $\bfeta = \sequence{\eta_j}_{j\in\Z} \in \cD^\Z$. The sequence $\bfeta$ is
  called a \emph{width\nbd-$w$ \tauadic{} non-adjacent form}, or
  \emph{\wNAF{}} for short, if each factor $\eta_{j+w-1}\ldots\eta_j$, i.e.,
  each block of length $w$, contains at most one non-zero digit.

  Let $J=\set*{j\in\Z}{\eta_j\neq0}$. We call $\f{\sup}{\set{0} \cup (J+1)}$ the
  \emph{left-length of the \wNAF{} $\bfeta$} and $-\f{\inf}{\set{0} \cup J}$ the
  \emph{right-length of the \wNAF{} $\bfeta$}.

  Let $\lambda$ and $\rho$ be elements of $\N_0 \cup \set{\wNAFsetFIN,\infty}$,
  where $\wNAFsetFIN$ means finite. We denote the \emph{set of all \wNAF{}s of
    left-length at most $\lambda$ and right-length at most $\rho$} by
  $\wNAFsetellell[w]{\lambda}{\rho}$. If $\rho=0$, then we will simply write
  $\wNAFsetell[w]{\lambda}$. The elements of the set $\wNAFsetfin$ will be
  called \emph{integer \wNAF{}s}.

  For $\bfeta\in\wNAFsetfininf$ we call
  \begin{equation*}
    \NAFvalue{\bfeta} := \sum_{j\in\Z} \eta_j \tau^j
  \end{equation*}
  the \emph{value of the \wNAF{} $\bfeta$}.
\end{definition}


The following notations and conventions are used. A block of zero digits is
denoted by $\bfzero$. For a digit $\eta$ and $k\in\N_0$ we will use
\begin{equation*}
  \eta^k:=\underbrace{\eta\ldots\eta}_{k},
\end{equation*}
with the convention $\eta^0 := \eps$, where $\eps$ denotes the empty word.  A
\wNAF{} $\bfeta = \sequence{\eta_j}_{j\in\Z}$ will be written as
$\bfeta_I\bfldot\bfeta_F$, where $\bfeta_I$ contains the $\eta_j$ with $j\geq0$
and $\bfeta_F$ contains the $\eta_j$ with $j<0$. $\bfeta_I$ is called
\emph{integer part}, $\bfeta_F$ \emph{fractional part}, and the dot is called
\emph{\taupoint{}}. Left-leading zeros in $\bfeta_I$ can be skipped, except
$\eta_0$, and right-leading zeros in $\bfeta_F$ can be skipped as well. If
$\bfeta_F$ is a sequence containing only zeros, the \taupoint{} and this
sequence is not drawn.

Further, for a \wNAF{} $\bfeta$ (a bold, usually small Greek letter) we will
always use $\eta_j$ (the same letter, but indexed and not bold) for the
elements of the sequence.


To see where the values, respectively the fractional values of our \wNAF{}s lie
in the complex plane, have a look at Figure~\vref{fig:w-weta}. There some
examples are drawn.


The set $\wNAFsetfininf$ can be equipped with a metric. It is defined in the
following way. Let $\bfeta\in\wNAFsetfininf$ and $\bfxi\in\wNAFsetfininf$, then
\begin{equation*}
  \NAFd{\bfeta}{\bfxi} :=
  \begin{cases}
    \abs{\tau}^{\max\set*{j\in\Z}{\eta_j\neq\xi_j}} 
    & \text{if $\bfeta\neq\bfxi$,} \\
    0 & \text{if $\bfeta=\bfxi$.}
  \end{cases}
\end{equation*}
So the largest index, where the two \wNAF{}s differ, decides their distance. See
for example Edgar~\cite{Edgar:2008:measur} for details on such metrics.

We get the following continuity result.


\begin{proposition}
  \label{pro:value-continuous}

  The value function $\NAFvaluename$ is Lipschitz continuous on
  $\wNAFsetfininf$.
\end{proposition}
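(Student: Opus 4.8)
The plan is to show directly from the definition of $\NAFd{\cdot}{\cdot}$ that $\NAFvaluename$ satisfies a Lipschitz estimate with an explicit constant depending only on $\tau$ and the diameter of the (finite) digit set $\cD$. First I would fix two distinct \wNAF{}s $\bfeta, \bfxi \in \wNAFsetfininf$ and let $k := \max\set*{j\in\Z}{\eta_j\neq\xi_j}$, so that $\NAFd{\bfeta}{\bfxi} = \abs\tau^k$; note $\eta_j = \xi_j$ for all $j > k$, hence
\begin{equation*}
  \NAFvalue{\bfeta} - \NAFvalue{\bfxi}
  = \sum_{j\in\Z} (\eta_j - \xi_j)\tau^j
  = \sum_{j \leq k} (\eta_j - \xi_j)\tau^j.
\end{equation*}
Then I would bound $\abs{\eta_j - \xi_j} \leq D$, where $D := \max\set*{\abs{\delta - \delta'}}{\delta,\delta'\in\cD}$ is finite since $\cD$ is finite (concretely one can take $D \leq 2\abs\tau^{w+1}c_V$ by Remark~\vref{rem:digitsets}, but finiteness is all that matters). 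This gives the geometric-series estimate
\begin{equation*}
  \abs[\big]{\NAFvalue{\bfeta} - \NAFvalue{\bfxi}}
  \leq D \sum_{j \leq k} \abs\tau^{j}
  = D\,\abs\tau^{k}\,\frac{1}{1 - \abs\tau^{-1}}
  = \frac{D\abs\tau}{\abs\tau - 1}\,\NAFd{\bfeta}{\bfxi},
\end{equation*}
which is exactly Lipschitz continuity with constant $L := D\abs\tau/(\abs\tau-1)$. When $\bfeta = \bfxi$ the inequality is trivial.

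One point that needs a word of care: the sum $\sum_{j\leq k}(\eta_j-\xi_j)\tau^j$ must actually converge, i.e.\ $\NAFvalue{\bfeta}$ must be well-defined on all of $\wNAFsetfininf$ in the first place. Since $\abs\tau > 1$ and the digits are bounded, $\sum_{j<0}\eta_j\tau^j$ converges absolutely (the integer part is a finite sum), so $\NAFvaluename$ is genuinely a function on $\wNAFsetfininf$; the same absolute-convergence bound is what legitimises the manipulation above. I would state this convergence explicitly at the start of the proof.

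I do not expect a serious obstacle here — the argument is a routine geometric-series estimate once one writes the difference of values as a single tail sum. The only mild subtlety is making sure the index set $\set*{j\in\Z}{\eta_j\neq\xi_j}$ has a finite maximum: this holds because both \wNAF{}s lie in $\wNAFsetfininf$, so each has finite left-length, hence $\eta_j = \xi_j = 0$ for all sufficiently large $j$ and the max is attained. With that observed, the estimate closes and gives the explicit Lipschitz constant $L = D\abs\tau/(\abs\tau - 1)$.
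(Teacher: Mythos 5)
Your proof is correct and follows essentially the same route as the paper: both write the difference of values as a tail sum over indices $j\leq k$ where $\NAFd{\bfeta}{\bfxi}=\abs\tau^k$, bound the digit differences by a constant (your diameter $D$ versus the paper's $2c_{\cD}$), and sum the geometric series to get the Lipschitz constant $\frac{D}{1-\abs\tau^{-1}}$. Your added remarks on absolute convergence and the attainment of the maximum index are sensible but routine; nothing further is needed.
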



\begin{proof}
  Let $c_{\cD}$ be a bound for the absolute value of the digits in the digit set
  $\cD$. Let $\bfeta\in\wNAFsetfininf$ and
  $\bfxi\in\wNAFsetfininf$, $\bfeta\neq\bfxi$, with $\NAFd{\bfeta}{\bfxi} =
  \abs{\tau}^{J}$. Since $\bfeta$ and $\bfxi$ are equal on all digits
  with index larger than $J$ we obtain
  \begin{equation*}
    \abs{\NAFvalue{\bfeta} - \NAFvalue{\bfxi}}
    \leq \sum_{j \leq J} \abs{\eta_j - \xi_j} \abs{\tau}^j
    \leq 2 c_{\cD} \frac{\abs{\tau}^J}{1-\abs{\tau}^{-1}}
    =  \frac{2 c_{\cD}}{1-\abs{\tau}^{-1}}\NAFd{\bfeta}{\bfxi}.
  \end{equation*}
  Thus Lipschitz continuity is proved.
\end{proof}


Furthermore, we get a compactness result on the metric space
$\wNAFsetellinf{\ell} \subseteq \wNAFsetfininf$ in the proposition below. The
metric space $\wNAFsetfininf$ is not compact, because if we fix a non-zero digit
$\eta$, then the sequence $\sequence{\eta0^j}_{j\in\N_0}$ has no convergent
subsequence, but all $\eta0^j$ are in the set $\wNAFsetfininf$.


\begin{proposition}
  \label{pro:naf-compact}

  For every $\ell\geq0$ the metric space
  $\left(\wNAFsetellinf{\ell},\NAFdname\right)$ is compact.
\end{proposition}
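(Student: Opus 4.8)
The plan is to realize $\wNAFsetellinf{\ell}$ as a closed subset of a compact metric space, namely an infinite product of copies of the finite digit set $\cD$, and then to invoke Tychonoff together with the characterization of compactness in metric spaces by sequential compactness. Concretely, the elements of $\wNAFsetellinf{\ell}$ are bi-infinite sequences $\bfeta=\sequence{\eta_j}_{j\in\Z}$ with $\eta_j=0$ for all $j\geq\ell$; thus only the coordinates with index $j<\ell$ carry information, and $\wNAFsetellinf{\ell}$ embeds into $\prod_{j<\ell}\cD$. Since $\cD$ is finite, it is compact in the discrete topology; the countable product $\prod_{j<\ell}\cD$ is compact by Tychonoff (or, since everything is metrizable, one may avoid the axiom of choice and argue directly with a diagonal subsequence extraction). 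So the first step is to check that the metric $\NAFdname$ restricted to $\wNAFsetellinf{\ell}$ induces exactly the product topology on this set, which is routine: a basic open neighbourhood of $\bfeta$ in the $\NAFdname$-metric of radius $\abs\tau^{-n}$ is precisely the set of $\bfxi$ agreeing with $\bfeta$ on all coordinates with index $>-n$, i.e.\ a cylinder set, and conversely.

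The heart of the argument is then to show that $\wNAFsetellinf{\ell}$ is a \emph{closed} subset of this compact product, equivalently that its complement is open, equivalently that the $w$-NAF condition is "closed" under the metric. So the key step I would carry out is: suppose $\sequence{\bfeta^{(k)}}_{k\in\N_0}$ is a sequence in $\wNAFsetellinf{\ell}$ converging to some $\bfxi\in\wNAFsetfininf$ in the metric $\NAFdname$; I must show $\bfxi\in\wNAFsetellinf{\ell}$. Convergence in $\NAFdname$ means that for every $m$ there is a $k_0$ such that for all $k\geq k_0$, $\bfeta^{(k)}$ and $\bfxi$ agree on all coordinates with index $>-m$; in particular each coordinate $\xi_j$ equals $\eta^{(k)}_j$ for $k$ large, so $\xi_j\in\cD$ and $\xi_j=0$ for $j\geq\ell$, giving left-length at most $\ell$. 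For the non-adjacency condition: fix any block $\xi_{j+w-1}\ldots\xi_j$; choose $m$ large enough that all these indices exceed $-m$, and then for $k\geq k_0(m)$ this block coincides with $\eta^{(k)}_{j+w-1}\ldots\eta^{(k)}_j$, which contains at most one non-zero digit because $\bfeta^{(k)}$ is a \wNAF{}. Hence $\bfxi$ satisfies the \wNAF{} condition on every block, so $\bfxi\in\wNAFsetellinf{\ell}$. This shows the space is sequentially complete and, combined with the compactness of the ambient product, that it is compact.

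I expect the main (minor) obstacle to be purely bookkeeping: being careful that the convention "$\eta_j=0$ for $j\geq\ell$" is genuinely part of the definition of left-length at most $\ell$ — one should double-check against Definition~\ref{def:wnaf} that left-length at most $\lambda$ forces $\eta_j=0$ for $j\ge\lambda$ (it does, since $\sup(\{0\}\cup(J+1))\le\lambda$ forces $j+1\le\lambda$ for all $j\in J$), and that the right-length is unrestricted so the product is genuinely over $j<\ell$ (countably infinitely many factors). An alternative, self-contained route that avoids citing Tychonoff altogether — and which I would probably favour for a clean write-up — is a direct diagonal argument: given a sequence $\sequence{\bfeta^{(k)}}$ in $\wNAFsetellinf{\ell}$, the coordinate values $\eta^{(k)}_{\ell-1}$ take only finitely many values in $\cD$, so some value is attained infinitely often; pass to that subsequence, then repeat for coordinate $\ell-2$, and so on; the diagonal subsequence converges coordinatewise, hence in $\NAFdname$, and its limit lies in $\wNAFsetellinf{\ell}$ by the closedness argument of the previous paragraph. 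Either way the proof is short.
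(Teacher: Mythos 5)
Your proposal is correct and matches the paper: the paper's own proof is exactly your diagonal subsequence extraction (fixing digits $-1,-2,\dots$ successively and taking the diagonal), and your primary Tychonoff-plus-closedness route is precisely what the paper records in the remark immediately following the proposition. Both your verification that the $w$-NAF condition and the left-length bound survive coordinatewise limits and your identification of the metric topology with the product topology are sound.
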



\begin{proof}
  Let $\sequence{\bfxi_{0,j}}_{j\in\N_0}$ be a sequence with
  $\bfxi_{0,j}\in\wNAFsetellinf{\ell}$. We can assume
  $\bfxi_{0,j}\in\wNAFsetinf$, therefore each word $\bfxi_{0,j}$ has digits zero
  for non-negative index. Now consider the digit with index $-1$. There is a
  subsequence $\sequence{\bfxi_{1,j}}_{j\in\N_0}$ of
  $\sequence{\bfxi_{0,j}}_{j\in\N_0}$, such that digit $-1$ is a fixed digit
  $\eta_{-1}$. Next there is a subsequence $\sequence{\bfxi_{2,j}}_{j\in\N_0}$
  of $\sequence{\bfxi_{1,j}}_{j\in\N_0}$, such that digit $-2$ is a fixed digit
  $\eta_{-2}$. This process can be repeated for each $k \geq 1$ to get sequences
  $\sequence{\bfxi_{k,j}}_{j\in\N_0}$ and digits $\eta_{-k}$.

  The sequence $\sequence{\bfvartheta_j}_{j\in\N_0}$ with $\bfvartheta_j :=
  \bfxi_{j,j}$ converges to $\bfeta$, since for $\eps>0$ there is an $J\in\N_0$
  such that for all $j \geq J$
  \begin{equation*}
    \NAFd{\bfeta}{\bfvartheta_j} \leq \abs\tau^{-(J+1)} < \eps.
  \end{equation*}
  It is clear that $\bfeta$ is indeed an element of $\wNAFsetinf$,
  as its first $k$ digits coincide with $\bfxi_{k,k}\in \wNAFsetinf$
  for all $k$.
  So we have found a converging subsequence of
  $\sequence{\bfxi_{0,j}}_{j\in\N_0}$, which proves the compactness.
\end{proof}


\begin{remark}
  \label{rem:naf-compact}

  The compactness of $\left(\wNAFsetellinf{\ell},\NAFdname\right)$ can also be
  deduced from general theory. As a consequence of Tychonoff's Theorem the set
  $\cD^\N$ is a compact space, the product topology (of the discrete topology
  on $\cD$) coincides with the topology
  induced by the obvious generalisation of the metric $\NAFdname$. The subset
  $\wNAFsetinf\subseteq\cD^\N$ is closed and therefore compact, too.
\end{remark}


We want to express all integers in $\Z[\tau]$ by finite \wNAF{}s. Thus we
restrict ourselves to suitable digit sets, cf. Muir and
Stinson~\cite{Muir-Stinson:2004:alter-digit}.


\begin{definition}[Width-$w$ Non-Adjacent Digit Set]
  \label{def:wnads}

  A digit set $\cD$ is called a \emph{width\nbd-$w$ non-adjacent digit set}, or
  \wNADS{} for short, when every element $z\in\Ztau$ admits a unique \wNAF{}
  $\bfeta\in\wNAFsetfin$, i.e., $\NAFvalue{\bfeta} = z$. When this is the case,
  the function
  \begin{equation*}
    \NAFvaluename\restricted{\wNAFsetfin}\colon \wNAFsetfin \fto \Ztau
  \end{equation*}
  is bijective, and we will denote its inverse function by $\NAFwname$.
\end{definition}


Later, namely in Section~\ref{sec:exist-uniqu-nafs}, we will see that the digit
set of minimal norm representatives is a \wNADS{} if $\tau$ is imaginary
quadratic.


\section{Full Block Length Analysis of Non-Adjacent Forms}
\label{sec:full-block-length}


Let $\tau\in\C$ be an algebraic integer, $w\in\N$ with $w\geq2$, and $\cD$ be a
reduced residue digit set, cf.\
Definition~\vref{def:red-residue-digit-set}. Let $\normsymbol\colon\Ztau\fto\Z$
denote the norm function.

Further, in this section all \wNAF{}s will be out of the set $\wNAFsetfin$, and
with \emph{length} the left-length is meant.

This general setting allows us to analyse digit frequencies under the
\emph{full block length model,}, i.e., we assume that all \wNAF{}s of given
length are equally likely. We will prove the following theorem.


\begin{theorem}[Full Block Length Distribution
  Theorem]\label{th:w-naf-distribution}
  We denote the number of \wNAF{}s of length $n\in\N_0$ by $C_{n,w}$, i.e.,
  $C_{n,w}=\card{\wNAFsetell{n}}$, and we get
  \begin{equation*}
    C_{n,w} = \frac{1}{(\abs{\normtau}-1)w+1}\abs{\normtau}^{n+w}
    +\Oh{(\rho\abs{\normtau})^n},
  \end{equation*}
  where $\rho=(1+\frac{1}{\abs{\normtau} w^3})^{-1}<1$.

  Further let $0\neq \eta\in\cD$ be a fixed digit and define the random
  variable $X_{n,w,\eta}$ to be the number of occurrences of the digit $\eta$ in
  a random \wNAF{} of length $n$, where every \wNAF{} of length $n$ is assumed
  to be equally likely.

  Then the following explicit expressions hold for the expectation and the
  variance of $X_{n,w,\eta}$:
  
  \begin{align}
    \expect{X_{n,w,\eta}}&=e_{w}n +\frac{(\abs{\normtau} -1) (w-1) w
    }{\abs{\normtau}^{w-1}((\abs{\normtau}-1)
      w+1)^2}+\Oh{n\rho^n}\label{eq:naf-expectation}\\
    \variance{X_{n,w,\eta}}&=v_w n \notag \\
    &\phantom{=}+\frac{ \scriptstyle (w-1) w
      \left(-(w-1)^2-\abs{\normtau}^2
        w^2+(\abs{\normtau}-1) \abs{\normtau}^{w-1} ((\abs{\normtau}-1)
        w+1)^2+2 \abs{\normtau}
        \left(w^2-w+1\right)\right)}{\abs{\normtau}^{2w-2}
      ((\abs{\normtau}-1) w+1)^4} 
    \notag \\
    &\phantom{=}+\Oh{n^2\rho^n}, \label{eq:naf-variance}
  \end{align}
  where 
  \begin{align*}
    e_w&=\frac{ 1}{\abs{\normtau}^{w-1}((\abs{\normtau} -1) w+1)},\\
    \intertext{and}
    v_w&=\frac{\abs{\normtau}^{w-1} 
      ((\abs{\normtau} -1)w+1)^2-\left((\abs{\normtau} -1)w^2+2 
        w-1\right)}{\abs{\normtau}^{2w-2}((\abs{\normtau}-1)
      w+1)^3}.
  \end{align*}

  Furthermore, $X_{n,w,\eta}$ satisfies the central limit theorem
  \begin{equation*}
    \prob{\frac{X_{n,w,\eta}-e_w n}{\sqrt{v_w n}} \le
      x}=\Phi(x)+\Oh{\frac1{\sqrt n}},
  \end{equation*}
  uniformly with respect to $x\in\R$, where $\Phi(x)=(2\pi)^{-1/2}\int_{t\le x}
  e^{-t^2/2}\,dt$ is the standard normal distribution.
\end{theorem}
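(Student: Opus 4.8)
The plan is to set up a bivariate generating function tracking the length of the $w$-NAF and the number of occurrences of the fixed digit $\eta$, and then to extract the asymptotics from its dominant singularity. First I would describe the set $\wNAFsetfin$ by a regular expression: a $w$-NAF is (up to left-leading zeros) a word built from blocks of the shape $\bfzero^{k}\,\delta$ with $\delta\in\cD\setminus\{0\}$ and $k\geq w-1$, preceded and followed by finitely many zeros, so that any two non-zero digits are separated by at least $w-1$ zeros. Encoding this, the generating function in which $z$ marks length and $u$ marks an occurrence of $\eta$ is a rational function $F(z,u)$ whose denominator is (essentially) $1 - (\abs{\normtau}-1-1)z^{w} \cdot \frac{1}{\text{stuff}} - \dots$; more precisely, from the block decomposition one gets a denominator of the form $1 - z - \bigl((\abs{\normtau}-2)+u\bigr)z^{w}$ after the usual telescoping of the zero-runs, where the $u$ marks the one digit $\eta$ among the $\abs{\normtau}-1$ non-zero choices per block. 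Setting $u=1$ recovers $C_{n,w}$; its denominator $1-z-(\abs{\normtau}-1)z^{w}$ has a unique dominant root $\rho_0$ with $\rho_0 = 1/\abs{\normtau} + O(\abs{\normtau}^{-w})$, which one must locate precisely enough to get the claimed error term with $\rho = (1+\frac{1}{\abs{\normtau}w^3})^{-1}$, and one must check by a Rouché-type argument that all other roots are strictly larger in modulus (this is where the exact shape of $\rho$ comes from — a careful estimate on the smallest modulus of the remaining roots of $1-z-(\abs{\normtau}-1)z^{w}$).

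Next I would apply the standard quasi-powers / Hwang machinery: writing $\rho_0(u)$ for the dominant singularity of $F(z,u)$ as a function of $u$ near $u=1$, the coefficient $[z^n]F(z,u)$ behaves like $C(u)\rho_0(u)^{-n}(1+O(\tilde\rho^{n}))$ uniformly for $u$ in a complex neighbourhood of $1$, with $\rho_0(u)$ analytic and $\rho_0'(1)\neq 0$. Then $\expect{X_{n,w,\eta}}$ and $\variance{X_{n,w,\eta}}$ come from differentiating $\log([z^n]F(z,u))$ at $u=1$: the linear-in-$n$ coefficients are $e_w = -\rho_0'(1)/\rho_0(1)$ and $v_w = -\bigl(\rho_0''(1)/\rho_0(1) + \rho_0'(1)/\rho_0(1) - (\rho_0'(1)/\rho_0(1))^2\bigr)$ (the usual variance-from-singularity formula), and the constant terms come from $C'(1)/C(1)$ and its analogue. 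Implicitly differentiating the denominator equation $1-\rho_0-((\abs{\normtau}-2)+u)\rho_0^{w}=0$ at $u=1$, $\rho_0=\rho_0(1)$, yields closed forms for $\rho_0(1),\rho_0'(1),\rho_0''(1)$ in terms of $\abs{\normtau}$ and $w$; substituting and simplifying should reproduce the stated expressions for $e_w$, $v_w$ and the two messy constants in \eqref{eq:naf-expectation}–\eqref{eq:naf-variance}. The Gaussian limit law with the $O(1/\sqrt n)$ Berry–Esseen rate is then exactly Hwang's quasi-power theorem, since $\rho_0(u)^{-n}$ is an analytic family of "large powers" with non-degenerate variance $v_w>0$.

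The main obstacle is twofold and essentially bookkeeping-heavy rather than conceptual. The first genuine difficulty is the \emph{uniform} error analysis: one needs that the subdominant contribution to $[z^n]F(z,u)$ is $O((\rho\abs{\normtau})^n)$ with the specific $\rho$, which forces a quantitative separation between $\abs{\rho_0(u)}$ and the moduli of all competing roots of the denominator, uniformly for $u$ near $1$ — this is the step where the innocuous-looking bound $\rho=(1+\frac{1}{\abs{\normtau}w^3})^{-1}$ must be earned, presumably by a somewhat delicate estimate on $\abs{1-z}$ versus $(\abs{\normtau}-1)\abs{z}^{w}$ on a suitable circle. The second is simply the algebra: propagating the implicit-function expansions through the variance formula and matching the result to the displayed polynomials is lengthy, and I would organize it so that $e_w$ and $v_w$ are derived first (clean), then the constant terms (messy), checking degrees and special values (e.g. $w=2$) as a sanity test rather than expanding everything by hand.
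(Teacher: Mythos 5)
Your overall strategy --- a regular expression for the $w$-NAFs, a bivariate rational generating function with a second variable marking the digit $\eta$, isolation of the dominant root of the denominator, and Hwang's quasi-power theorem for the Gaussian limit with the $\Oh{1/\sqrt{n}}$ rate --- is exactly the route the paper takes, and the ``delicate estimate'' you anticipate for separating the dominant root from all competing roots is precisely the paper's Lemma~\ref{le:wagner-lemma} (applied after the rescaling $z=\abs{\normtau}Z$). The only methodological difference is cosmetic: the paper obtains the moments by explicitly extracting coefficients of $\partial_Y G(Y,Z)\vert_{Y=1}$ and $\partial_Y^2 G(Y,Z)\vert_{Y=1}$ via partial fractions at the known dominant pole, rather than by your $-\rho_0'(1)/\rho_0(1)$ perturbation formulas; both yield the same constants.

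There is, however, a concrete error in your setup that would derail the computation. You count $\abs{\normtau}-1$ non-zero digits per block, but by Lemma~\ref{lem:complete-res-sys} the digit set is a reduced residue system modulo $\tau^w$ and has $\card{\cD^\bullet}=\abs{\normtau}^{w-1}(\abs{\normtau}-1)$ non-zero elements. The denominator of the bivariate generating function is therefore $1-Z-YZ^w-\bigl(\abs{\normtau}^{w-1}(\abs{\normtau}-1)-1\bigr)Z^w$, not $1-z-\bigl((\abs{\normtau}-2)+u\bigr)z^w$. This matters structurally: with the correct count, $Z=1/\abs{\normtau}$ is an \emph{exact} root of the $Y=1$ denominator (the substitution $z=\abs{\normtau}Z$ turns it into $1-\tfrac{1}{t}z-(1-\tfrac{1}{t})z^w$ with $t=\abs{\normtau}$, which vanishes at $z=1$), and this exact cancellation is what produces the growth rate $\abs{\normtau}^{n}$ and the constant $1/((\abs{\normtau}-1)w+1)$ in $C_{n,w}$; it is also the form to which Lemma~\ref{le:wagner-lemma} applies to give $\rho=(1+\tfrac{1}{\abs{\normtau}w^3})^{-1}$. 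With your denominator the dominant pole would not lie at $1/\abs{\normtau}$ (nor within $O(\abs{\normtau}^{-w})$ of it), the coefficient growth would not be $\abs{\normtau}^{n}$, and none of the stated formulas would emerge. The fix is purely the digit count; once corrected, the rest of your plan goes through as you describe.
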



For the proof we need estimates for the zeros of a polynomial which will be
needed for estimating the non-dominant roots of our generating function.


\begin{lemma}\label{le:wagner-lemma}
  Let $t\ge 2$ and 
  \begin{equation*}
    f(z)=1-\frac1t z-\left(1-\frac1t\right)z^w.
  \end{equation*}
  Then $f(z)$ has exactly one root with $|z|\le 1+\frac1{ t w^3}$, namely $z=1$.
\end{lemma}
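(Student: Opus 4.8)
The plan is to establish the claim in two parts: first that $z=1$ is a root, and second that it is the only one in the closed disc of radius $r := 1+\frac{1}{tw^3}$, using Rouché's theorem on a slightly larger circle.

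First I would check directly that $f(1) = 1 - \frac1t - (1-\frac1t) = 0$, so $z=1$ is indeed a root. To locate the others, I would rewrite $f(z) = 0$ as the fixed-point equation $z^w = \frac{1 - z/t}{1 - 1/t} = 1 + \frac{1 - z}{t-1}$, or equivalently factor $f$ as $(1-z)$ times something. In fact a cleaner route: multiply by the conjugate and observe $f(z) = (1-z)\,g(z)$ where $g(z) = \frac{f(z)}{1-z}$; expanding, $g(z) = \bigl(1-\tfrac1t\bigr)\bigl(1 + z + \cdots + z^{w-1}\bigr) - \bigl(1 - \tfrac1t - \tfrac1t\bigr)\cdots$ — more carefully, writing $1 - z^w = (1-z)(1+z+\cdots+z^{w-1})$ and $-\frac1t z + \frac1t z^w$ absorbed accordingly, one gets
\begin{equation*}
  f(z) = (1-z)\left[\bigl(1-\tfrac1t\bigr)\bigl(1+z+\cdots+z^{w-1}\bigr) - \tfrac1t\right].
\end{equation*}
So it suffices to show the polynomial $h(z) := (1-\tfrac1t)(1 + z + \cdots + z^{w-1}) - \tfrac1t$ of degree $w-1$ has no root with $|z| \le r$.

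For that I would argue by a direct lower bound on $|h(z)|$ when $|z| \le r$. We have $h(z) = (1-\tfrac1t)\sum_{k=0}^{w-1} z^k - \tfrac1t$; the dominant behaviour is that $\sum_{k=0}^{w-1} z^k \approx w$ near $z=1$, so $h$ is close to $(1-\tfrac1t)w - \tfrac1t$, which is bounded below by roughly $w - 1 \ge 1$ for $t \ge 2$, $w \ge 2$. The task is to control the deviation: for $|z| \le r$, $\bigl|\sum_{k=0}^{w-1} z^k\bigr| \ge w - \sum_{k=0}^{w-1}(r^k - 1) \cdot(\text{something})$ — better, since $|z-1|$ can be as large as $r+1 \approx 2$, I instead bound $\bigl|\sum_{k=0}^{w-1} z^k - w\bigr| = \bigl|\sum_{k=1}^{w-1}(z^k - 1)\bigr| \le \sum_{k=1}^{w-1}|z^k-1|$, and estimate $|z^k - 1| \le k(r-1)r^{k-1}$ along the segment, hence $\sum_{k=1}^{w-1}|z^k-1| \le (r-1)\sum_{k=1}^{w-1} k\, r^{k-1} \le (r-1)\binom{w}{2} r^{w-2}$. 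With $r - 1 = \frac{1}{tw^3}$ and $r^{w-2} \le e^{(w-2)/(tw^3)} < e^{1/(tw^2)} \le e^{1/8} < 2$, this sum is at most $\frac{1}{tw^3}\cdot\frac{w^2}{2}\cdot 2 = \frac{1}{tw}$. Therefore $\bigl|\sum z^k\bigr| \ge w - \frac{1}{tw}$ and
\begin{equation*}
  |h(z)| \ge \bigl(1 - \tfrac1t\bigr)\bigl(w - \tfrac{1}{tw}\bigr) - \tfrac1t > 0
\end{equation*}
whenever $t \ge 2$ and $w \ge 2$ (the right side is at least $\frac12(w - \frac14) - \frac12 \ge \frac12(2 - \frac14) - \frac12 > 0$). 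This shows $h$ has no zeros in $|z|\le r$, so $z=1$ is the only zero of $f$ there.

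The main obstacle is getting the constant right: one must be careful that the crude bound $r^{w-2} < e^{1/(tw^2)}$ and the sum $\sum_{k=1}^{w-1} k r^{k-1} \le \binom{w}{2} r^{w-2}$ are tight enough that the final inequality $(1-\tfrac1t)(w - \tfrac1{tw}) > \tfrac1t$ survives for all $t \ge 2$, $w\ge 2$ simultaneously; the worst case is $t=2$, $w=2$, and one should verify that case (and perhaps $w=2$ generally) by hand since the asymptotic slack is smallest there. An alternative, possibly cleaner, finish is Rouché on $|z| = r$ comparing $f$ with the monomial term, but the factorisation-plus-direct-estimate route above avoids having to treat $1-z/t$ and $z^w$ as competing dominant terms and keeps the bookkeeping linear in $w$.
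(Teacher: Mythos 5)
There is a genuine gap here — in fact two, and the second one is fatal to the whole strategy. First, the factorisation is miscomputed. Writing $f(z)=(1-z^w)-\tfrac1t z(1-z^{w-1})$ and dividing out $(1-z)$ gives the cofactor
\begin{equation*}
  h(z)=1+\Bigl(1-\tfrac1t\Bigr)\bigl(z+z^2+\cdots+z^{w-1}\bigr),
\end{equation*}
not $(1-\tfrac1t)(1+z+\cdots+z^{w-1})-\tfrac1t$; your expression already fails at $z=0$, where it gives $1-\tfrac2t$ instead of $f(0)=1$. Second, and more seriously, the lower bound on the cofactor rests on the claim that $\bigl|\sum_{k=0}^{w-1}z^k-w\bigr|\le(r-1)\binom{w}{2}r^{w-2}$ for \emph{all} $|z|\le r$. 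The estimate $|z^k-1|\le k(r-1)r^{k-1}$ only holds when $|z-1|\le r-1$, i.e.\ for $z$ within distance $r-1$ of the point $1$; you acknowledge that $|z-1|$ can be as large as $r+1\approx 2$ and then use $r-1$ anyway. At $z=-1$ (which lies in the disc) the sum $\sum_{k=0}^{w-1}z^k$ equals $0$ or $1$, nowhere near $w$, so the claimed bound $\bigl|\sum z^k\bigr|\ge w-\tfrac1{tw}$ is false. A sanity check shows the argument cannot be repaired by bookkeeping: the product of the moduli of the $w-1$ roots of $h$ is $\tfrac{t}{t-1}\le 2$, so their geometric mean is only $1+\Theta(\tfrac1{tw})$; a uniform lower bound $|h|\gtrsim w-1$ on any disc of radius bounded away from the root moduli is impossible, and the genuinely delicate content of the lemma --- that none of these barely-outside-the-unit-circle roots dips below $1+\tfrac1{tw^3}$ --- is exactly what your estimate skips over.

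For comparison, the paper argues directly on $f$ rather than on a cofactor: it first shows any root $z\neq 1$ has $|z|\ge 1$, then that its argument $\psi$ satisfies $\psi\ge\pi/w$ (via the sign of $\im f(re^{i\psi})$ for $0<\psi<\pi/w$), and finally derives a contradiction from the modulus equation $|1-\tfrac1t re^{i\psi}|^2=(1-\tfrac1t)^2r^{2w}$ combined with $\cos\psi\le\cos(\pi/w)$, for $w\ge4$; the cases $w=2,3$ are settled by computing the non-trivial roots explicitly (which is essentially your factorisation specialised to small $w$). If you want to keep the cofactor route, you would need an estimate on $h$ that exploits the location of $z$ on the circle (e.g.\ its argument), not a perturbation around $z=1$.
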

\begin{proof}
  It is easily checked that $f(1)=0$.
  Assume that $z\neq 1$ is another root of $f$. As the coefficients of $f$ are
  reals, it is sufficient to consider $z$ with $\im{z}\ge 0$. If $|z|<1$, then 
  \begin{equation*}
    1=\left|\frac1t z+\left(1-\frac1t\right)z^w\right|\le\frac1t |z|
    +\left(1-\frac1t\right)|z|^w<\frac1t+\left(1-\frac1t\right)=1,
  \end{equation*}
  which is a contradiction. Therefore, we have $|z|\ge 1$.  We write $z=r
  e^{i\psi}$ for appropriate $r\ge 1$ and $0\le \psi\le \pi$. For $r>0$, $f(r)$
  is strictly decreasing, so we can assume that $\psi>0$.

  For $\psi<\pi/w$, we have  $\sin(w\psi)> 0$ and
  $\sin(\psi)>0$, which implies that $\im{f(re^{i\psi})}=-\frac{1}t
  \sin\psi-\left(1-\frac1t\right)\sin(\psi w)<0$, a contradiction.
  We conclude that $\psi\ge \pi/w$.

  Next, we see that $f(r e^{i\psi})=0$ implies that
  \begin{equation*}
    1-\frac{2r}{t}\cos\psi+\frac{r^2}{t^2}
    =\left| 1-\frac1t re^{i\psi}\right|^2=\left(1-\frac1t\right)^2 r^{2w}.
  \end{equation*}
  We have $\cos\psi\le \cos(\pi/w)$, which implies that 
  \begin{equation}\label{eq:wagner-inequality}
    \left(1-\frac1t\right)^2 r^{2w} \ge 1-\frac{2r}{t}\cos\frac{\pi}w
    +\frac{r^2}{t^2}
    =\left(1-\frac{r}t\right)^2+\left(1-\cos\frac{\pi}w\right)\frac{2r}t.
  \end{equation}
  For $w\ge 4$ and $r<\sqrt{2}$, the right hand side of
  \eqref{eq:wagner-inequality} is decreasing and the left hand side is
  increasing. Thus, for $r\le 1+1/(t w^3)$, \eqref{eq:wagner-inequality} yields
  \begin{equation*}
    \left(1-\frac1t\right)^2 \left(1+\frac1{t w^3}\right)^{2w}
    \ge\left(1-\frac1t\cdot\left(1+\frac1{t
          w^3}\right)\right)^2+\left(1-\cos\frac{\pi}w\right)
    \frac{2}t\left(1+\frac1{t w^3}\right).
  \end{equation*}
  Using the estimates $(1+\frac1{tw^3})^{2w}\le 1+\frac2{ t w^2}+\frac{2}{t^2
    w^4}$ and $\cos\left(\frac{\pi}{w}\right)\le
  1-\frac{\pi^2}{2w^2}+\frac{\pi^4}{24w^4}$, we obtain
  \begin{multline*}
    \left(\frac{2-\pi^2}{t}-\frac{4}{t^2}+\frac{2}{t^3}\right)\frac{1}{w^2}
    +\left(\frac{2}{t^2}-\frac{2}{t^3}\right)\frac{1}{w^3}\\
    +\left(\frac{\pi^4}{12t}+\frac{2}{t^2}-\frac{4}{t^3}
      +\frac{2}{t^4}\right)\frac{1}{w^4}
    -\frac{\pi^2}{t^2w^5}
    -\frac{1}{t^4w^6}
    +\frac{\pi^4}{12t^2w^7}\ge 0,  
  \end{multline*}
  which is a contradiction for $w\ge 4$ and $t\ge 2$.

  For $w=3$, we easily check that $|z|=\sqrt{\frac{t}{t-1}}\ge 1+1/(27t)$;
  similarly, for $w=2$, we have $|z|=t/(t-1)>1+1/(4t)$.
\end{proof}


\begin{proof}[Proof of Theorem~\ref{th:w-naf-distribution}]
  For simplicity we set $\cD^\bullet := \cD \setminus \set{0}$. A \wNAF{} can be
  described by the regular expression
  \begin{equation*}
    \left(\eps+\sum_{d\in\cD^\bullet} 
      \sum_{k=0}^{w-2}0^k d\right)\left(0+\sum_{d\in\cD^\bullet} 
      0^{w-1}d\right)^\bfast
  \end{equation*}

  Let $a_{mn}$ be the number of \wNAF s of length $n$ containing exactly $m$
  occurrences of the digit $\eta$. We consider the generating function
  $G(Y,Z)=\sum_{m,n} a_{mn}Y^m Z^n$. From the regular expression we see that
  \begin{equation*}
    G(Y,Z)=\frac{1+(Y+(\#\cD^\bullet -1))
      \frac{Z^{w}-Z}{Z-1}}{1-Z-Y Z^w-(\#\cD^\bullet-1) Z^w}.
  \end{equation*}

  We start with determining the number of \wNAF s of length $n$. This amounts
  to extracting the coefficient of $Z^n$ of
  \begin{equation*}
    G(1,Z)=\frac{1+(\#\cD^\bullet -1)Z-\#\cD^\bullet\cdot Z^w}{(1-Z)
      (1-Z-\#\cD^\bullet\cdot Z^w)}.
  \end{equation*}
  This requires finding the dominant root of the denominator. Setting
  $z=\abs{\normtau} Z$ in the second factor yields
  \begin{equation*}
    1-\frac{1}{\abs{\normtau}}z-\left(1-\frac{1}{\abs{\normtau}}\right)z^{w}.
  \end{equation*}
  From Lemma~\vref{le:wagner-lemma}, we see that the dominant root of the
  denominator of $G(1,Z)$ is $Z=1/\abs{\normtau}$, and that all other roots
  satisfy $|Z|\ge 1/\abs{\normtau}+1/(\abs{\normtau}^2w^3)$.  Extracting the
  coefficient of $Z^n$ of $G(1,Z)$ then yields the number $C_{n,w}$ of $w$-NAFs
  of length $n$ as
  \begin{equation}\label{eq:naf-moment-0}
    \frac{1}{(\abs{\normtau}-1)w+1}\abs{\normtau}^{n+w}
    +\Oh{(\rho\abs{\normtau})^n},
  \end{equation}
  where $\rho=(1+\frac{1}{\abs{\normtau} w^3})^{-1}$.

  The number of occurrences of the digit $\mu$ amongst all $w$-NAFs of length
  $n$ is
  \begin{multline*}
    \coefficient{Z^n}\left.\frac{\partial G(Y,Z)}{\partial Y}\right|_{Y=1}=
    \frac{Z}{\left(1-Z -\left(1-\frac{1}{\abs{\normtau}}\right)
        (\abs{\normtau} Z)^w\right)^2}\\
    =
    \frac{ 1}{((\abs{\normtau} -1) w+1)^2}n 
    \abs{\normtau}^{n+1}+\frac{(\abs{\normtau} -1) (w-1) w
    }{((\abs{\normtau}-1)  w+1)^3}\abs{\normtau}^{n+1}
    +\Oh{(\rho\abs{\normtau})^n}.
  \end{multline*}
  Dividing this by \eqref{eq:naf-moment-0} yields \eqref{eq:naf-expectation}.

  In order to compute the second moment, we compute
  \begin{multline*}
    \coefficient{Z^n}\left.\frac{\partial^2 G(Y,Z)}{\partial Y^2}\right|_{Y=1}=
    \frac{2 Z^{w+1}}{\left(1- Z-\left(1-\frac1{\abs{\normtau}}\right)
        (\abs{\normtau} Z)^w\right)^3} \\
    =\frac{1}{((\abs{\normtau}-1) w+1)^3}n^2\abs{\normtau}^{n-w+2}
    +\frac{ \left((\abs{\normtau}-1)w^2 -2w\abs{\normtau} +1\right)
    }{((\abs{\normtau}-1)
      w+1)^4} n\abs{\normtau}^{n-w+2}\\
    -\frac{(w-1) w \left(w \abs{\normtau}^2-2
        \abs{\normtau}-w+1\right)
    }{((\abs{\normtau}-1)
      w+1)^5}\abs{\normtau}^{n-w+2}
    +\Oh{(\rho\abs{\normtau})^n},
  \end{multline*}
  which after division by \eqref{eq:naf-moment-0} yields
  \begin{multline*}
    \expect{X_{n,w,\eta}(X_{n,w,\eta}-1)}=
    \frac{1}{\abs{\normtau}^{2w-2}((\abs{\normtau}-1) w+1)^2}n^2 \\
    +\frac{ \left((\abs{\normtau}-1)w^2 -2w\abs{\normtau} +1\right) 
    }{\abs{\normtau}^{2w-2}((\abs{\normtau}-1) w+1)^3} n
    -\frac{(w-1) w \left(w \abs{\normtau}^2-2
        \abs{\normtau}-w+1\right)
    }{\abs{\normtau}^{n-w+2}((\abs{\normtau}-1)  w+1)^4} \\
    +\Oh{n^2\rho^n}.
  \end{multline*}
  Adding $\expect{X_{n,w,\eta}}-\expect{X_{n,w,\eta}}^2$ yields the variance
  given in \eqref{eq:naf-variance}.

  The asymptotic normality follows from Hwang's
  Quasi-Power-Theorem~\cite{Hwang:1998}.
\end{proof}


\section{Bounds for the Value of Non-Adjacent Forms}
\label{sec:bounds-value}


Let $\tau\in\C$ be an algebraic integer, imaginary quadratic with minimal
polynomial $X^2 - p X + q$ with $p$, $q\in\Z$ such that $4q-p^2>0$. Suppose that
$\abs{\tau}>1$. Let $w\in\N$ with $w\geq2$. Further let $\cD$ be a minimal norm
representatives digit set modulo $\tau^w$ as in
Definition~\vref{def:min-norm-digit-set}.

In this section the \emph{fractional value} of a \wNAF{} means the value of a
\wNAF{} of the form $0\bfldot\bfeta$. The term \emph{most significant digit} is
used for the digit $\eta_{-1}$.


So let us have a closer look at the fractional value of a \wNAF{}. We want to
find upper bounds and if we fix a digit, e.g.\ the most significant one, a lower
bound. We need two different approaches to prove those results. The first one is
analytic. The results there are valid for all combinations of $\tau$ and $w$
except finitely many. These exceptional cases will be called ``problematic
values''. To handle those, we will use an other idea. We will show an
equivalence, which directly leads to a simple procedure to check, whether a
condition is fulfilled. If this is the case, the procedure terminates and
returns the result. This idea is similar to a proof in
Matula~\cite{Matula:1982:basic}.

The following proposition deals with three upper bounds, one for
the absolute value and two give us regions containing the fractional value.


\begin{proposition}[Upper Bounds for the Fractional Value]
  \label{pro:upper-bound-fracnafs}

  Let $\bfeta\in\wNAFsetinf$, and let   
  \begin{equation*}
    f_U = \frac{\abs{\tau}^w c_V}{1 - \abs{\tau}^{-w}}.
  \end{equation*}
  Then the following statements are true:
 
  \begin{enumerate}[(a)]

  \item \label{enu:upper-bound:leq-f} We get
    \begin{equation*}
      \abs{\NAFvalue{\bfeta}} \leq f_U.
    \end{equation*}
  
  \item \label{enu:upper-bound:in-balls} Further we have
    \begin{equation*}
      \NAFvalue{\bfeta} \in 
      \bigcup_{z \in \tau^{w-1} V} \ball*{z}{\abs{\tau}^{-w} f_U}.
    \end{equation*}

  \item \label{enu:upper-bound:equiv} The following two statements are
    equivalent:
    \begin{enumequivalences}

    \item \label{enu:proc-ub:1} There is an $\ell\in\N_0$, such that for
      all $\bfxi\in\wNAFsetellell{0}{\ell}$ the condition
      \begin{equation*}
        \ball*{\NAFvalue{\bfxi}}{\abs{\tau}^{-\ell} f_U}
        \subseteq \tau^{2w-1} \interior{V}
      \end{equation*}
      is fulfilled. 

    \item \label{enu:proc-ub:2} There exists an $\eps>0$, such that for all
      $\bfvartheta \in \wNAFsetinf$ the condition
      \begin{equation*}
        \ball{\NAFvalue{\bfvartheta}}{\eps} \subseteq \tau^{2w-1} V
      \end{equation*}
      holds.

    \end{enumequivalences}

  \item \label{enu:upper-bound:in-v} We get
    \begin{equation*}
      \NAFvalue{\bfeta} \in \tau^{2w-1} V.
    \end{equation*}

  \item \label{enu:upper-bound:approx-in-v} For $\ell\in\N_0$ we have
    \begin{equation*}
      \NAFvalue{0\bfldot\eta_{-1}\ldots\eta_{-\ell}} + \tau^{-\ell}V 
      \subseteq \tau^{2w-1} V.
    \end{equation*}

  \end{enumerate}
\end{proposition}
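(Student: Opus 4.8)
The plan is to prove the five parts of Proposition~\ref{pro:upper-bound-fracnafs} roughly in order, using the width-$w$ non-adjacency structure to turn the sum $\NAFvalue{\bfeta} = \sum_{j\le -1}\eta_j\tau^j$ into a sparse geometric series. I want to first establish a clean bookkeeping lemma in my head: because $\bfeta\in\wNAFsetinf$ is a $w$-NAF, in every window of $w$ consecutive negative indices there is at most one non-zero digit, so the non-zero positions $-k_1 > -k_2 > \cdots$ satisfy $k_{i+1}\ge k_i + w$ (and $k_1\ge 1$). Each non-zero digit has $\abs{\eta_{-k_i}}\le\abs\tau^{w+1}c_V$ by Remark~\ref{rem:digitsets}. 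Hence $\abs{\NAFvalue{\bfeta}} \le \abs\tau^{w+1}c_V\sum_{i\ge1}\abs\tau^{-k_i} \le \abs\tau^{w+1}c_V\sum_{i\ge0}\abs\tau^{-1-iw} = \abs\tau^{w+1}c_V\cdot\frac{\abs\tau^{-1}}{1-\abs\tau^{-w}} = \frac{\abs\tau^w c_V}{1-\abs\tau^{-w}} = f_U$, which is part~\itemref{enu:upper-bound:leq-f}. Here I'd need to double-check the geometric series bound against the worst case where the first non-zero digit sits as far left as allowed (at index $-1$); that gives the stated $f_U$.

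For part~\itemref{enu:upper-bound:in-balls} I would split $\NAFvalue{\bfeta}$ at the first possibly-non-zero block: write $\NAFvalue{\bfeta} = \NAFvalue{0\bfldot\eta_{-1}\ldots\eta_{-(2w-1)}} + \tau^{-(2w-1)}\NAFvalue{\bfeta'}$ where $\bfeta'$ is the shifted tail — but actually the cleaner decomposition for this part is to peel off only the leading $w-1$ digits: since $\eta_{-1}\ldots\eta_{-w}$ contains at most one non-zero digit, one checks $\NAFvalue{0\bfldot\eta_{-1}\ldots\eta_{-w}}\in\tau^{-1}(\text{digit})\cdot(\text{stuff})$; more robustly, I'd argue $\NAFvalue{\bfeta} = z_0 + r$ where $z_0 = \NAFvalue{0\bfldot\eta_{-1}\ldots\eta_{-(w-1)}}$ lies in $\tau^{w-1}V$ and the remainder $r = \tau^{-(w-1)}\NAFvalue{0\bfldot\eta_{-w}\ldots}$ — no, let me instead directly claim: the leading digit $\eta_{-1}\in\cD\subseteq\tau^w\wt V\subseteq\tau^w V$, so $\tau^{-1}\eta_{-1}\in\tau^{w-1}V$, and using the $w$-NAF condition the full head contributes an element of $\tau^{w-1}V$ while the tail from index $\le -w$ has absolute value $\le\abs\tau^{-(w-1)}f_U\cdot$ — hmm, by part~(a) applied to the tail shifted, $\abs{\text{tail}}\le\abs\tau^{-w}f_U$. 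So $\NAFvalue{\bfeta}\in(\tau^{w-1}V) + \ball*{0}{\abs\tau^{-w}f_U} = \bigcup_{z\in\tau^{w-1}V}\ball*{z}{\abs\tau^{-w}f_U}$. The delicate point is to confirm the head really lands in $\tau^{w-1}V$ using convexity of $V$ (Proposition~\ref{pro:voronoi-prop}(b)) and $\tau^{-1}V\subseteq V$ (part~(e)): if $\eta_{-1}\ne0$ the head is $\tau^{-1}\eta_{-1}$ (all other head digits zero by the NAF condition on the first block), and if $\eta_{-1}=0$ the head is $\tau^{-1}$ times a value handled inductively.

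For part~\itemref{enu:upper-bound:equiv}, the implication \itemref{enu:proc-ub:1}$\Rightarrow$\itemref{enu:proc-ub:2} is immediate: given such an $\ell$, any $\bfvartheta\in\wNAFsetinf$ has a prefix $\bfxi=0\bfldot\vartheta_{-1}\ldots\vartheta_{-\ell}\in\wNAFsetellell{0}{\ell}$, and $\abs{\NAFvalue{\bfvartheta}-\NAFvalue{\bfxi}}\le\abs\tau^{-\ell}f_U$ by part~(a) on the tail, so $\NAFvalue{\bfvartheta}\in\ball*{\NAFvalue{\bfxi}}{\abs\tau^{-\ell}f_U}\subseteq\tau^{2w-1}\interior V$, and one picks $\eps$ uniformly by compactness. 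The converse \itemref{enu:proc-ub:2}$\Rightarrow$\itemref{enu:proc-ub:1} is the substantive direction: given $\eps>0$, choose $\ell$ with $\abs\tau^{-\ell}f_U < \eps/2$; then for $\bfxi\in\wNAFsetellell{0}{\ell}$, extend it by zeros to $\bfvartheta\in\wNAFsetinf$ (legal: appending zeros preserves the $w$-NAF property), so $\NAFvalue{\bfxi}=\NAFvalue{\bfvartheta}$ and $\ball*{\NAFvalue{\bfxi}}{\abs\tau^{-\ell}f_U}\subseteq\ball{\NAFvalue{\bfvartheta}}{\eps}\subseteq\tau^{2w-1}V$ — but I need $\interior V$, so I'd take $\ell$ slightly larger so that the closed ball of radius $\abs\tau^{-\ell}f_U$ around $\NAFvalue{\bfxi}$ still sits inside the open ball of radius $\eps$, landing in $\tau^{2w-1}V$; to upgrade to $\interior V$ I note any point with an open-ball neighbourhood inside $\tau^{2w-1}V$ is interior. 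I expect this $V$ versus $\interior V$ / closed versus open ball juggling to be the main fiddly obstacle, together with verifying one can always legitimately extend finite $w$-NAFs by zeros on the right.

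Finally, parts~\itemref{enu:upper-bound:in-v} and~\itemref{enu:upper-bound:approx-in-v}: I would prove~\itemref{enu:upper-bound:approx-in-v} first since it implies~\itemref{enu:upper-bound:in-v} by letting $\ell\to\infty$ (the sets $\NAFvalue{0\bfldot\eta_{-1}\ldots\eta_{-\ell}}+\tau^{-\ell}V$ shrink, as $\tau^{-\ell}V$ has diameter $O(\abs\tau^{-\ell})$, and $\NAFvalue{0\bfldot\eta_{-1}\ldots\eta_{-\ell}}\to\NAFvalue{\bfeta}$ by Lipschitz continuity of $\NAFvaluename$, Proposition~\ref{pro:value-continuous}, so $\NAFvalue{\bfeta}\in\tau^{2w-1}V$ as $\tau^{2w-1}V$ is closed). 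For~\itemref{enu:upper-bound:approx-in-v} I'd induct on $\ell$: for $\ell=0$ the claim is $\tau^{-0}V = V\subseteq\tau^{2w-1}V$, which follows from $\tau^{-1}V\subseteq V$ iterated. For the step, decompose $0\bfldot\eta_{-1}\ldots\eta_{-\ell}$ according to whether the first block $\eta_{-1}\ldots\eta_{-w}$ is all zero or has one non-zero digit $\eta_{-k}$ ($1\le k\le w$): in the all-zero case, $\NAFvalue{0\bfldot 0\eta_{-2}\ldots} = \tau^{-1}\NAFvalue{0\bfldot\eta_{-2}\ldots\eta_{-\ell}}$ and the inductive hypothesis plus $\tau^{-1}V\subseteq V$ closes it; in the non-zero case, write the value as $\tau^{-k}\eta_{-k} + \tau^{-k}\NAFvalue{\text{tail}}$ where the tail starts at relative index $\le -w$ (by the NAF condition), so tail value $+\,\tau^{-(\ell-k)}V\subseteq\tau^{2w-1}V$ by induction on a shorter word, multiply through by $\tau^{-k}$ and use $\tau^{-k}\eta_{-k}\in\tau^{w-k}\tau^w V = \tau^{2w-k}V$ together with convexity to absorb everything into $\tau^{2w-1}V$ after checking the exponents work out. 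The bookkeeping on which scaled Voronoi cell contains which piece — matching $\tau^{w+1}c_V$ digit bounds against $\tau^{2w-1}V$ via the bounds $\ball*{0}{\frac12}\subseteq V\subseteq\ball*{0}{\abs\tau c_V}$ from Proposition~\ref{pro:voronoi-prop}\itemref{enu:voronoi-bounds} — is where I'd be most careful, and I suspect the honest write-up ultimately routes everything through part~\itemref{enu:upper-bound:equiv} with the finitely many ``problematic values'' checked by the algorithmic procedure promised in the section introduction.
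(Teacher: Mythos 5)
Parts (a), (b) and (c) of your proposal are correct and essentially identical to the paper's proof: the worst-case sparse geometric series for (a); peeling off the leading digit $\eta_{-1}\tau^{-1}\in\tau^{w-1}V$ plus a tail of modulus at most $\abs{\tau}^{-w}f_U$ for (b), with the reduction to $\eta_{-1}\neq0$ via $\tau^{-1}V\subseteq V$; and the finite-prefix/compactness argument for (c), including the correct observation that a closed ball sitting inside an open ball inside $\tau^{2w-1}V$ lands in $\tau^{2w-1}\interior{V}$.

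The gap is in your induction for (e). In the step where the leading non-zero digit is $\eta_{-k}$, you obtain the digit contribution $\eta_{-k}\tau^{-k}\in\tau^{w-k}V$ (note $\tau^{-k}\cdot\tau^{w}V=\tau^{w-k}V$, not $\tau^{2w-k}V$ as written) and, from the inductive hypothesis applied to the tail, a set contained in $\tau^{2w-1-k}V$; you then need the Minkowski sum $\tau^{w-k}V+\tau^{2w-1-k}V\subseteq\tau^{2w-1}V$. Convexity does not give this: for subsets $A,B\subseteq V$ it only yields $A+B\subseteq 2V$. Quantitatively, for $k=1$ the required inclusion reduces to $\tau^{-w}V+\tau^{-1}V\subseteq V$, and since $V\subseteq\ball*{0}{\abs{\tau}c_V}$ while only $\ball*{0}{\frac12}\subseteq V$, you would need $c_V\left(1+\abs{\tau}^{1-w}\right)\leq\frac12$, which fails for every $\tau$ and $w$ because $c_V=\sqrt{7/12}>\frac12$. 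So the inductive step never closes. The paper instead proves (e) by a single global estimate $\abs{\NAFvalue{0\bfldot\eta_{-1}\ldots\eta_{-\ell}}+\tau^{-\ell}v}\leq f_U$ for all $v\in V$ (a refinement of the computation in (a)), combined with $\ball*{0}{\frac12\abs{\tau}^{2w-1}}\subseteq\tau^{2w-1}V$; this works exactly when $\nu=\frac12-\frac{\abs{\tau}c_V}{\abs{\tau}^w-1}\geq0$, and the finitely many remaining pairs ($w=2$ with $\abs{\tau}\in\{\sqrt2,\sqrt3,2\}$, and $w=3$ with $\abs{\tau}=\sqrt2$) are settled by the tabulated computational checks — which for (e) involve explicit vertex checks for small $\ell$, not merely the criterion of (c). Your closing hedge that the problematic values must be verified algorithmically is the right instinct, but it is not an optional fallback: the finite verification is unavoidable for those cases, and the induction covers no case at all. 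Your derivation of (d) from (e) by letting $\ell\to\infty$ is fine once (e) is established; the paper proves (d) directly from (a) plus the inradius bound, with the same split into non-problematic and problematic cases.
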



\begin{proof}
  \begin{enumerate}[(a)]

  \item We have
    \begin{equation*}
      \abs{\NAFvalue{\bfeta}}
      = \abs{\sum_{j=1}^{\infty} \eta_{-j} \tau^{-j}}
      \leq \sum_{j=1}^{\infty} \abs{\eta_{-j}} \abs{\tau}^{-j}.
    \end{equation*}
    We consider \wNAF{}s, which have $\eta_{-j} \neq 0$ for $-j \equiv 1
    \pmod{w}$. For all other \wNAF{}s the upper bound is smaller. To see this,
    assume that there are more than $w-1$ adjacent zeros in a \wNAF{} or the
    first digits are zero. Then we could build a larger upper bound by shifting
    digits to the left, i.e., multiplying parts of the sum by $\abs{\tau}$,
    since $\abs{\tau} > 1$.

    We get
    \begin{align*}
      \abs{\NAFvalue{\bfeta}}
      &\leq \sum_{j=1}^{\infty} \abs{\eta_{-j}} \abs{\tau}^{-j}
      = \sum_{j=1}^{\infty} \tiverson{-j \equiv 1 \pmod{w}}
      \abs{\eta_{-j}} \abs{\tau}^{-j} \\
      &\leq \abs{\tau}^{-1} \sum_{k=0}^{\infty} 
      \abs{\eta_{-(wk+1)}} \abs{\tau}^{-wk},
    \end{align*}
    in which we changed the summation index according to $wk+1=j$ and the
    Iversonian notation $\iverson{\var{expr}}=1$ if $\var{expr}$ is true and
    $\iverson{\var{expr}}=0$ otherwise, cf.\ Graham, Knuth and
    Patashnik~\cite{Graham-Knuth-Patashnik:1994}, has been used. Using
    $\abs{\eta_{-(wk+1)}} \leq \abs{\tau}^{w+1} c_V$, see
    Remark~\vref{rem:digitsets}, yields
    \begin{equation*}
      \abs{\NAFvalue{\bfeta}}
      \leq \abs{\tau}^{-1} \abs{\tau}^{w+1} c_V \frac{1}{1-\abs{\tau}^{-w}}
      = \underbrace{\frac{\abs{\tau}^w c_V}{1 - \abs{\tau}^{-w}}}_{=: f_U}.
    \end{equation*}

  \item There is nothing to show if the \wNAF{} $\bfeta$ is zero, and it is
    sufficient to prove it for $\eta_{-1} \neq 0$. Otherwise, let $k\in\N$ be
    minimal, such that $\eta_{-k} \neq 0$. Then
    \begin{equation*}
      \tau^{-(k-1)} \tau^{k-1} \NAFvalue{\bfeta} \in 
      \tau^{-(k-1)} \bigcup_{z \in \tau^{w-1} V} \ball*{z}{\abs{\tau}^{-w} f_U}
      \subseteq \bigcup_{z \in \tau^{w-1} V} \ball*{z}{\abs{\tau}^{-w} f_U},
    \end{equation*}
    since $\abs{\tau} > 1$ and $\tau^{-1} V \subseteq V$, see
    Proposition~\vref{pro:voronoi-prop}.

    Since $\eta_{-1} \in \tau^w V$, see Remark~\vref{rem:digitsets}, we obtain
    $\eta_{-1}\tau^{-1} \in \tau^{w-1} V$. Thus, using
    \itemref{enu:upper-bound:leq-f}, yields
    \begin{equation*}
      \abs{\tau^w \left(\NAFvalue{\bfeta} - \eta_{-1}\tau^{-1}\right)}
      \leq f_U,
    \end{equation*}
    i.e., 
    \begin{equation*}
      \NAFvalue{\bfeta} \in \ball*{\eta_{-1}\tau^{-1}}{\abs{\tau}^{-w} f_U},
    \end{equation*}
    which proves the statement.

  \item
    \begin{descproofequivalences}

    \item[\labelproofequivalent{\ref{enu:proc-ub:1}}{\ref{enu:proc-ub:2}}]
      Suppose there exists such an $\ell\in\N$. Then there exists an $\eps>0$
      such that
      \begin{equation*}
        \ball*{\NAFvalue{\bfxi}}{\abs{\tau}^{-\ell} f_U + \eps}
        \subseteq \tau^{2w-1} V
      \end{equation*}
      for all $\bfxi\in\wNAFsetellell{0}{\ell}$, since there are only finitely
      many $\bfxi\in\wNAFsetellell{0}{\ell}$. Let
      $\bfvartheta\in\wNAFsetinf$. Then there is a
      $\bfxi\in\wNAFsetellell{0}{\ell}$ such that the digits from index $-1$ to
      $-\ell$ of $\bfxi$ and $\bfvartheta$ coincide. By using
      \itemref{enu:upper-bound:leq-f} we obtain
      \begin{equation*}
        \abs{\NAFvalue{\bfvartheta} - \NAFvalue{\bfxi}} 
        \leq \abs{\tau}^{-\ell} f_U,
      \end{equation*}
      and thus 
      \begin{equation*}
        \ball*{\NAFvalue{\bfvartheta}}{\eps} \subseteq
        \ball*{\NAFvalue{\bfxi}}{\abs{\tau}^{-\ell} f_U + \eps}
        \subseteq \tau^{2w-1} V.
      \end{equation*}

    \item[\labelproofequivalent{\ref{enu:proc-ub:2}}{\ref{enu:proc-ub:1}}]
      Now suppose there is such an $\eps>0$. Since there is an
      $\ell\in\N$ such that $\abs{\tau}^{-\ell} f_U < \eps$, the statement
      follows. 
      
    \end{descproofequivalences}

  \item We know from Proposition~\vref{pro:voronoi-prop} that
    $\ball*{0}{\frac12\abs{\tau}^{2w-1}} \subseteq \tau^{2w-1} V$. Therefore, if
    the upper bound found in \itemref{enu:upper-bound:leq-f} fulfils
    \begin{equation*}
      f_U 
      = \frac{\abs{\tau}^w c_V}{1 - \abs{\tau}^{-w}}
      \leq \frac12\abs{\tau}^{2w-1},
    \end{equation*}
    the statement follows. 
    
    The previous inequality is equivalent to
    \begin{equation*}
      \nu :=  \frac{1}{2} - \frac{\abs{\tau} c_V}{\abs{\tau}^{w}-1} \geq 0.
    \end{equation*}
    The condition is violated for $w=2$ and $\abs{\tau}$ equal to $\sqrt{2}$,
    $\sqrt{3}$ or $\sqrt{4}$, and for $w=3$ and $\abs{\tau}=\sqrt{2}$, see
    Table~\vref{tab:values-of-nu}. Since $\nu$ is monotonic increasing for
    $\abs{\tau}$ and for $w$, there are no other ``problematic cases''.

    For those cases we will use~\itemref{enu:upper-bound:equiv}. For each of the
    ``problematic cases'' an $\ell$ satisfying the
    condition~\itemref{enu:proc-ub:1} of equivalences
    in~\itemref{enu:upper-bound:equiv} was found, see
    Table~\vref{tab:upper-bound-problematic} for the results. Thus the statement
    is proved.

  \item Analogously to the proof
    of~\itemref{enu:upper-bound:leq-f}, except that we use $\ell$ for the upper
    bound of the sum, we obtain for $v\in V$
    \begin{align*}
      \abs{\NAFvalue{0\bfldot\eta_{-1}\ldots\eta_{-\ell}} + \tau^{-\ell}v}
      &\leq \abs{\NAFvalue{0\bfldot\eta_{-1}\ldots\eta_{-\ell}}} 
      + \abs{\tau^{-\ell}}\abs{\tau} c_V \\
      &\leq \frac{\abs{\tau}^w c_V}{1 - \abs{\tau}^{-w}}
      \left( 1 - \abs{\tau}^{-w \floor{\frac{\ell-1+w}{w}}} \right)
      + \abs{\tau}^{-\ell+1} c_V \\
      &\leq \frac{\abs{\tau}^w c_V}{1 - \abs{\tau}^{-w}}
      \left( 1 - \abs{\tau}^{-\ell+1-w} 
        + \abs{\tau}^{-\ell+1-w} \left(1 - \abs{\tau}^{-w}\right) \right) \\
      &= \frac{\abs{\tau}^w c_V}{1 - \abs{\tau}^{-w}}
      \left( 1 - \abs{\tau}^{-\ell+1-2w} \right).
    \end{align*}
    Since $1 - \abs{\tau}^{-\ell+1-2w} < 1$ we get
    \begin{equation*}
      \abs{\NAFvalue{0\bfldot\eta_{-1}\ldots\eta_{-\ell}} + \tau^{-\ell}V}
      \leq \frac{\abs{\tau}^w c_V}{1 - \abs{\tau}^{-w}} = f_U
    \end{equation*}
    for all $\ell\in\N_0$.

    Let $z\in\C$. Have again a look at the proof
    of~\itemref{enu:upper-bound:in-v}. If $\nu>0$ there, we get that $\abs{z}
    \leq f_U$ implies $z \in \tau^{2w-1}V$.

    Combining these two results yields the inclusion for $\nu>0$, i.e., the
    ``problematic cases'' are left. Again, each of these cases has to be
    considered separately.

    For each of the problematic cases, we find a $k\in\N_0$ such that
    \begin{equation*}
      \ball*{\NAFvalue{\bfxi}}{2\abs{\tau}^{-k} f_U}\subseteq \tau^{2w-1} V
    \end{equation*}
    holds for all $\bfxi\in\wNAFsetellell{0}{k}$. These $k$ are listed in
    Table~\vref{tab:upper-bound-approx-problematic}.

    For $\ell>k$ and  $v \in V$, we obtain
    \begin{equation*}
      \begin{split}
        \abs{\NAFvalue{0\bfldot0\ldots0\eta_{-(k+1)}\ldots\eta_{-\ell}} 
          + \tau^{-\ell}v}
        &\leq \abs{\tau}^{-k} f_U + \abs{\tau}^{-\ell} \abs{\tau} c_V \\
        &\leq \abs{\tau}^{-k} f_U \left( 1 + \frac{c_V}{f_U} \right) \\
        &= \abs{\tau}^{-k} f_U \left( 1 
          + \frac{1-\abs{\tau}^{-w}}{\abs{\tau}^w} \right) \\
        &\leq 2 \abs{\tau}^{-k} f_U
      \end{split}
    \end{equation*}
    using \itemref{enu:upper-bound:leq-f}, Proposition~\vref{pro:voronoi-prop},
    and $\abs{\tau}^w > 1$. Thus the  desired inclusion follows for $\ell>k$.
    
    For the finitely many $\ell \leq k$ we additionally check all possibilities,
    i.e., whether for all combinations of $\bfvartheta \in
    \wNAFsetellell{0}{\ell}$ and vertices of the boundary of
    $\NAFvalue{\bfvartheta} + \tau^{-\ell} V$ the corresponding value is inside
    $\tau^{2w-1}V$. Convexity of $V$ is used here. All combinations were valid,
    see last column of Table~\vref{tab:upper-bound-approx-problematic}, thus the
    inclusion proved.  \qedhere
  \end{enumerate}
\end{proof}


\begin{table}
  \centering
  \begin{equation*}
\begin{array}{c|ccc}
& w=2
& w=3
& w=4
\\
\hline
\abs{\tau}=\sqrt{2}
& -0.58012
& -0.09074
& \phantom{+}0.13996
\\
\abs{\tau}=\sqrt{3}
& -0.16144
& \phantom{+}0.18474
& \phantom{+}0.33464
\\
\abs{\tau}=\sqrt{4}
& -0.00918
& \phantom{+}0.28178
& \phantom{+}0.39816
\\
\abs{\tau}=\sqrt{5}
& \phantom{+}0.07304
& \phantom{+}0.33224
& \phantom{+}0.42884
\end{array}
\end{equation*}

  \caption[Values  of $\nu$]{Values (given five decimal places) of 
    $\nu = \frac{1}{2} - \frac{\abs{\tau} c_V}{\abs{\tau}^{w}-1}$ for 
    different $\abs{\tau}$ and 
    $w$. A negative sign means that this value is a ``problematic value''.}
  \label{tab:values-of-nu}
\end{table}


\begin{table}
  \centering
  \begin{equation*}
\begin{array}{cc|cc|c|cccc}
\hline
q=\abs{\tau}^2 & p & \re{\tau} & \im{\tau} & w & 
\text{$\ell$ found?} & \ell & \abs{\tau}^{-\ell}f_U & \eps \\
\hline
2 & -2 & -1 & 1 & 2 & \var{true} & 8 & 0.1909 & 0.03003 \\
2 & -1 & -0.5 & 1.323 & 2 & \var{true} & 4 & 0.7638 & 0.02068 \\
2 & 0 & 0 & 1.414 & 2 & \var{true} & 6 & 0.3819 & 0.1484 \\
2 & 1 & 0.5 & 1.323 & 2 & \var{true} & 4 & 0.7638 & 0.02068 \\
2 & 2 & 1 & 1 & 2 & \var{true} & 7 & 0.27 & 0.08352 \\
\hline
2 & -2 & -1 & 1 & 3 & \var{true} & 2 & 1.671 & 0.4505 \\
2 & -1 & -0.5 & 1.323 & 3 & \var{true} & 2 & 1.671 & 0.4726 \\
2 & 0 & 0 & 1.414 & 3 & \var{true} & 2 & 1.671 & 0.4505 \\
2 & 1 & 0.5 & 1.323 & 3 & \var{true} & 2 & 1.671 & 0.4726 \\
2 & 2 & 1 & 1 & 3 & \var{true} & 2 & 1.671 & 0.4505 \\
\hline
3 & -3 & -1.5 & 0.866 & 2 & \var{true} & 1 & 1.984 & 0.03641 \\
3 & -2 & -1 & 1.414 & 2 & \var{true} & 2 & 1.146 & 0.5543 \\
3 & -1 & -0.5 & 1.658 & 2 & \var{true} & 2 & 1.146 & 0.4581 \\
3 & 0 & 0 & 1.732 & 2 & \var{true} & 1 & 1.984 & 0.03641 \\
3 & 1 & 0.5 & 1.658 & 2 & \var{true} & 2 & 1.146 & 0.4581 \\
3 & 2 & 1 & 1.414 & 2 & \var{true} & 2 & 1.146 & 0.5543 \\
3 & 3 & 1.5 & 0.866 & 2 & \var{true} & 1 & 1.984 & 0.03641 \\
\hline
4 & -3 & -1.5 & 1.323 & 2 & \var{true} & 1 & 2.037 & 0.9164 \\
4 & -2 & -1 & 1.732 & 2 & \var{true} & 1 & 2.037 & 0.4633 \\
4 & -1 & -0.5 & 1.936 & 2 & \var{true} & 1 & 2.037 & 0.3227 \\
4 & 0 & 0 & 2 & 2 & \var{true} & 1 & 2.037 & 0.9633 \\
4 & 1 & 0.5 & 1.936 & 2 & \var{true} & 1 & 2.037 & 0.3227 \\
4 & 2 & 1 & 1.732 & 2 & \var{true} & 1 & 2.037 & 0.4633 \\
4 & 3 & 1.5 & 1.323 & 2 & \var{true} & 1 & 2.037 & 0.9164 \\
\hline
\end{array}
\end{equation*}

  \caption[Upper bound inclusion $\NAFvalue{\bfeta} \in \tau^{2w-1} V$]{Upper 
    bound inclusion $\NAFvalue{\bfeta} \in \tau^{2w-1} V$
    checked for ``problematic values'' of $\abs{\tau}$ and $w$, cf.\ 
    \itemref{enu:upper-bound:in-v} of 
    Proposition~\vref{pro:upper-bound-fracnafs}. 
    The dependence of $p$, $q$ and $\tau$ is given by $\tau^2-p\tau+q=0$. We 
    have $p^2<4q$, since $\tau$ is assumed to be imaginary quadratic.}
  \label{tab:upper-bound-problematic}
\end{table}


\begin{table}
  \centering
  \begin{equation*}
\begin{array}{cc|cc|c|cccc|c}
\hline
q=\abs{\tau}^2 & p & \re{\tau} & \im{\tau} & w & 
\text{$k$ found?} & k & 2\abs{\tau}^{-k}f_U & \eps & \text{valid for $\ell \leq k$?} \\
\hline
2 & -2 & -1 & 1 & 2 & \var{true} & 10 & 0.1909 & 0.03003 & \var{true} \\
2 & -1 & -0.5 & 1.323 & 2 & \var{true} & 7 & 0.5401 & 0.138 & \var{true} \\
2 & 0 & 0 & 1.414 & 2 & \var{true} & 8 & 0.3819 & 0.1484 & \var{true} \\
2 & 1 & 0.5 & 1.323 & 2 & \var{true} & 7 & 0.5401 & 0.138 & \var{true} \\
2 & 2 & 1 & 1 & 2 & \var{true} & 9 & 0.27 & 0.03933 & \var{true} \\
\hline
2 & -2 & -1 & 1 & 3 & \var{true} & 4 & 1.671 & 0.2737 & \var{true} \\
2 & -1 & -0.5 & 1.323 & 3 & \var{true} & 4 & 1.671 & 0.2682 & \var{true} \\
2 & 0 & 0 & 1.414 & 3 & \var{true} & 4 & 1.671 & 0.0969 & \var{true} \\
2 & 1 & 0.5 & 1.323 & 3 & \var{true} & 4 & 1.671 & 0.2682 & \var{true} \\
2 & 2 & 1 & 1 & 3 & \var{true} & 4 & 1.671 & 0.2737 & \var{true} \\
\hline
3 & -3 & -1.5 & 0.866 & 2 & \var{true} & 3 & 1.323 & 0.5054 & \var{true} \\
3 & -2 & -1 & 1.414 & 2 & \var{true} & 3 & 1.323 & 0.04922 & \var{true} \\
3 & -1 & -0.5 & 1.658 & 2 & \var{true} & 4 & 0.7638 & 0.4729 & \var{true} \\
3 & 0 & 0 & 1.732 & 2 & \var{true} & 3 & 1.323 & 0.5054 & \var{true} \\
3 & 1 & 0.5 & 1.658 & 2 & \var{true} & 4 & 0.7638 & 0.4729 & \var{true} \\
3 & 2 & 1 & 1.414 & 2 & \var{true} & 3 & 1.323 & 0.04922 & \var{true} \\
3 & 3 & 1.5 & 0.866 & 2 & \var{true} & 3 & 1.323 & 0.5054 & \var{true} \\
\hline
4 & -3 & -1.5 & 1.323 & 2 & \var{true} & 2 & 2.037 & 0.9164 & \var{true} \\
4 & -2 & -1 & 1.732 & 2 & \var{true} & 2 & 2.037 & 0.4633 & \var{true} \\
4 & -1 & -0.5 & 1.936 & 2 & \var{true} & 2 & 2.037 & 0.3227 & \var{true} \\
4 & 0 & 0 & 2 & 2 & \var{true} & 2 & 2.037 & 0.9633 & \var{true} \\
4 & 1 & 0.5 & 1.936 & 2 & \var{true} & 2 & 2.037 & 0.3227 & \var{true} \\
4 & 2 & 1 & 1.732 & 2 & \var{true} & 2 & 2.037 & 0.4633 & \var{true} \\
4 & 3 & 1.5 & 1.323 & 2 & \var{true} & 2 & 2.037 & 0.9164 & \var{true} \\
\hline
\end{array}
\end{equation*}

  \caption[Upper bound inclusion $\NAFvalue{\eta_1\ldots\eta_\ell} 
  + \tau^{-\ell}V \subseteq \tau^{2w-1} V$]{Upper bound inclusion 
    $\NAFvalue{\eta_1\ldots\eta_\ell} 
    + \tau^{-\ell}V \subseteq \tau^{2w-1} V$
    checked for ``problematic values'' of $\abs{\tau}$ and $w$, cf.\ 
    \itemref{enu:upper-bound:approx-in-v} of 
    Proposition~\vref{pro:upper-bound-fracnafs}. The
    dependence of $p$, $q$ and $\tau$ is given by $\tau^2-p\tau+q=0$. We have
    $p^2<4q$, since $\tau$ is assumed to be imaginary quadratic.}
  \label{tab:upper-bound-approx-problematic}
\end{table}


We remark that the check of the ``problematic cases'' in the proofs
of~\itemref{enu:upper-bound:in-v} and~\itemref{enu:upper-bound:approx-in-v}
depends on the choice of the digit set $\cD$, cf.\
Remark~\vref{rem:choice-digit-set-voronoi-boundary}. The results in
Tables~\ref{tab:upper-bound-problematic}
and~\ref{tab:upper-bound-approx-problematic} are for the choice of $\cD$ that
corresponds to Definition~\vref{def:restr-voronoi}. Similar values can be found
for all other choices with the exception of the case $p=0$ and $q=2$. In this
case, there are
digit set choices such that the equivalent conditions
in~\itemref{enu:upper-bound:equiv} --- which are used to prove the last two
statements of the proposition --- are not fulfilled. But also in those
exceptional cases it is not too hard to show that~\itemref{enu:upper-bound:in-v}
and~\itemref{enu:upper-bound:approx-in-v} of
Proposition~\vref{pro:upper-bound-fracnafs} still hold.


Next we want to find a lower bound for the fractional value of a
\wNAF{}. Clearly the \wNAF{} $0$ has fractional value $0$, so we are interested
in cases, where we have a non-zero digit somewhere.


\begin{proposition}[Lower Bound for the Fractional Value]
  \label{pro:lower-bound-fracnafs}

  The following is true:
  \begin{enumerate}[(a)]

  \item \label{enu:lower-bound:equiv} The following two statements are
    equivalent:
    \begin{enumequivalences}

    \item \label{enu:proc-lb:1} There is an $\ell\in\N_0$, such that for all
      $\bfxi\in\wNAFsetellell{0}{\ell}$ with non-zero most significant digit the
      condition
      \begin{equation*}
        \abs{\NAFvalue{\bfxi}} > \abs{\tau}^{-\ell} f_U
      \end{equation*}
      is fulfilled. 

    \item \label{enu:proc-lb:2} There exists a $\wt\nu>0$, such that for all
      $\bfvartheta \in \wNAFsetinf$ with non-zero most significant digit the
      condition
      \begin{equation*}
        \abs{\NAFvalue{\bfvartheta}} \geq \abs{\tau}^{-1} \wt\nu.
      \end{equation*}
      holds.

    \end{enumequivalences}

  \item \label{enu:lower-bound:geq-nu} Let $\bfeta\in\wNAFsetinf$ with non-zero
    most significant digit. Then
    \begin{equation*}
      \abs{\NAFvalue{\bfeta}} \geq \abs{\tau}^{-1} f_L
    \end{equation*}
    with $f_L=\nu$ if $\nu>0$, where
    \begin{equation*}
      \label{eq:lower-bound-nu:formula}
      \nu = \frac{1}{2} - \frac{\abs{\tau} c_V}{\abs{\tau}^{w}-1}.
    \end{equation*}
    If $\nu \leq 0$, see Table~\vref{tab:values-of-nu}, then we set $f_L =
    \wt\nu$ from Table~\vref{tab:lower-bounds-problematic}.

  \end{enumerate}
\end{proposition}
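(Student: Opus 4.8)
The plan is to prove the two parts in the order they are stated, using part~\itemref{enu:lower-bound:equiv} as the engine for the ``problematic'' cases of part~\itemref{enu:lower-bound:geq-nu}, exactly parallel to the structure of Proposition~\vref{pro:upper-bound-fracnafs}. For the equivalence in~\itemref{enu:lower-bound:equiv}, the direction \labelproofequivalent{\ref{enu:proc-lb:2}}{\ref{enu:proc-lb:1}} is the easy one: given $\wt\nu>0$, pick $\ell\in\N$ large enough that $\abs{\tau}^{-\ell}f_U<\abs{\tau}^{-1}\wt\nu$; then for any $\bfxi\in\wNAFsetellell{0}{\ell}$ with non-zero most significant digit, extend it (by zeros, or by anything) to some $\bfvartheta\in\wNAFsetinf$ with the same top $\ell$ digits and non-zero most significant digit, so that $\abs{\NAFvalue{\bfxi}}\ge\abs{\NAFvalue{\bfvartheta}}-\abs{\NAFvalue{\bfvartheta}-\NAFvalue{\bfxi}}\ge\abs{\tau}^{-1}\wt\nu-\abs{\tau}^{-\ell}f_U>\abs{\tau}^{-\ell}f_U$ by~\itemref{enu:upper-bound:leq-f} of Proposition~\vref{pro:upper-bound-fracnafs} applied to the shifted tail. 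For the reverse direction \labelproofequivalent{\ref{enu:proc-lb:1}}{\ref{enu:proc-lb:2}}, take the $\ell$ supplied by~\ref{enu:proc-lb:1}; since $\wNAFsetellell{0}{\ell}$ is finite, the minimum of $\abs{\NAFvalue{\bfxi}}-\abs{\tau}^{-\ell}f_U$ over the finitely many $\bfxi$ with non-zero most significant digit is some $\delta>0$. Any $\bfvartheta\in\wNAFsetinf$ with non-zero most significant digit agrees with such a $\bfxi$ on the top $\ell$ digits, and again by~\itemref{enu:upper-bound:leq-f} the difference in value is at most $\abs{\tau}^{-\ell}f_U$ (after the usual shift so that the tail below index $-\ell$ is a genuine fractional \wNAF{}), hence $\abs{\NAFvalue{\bfvartheta}}\ge\abs{\NAFvalue{\bfxi}}-\abs{\tau}^{-\ell}f_U\ge\delta$; set $\wt\nu=\abs{\tau}\delta$.

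For part~\itemref{enu:lower-bound:geq-nu} in the generic case $\nu>0$, I would argue as in~\itemref{enu:upper-bound:in-balls}: it suffices to treat $\eta_{-1}\neq0$, because if $k$ is the minimal index with $\eta_{-k}\neq0$, then $\tau^{k-1}\NAFvalue{\bfeta}$ is again a fractional \wNAF{} with non-zero most significant digit and $\abs{\NAFvalue{\bfeta}}=\abs{\tau}^{-(k-1)}\abs{\tau^{k-1}\NAFvalue{\bfeta}}\ge\abs{\tau}^{-(k-1)}\abs{\tau}^{-1}f_L$, and since the claimed bound is $\abs{\tau}^{-1}f_L$ this only helps. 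So assume $\eta_{-1}\neq0$. Then $\NAFvalue{\bfeta}=\eta_{-1}\tau^{-1}+\NAFvalue{0\bfldot 0\eta_{-2}\eta_{-3}\ldots}$, where the second summand has absolute value at most $\abs{\tau}^{-1}f_U$ by~\itemref{enu:upper-bound:leq-f} applied to the shifted tail $\eta_{-2}\eta_{-3}\ldots$ (which, being preceded by a non-zero digit, starts with at least $w-1$ zeros, so the bound $\abs{\tau}^{-1}f_U$ for a fractional \wNAF{} of the form $0\bfldot 0\ldots$ applies). Since $\eta_{-1}$ is a non-zero digit, $\abs{\eta_{-1}}\ge1$ by Remark~\vref{rem:digitsets}, hence $\abs{\NAFvalue{\bfeta}}\ge\abs{\tau}^{-1}\abs{\eta_{-1}}-\abs{\tau}^{-1}f_U\ge\abs{\tau}^{-1}(1-f_U/\abs{\tau})$; and a direct computation shows $1-f_U/\abs{\tau}=1-\frac{\abs{\tau}^{w-1}c_V}{1-\abs{\tau}^{-w}}$, which after simplification is exactly... hmm, here the arithmetic has to land on $\nu=\tfrac12-\frac{\abs{\tau}c_V}{\abs{\tau}^w-1}$; I would instead factor out $\eta_{-1}\tau^{-1}$ and bound $\abs{\NAFvalue{\bfeta}}\ge\abs{\eta_{-1}}\abs{\tau}^{-1}-\abs{\tau}^{-w}f_U=\abs{\tau}^{-1}\bigl(\abs{\eta_{-1}}-\abs{\tau}^{1-w}f_U\bigr)$ using that the tail starting at index $-2$ carries at least $w-1$ leading zeros so its value is $\tau^{-w}$ times a fractional \wNAF{}; then $\abs{\eta_{-1}}-\abs{\tau}^{1-w}f_U\ge 1-\frac{\abs{\tau}c_V}{\abs{\tau}^w-1}=\tfrac12+\nu\ge\nu$ — wait, that overshoots; the cleaner route is to keep the Voronoi information $\eta_{-1}\tau^{-1}\in\tau^{w-1}V$ and use $\abs{\tau}^{w-1}/2\le\abs{\eta_{-1}}\abs{\tau}^{-1}$ from $\ball*{0}{\frac12}\subseteq\tau^{-w}V\cdot$..., i.e.\ Proposition~\vref{pro:voronoi-prop}\itemref{enu:voronoi-bounds}, giving $\abs{\eta_{-1}\tau^{-1}}\ge\tfrac12\abs{\tau}^{w-1}$ and then $\abs{\NAFvalue{\bfeta}}\ge\tfrac12\abs{\tau}^{w-1}-\abs{\tau}^{-1}f_U=\abs{\tau}^{-1}\bigl(\tfrac12\abs{\tau}^{w}-f_U\bigr)=\abs{\tau}^{-1}\nu\cdot\frac{\abs{\tau}^w-1}{\abs{\tau}^w-1}$ — and indeed $\tfrac12\abs{\tau}^w-f_U=\tfrac12\abs{\tau}^w-\frac{\abs{\tau}^wc_V}{1-\abs{\tau}^{-w}}$; dividing the target $\nu$ by... the point is that $\tfrac12\abs{\tau}^w-f_U$ and $\nu$ differ by the positive factor coming from clearing denominators, so $\nu>0$ gives the bound. (I will present this last computation cleanly rather than as above.)

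For the finitely many ``problematic'' cases $\nu\le0$ listed in Table~\vref{tab:values-of-nu}, I would invoke the equivalence from part~\itemref{enu:lower-bound:equiv}: for each such pair $(\tau,w)$ one exhibits an explicit $\ell$ so that condition~\ref{enu:proc-lb:1} holds, i.e.\ one checks the finitely many $\bfxi\in\wNAFsetellell{0}{\ell}$ with non-zero most significant digit and verifies $\abs{\NAFvalue{\bfxi}}>\abs{\tau}^{-\ell}f_U$ for all of them; the resulting $\wt\nu$ is recorded in Table~\vref{tab:lower-bounds-problematic}, and then~\ref{enu:proc-lb:2} gives $\abs{\NAFvalue{\bfvartheta}}\ge\abs{\tau}^{-1}\wt\nu$ for every relevant infinite \wNAF{}, which is the assertion with $f_L=\wt\nu$. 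The main obstacle, as in the companion proposition, is not the structure of the argument but making the ``problematic cases'' genuinely rigorous: one must be sure the finite search over $\wNAFsetellell{0}{\ell}$ really terminates with a strict inequality bounded away from the threshold, and that the chosen $\ell$ is large enough — this is where a small but honest computation (and a dependence on the precise choice of restricted Voronoi cell, cf.\ Remark~\vref{rem:choice-digit-set-voronoi-boundary}) is unavoidable; everything else is the triangle inequality together with $\tau^{-1}V\subseteq V$ and the digit bounds from Remark~\vref{rem:digitsets}.
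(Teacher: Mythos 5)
Part~\itemref{enu:lower-bound:equiv} and your treatment of the ``problematic cases'' match the paper's argument. The genuine gap is in the main estimate of part~\itemref{enu:lower-bound:geq-nu}. Your ``cleaner route'' rests on the inequality $\abs{\eta_{-1}}\abs{\tau}^{-1}\geq\tfrac12\abs{\tau}^{w-1}$, i.e.\ $\abs{\eta_{-1}}\geq\tfrac12\abs{\tau}^{w}$, which is false: by Remark~\vref{rem:digitsets} a non-zero digit only satisfies $\abs{\eta_{-1}}\geq 1$, and for instance $1$ itself is a digit, so $\abs{\eta_{-1}}$ can be far smaller than $\tfrac12\abs{\tau}^w$. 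The inclusion $\eta_{-1}\in\tau^w V$ gives an \emph{upper} bound on $\abs{\eta_{-1}}$, not a lower one. Your first attempt, using only $\abs{\eta_{-1}}\geq1$ together with the crude modulus bound for the tail, does not close either (as you yourself noticed): the resulting quantity $\abs{\tau}^{-1}-\abs{\tau}^{-w}f_U$ is in general negative, since $\abs{\tau}^{-w}f_U\geq c_V$.

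The missing idea is to replace the modulus bound on the tail by the sharper localization of \itemref{enu:upper-bound:in-balls} of Proposition~\vref{pro:upper-bound-fracnafs}. Writing $M=\tau^w\NAFvalue{\bfeta}=\eta_{-1}\tau^{w-1}+\sum_{k\geq1}\eta_{-(w+k)}\tau^{-k}$, that statement places the tail in $\bigcup_{z\in\tau^{w-1}V}\ball*{z}{\abs{\tau}^{-w}f_U}$, so $M$ lies within distance $\abs{\tau}^{-w}f_U$ of the translated cell $\tau^{w-1}V_{\eta_{-1}}$. Since $\eta_{-1}\neq0$, this cell is disjoint from the interior of $\tau^{w-1}V_0$, which contains $\ball*{0}{\tfrac12\abs{\tau}^{w-1}}$ by Proposition~\vref{pro:voronoi-prop}; hence every point of $\tau^{w-1}V_{\eta_{-1}}$ has modulus at least $\tfrac12\abs{\tau}^{w-1}$, giving $\abs{M}\geq\tfrac12\abs{\tau}^{w-1}-\abs{\tau}^{-w}f_U=\abs{\tau}^{w-1}\nu$ and then $\abs{\NAFvalue{\bfeta}}\geq\abs{\tau}^{-1}\nu$. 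In short, the factor $\tfrac12$ comes from the distance of $0$ to the Voronoi cell of the non-zero lattice point $\eta_{-1}$, not from the size of $\eta_{-1}$. (Two smaller points: your preliminary reduction to $\eta_{-1}\neq0$ is unnecessary, since ``most significant digit'' means precisely $\eta_{-1}$ here, and the inequality you invoke for it points the wrong way — multiplying a lower bound by $\abs{\tau}^{-(k-1)}$ weakens it; and in the direction \ref{enu:proc-lb:2}$\Rightarrow$\ref{enu:proc-lb:1} you should extend $\bfxi$ by zeros so that the values coincide exactly, as otherwise your chain only yields positivity rather than the strict inequality against $\abs{\tau}^{-\ell}f_U$.)
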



\begin{proof}[Proof of Proposition~\ref{pro:lower-bound-fracnafs}]

  \begin{enumerate}[(a)]

  \item We have to prove both directions.
    \begin{descproofequivalences}

    \item[\labelproofequivalent{\ref{enu:proc-lb:1}}{\ref{enu:proc-lb:2}}]

      Suppose there exists such an $\ell\in\N_0$. We set
      \begin{equation*}
        \wt\nu = \min\set*{\abs{\tau} 
          \left( \abs{\NAFvalue{\bfxi}} - \abs{\tau}^{-\ell} f_U 
          \right)}{\text{$\bfxi \in \wNAFsetellell{0}{\ell}$ 
            with $\xi_{-1}\neq0$}}.
      \end{equation*}
      Then clearly $\wt\nu>0$. Using \itemref{enu:upper-bound:leq-f} of
      Proposition~\vref{pro:upper-bound-fracnafs} with digits shifted $\ell$ to
      the right, i.e., multiplication by $\tau^{-\ell}$, the desired result
      follows by using the triangle inequality.

    \item[\labelproofequivalent{\ref{enu:proc-lb:2}}{\ref{enu:proc-lb:1}}]

      Now suppose there exists such a lower bound $\wt\nu>0$. Then there is an
      $\ell\in\N_0$ such that $\abs{\tau}^{-\ell} f_U < \abs\tau^{-1}
      \wt\nu$. Since
      \begin{equation*}
        \abs{\NAFvalue{0\bfldot\eta_{-1}\ldots\eta_{-\ell}}} 
        \geq \abs\tau^{-1} \wt\nu > \abs{\tau}^{-\ell} f_U
      \end{equation*}
      for all \wNAF{}s $0\bfldot\eta_{-1}\ldots\eta_{-\ell}$, the statement
      follows.
    
    \end{descproofequivalences}
    
  \item Set
    \begin{equation*}
      M := \tau^w \NAFvalue{\bfeta} 
      = \eta_{-1} \tau^{w-1} + \sum_{i=2}^{\ell} \eta_{-i} \tau^{w-i}
    \end{equation*}
    Since $\eta_{-1} \neq 0$, we can rewrite this to get
    \begin{equation*}
      M = \eta_{-1} \tau^{w-1} + \sum_{i=w+1}^{\ell} \eta_{-i} \tau^{w-i}
      = \eta_{-1} \tau^{w-1} + \sum_{k=1}^{\ell-w} \eta_{-(w+k)} \tau^{-k}.
    \end{equation*}
    
    Now consider the Voronoi cell $V_{\eta_{-1}}$ for $\eta_{-1}$ and $V_0=V$
    for $0$. Since $\eta_{-1} \neq 0$, these two are disjoint, except parts of
    the boundary, if they are adjacent.
    
    We know from \itemref{enu:upper-bound:in-balls} of
    Proposition~\vref{pro:upper-bound-fracnafs}, that
    \begin{equation*}
      M - \eta_{-1} \tau^{w-1} 
      = \sum_{k=1}^{\ell-w} \eta_{-(w+k)} \tau^{-k}
      \in \bigcup_{z \in \tau^{w-1} V} \ball*{z}{\abs{\tau}^{-w} f_U},
    \end{equation*}
    so
    \begin{equation*}
      M \in \bigcup_{z \in \tau^{w-1} V_{\eta_{-1}}} \ball*{z}{\abs{\tau}^{-w} f_U}.
    \end{equation*}
    This means that $M$ is in $\tau^{w-1} V_{\eta_{-1}}$ or in a
    $\abs{\tau}^{-w} f_U$\nbd-strip around this cell.
    
    Now we are looking at $\tau^{w-1} V_0$ and using
    Proposition~\vref{pro:voronoi-prop}, from which we know that
    $\ball*{0}{\frac{1}{2} \abs{\tau}^{w-1}}$ is inside such a Voronoi
    cell. Thus, we get
    \begin{equation*}
      \abs{M} 
      \geq \frac{1}{2} \abs{\tau}^{w-1} - \abs{\tau}^{-w} f_U 
      = \frac{1}{2} \abs{\tau}^{w-1} - \frac{c_V}{1 - \abs{\tau}^{-w}}
      = \abs{\tau}^{w-1} \nu 
    \end{equation*}
    for our lower bound of $M$ and therefore, by multiplying with $\tau^{-w}$
    one for $\NAFvalue{\bfeta}$.

    Looking in Table~\vref{tab:values-of-nu}, we see that there are some values
    where $\nu$ is not positive. As in
    Proposition~\vref{pro:upper-bound-fracnafs}, this is the case, if $w=2$ and
    $\abs{\tau}$ is $\sqrt{2}$, $\sqrt{3}$ or $\sqrt{4}$, and if $w=3$ and
    $\abs{\tau}=\sqrt{2}$. Since $\nu$ is monotonic increasing with $\abs{\tau}$
    and monotonic increasing with $w$, there are no other non-positive values of
    $\nu$ than the above mentioned.

    For those finite many problem cases, we use~\itemref{enu:lower-bound:equiv}
    to find a $\wt\nu$. The results are listed in
    Table~\vref{tab:lower-bounds-problematic} and an example is drawn in
    Figure~\vref{fig:lower-bound-1}.
    \qedhere
\end{enumerate}
\end{proof}


\begin{table}
  \centering
  \begin{equation*}
\begin{array}{cc|cc|c|ccc|c}
\hline
q=\abs{\tau}^2 & p & \re{\tau} & \im{\tau} & w & 
\ell & \abs{\tau}^{-\ell}f_U & \wt\nu & \log_{\abs\tau}(f_U/\wt\nu) \\
\hline
2 & -2 & -1 & 1 & 2 & 9 & 0.135 & 0.004739 & 18.66 \\
2 & -1 & -0.5 & 1.323 & 2 & 7 & 0.27 & 0.105 & 9.726 \\
2 & 0 & 0 & 1.414 & 2 & 8 & 0.1909 & 0.07422 & 10.73 \\
2 & 1 & 0.5 & 1.323 & 2 & 7 & 0.27 & 0.105 & 9.726 \\
2 & 2 & 1 & 1 & 2 & 9 & 0.135 & 0.04176 & 12.39 \\
\hline
2 & -2 & -1 & 1 & 3 & 6 & 0.4177 & 0.1126 & 9.782 \\
2 & -1 & -0.5 & 1.323 & 3 & 6 & 0.4177 & 0.04999 & 12.13 \\
2 & 0 & 0 & 1.414 & 3 & 6 & 0.4177 & 0.0153 & 15.54 \\
2 & 1 & 0.5 & 1.323 & 3 & 6 & 0.4177 & 0.04999 & 12.13 \\
2 & 2 & 1 & 1 & 3 & 6 & 0.4177 & 0.1126 & 9.782 \\
\hline
3 & -3 & -1.5 & 0.866 & 2 & 4 & 0.3819 & 0.003019 & 12.81 \\
3 & -2 & -1 & 1.414 & 2 & 5 & 0.2205 & 0.04402 & 7.933 \\
3 & -1 & -0.5 & 1.658 & 2 & 5 & 0.2205 & 0.08717 & 6.689 \\
3 & 0 & 0 & 1.732 & 2 & 4 & 0.3819 & 0.003019 & 12.81 \\
3 & 1 & 0.5 & 1.658 & 2 & 5 & 0.2205 & 0.08717 & 6.689 \\
3 & 2 & 1 & 1.414 & 2 & 5 & 0.2205 & 0.04402 & 7.933 \\
3 & 3 & 1.5 & 0.866 & 2 & 4 & 0.3819 & 0.003019 & 12.81 \\
\hline
4 & -3 & -1.5 & 1.323 & 2 & 4 & 0.2546 & 0.07613 & 5.742 \\
4 & -2 & -1 & 1.732 & 2 & 5 & 0.1273 & 0.03807 & 6.742 \\
4 & -1 & -0.5 & 1.936 & 2 & 4 & 0.2546 & 0.0516 & 6.303 \\
4 & 0 & 0 & 2 & 2 & 5 & 0.1273 & 0.07035 & 5.856 \\
4 & 1 & 0.5 & 1.936 & 2 & 4 & 0.2546 & 0.0516 & 6.303 \\
4 & 2 & 1 & 1.732 & 2 & 5 & 0.1273 & 0.0467 & 6.447 \\
4 & 3 & 1.5 & 1.323 & 2 & 4 & 0.2546 & 0.07613 & 5.742 \\
\hline
\end{array}
\end{equation*}

  \caption[Lower bounds for ``problematic values'' of $\abs{\tau}$ and $w$]{
    Lower bounds for ``problematic values'' of $\abs{\tau}$ and $w$, cf.\
    \itemref{enu:lower-bound:geq-nu} of 
    Proposition~\vref{pro:lower-bound-fracnafs}. 
    The dependence of $p$, $q$ and $\tau$ is given by $\tau^2-p\tau+q=0$. We 
    have $p^2<4q$, since $\tau$ is assumed to be imaginary quadratic.}
  \label{tab:lower-bounds-problematic}
\end{table}


\begin{figure}
  \centering
  \includegraphics{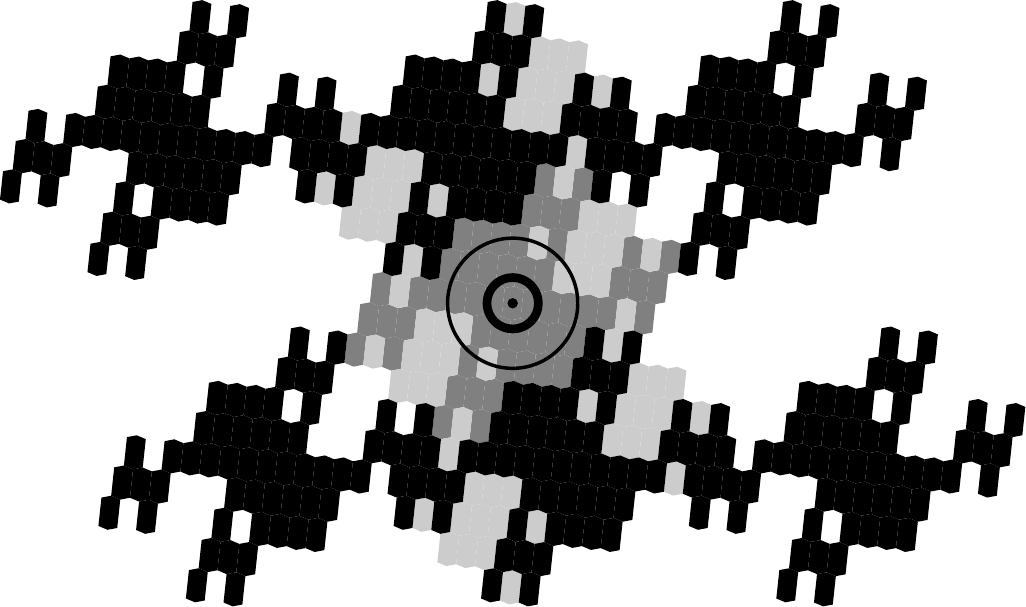}
  \caption[Lower bound for $\tau=\frac12 + \frac{1}{2}\sqrt{-11}$ and
  $w=2$]{Lower bound for $\tau=\frac12 + \frac{1}{2}\sqrt{-11}$ and $w=2$. The
    procedure stopped at $\ell=6$. The large circle has radius
    $\abs{\tau}^{-\ell} f_U$, the small circle is our lower bound with radius
    $f_L=\wt\nu$. The dot inside represents zero. The grey region has most
    significant digit zero, the black ones non-zero.}
  \label{fig:lower-bound-1}
\end{figure}


Again, as in the proof of Proposition~\vref{pro:lower-bound-fracnafs}, the
check of the ``problematic cases'' depends on the choice of the digit set
$\cD$, cf.\ Remark~\vref{rem:choice-digit-set-voronoi-boundary}. And again, the
results stay true for any choice of $\cD$.


Combining the previous two Propositions leads to the following corollary, which
gives an upper and a lower bound for the absolute value of a \wNAF{} by
looking at the largest non-zero index.


\begin{corollary}[Bounds for the Value]
  \label{cor:bounds-value}
  Let $\bfeta\in\wNAFsetfininf$, then we get
  \begin{equation*}
     \NAFd{\bfeta}{\bfzero} f_L
    \leq \abs{\NAFvalue{\bfeta}}
    \leq \NAFd{\bfeta}{\bfzero} f_U \abs\tau.
  \end{equation*}
\end{corollary}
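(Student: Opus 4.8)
The plan is to reduce the corollary to the two preceding propositions by a single index shift, since all the real content — both the analytic estimates and the case analysis for the ``problematic values'' — is already contained in Propositions~\vref{pro:upper-bound-fracnafs} and~\vref{pro:lower-bound-fracnafs}.

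First I would dispose of the trivial case $\bfeta=\bfzero$: then $\NAFvalue{\bfeta}=0$ and $\NAFd{\bfeta}{\bfzero}=0$, so all three terms vanish. Otherwise, since $\bfeta\in\wNAFsetfininf$ has finite left-length, there is a largest index $J$ with $\eta_J\neq0$, and by the definition of the metric $\NAFd{\bfeta}{\bfzero}=\abs{\tau}^{J}$. Then I would pass to the shifted sequence $\bfxi:=\sequence{\eta_{j+J+1}}_{j\in\Z}$. Shifting digits preserves the width-$w$ non-adjacency property, so $\bfxi$ is again a $w$-NAF; moreover $\xi_j=0$ for $j\geq0$ and $\xi_{-1}=\eta_J\neq0$, so $\bfxi\in\wNAFsetinf$ has non-zero most significant digit. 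From $\NAFvalue{\bfeta}=\tau^{J+1}\NAFvalue{\bfxi}$ we obtain $\abs{\NAFvalue{\bfeta}}=\abs{\tau}^{J+1}\abs{\NAFvalue{\bfxi}}$.

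Now I would invoke \itemref{enu:upper-bound:leq-f} of Proposition~\vref{pro:upper-bound-fracnafs}, giving $\abs{\NAFvalue{\bfxi}}\leq f_U$, and \itemref{enu:lower-bound:geq-nu} of Proposition~\vref{pro:lower-bound-fracnafs}, which — as $\xi_{-1}\neq0$ — gives $\abs{\NAFvalue{\bfxi}}\geq\abs{\tau}^{-1}f_L$. Multiplying through by $\abs{\tau}^{J+1}=\abs{\tau}\,\NAFd{\bfeta}{\bfzero}$ yields
\[
  \NAFd{\bfeta}{\bfzero}\,f_L \leq \abs{\NAFvalue{\bfeta}} \leq \NAFd{\bfeta}{\bfzero}\,f_U\,\abs{\tau},
\]
as claimed.

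The hard part is essentially non-existent here: the only things that need care are the bookkeeping around the metric (that $\NAFd{\bfeta}{\bfzero}$ really equals $\abs{\tau}^{J}$ with $J$ the top non-zero position) and the fact that, strictly speaking, one must separate the $\bfeta=\bfzero$ case and verify that the shifted $\bfxi$ lies in the set $\wNAFsetinf$ to which the two propositions apply.
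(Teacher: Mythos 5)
Your proposal is correct and matches the paper's intent: the paper's own proof is the one-line remark that the corollary ``follows directly'' from Propositions~\ref{pro:upper-bound-fracnafs} and~\ref{pro:lower-bound-fracnafs}, and your shift-by-$\tau^{J+1}$ argument is exactly the routine reduction that remark leaves implicit. The bookkeeping (the zero case, the identification of $\NAFd{\bfeta}{\bfzero}$ with $\abs{\tau}^{J}$, and the verification that the shifted word lies in $\wNAFsetinf$ with non-zero most significant digit) is all handled correctly.
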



\begin{proof}
  Follows directly from Proposition~\vref{pro:upper-bound-fracnafs}
  and Proposition~\vref{pro:lower-bound-fracnafs}.
\end{proof}


Last in this section, we want to find out, if there are special \wNAF{}s, for
which we know for sure that all their expansions start with a certain finite
\wNAF{}. We will show the following lemma.


\begin{lemma}
  \label{lem:choosing-k0}

  Let
  \begin{equation*}
    k \geq k_0 = \max\set{19,2w+5},
  \end{equation*}
  let $\bfeta\in\wNAFsetinf$ start with the word $0^k$, i.e.,
  $\eta_{-1}=0$, \ldots, $\eta_{-k}=0$, and set $z=\NAFvalue{\bfeta}$.  Then we
  get for all $\bfxi\in\wNAFsetfininf$ that $z=\NAFvalue{\bfxi}$ implies
  $\bfxi\in\wNAFsetellinf{0}$.
\end{lemma}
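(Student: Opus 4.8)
The plan is to show that if $\bfeta\in\wNAFsetinf$ starts with a sufficiently long block of zeros, then its value $z$ is so small in absolute value that every $\wNAF$ representing $z$ must also start with zeros, i.e.\ must lie in $\wNAFsetellinf{0}$. The mechanism is the two-sided bound of Corollary~\vref{cor:bounds-value}: the left-length of a $\wNAF$ is essentially $\log_{\abs\tau}$ of the absolute value of its value, so a small value forces a small left-length.

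First I would bound $\abs z$ from above. Since $\bfeta$ starts with $0^k$, we have $z=\NAFvalue{\bfeta}=\tau^{-k}\NAFvalue{\bfvartheta}$ for the shifted $\wNAF$ $\bfvartheta\in\wNAFsetinf$, so by \itemref{enu:upper-bound:leq-f} of Proposition~\vref{pro:upper-bound-fracnafs} we get $\abs z\le\abs\tau^{-k}f_U$. Now suppose $\bfxi\in\wNAFsetfininf$ satisfies $\NAFvalue{\bfxi}=z$ but $\bfxi\notin\wNAFsetellinf{0}$; then $\bfxi$ has some non-zero digit at a non-negative index, so its left-length $\lambda\ge1$ and $\NAFd{\bfxi}{\bfzero}=\abs\tau^{\lambda-1}\cdot\abs\tau\ge\abs\tau$ — more precisely, writing $J$ for the largest index with $\xi_J\ne0$ we have $J\ge0$ and $\NAFd{\bfxi}{\bfzero}=\abs\tau^{J}$. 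The lower bound in Corollary~\vref{cor:bounds-value} gives $\abs z=\abs{\NAFvalue{\bfxi}}\ge\NAFd{\bfxi}{\bfzero}f_L\ge f_L$ (using $J\ge0$). Combining, $f_L\le\abs\tau^{-k}f_U$, i.e.\ $\abs\tau^{k}\le f_U/f_L$, which fails once $k$ is large enough, namely once $k>\log_{\abs\tau}(f_U/f_L)$.

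So the remaining point is purely a verification: that the threshold $k_0=\max\set{19,2w+5}$ indeed exceeds $\log_{\abs\tau}(f_U/f_L)$ for every admissible pair $(\tau,w)$. For the non-problematic pairs one has $f_L=\nu=\frac12-\frac{\abs\tau c_V}{\abs\tau^w-1}$ with $c_V=\sqrt{7/12}$ and $f_U=\frac{\abs\tau^w c_V}{1-\abs\tau^{-w}}$, so $f_U/f_L$ is an explicit function of $\abs\tau$ and $w$; since $\abs\tau^2=q\ge2$ and $\nu$ is increasing in both $\abs\tau$ and $w$, one checks that $\log_{\abs\tau}(f_U/f_L)$ is dominated by a bound linear in $w$ (the $\abs\tau^w$ inside the log contributes the $2w$-type term, after dividing by $\log_{\abs\tau}\ge\frac12\log 2$), which is where the $2w+5$ comes from; the constant $19$ handles the small-$w$ regime. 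For the finitely many problematic pairs, $f_L=\wt\nu$ is taken from Table~\vref{tab:lower-bounds-problematic}, whose last column lists exactly $\log_{\abs\tau}(f_U/\wt\nu)$, and all those entries are well below $19$; so $k_0$ works there too.

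The main obstacle is not conceptual but the bookkeeping in this last verification: making the estimate $\log_{\abs\tau}(f_U/f_L)\le\max\set{19,2w+5}$ clean across all $(\tau,w)$ — in particular being a little careful that $\abs\tau$ close to $1$ does not blow up the logarithm (it does not, since imaginary quadratic $\tau$ forces $\abs\tau^2=q\ge2$, so $\abs\tau\ge\sqrt2$ and $\log_{\abs\tau}$ is controlled). Everything else is a direct application of the already-proved bounds, the definition of $\NAFdname$, and the fact that a $\wNAF$ not in $\wNAFsetellinf{0}$ has a non-zero digit at index $\ge0$.
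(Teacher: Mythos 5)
Your proposal is correct and follows essentially the same route as the paper: bound $\abs z\le\abs\tau^{-k}f_U$ by shifting, note that any $\bfxi$ with a non-zero digit at a non-negative index has $\abs{\NAFvalue{\bfxi}}\ge f_L$ (the paper invokes Proposition~\ref{pro:lower-bound-fracnafs} directly where you use Corollary~\ref{cor:bounds-value}, which packages the same bound), and then verify $k_0>\log_{\abs\tau}(f_U/f_L)$ by monotonicity of $\log_{\abs\tau}A$ in $\abs\tau$ and $w$ plus a few boundary evaluations for the non-problematic cases and the tabulated values (all below $19$) for the problematic ones. The only difference is that the paper carries out the numerical bookkeeping you defer, so there is nothing to add.
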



\begin{proof}
  Let $\bfxi_I\bfldot\bfxi_F\in\wNAFsetfininf$. Then
  $\abs{\NAFvalue{\bfxi_I\bfldot\bfxi_F}} < f_L$ implies $\bfxi_I=\bfzero$,
  cf.\ Proposition~\vref{pro:lower-bound-fracnafs}. For our $\bfeta$ we obtain
  $z=\abs{\NAFvalue{\bfeta}} \leq \abs\tau^{-k} f_U$, cf.\
  Proposition~\vref{pro:upper-bound-fracnafs}. So we have to show that
  \begin{equation*}
    \abs\tau^{-k} f_U < f_L,
  \end{equation*} 
  which is equivalent to
  \begin{equation*}
    k > \log_{\abs\tau} \frac{f_U}{f_L}.
  \end{equation*}

  For the ``non-problematic cases'', cf.\
  Propositions~\vref{pro:upper-bound-fracnafs}
  and~\vref{pro:lower-bound-fracnafs}, we obtain
  \begin{equation*}
    k > 2w-1 + \log_{\abs\tau} A
  \end{equation*}
  with 
  \begin{equation*}
    A := \frac{1}{\nu} \frac{\abs\tau c_V}{\abs\tau^w-1}
    = \left(\frac{\abs\tau^w-1}{2\abs\tau c_V}-1\right)^{-1} > 0,
  \end{equation*}
  where we just inserted the formulas for $f_U$, $f_L$ and $\nu$, and used
  $\nu>0$.

  Consider the partial derivation of $\log_{\abs\tau} A$ with respect to
  $\abs\tau$. We get
  \begin{equation*}
    \frac{\partial \log_{\abs\tau} A}{\partial \abs\tau} 
    = \underbrace{\vphantom{\Bigg\vert}\frac{1}{\log_e \abs\tau}}_{>0}
    \underbrace{\vphantom{\Bigg\vert}\nu}_{>0}
    \underbrace{\vphantom{\Bigg\vert}\frac{\abs\tau^w-1}{\abs\tau c_V}}_{>0}
    \underbrace{\frac{\partial A}{\partial \abs\tau}}_{<0} < 0,
  \end{equation*}
  where we used $\abs\tau>1$, $w\geq2$, and the fact that the quotient of
  polynomials $\frac{\abs\tau^w-1}{2\abs\tau c_V}$ is monotonic increasing with
  $\abs\tau$. Further we see that $A$ is monotonic
  decreasing with $w$, therefore $\log_{\abs\tau} A$, too.

  For $\abs\tau=\sqrt{5}$ and $w=2$ we get $\log_{\abs\tau} A = 5.84522$, for
  $\abs\tau=\sqrt{3}$ and $w=3$ we get $\log_{\abs\tau} A = 1.70649$, and for
  $\abs\tau=\sqrt{2}$ and $w=4$ we get $\log_{\abs\tau} A = 2.57248$. Using the
  monotonicity from above yields $k \geq 2w+5$ for the ``non-problematic
  cases''.

  For our ``problematic cases'', the value of $\log_{\abs\tau} \frac{f_U}{f_L}$
  is calculated in Table~\vref{tab:lower-bounds-problematic}. Therefore we
  obtain $k \geq 19$.
\end{proof}


\section{Numeral Systems with Non-Adjacent Forms}
\label{sec:exist-uniqu-nafs}


Let $\tau\in\C$ be an algebraic integer, imaginary quadratic.  Suppose that
$\abs{\tau}>1$. Let $w\in\N$ with $w\geq2$. Further let $\cD$ be a minimal norm
representatives digit set modulo $\tau^w$ as in
Definition~\vref{def:min-norm-digit-set}.


We are now able to show that in this setting, the digit set of minimal norm
representatives is indeed a width\nbd-$w$ non-adjacent digit set. This is then
extended to infinite fractional expansions of elements in $\C$.


\begin{theorem}[Existence and Uniqueness Theorem concerning Lattice Points]
  \label{thm:wnaf-exist-unique}

  For each lattice point $z\in\Ztau$ there is a unique element
  $\bfxi\in\wNAFsetfin$, such that $z = \NAFvalue{\bfxi}$. Thus $\cD$ is a
  width\nbd-$w$ non-adjacent digit set. The \wNAF{} $\bfxi$ can be calculated
  using Algorithm~\vref{alg:get-wnaf}, i.e., this algorithm terminates and is
  correct.
\end{theorem}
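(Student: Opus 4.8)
The plan is to prove existence and uniqueness together by analysing the greedy digit-extraction algorithm (Algorithm~\ref{alg:get-wnaf}) directly. First I would describe the algorithm: given $z\in\Ztau$, if $\tau\mid z$ then emit the digit $0$ and recurse on $z/\tau$; otherwise pick the unique digit $\eta\in\cD$ with $\eta\equiv z\pmod{\tau^w}$ (this exists and is unique by Lemma~\ref{lem:complete-res-sys} together with the minimal-norm choice in Definition~\ref{def:min-norm-digit-set}), emit $\eta$ followed by $w-1$ zeros, and recurse on $(z-\eta)/\tau^w$. By construction every output block of length $w$ has at most one non-zero digit, so if the algorithm terminates its output is a genuine element of $\wNAFsetfin$ with value $z$; thus the whole content is (i) termination and (ii) uniqueness.

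For termination, the key is to show the absolute value of the ``remainder'' strictly decreases (eventually) under the map $z\mapsto(z-\eta)/\tau^w$, and symmetrically $z\mapsto z/\tau$ in the divisible case. The divisible step is immediate since $\abs{z/\tau}=\abs z/\abs\tau<\abs z$. For the non-divisible step, write $z'=(z-\eta)/\tau^w$; since $\eta$ is a minimal norm representative we have $\abs\eta\le\abs{\tau}^{w+1}c_V$ (Remark~\ref{rem:digitsets}), hence $\abs{z'}\le(\abs z+\abs\tau^{w+1}c_V)\abs\tau^{-w}=\abs z\abs\tau^{-w}+\abs\tau c_V$. Iterating, the remainder is eventually trapped in a fixed disc of radius roughly $\abs\tau c_V/(1-\abs\tau^{-w})$, i.e.\ comparable to $f_U$; once inside this bounded region there are only finitely many lattice points, so I must rule out cycles. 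Here I would invoke the bounds of Section~\ref{sec:bounds-value}: if $z$ is a small lattice point whose \wNAF{} (if it exists) would be purely fractional, Corollary~\ref{cor:bounds-value} and Lemma~\ref{lem:choosing-k0} force $z=0$, at which point the algorithm stops. More precisely, I would argue that after the remainder enters the bounded region, either it reaches $0$, or we exhibit a contradiction with the lower bound $f_L$ of Proposition~\ref{pro:lower-bound-fracnafs}: a non-zero lattice point that the algorithm fails to reduce below $f_L$ in absolute value cannot occur because the next digit extraction strictly shrinks it past that threshold. This finiteness/no-cycles argument, combined with the explicit problematic-case tables already established, is what guarantees termination.

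For uniqueness, suppose $z=\NAFvalue{\bfxi}=\NAFvalue{\bfxi'}$ with $\bfxi,\bfxi'\in\wNAFsetfin$. I would compare the two expansions digit by digit from the right. Looking modulo $\tau$: the lowest digit of a \wNAF{} is forced — $z$ is divisible by $\tau$ iff the constant digit is $0$ (by Lemma~\ref{lem:complete-res-sys}, a non-zero digit $\eta$ is never divisible by $\tau$), so the decision ``digit $0$'' versus ``non-zero digit'' is determined by $z$. In the non-zero case, the \wNAF{} condition forces the next $w-1$ digits to be $0$, so the block of length $w$ reads $0^{w-1}\eta$ with $\eta\equiv z\pmod{\tau^w}$; since $\cD$ contains exactly one representative per residue class mod $\tau^w$, $\eta$ is uniquely determined. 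In either case $\bfxi$ and $\bfxi'$ agree on the lowest block, and subtracting and dividing by $\tau$ (resp.\ $\tau^w$) reduces to a strictly shorter instance; induction on the length then yields $\bfxi=\bfxi'$.

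I expect the main obstacle to be termination, specifically the rigorous handling of the phase after the remainder has entered the bounded disc: the naive contraction estimate only gives $\abs{z'}\le\abs z\abs\tau^{-w}+\abs\tau c_V$, which does not by itself drive the remainder to $0$, so one genuinely needs the geometric input from Section~\ref{sec:bounds-value}. In particular the ``problematic cases'' ($w=2$ with $\abs\tau\in\{\sqrt2,\sqrt3,2\}$ and $w=3$ with $\abs\tau=\sqrt2$) are exactly where the clean analytic bound $\nu>0$ fails, and there one must lean on the computationally verified values $\wt\nu$ in Table~\ref{tab:lower-bounds-problematic} and the finite checks already carried out; I would make explicit that these are precisely the lattice points that need separate, finite verification, and that Lemma~\ref{lem:choosing-k0} bundles them into the uniform bound $k_0=\max\{19,2w+5\}$ needed elsewhere. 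Everything else — correctness of the emitted form, the residue-class bookkeeping, the induction for uniqueness — is routine once the finiteness of the reduction process is in hand.
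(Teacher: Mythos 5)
Your uniqueness argument and your description of the algorithm's correctness match the paper's proof essentially verbatim, and your overall termination strategy (a contraction estimate outside a bounded disc plus a finite computational check in the ``problematic cases'') is also the paper's. However, there is a genuine gap in how you finish the termination argument once the remainder has entered the trapping disc of radius $\abs{\tau}^{w+1}c_V/(\abs{\tau}^w-1)$. The paper's key observation at this point is elementary and non-circular: if $\tau\nmid y$ and $\abs{y}<\tfrac12\abs{\tau}^w$, then $y$ lies in $\tau^w V$ and hence \emph{is itself a digit} (Proposition~\ref{pro:voronoi-prop} and Remark~\ref{rem:digitsets}), so the algorithm halts one step later; when $\nu>0$ the trapping radius is below $\tfrac12\abs{\tau}^w$, and termination follows with no need to rule out cycles. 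Your proposed substitute --- invoking Corollary~\ref{cor:bounds-value}, Lemma~\ref{lem:choosing-k0} and the lower bound $f_L$ of Proposition~\ref{pro:lower-bound-fracnafs} --- is circular: those statements bound the \emph{value of a given \wNAF{}}, so applying them to a lattice point presupposes that the point admits a \wNAF{}, which is exactly what is being proved. Moreover the contraction estimate $\abs{z'}\le\abs{z}\abs{\tau}^{-w}+\abs{\tau}c_V$ gives nothing once $\abs{z}$ is below the fixed point, so the claim that ``the next digit extraction strictly shrinks it past that threshold'' is unsupported.

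A second, smaller inaccuracy: the finite verification needed for the problematic cases ($\nu\le 0$) is not the table of $\wt\nu$ values in Table~\ref{tab:lower-bounds-problematic}. It is a separate direct check that each of the finitely many lattice points $y$ with $\abs{y}\le\abs{\tau}^{w+1}c_V/(\abs{\tau}^w-1)$ admits a \wNAF{} (e.g.\ by running the algorithm on each and observing termination); the paper refers to an online table for this. Lemma~\ref{lem:choosing-k0} plays no role in this theorem --- it is used later for the fundamental domain. With the ``small non-divisible points are digits'' observation inserted and the correct finite check identified, the rest of your proposal goes through.
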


The uniqueness result is well known. The existence result is only known for
special $\tau$ and $w$. For example in Koblitz~\cite{Koblitz:1998:ellip-curve}
the case $\tau=\pm\frac{3}{2} + \frac{1}{2} \sqrt{-3}$ and $w=2$ was
shown. There the digit set $\cD$ consists of $0$ and powers of primitive sixth
roots of unity. Blake, Kumar Murty and Xu~\cite{Blake-Kumar-Xu:2005:effic-algor}
generalised that for $w\geq2$. Another example is given in
Solinas~\cite{Solinas:2000:effic-koblit}. There $\tau=\pm\frac{1}{2} +
\frac{1}{2}\sqrt{-7}$ and $w=2$ is used, and the digit set $\cD$ consists of $0$
and $\pm1$. This result was generalised by Blake, Kumar Murty and
Xu~\cite{Blake-Murty-Xu:2005:naf} for $w\geq2$. The cases $\tau=1+\sqrt{-1}$,
$\tau=\sqrt{-2}$ and $\tau=\frac{1}{2} + \frac{1}{2}\sqrt{-11}$ were studied in
Blake, Kumar Murty and Xu~\cite{Blake-Murty-Xu:ta:nonad-radix}.


\begin{algorithm}
  \begin{algorithmic}[1]
    \STATE $\ell \leftarrow 0$
    \STATE $y \leftarrow z$
    \WHILE{$y \neq 0$}
    \IF{$\tau \divides y$}
    \STATE $\xi_\ell \leftarrow 0$
    \ELSE
    \STATE Let $\xi_\ell \in \cD$ such that $\xi_\ell \equiv y \pmod{\tau^w}$
    \ENDIF
    \STATE $y \leftarrow \left(y-\xi_\ell\right) / \tau$
    \STATE $\ell \leftarrow \ell+1$
    \ENDWHILE
    \STATE $\bfxi \leftarrow \xi_{\ell-1}\xi_{\ell-2}\ldots\xi_0$
    \RETURN $\bfxi$
  \end{algorithmic}  

  \caption{Algorithm to calculate a \wNAF{} $\bfxi\in\wNAFsetfin$ for an element
    $z\in\Ztau$.}
  \label{alg:get-wnaf}
\end{algorithm}


The proof follows a similar idea as in Section~\vref{sec:bounds-value} and in
Matula~\cite{Matula:1982:basic}. There are again two parts, one analytic part
for all but finitely many cases, and the other, which proves the remaining by
the help of a simple procedure.


\begin{proof}  
  First we show that the algorithm terminates. Let $y\in\Ztau$ and consider
  Algorithm~\vref{alg:get-wnaf} in cycle $\ell$. If $\tau \divides y$, then in
  the next step the norm $\abs{y}^2\in\N_0$ becomes smaller since
  $\abs\tau^2>1$.
  
  Let $\tau \ndivides y$. If $\abs{y} < \frac{1}{2} \abs{\tau}^w$, then
  $y\in\cD$, cf.\ Proposition~\vref{pro:voronoi-prop} and
  Remark~\vref{rem:digitsets}. Thus the algorithm terminates in the next
  cycle. If
  \begin{equation*}
    \abs{y} > \frac{\abs{\tau} c_V}{1-\abs{\tau}^{-w}} 
    = \frac{\abs{\tau}^{w+1} c_V}{\abs{\tau}^w-1},
  \end{equation*}
  we obtain
  \begin{equation*}
    \abs{y} \abs{\tau}^w
    > \abs{y} + \abs{\tau}^{w+1} c_V
    \geq \abs{y} + \abs{\xi_\ell} 
    \geq \abs{y-\xi_\ell},
  \end{equation*}
  which is equivalent to
  \begin{equation*}
    \abs{\frac{y-\xi_\ell}{\tau^w}} < \abs{y}.
  \end{equation*}
  So if the condition
  \begin{equation*}
    \frac{\abs{\tau}^{w+1} c_V}{\abs{\tau}^w-1} < \frac{1}{2} \abs{\tau}^w
    \equivalent
    \nu = \frac{1}{2} - \frac{\abs{\tau}c_V}{\abs{\tau}^w-1} > 0,
  \end{equation*}
  with the same $\nu$ as in Proposition~\vref{pro:lower-bound-fracnafs}, is
  fulfilled, the norm $\abs{y}^2\in\N_0$ is descending and therefore the
  algorithm terminating.

  Now we consider the case, when $\nu\leq0$. According to
  Table~\vref{tab:values-of-nu} there are the same finitely many combinations of
  $\tau$ and $w$ to check as in Proposition~\vref{pro:upper-bound-fracnafs} and
  Proposition~\vref{pro:lower-bound-fracnafs}. For each of them, there is only a
  finite number of elements $y\in\Ztau$ with
  \begin{equation*}
    \abs{y} \leq \frac{\abs{\tau}^{w+1} c_V}{\abs{\tau}^w-1},
  \end{equation*}
  so altogether only finitely many $y\in\Ztau$ left to check, whether they
  admit a \wNAF{} or not.  \ifma The results can be found in the table in
  Appendix~\ref{cha:tab:existence-wnafs}.  \else The results can be found in
  the table available as online-resource\footnote{Table available at
    \href{http://www.danielkrenn.at/wnaf-analysis}{\texttt{www.danielkrenn.at/wnaf-analysis}}.}. \fi
  Every element that was to check, has a \wNAF{}.

  To show the correctness, again let $y\in\Ztau$ and consider
  Algorithm~\vref{alg:get-wnaf} in cycle $\ell$. If $\tau$ divides $y$, then we
  append a digit $\xi_\ell=0$. Otherwise $y$ is congruent to a non-zero element
  $\xi_\ell$ of $\cD$ modulo $\tau^w$, since the digit set~$\cD$ was constructed
  in that way, cf.\ Definitions~\ref{def:red-residue-digit-set}
  and~\ref{def:min-norm-digit-set}. The digit $\xi_\ell$ is appended. Because
  $\tau^w$ divides $y-\xi_\ell$, the next $w-1$ digits will be zero. Therefore a
  correct \wNAF{} is produced.

  For the uniqueness let $\bfxi\in\wNAFsetfin$ be an expansions for the element
  $z\in\Ztau$. If $\tau \divides z$, then
  \begin{equation*}
    0 \equiv z = \NAFvalue{\bfxi} \equiv \xi_0 \pmod{\tau},
  \end{equation*}
  so $\tau \divides \xi_0 \in \cD$. Therefore $\xi_0=0$. If $\tau \ndivides z$,
  then $\tau \ndivides \xi_0$ and so $\xi_0\neq0$. This implies $\xi_1=0$,
  \dots, $\xi_{w-1}=0$. This means $\xi_0$ lies in the same residue class modulo
  $\tau^w$ as exactly one non-zero digit of $\cD$ (per construction of the digit
  set, cf.\ Definitions~\ref{def:red-residue-digit-set}
  and~\ref{def:min-norm-digit-set}), hence they are equal. Induction finishes
  the proof of the uniqueness.
\end{proof}


The existence check of the ``problematic cases'' depends on the choice of the
digit set $\cD$, cf.\ Remark~\vref{rem:choice-digit-set-voronoi-boundary}, but
for all possible choices the result stays true.


So we get that all elements of our lattice $\Ztau$ have a unique expansion. Now
we want to get a step further and look at all elements of $\C$. We will need
the following three lemmata, to prove that every element of $\C$ has a
\wNAF{}-expansion.


\begin{lemma}
  \label{lem:value-injective}
  
  The function $\NAFvaluename\restricted{\wNAFsetfinfin}$ is injective.
\end{lemma}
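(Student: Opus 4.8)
The plan is to reduce the statement to the uniqueness assertion for integer $w$-NAFs already established in Theorem~\ref{thm:wnaf-exist-unique}. That theorem says that every lattice point has a unique representation by a $w$-NAF supported on the nonnegative indices, so the only thing to do is to clear the ``denominators'' coming from negative indices.

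Concretely, let $\bfeta,\bfxi\in\wNAFsetfinfin$ with $\NAFvalue{\bfeta}=\NAFvalue{\bfxi}$. Both sequences have only finitely many nonzero digits, so each has a finite right-length; let $m\in\N_0$ be the maximum of these two right-lengths. Shifting a sequence $m$ places to the left, i.e.\ replacing $(\zeta_j)_{j\in\Z}$ by $(\zeta_{j-m})_{j\in\Z}$, clearly preserves the width-$w$ non-adjacency condition (a window of $w$ consecutive positions is mapped to a window of $w$ consecutive positions) and multiplies the value by $\tau^m$, since $\sum_j\zeta_{j-m}\tau^j=\tau^m\sum_k\zeta_k\tau^k$. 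By the choice of $m$, the shifted sequences $\tau^m\bfeta$ and $\tau^m\bfxi$ have all their nonzero digits at nonnegative positions and still have finite left-length, hence lie in $\wNAFsetfin$. Moreover $\NAFvalue{\tau^m\bfeta}=\tau^m\NAFvalue{\bfeta}=\tau^m\NAFvalue{\bfxi}=\NAFvalue{\tau^m\bfxi}$, and this common value is a $\Z$-linear combination of nonnegative powers of $\tau$ with coefficients in $\cD\subseteq\Ztau$, hence an element of $\Ztau$.

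Now Theorem~\ref{thm:wnaf-exist-unique} applies: the lattice point $\tau^m\NAFvalue{\bfeta}$ has a \emph{unique} representative in $\wNAFsetfin$, so $\tau^m\bfeta=\tau^m\bfxi$ as sequences. Since $\tau\neq0$, the left-shift by $m$ is an injective operation on sequences in $\cD^\Z$, and therefore $\bfeta=\bfxi$, proving injectivity of $\NAFvaluename\restricted{\wNAFsetfinfin}$. I do not expect a genuine obstacle here; the only points requiring a line of care are that the left-shift really preserves the $w$-NAF property and, for $m$ chosen large enough, maps $\wNAFsetfinfin$ into $\wNAFsetfin$, and that the common value lies in $\Ztau$ so that the integer uniqueness theorem is applicable. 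Everything else is bookkeeping.
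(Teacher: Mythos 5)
Your proposal is correct and follows essentially the same route as the paper: multiply both expansions by a sufficiently high power of $\tau$ to land in $\wNAFsetfin$ with a common value in $\Ztau$, invoke the uniqueness of integer \wNAF{}s from Theorem~\ref{thm:wnaf-exist-unique}, and shift back. You merely spell out the bookkeeping (shift preserves the non-adjacency condition, the common value lies in $\Ztau$) that the paper leaves implicit.
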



\begin{proof}
  Let $\bfeta$ and $\bfxi$ be elements of $\wNAFsetfinfin$ with
  $\NAFvalue{\bfeta} = \NAFvalue{\bfxi}$. This implies that $\tau^J
  \NAFvalue{\bfeta} = \tau^J \NAFvalue{\bfxi} \in \Ztau$ for some $J\in\Z$. By
  uniqueness of the integer \wNAF{}s, see Theorem~\vref{thm:wnaf-exist-unique},
  we conclude that $\bfeta=\bfxi$.
\end{proof}


\begin{lemma}
  \label{lem:Zinvtau-is-wnaffinfin}
  
  We have $\NAFvalue{\wNAFsetfinfin} = \Zinvtau$.
\end{lemma}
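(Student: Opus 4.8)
The plan is to prove the two inclusions separately; both are light consequences of the existence and uniqueness theorem, Theorem~\ref{thm:wnaf-exist-unique}, together with the elementary observation that an index shift of a $w$-NAF — equivalently, multiplication of its value by a power of $\tau$ — is again a $w$-NAF, since the non-adjacency condition only constrains the relative positions of the non-zero digits.

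For $\NAFvalue{\wNAFsetfinfin} \subseteq \Zinvtau$, I would take $\bfeta \in \wNAFsetfinfin$ and exploit that its right-length is finite: there is an $m \in \N_0$ with $\eta_j = 0$ for all $j < -m$. Then $\tau^m \NAFvalue{\bfeta} = \sum_{i \geq 0} \eta_{i-m} \tau^i$ is a finite sum with non-negative exponents and with all coefficients in $\cD \subseteq \Ztau$, hence lies in $\Ztau$, so $\NAFvalue{\bfeta} \in \tau^{-m} \Ztau \subseteq \Zinvtau$.

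For the reverse inclusion, I would pick $x \in \Zinvtau$ and write $x = \tau^{-m} z$ with $m \in \N_0$ and $z \in \Ztau$ (possible since, by definition, $\Zinvtau = \bigcup_{m \geq 0} \tau^{-m} \Ztau$). By Theorem~\ref{thm:wnaf-exist-unique}, $z$ has an integer $w$-NAF $\bfxi \in \wNAFsetfin$ with $\NAFvalue{\bfxi} = z$. Setting $\bfeta = \sequence{\eta_j}_{j \in \Z}$ with $\eta_j := \xi_{j+m}$ merely shifts indices, so $\bfeta$ is still a $w$-NAF and still has finite support, i.e., $\bfeta \in \wNAFsetfinfin$; moreover $\NAFvalue{\bfeta} = \tau^{-m} \NAFvalue{\bfxi} = \tau^{-m} z = x$. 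Hence $x \in \NAFvalue{\wNAFsetfinfin}$.

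I do not expect any real obstacle: the lemma is essentially a restatement of Theorem~\ref{thm:wnaf-exist-unique}, and the only point deserving a line of care is the shift-invariance of both the $w$-NAF property and the finiteness of the digit support. Combined with Lemma~\ref{lem:value-injective}, this shows that $\NAFvaluename$ restricts to a bijection $\NAFvaluename\restricted{\wNAFsetfinfin} \colon \wNAFsetfinfin \to \Zinvtau$, which is presumably the form in which the result will be used later.
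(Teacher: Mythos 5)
Your proof is correct and follows essentially the same route as the paper: for the forward inclusion, clear denominators by multiplying with $\tau^m$ to land in $\Ztau$; for the converse, write $x=\tau^{-m}z$ with $z\in\Ztau$, invoke Theorem~\ref{thm:wnaf-exist-unique} to get an integer \wNAF{} for $z$, and shift indices back. Your explicit remark that the \wNAF{} condition and finite support are shift-invariant is a point the paper leaves implicit, but the argument is the same.
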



\begin{proof}
  Let $\bfeta\in\wNAFsetfinfin$ and $\eta_j=0$ for all $\abs{j}>J$ for some
  $J \geq 1$. Then $\tau^J\NAFvalue{\bfeta}\in\Ztau$, which implies that there
  are some $a$, $b \in\Z$ such that
  \begin{equation*}
    \NAFvalue{\bfeta}=a \tau^{-(J-1)}+b\tau^{-J}\in\Zinvtau.
  \end{equation*}
  Conversely, if
  \begin{equation*}
    z=\sum_{j=0}^J \eta_j \tau^{-j}\in\Zinvtau,
  \end{equation*}
  we have $\tau^J z \in\Ztau$. Since every element of $\Ztau$ admits an integer
  \wNAF{}, see Theorem~\vref{thm:wnaf-exist-unique}, there is an
  $\bfxi\in\wNAFsetfinfin$ with $\NAFvalue{\bfxi}=z$.
\end{proof}


\begin{lemma}
  \label{lem:Zinvtau-dense}
  
  $\Zinvtau$ is dense in $\C$.
\end{lemma}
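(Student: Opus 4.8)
The plan is to show that $\Zinvtau = \bigcup_{J \geq 0} \tau^{-J} \Ztau$ is dense in $\C$ by exploiting the fact that multiplication by $\tau^{-1}$ shrinks distances (since $\abs{\tau} > 1$) while the lattice $\Ztau$ is, by construction, a full-rank lattice in $\C$ whose fundamental mesh has a fixed diameter, say $\delta$. First I would recall that $\Ztau = \Z + \Z\tau$ is a lattice in $\C \cong \R^2$, because $\tau$ is imaginary quadratic and hence $1$ and $\tau$ are $\R$-linearly independent; consequently there is a constant $\delta > 0$ such that every point of $\C$ lies within distance $\delta$ of some point of $\Ztau$. (One may take $\delta$ to be the diameter of the Voronoi cell $V$, which by Proposition~\vref{pro:voronoi-prop}\itemref{enu:voronoi-bounds} is at most $\abs{\tau} c_V$, but any crude bound suffices.)

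Next, given an arbitrary $z \in \C$ and $\eps > 0$, I would choose $J \in \N_0$ large enough that $\abs{\tau}^{-J} \delta < \eps$. The point $\tau^J z \in \C$ lies within distance $\delta$ of some lattice point $u \in \Ztau$, i.e.\ $\abs{\tau^J z - u} \leq \delta$. Dividing by $\tau^J$ gives
\begin{equation*}
  \abs{z - \tau^{-J} u} \leq \abs{\tau}^{-J} \delta < \eps,
\end{equation*}
and $\tau^{-J} u \in \Zinvtau$. Hence every neighbourhood of $z$ meets $\Zinvtau$, which is exactly the assertion that $\Zinvtau$ is dense in $\C$.

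I do not expect any real obstacle here; the only point requiring a line of justification is that $\Ztau$ is genuinely a (full-rank) lattice, i.e.\ that the covering radius $\delta$ is finite — and this is immediate from $\tau$ being imaginary quadratic, so that $\im\tau \neq 0$ and $\{1,\tau\}$ spans $\C$ over $\R$. Everything else is the standard ``shrink the lattice by powers of $\tau^{-1}$'' argument, and the needed geometric input on $V$ is already available from Proposition~\vref{pro:voronoi-prop}.
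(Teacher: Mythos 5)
Your proposal is correct and follows essentially the same route as the paper: multiply $z$ by a power of $\tau$, approximate the result by a nearby lattice point of $\Ztau$, and divide back, using $\abs{\tau}>1$ to make the error as small as desired. The only cosmetic difference is that the paper bounds the distance to the nearest lattice point explicitly by writing $\tau^K z=u+v\tau$ and taking $\floor{u}+\floor{v}\tau$ (giving the constant $1+\abs{\tau}$), whereas you invoke the finite covering radius of the lattice via the Voronoi cell; both yield the same argument.
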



\begin{proof}
  Let $z\in\C$ and $K\geq 0$. Then $\tau^K z=u+v\tau$ for some reals $u$ and
  $v$. We have
  \begin{equation*}
    \abs{z-\frac{\floor u+\floor v \tau}{\tau^K}}
    < \frac{1+\abs\tau}{\abs\tau^K},
  \end{equation*}
  which proves the lemma.
\end{proof}


Now we can prove the following theorem.


\begin{theorem}[Existence Theorem concerning $\C$]
  \label{thm:C-has-wnaf-exp}
  
  Let $z\in\C$. Then there is an $\bfeta\in\wNAFsetfininf$ such that
  $z=\NAFvalue{\bfeta}$, i.e., each complex number has a \wNAF{}-expansion.
\end{theorem}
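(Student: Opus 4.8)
The plan is to combine the three preceding lemmata with the compactness result (Proposition~\vref{pro:naf-compact}) via a standard approximation-and-limit argument. Fix $z\in\C$. By Lemma~\vref{lem:Zinvtau-dense}, $\Zinvtau$ is dense in $\C$, so there is a sequence $\sequence{z_n}_{n\in\N_0}$ in $\Zinvtau$ with $z_n\to z$; moreover, inspecting the proof of Lemma~\vref{lem:Zinvtau-dense}, we may assume $\abs{z-z_n}$ is as small as we like, in particular $\abs{z_n}$ is bounded, say $\abs{z_n}\le R$ for all $n$ and some fixed $R>0$. By Lemma~\vref{lem:Zinvtau-is-wnaffinfin}, each $z_n$ is the value of some $\bfeta^{(n)}\in\wNAFsetfinfin$, i.e.\ $\NAFvalue{\bfeta^{(n)}}=z_n$.

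The key step is to control the \emph{left-length} of the $\bfeta^{(n)}$ uniformly in $n$. By Corollary~\vref{cor:bounds-value} we have $\NAFd{\bfeta^{(n)}}{\bfzero}\,f_L\le\abs{\NAFvalue{\bfeta^{(n)}}}=\abs{z_n}\le R$, so $\NAFd{\bfeta^{(n)}}{\bfzero}\le R/f_L$, which by definition of the metric means that the largest index $j$ with $\eta^{(n)}_j\neq0$ satisfies $\abs{\tau}^j\le R/f_L$, i.e.\ $j\le\ell_0:=\log_{\abs\tau}(R/f_L)$. Hence every $\bfeta^{(n)}$ lies in $\wNAFsetellinf{\ell_0}$ after shifting away left-leading zeros (more precisely, $\bfeta^{(n)}\in\wNAFsetellell[w]{\ell_0}{\infty}$, interpreting a finite fractional part as an infinite one padded with zeros). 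By Proposition~\vref{pro:naf-compact}, $\wNAFsetellinf{\ell_0}$ is compact in the metric $\NAFdname$, so the sequence $\sequence{\bfeta^{(n)}}$ has a subsequence converging to some $\bfeta\in\wNAFsetellinf{\ell_0}\subseteq\wNAFsetfininf$. Since $\NAFvaluename$ is (Lipschitz) continuous by Proposition~\vref{pro:value-continuous}, passing to the limit along this subsequence gives $\NAFvalue{\bfeta}=\lim_n\NAFvalue{\bfeta^{(n)}}=\lim_n z_n=z$, as desired.

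The main obstacle is exactly the uniform bound on the left-length; without it the $\bfeta^{(n)}$ need not lie in any single compact $\wNAFsetellinf{\ell}$, and indeed $\wNAFsetfininf$ itself is not compact (as noted before Proposition~\vref{pro:naf-compact}). This is why Corollary~\vref{cor:bounds-value}, and ultimately the lower bound $f_L>0$ from Proposition~\vref{pro:lower-bound-fracnafs}, is essential: it converts a bound on the value into a bound on the length. Everything else is routine — one only has to be slightly careful about how a finite \wNAF{} in $\wNAFsetfinfin$ is regarded as an element of $\wNAFsetfininf$, but padding on the right with zeros is harmless both for the metric and for the value.
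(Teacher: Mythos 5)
Your proposal is correct and follows essentially the same route as the paper: density of $\Zinvtau$, representability of its elements by finite \wNAF{}s, the bound from Corollary~\vref{cor:bounds-value} to confine the approximating expansions to a single compact set $\wNAFsetellinf{\ell}$, then compactness (Proposition~\vref{pro:naf-compact}) and continuity of $\NAFvaluename$ (Proposition~\vref{pro:value-continuous}) to pass to the limit. Your explicit derivation of the uniform left-length bound from $f_L>0$ just spells out what the paper states more tersely.
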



\begin{proof}
  By Lemma~\vref{lem:Zinvtau-dense}, there is a sequence $z_n\in\Zinvtau$
  converging to $z$. By Lemma~\vref{lem:Zinvtau-is-wnaffinfin}, there is a
  sequence $\bfeta_n\in\wNAFsetfinfin$ with $\NAFvalue{\bfeta_n}=z_n$ for all
  $n$. By Corollary~\vref{cor:bounds-value} the sequence $\NAFd{\bfeta_n}{0}$ is
  bounded from above, so there is an $\ell$ such that
  $\bfeta_n\in\wNAFsetellfin{\ell} \subseteq \wNAFsetellinf{\ell}$. By
  Proposition~\vref{pro:naf-compact}, we conclude that there is a convergent
  subsequence $\bfeta'_n$ of $\bfeta_n$. Set
  $\bfeta:=\lim_{n\to\infty}{\bfeta'_n}$. By continuity of $\NAFvaluename$, see
  Proposition~\vref{pro:value-continuous}, we conclude that
  $\NAFvalue{\bfeta}=z$.
\end{proof}


\section{The Fundamental Domain}
\label{sec:fundamental-domain}


Let $\tau\in\C$ be an algebraic integer, imaginary quadratic.  Suppose that
$\abs{\tau}>1$. Let $w\in\N$ with $w\geq2$. Further let $\cD$ be a minimal norm
representatives digit set modulo $\tau^w$ as in
Definition~\vref{def:min-norm-digit-set}.

We now derive properties of the \emph{Fundamental Domain}, i.e., the set of
complex numbers representable by \wNAF{}s which vanish left of the
\taupoint{}. The boundary of the fundamental domain is shown to correspond to
complex numbers which admit more than one \wNAF{} differing left of the
\taupoint{}. Finally, an upper bound for the Hausdorff dimension of the boundary
is derived.


\begin{definition}[Fundamental Domain]
  \label{def:fund-domain}

  The set
  \begin{equation*}
    \cF := \NAFvalue{\wNAFsetinf} 
    = \set*{\NAFvalue{\bfxi}}{\bfxi\in\wNAFsetinf}.
  \end{equation*}
  is called \emph{fundamental domain}.
\end{definition}


The pictures in Figure~\vref{fig:w-weta} can also be reinterpreted as
fundamental domains for the $\tau$ and $w$ given there. The definition of the
fundamental domain for a general $\tau\in\C$ and a general finite digit set
containing zero is meaningful, too. The same is true for following proposition,
which is also valid for general $\tau\in\C$ and a general finite digit set
including zero.


\begin{proposition}
  \label{pro:fund-domain-compact}

  The fundamental domain $\cF$ is compact.
\end{proposition}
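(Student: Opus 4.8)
The plan is to show $\cF = \NAFvalue{\wNAFsetinf}$ is compact by exhibiting it as the continuous image of a compact set. The natural candidate is $\wNAFsetinf$ with the metric $\NAFdname$: by Proposition~\ref{pro:naf-compact} (applied with $\ell=0$), the space $\wNAFsetellinf{0} = \wNAFsetinf$ is compact, and by Proposition~\ref{pro:value-continuous} the value map $\NAFvaluename$ is (Lipschitz) continuous on $\wNAFsetfininf \supseteq \wNAFsetinf$. Hence $\cF = \NAFvalue{\wNAFsetinf}$ is the image of a compact space under a continuous map, and is therefore compact.

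The one subtlety is that the cited propositions are phrased for $\tau$ imaginary quadratic with the minimal norm digit set, whereas the remark preceding the proposition asserts the statement for general $\tau\in\C$ (with $\abs\tau>1$) and a general finite digit set containing $0$. Inspecting the two ingredients, this causes no difficulty: the compactness argument of Proposition~\ref{pro:naf-compact} uses only that $\cD$ is finite (so that one can pass to subsequences fixing digit after digit) and that $\abs\tau>1$ (so that $\NAFd{\bfeta}{\bfvartheta_j}\le\abs\tau^{-(J+1)}\to 0$); likewise the Lipschitz estimate in Proposition~\ref{pro:value-continuous} only needs a finite bound $c_\cD$ on the absolute values of the digits together with $\abs\tau>1$, which is exactly the general hypothesis. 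So both facts hold verbatim in the stated generality.

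Thus the proof is: let $\sequence{z_n}$ be any sequence in $\cF$; pick $\bfxi_n\in\wNAFsetinf$ with $\NAFvalue{\bfxi_n}=z_n$; by compactness of $\wNAFsetinf$ (Proposition~\ref{pro:naf-compact}) there is a subsequence $\bfxi_{n_k}\to\bfxi\in\wNAFsetinf$; by continuity of $\NAFvaluename$ (Proposition~\ref{pro:value-continuous}) we get $z_{n_k}=\NAFvalue{\bfxi_{n_k}}\to\NAFvalue{\bfxi}\in\cF$. Hence $\cF$ is sequentially compact, and being a subset of the metric space $\C$, it is compact. Alternatively one can phrase it without subsequences: continuous images of compact spaces are compact.

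There is essentially no hard part here — it is a one-line application of two already-established results. The only thing to be careful about is the scope-of-hypotheses bookkeeping described above; once that is dispatched, the argument is routine. (One could also remark that $\cF\subseteq\ball*{0}{f_U\abs\tau}$ by Corollary~\ref{cor:bounds-value}, so $\cF$ is bounded, and that closedness follows from the same limit argument; but the image-of-compact formulation is cleaner and avoids invoking the imaginary-quadratic-specific Corollary~\ref{cor:bounds-value}.)
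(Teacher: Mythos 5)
Your proof is correct and is essentially identical to the paper's: both cite Proposition~\ref{pro:naf-compact} for the compactness of $\wNAFsetinf$ and Proposition~\ref{pro:value-continuous} for the continuity of $\NAFvaluename$, and conclude that $\cF$ is the continuous image of a compact set. The extra bookkeeping about the generality of the hypotheses is a reasonable addition but does not change the argument.
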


\begin{proof}
  The set $\wNAFsetinf$ is compact, cf.\ Proposition~\vref{pro:naf-compact}. The
  compactness of the fundamental domain $\cF$ follows, since $\cF$ is the image
  of $\wNAFsetinf$ under the continuous function $\NAFvaluename$, cf.\
  Proposition~\vref{pro:value-continuous}.
\end{proof}

We can also compute the Lebesgue measure of the fundamental domain. This result
can be found in Remark~\vref{rem:meas-fund-domain}. We will need the results of
Sections~\ref{sec:cell-rounding-op} and~\ref{sec:sets-w_eta} for
calculating $\lmeas{\cF}$.


Next we want to get more properties of the fundamental domain. We will need the
following proposition, which will be extended in
Proposition~\vref{pro:char-boundary}.
 

\begin{proposition}
  \label{pro:char-boundary:part1}

  Let $z\in\cF$. If there exists a \wNAF{}
  $\bfxi_I\bfldot\bfxi_F\in\wNAFsetfininf$ with $\bfxi_I\neq\bfzero$ and such
  that $z=\NAFvalue{\bfxi_I\bfldot\bfxi_F}$, then $z\in\boundary*{\cF}$.
\end{proposition}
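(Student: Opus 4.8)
The plan is to show that every neighbourhood of $z$ contains points of $\C \setminus \cF$, which together with $z \in \cF$ (and $\cF$ being closed, by Proposition~\ref{pro:fund-domain-compact}) gives $z \in \boundary*{\cF}$. The key structural idea is that, since $\bfxi_I \neq \bfzero$, the value $z$ "carries" a genuine integer part, and multiplying by an appropriate power of $\tau^{-1}$ does not make it disappear. Concretely, write $\bfxi = \bfxi_I\bfldot\bfxi_F$ and let $J \geq 0$ be the largest index with $\xi_J \neq 0$, which exists since $\bfxi_I \neq \bfzero$. First I would observe that $\tau^{-(J+1)} z$ is the value of the \wNAF{} obtained by shifting $\bfxi$ right by $J+1$ places; this shifted \wNAF{} lies in $\wNAFsetinf$, so $\tau^{-(J+1)} z \in \cF$, and moreover its digit at position $-(J+1)$ equals $\xi_J \neq 0$. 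So without loss of generality (replacing $z$ by $\tau^{-(J+1)} z$ and using that multiplication by $\tau^{\pm 1}$ maps $\boundary*{\cF}$-membership questions around in a controlled way — actually it is cleaner to argue directly) we may assume $z \in \cF$ has a \wNAF{}-representation whose most significant non-zero digit sits at some negative position.

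The heart of the argument is then: if $z \in \cF$ but $z$ also equals $\NAFvalue{\bfxi}$ for a \wNAF{} $\bfxi$ with a non-zero digit at a non-negative position, then $z$ admits two \emph{different} \wNAF{}-expansions that differ to the left of the \taupoint{} — namely $\bfxi_I\bfldot\bfxi_F$ with $\bfxi_I \neq \bfzero$, and some $\bfzero\bfldot\bfeta$ with $\bfeta \in \wNAFsetinf$ (the representation witnessing $z \in \cF$); by Lemma~\ref{lem:value-injective} these would have to coincide if both were in $\wNAFsetfinfin$, but here one of them is genuinely infinite, so no contradiction with injectivity — instead I would extract geometric information. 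The cleanest route: apply the bounds of Corollary~\ref{cor:bounds-value} and Proposition~\ref{pro:upper-bound-fracnafs}. Since $z \in \cF = \NAFvalue{\wNAFsetinf}$, we have $\abs{z} \leq f_U$, whereas a \wNAF{} with a non-zero digit at position $\geq 0$ has absolute value $\geq f_L > \frac{1}{\abs\tau}f_L \cdot \abs\tau$... more precisely $\NAFd{\bfxi}{\bfzero} \geq 1$, so $\abs{z} \geq f_L$ only bounds below. This alone does not separate things, so the real obstacle is extracting a point \emph{outside} $\cF$ near $z$.

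For that I would use the self-affine (iterated function system) structure implicit in Remark~\ref{rem:ifs}: $\cF$ satisfies $\cF = \bigcup_{\eta} (\eta + \tau^{?}\cF\text{-like pieces})$, and a point of $\cF$ with a representation $\bfxi_I \bfldot \bfxi_F$, $\bfxi_I \neq \bfzero$, must lie where a "shifted-up" copy of $\cF$ meets the base copy. Since $\cF$ is compact with non-empty interior and $\lmeas{\cF} < \infty$ (indeed the tiling property, Corollary~\ref{cor:complex-plane-tiling}, says scaled copies of $\cF$ tile $\C$ with disjoint interiors), a point lying in two overlapping tiles of such a tiling must lie on the boundary of at least one of them — and by the point-symmetry / translation structure, on $\boundary*{\cF}$ itself. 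Concretely: the hypothesis gives that $z$ lies both in $\cF$ and in a lattice-translate-plus-scaled copy $\tau^{k}(u + \cF)$ for suitable $u \in \Ztau \setminus \{0\}$, $k$ related to $J$ and $w$; since these two tiles in the $\tau^k$-scaled tiling have disjoint interiors, $z$ cannot be in the interior of $\cF$, hence $z \in \boundary*{\cF}$. I expect the main technical obstacle to be pinning down precisely which scaled/translated copy of $\cF$ the \wNAF{} $\bfxi_I\bfldot\bfxi_F$ forces $z$ into — i.e.\ getting the bookkeeping of the $w$-NAF block condition right so that the piece genuinely is a full copy of $\cF$ (not just a sub-piece) and genuinely is distinct from the base copy — and invoking exactly the right form of the tiling/interior-disjointness statement; once that alignment is set up, "$z$ in two interior-disjoint tiles $\Rightarrow$ $z \in \boundary*{\cF}$" is immediate.
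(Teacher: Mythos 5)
There is a genuine gap: your main route is circular within the logical structure of the paper. The step you rely on --- ``$z$ lies in two tiles of the tiling with disjoint interiors, hence $z\in\boundary*{\cF}$'' --- presupposes that distinct tiles $\tau^k\NAFvalue{\bfxi}+\tau^{k-w+1}\cF$ meet only in their boundaries. But that interior-disjointness is exactly the content of Corollary~\ref{cor:complex-plane-tiling}, and the paper proves that corollary \emph{by invoking Proposition~\ref{pro:char-boundary:part1}}: points in the intersection of two tiles have two expansions differing left of the \taupoint{}, and it is this proposition that converts that into boundary membership. Without the proposition, Theorems~\ref{thm:wnaf-exist-unique} and~\ref{thm:C-has-wnaf-exp} only give a \emph{covering} of $\C$ by the tiles, not disjointness of interiors. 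One could in principle get interior-disjointness independently by a measure argument (the contraction ratios of the IFS in Remark~\ref{rem:ifs} have squares summing to $1$), but that machinery (the value of $\lmeas{\cF}$, the null boundary) is only developed later in the paper and itself rests on this proposition, so you would have to rebuild it from scratch. Your first paragraph's shift by $\tau^{-(J+1)}$ also does not help: it moves the nonzero digit to a negative position, destroying the very hypothesis $\bfxi_I\neq\bfzero$ you need to exploit, and boundary membership is not obviously preserved under that map.

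The paper's actual argument is more elementary and worth internalising: assume $z\in\interior*{\cF}$. Truncating $\bfxi_F$ produces a point $y\in\Zinvtau$ near $z$, still in $\interior*{\cF}$, whose finite expansion $\bfvartheta$ has nonzero integer part. Since $y\in\cF$, it also has an expansion $\bfzero\bfldot\bfeta$; truncating \emph{that} gives $y'\in\Zinvtau$ very close to $y$ with a finite expansion having \emph{zero} integer part. The difference $y'-y\in\Zinvtau$ is so small (Corollary~\ref{cor:bounds-value}) that its expansion is supported on indices far to the right of the support of $\bfvartheta$, so digit-wise addition produces a second finite expansion of $y'$ with \emph{nonzero} integer part. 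This contradicts the injectivity of $\NAFvaluename$ on $\wNAFsetfinfin$ (Lemma~\ref{lem:value-injective}). The key idea your proposal is missing is precisely this: reduce to points of $\Zinvtau$, where uniqueness of finite expansions is already available, rather than appealing to a global tiling statement that has not yet been established.
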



\begin{proof}
  Assume that $z\in\interior*{\cF}$. Then there is an $\eps_z>0$ such that
  $\ballo{z}{\eps_z} \subseteq \interior*{\cF}$. Let $\eps_z$ be small enough
  such that there exists a $y\in\ball{z}{\eps_z} \cap \Zinvtau$ and a
  $\bfvartheta=\bfvartheta_I\bfldot\bfvartheta_F \in\wNAFsetfinfin$ with
  $y=\NAFvalue{\bfvartheta}$ and such that $\bfvartheta_I\neq\bfzero$. Let $k$
  be the right-length of $\bfvartheta$.

  Choose $0<\varepsilon_y<\tau^{-k-w}f_L$ such that $\ballo{y}{\eps_y} \subseteq
  \interior*{\cF}$.  Since $y\in\cF$ there is an $\bfeta\in\wNAFsetinf$ with
  $y=\NAFvalue{\bfeta}$. Therefore, there is a $y'\in\ballo{y}{\eps_y}\cap
  \Zinvtau$ with $y'=\NAFvalue{\bfeta'}$ for some
  $\bfeta'=\bfeta_I'\bfldot\bfeta_F' \in\wNAFsetfinfin$ with $\bfeta_I'=\bfzero$
  (by ``cutting'' the infinite right side of $\bfeta$).
  
  As $y'-y \in \Zinvtau$, there is a $\bfxi\in\wNAFsetfinfin$ with
  $\NAFvalue{\bfxi}=y'-y$. By Corollary~\ref{cor:bounds-value} we obtain
  $\NAFd{\bfxi}{\bfzero} < \eps_y/f_L<\tau^{-k-w}$. Thus $\xi_\ell=0$ for all
  $\ell \geq -k-(w-1)$.

  Now $y'=y+(y'-y)$ and we get a
  $\bfvartheta'=\bfvartheta_I'\bfldot\bfvartheta_F' \in\wNAFsetfinfin$ with
  $\NAFvalue{\bfvartheta'}=y'$ by digit-wise addition of $\bfvartheta$ and
  $\bfxi$. Note that at each index at most one summand (digit) is non-zero and
  that the \wNAF{}-condition is fulfilled. We have $\bfvartheta_I'\neq\bfzero$,
  since $\bfvartheta_I\neq\bfzero$.

  So we got two different \wNAF{}s in $\wNAFsetfinfin$ for one
  element $y'\in \Zinvtau$, which is impossible due to uniqueness, see
  Lemma~\vref{lem:value-injective}. Thus we have a contradiction.
\end{proof}


The complex plane has a tiling property with respect to the fundamental
domain. This fact is stated in the following corollary to
Theorem~\vref{thm:wnaf-exist-unique} and Theorem~\vref{thm:C-has-wnaf-exp}.

\begin{corollary}[Tiling Property]
  \label{cor:complex-plane-tiling}

  The complex plane can be tiled with scaled versions of the fundamental domain
  $\cF$. Only finitely many different size are needed. More precisely: Let
  $K\in\Z$, then
  \begin{equation*}
    \C = 
    \bigcup_{\substack{k\in\set{K,K+1,\dots,K+w-1} \\ \bfxi \in \wNAFsetfin
      \\ \text{$k\neq K+w-1$ implies $\bfxi_0\neq0$}}}
    \left( \tau^k \NAFvalue{\bfxi} + \tau^{k-w+1} \cF \right),
  \end{equation*}
  and the intersection of two different $\tau^k \NAFvalue{\bfxi} + \tau^{k-w+1}
  \cF$ and $\tau^{k'-w+1} \NAFvalue{\bfxi'} + \tau^{k'} \cF$ in this union is a
  subset of the intersection of their boundaries.
\end{corollary}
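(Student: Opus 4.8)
The plan is to deduce this tiling statement from the existence theorem (Theorem~\ref{thm:C-has-wnaf-exp}), the uniqueness theorem (Theorem~\ref{thm:wnaf-exist-unique}, via injectivity of $\NAFvaluename$ on $\wNAFsetfinfin$, Lemma~\ref{lem:value-injective}), together with the simple observation that every $\bfeta\in\wNAFsetfininf$ can be grouped into an ``integer-part piece'' and a tail lying (after scaling) in $\cF$. First I would fix $K\in\Z$ and take an arbitrary $z\in\C$. By Theorem~\ref{thm:C-has-wnaf-exp} there is $\bfeta\in\wNAFsetfininf$ with $z=\NAFvalue{\bfeta}$. Now I would cut the expansion of $\bfeta$ at index $K-w+1$: write $z=\tau^{K-w+1}\bigl(\NAFvalue{\bfeta^{(\geq K-w+1)}} + \NAFvalue{\bfeta^{(<K-w+1)}}\bigr)$, where the first summand, after dividing by $\tau^{K-w+1}$, is an integer $w$-NAF $\bfzeta\in\wNAFsetfin$, and the second summand lies in $\cF$ (it is the value of a $w$-NAF vanishing left of the $\tau$-point). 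So far this writes $z\in\tau^{K-w+1}\NAFvalue{\bfzeta}+\tau^{K-w+1}\cF$, i.e.\ the ``$k=K+w-1$'' slice of the union; hence the union covers $\C$. To get the more refined statement with the varying exponent $k\in\{K,\dots,K+w-1\}$ and the condition ``$\bfxi_0\neq0$ unless $k=K+w-1$'', I would instead look at where the lowest non-zero digit of $\bfzeta$ sits: if $\bfzeta=\bfzero$ or its lowest non-zero digit has index $\geq 0$ we may need no shift, but in general the $w$-NAF condition forces the low-order digits to be zero, and I would absorb those leading zeros into the $\cF$-part. Concretely, if the lowest non-zero digit of $\bfzeta$ has index $m\in\{0,\dots,w-1\}$ (the $w$-NAF condition guarantees $m\leq w-1$ once we have truncated suitably, after possibly enlarging the truncation level), then $\tau^{K-w+1}\NAFvalue{\bfzeta}=\tau^{K-w+1+m}\NAFvalue{\bfxi}$ with $\bfxi_0\neq0$, and the remaining $m$ digits of zeros get folded into $\tau^{K-w+1}\cF\subseteq\tau^{K-w+1+m}\cF$ (since $\tau^{-1}\cF\subseteq\cF$, which follows from $\tau^{-1}V\subseteq V$, Proposition~\ref{pro:voronoi-prop}, because $\cF\subseteq\tau^{2w-1}V$ by Proposition~\ref{pro:upper-bound-fracnafs} and more directly from the IFS structure $\cF=\bigcup_d(\tau^{-w}\cdot(\text{piece}))$). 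Setting $k:=K+m$ gives a representation in the asserted union, so the union covers $\C$.

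For the intersection statement, suppose $p\in\bigl(\tau^k\NAFvalue{\bfxi}+\tau^{k-w+1}\cF\bigr)\cap\bigl(\tau^{k'}\NAFvalue{\bfxi'}+\tau^{k'-w+1}\cF\bigr)$ with the two index-word pairs $(k,\bfxi)\neq(k',\bfxi')$. Write $p=\tau^{k}\NAFvalue{\bfxi}+\tau^{k-w+1}\NAFvalue{\bfvartheta}=\tau^{k'}\NAFvalue{\bfxi'}+\tau^{k'-w+1}\NAFvalue{\bfvartheta'}$ with $\bfvartheta,\bfvartheta'\in\wNAFsetinf$. Each side is the value of a genuine $w$-NAF in $\wNAFsetfinfin$: the concatenation $\bfxi$ shifted up by $k$, with a $\tau$-point inserted, followed by $\bfvartheta$; the $w$-NAF condition at the junction holds precisely because of the side conditions ``$\bfxi_0\neq 0$'' (which forces, via the $w$-NAF property of $\bfxi$ and the fact that $\bfvartheta$ begins with at least... no, $\bfvartheta$ may start with a non-zero digit) — here is where I must be careful. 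I would argue that the condition guaranteeing a valid $w$-NAF at the junction is built into the indexing: the block between the lowest non-zero digit of the integer part and the highest digit of the $\cF$-part has length at least $w-1$, because $\bfxi$ contributes its digits at indices $\geq k$ (lowest non-zero at index $\geq k$ when $\bfxi_0\neq 0$, i.e.\ at absolute index $\geq k$), while the $\cF$-part lives at indices $\leq k-w$ after the $\tau^{k-w+1}$ scaling... so there are $w-1$ zero positions in between, ensuring the $w$-NAF property. Thus both sides of the equality for $p$ are values of elements of $\wNAFsetfinfin$; by injectivity (Lemma~\ref{lem:value-injective}) these two $w$-NAFs must be equal as sequences. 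Comparing sequences then forces $k=k'$ and $\bfxi=\bfxi'$ and $\bfvartheta=\bfvartheta'$, contradicting $(k,\bfxi)\neq(k',\bfxi')$ — unless $p$ fails the ``interior'' genericity, i.e.\ unless one of $\bfvartheta,\bfvartheta'$ can be chosen in more than one way, which happens exactly when $\tau^{k-w+1}\NAFvalue{\bfvartheta}=p-\tau^k\NAFvalue{\bfxi}$ lies on $\boundary*{\cF}$ (scaled and translated). In that case $p$ lies in the boundary of the corresponding tile. Hence any $p$ in the intersection of two distinct tiles lies in the intersection of their boundaries.

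The main obstacle I anticipate is the bookkeeping at the junction between the integer part $\bfxi$ and the fractional tail: making precise that the side condition ``$k\neq K+w-1$ implies $\bfxi_0\neq 0$'' is exactly what is needed so that (i) every $z$ lands in exactly one ``type'' of tile in the covering argument, and (ii) concatenating $\bfxi$ and $\bfvartheta$ across the $\tau$-point never violates the width-$w$ non-adjacency condition, so that injectivity of $\NAFvaluename$ on $\wNAFsetfinfin$ can be invoked. I would handle this by introducing, for each tile, the canonical $w$-NAF obtained by concatenation and verifying the $w$-NAF property position-by-position at the $w-1$ indices straddling the $\tau$-point, using only that $\bfxi\in\wNAFsetfin$, $\bfvartheta\in\wNAFsetinf$, and the index offset $w-1$ between the last non-zero digit of the integer part and the first digit of the fractional part. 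Everything else — covering of $\C$, the boundary dichotomy — then follows formally from Theorems~\ref{thm:wnaf-exist-unique} and~\ref{thm:C-has-wnaf-exp} and Lemma~\ref{lem:value-injective}, much as the analogous tiling of $\R$ by intervals follows from existence and uniqueness of base-$b$ expansions.
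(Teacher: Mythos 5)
Your covering argument has the right strategy --- it is essentially the paper's: invoke Theorem~\ref{thm:C-has-wnaf-exp}, cut the expansion of $z$ at a suitable index, and use the non-adjacency condition to force $w-1$ zeros at the junction so that the tail lands in a scaled copy of $\cF$. But as written the bookkeeping does not produce tiles of the asserted form: cutting at index $K-w+1$ gives a head $\tau^{K-w+1}\NAFvalue{\bfzeta}$, and if the lowest non-zero digit of $\bfzeta$ sits at relative index $m$ the head is $\tau^{K-w+1+m}\NAFvalue{\bfxi}$, not $\tau^{K+m}\NAFvalue{\bfxi}$; moreover $m$ need not lie in $\{0,\dots,w-1\}$ at all (all of $\eta_{K-w+1},\dots,\eta_{K+w-1}$ may vanish while a much higher digit is non-zero, in which case your recipe exits the allowed range of $k$). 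The clean fix, which is what the paper does, is to look at the window $\eta_{K+w-1}\dots\eta_K$ of the expansion of $z$: by non-adjacency it contains at most one non-zero digit $\eta_k$ (and one sets $k=K+w-1$ if the window is entirely zero, which is exactly the case where $\xi_0=0$ is permitted); the $w-1$ digits below index $k$ are then automatically zero and cutting at $k$ puts $z$ into a tile of the prescribed form.

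The genuine gap is in the intersection statement. Lemma~\ref{lem:value-injective} gives injectivity of $\NAFvaluename$ only on $\wNAFsetfinfin$, i.e.\ on expansions of \emph{finite} right-length; the fractional parts $\bfvartheta,\bfvartheta'\in\wNAFsetinf$ of a point of a tile are in general infinite, and injectivity is simply false for infinite expansions --- indeed the whole point is that boundary points admit several. So the dichotomy ``either the two concatenated sequences coincide, contradicting $(k,\bfxi)\neq(k',\bfxi')$, or $p$ is a boundary point'' is not established by what you cite; and your fallback claim that a second admissible choice of $\bfvartheta$ occurs ``exactly when the point lies on $\boundary*{\cF}$'' is precisely the nontrivial fact that needs proving. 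The missing ingredient is Proposition~\ref{pro:char-boundary:part1}: a point of $\cF$ that also admits an expansion with non-zero integer part lies on $\boundary*{\cF}$. With that in hand the paper's argument is immediate: a point in two distinct tiles yields, after translating and rescaling into $\cF$ relative to either tile, a second expansion with non-zero integer part, hence a boundary point of that tile, and by symmetry it lies on both boundaries. You should route your proof through that proposition (whose own proof uses the injectivity lemma, but applied to nearby points of $\Zinvtau$ with genuinely finite expansions, which is where that lemma is legitimately available).
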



Later, after Proposition~\vref{pro:boundary-fund-dom-dim-upper}, we will know
that the intersection of the two different sets of the tiling in the
previous corollary has Lebesgue measure $0$.


\begin{proof}[Proof of Corollary~\ref{cor:complex-plane-tiling}]
  Let $z\in\C$. Then, according to Theorem~\vref{thm:C-has-wnaf-exp}, there is a
  $\bfeta\in\wNAFsetfininf$ with $z=\NAFvalue{\bfeta}$. We look at the block
  $\eta_{K+w-1}\ldots\eta_{K+1}\eta_K$. If this block is $\bfzero$, then set
  $k=K+w-1$, otherwise there is at most one non-zero digit in it, which we call
  $\eta_k$. So the digits $\eta_{k-1},\dots,\eta_{k-w+1}$ are always zero. We
  set $\bfxi=\ldots\eta_{k+1}\eta_k\bfldot0\ldots$, and we obtain
  \begin{equation*}
    z - \tau^k \NAFvalue{\bfxi} \in \tau^{k-w+1} \cF.
  \end{equation*}
  
  Now set $F = \tau^k \NAFvalue{\bfxi} + \tau^{k-w+1} \cF$ and $F' = \tau^{k'}
  \NAFvalue{\bfxi'} + \tau^{k'-w+1} \cF$ in a way that both are in the union of
  the tiling with $(k,\bfxi)\neq(k',\bfxi')$ and consider their intersection
  $I$. Since every point in there has two different representations per
  construction, we conclude that $I\subseteq \boundary*{F}$ and $I\subseteq
  \boundary*{F'}$ by Proposition~\vref{pro:char-boundary:part1}.
\end{proof}


\begin{remark}[Iterated Function System]
  \label{rem:ifs}

  Let $\tau\in\C$ and $\cD$ be a general finite digit set containing zero. We
  have two possibilities building the elements $\bfxi\in\wNAFsetinf$ from left
  to right. We can either append $0$, what corresponds to a division through
  $\tau$, so we define $\f{f_0}{z} = \frac{z}{\tau}$. Or we can append a
  non-zero digit $\vartheta\in\cD^\bullet$ and then add $w-1$ zeros. In this
  case, we define $\f{f_\vartheta}{z} = \frac{\vartheta}{\tau} +
  \frac{z}{\tau^w}$. Thus we get the \emph{iterated function system}
  $\ifs{f_\vartheta}{\vartheta\in\cD}$, cf.\ Edgar~\cite{Edgar:2008:measur} or
  Barnsley~\cite{Barnsley:1988:fractals-ew}. All $f_\vartheta$ are
  \emph{similarities}, and the iterated function system realizes the
  \emph{ratio list} $\ral{r_\vartheta}{\vartheta\in\cD}$ with
  $r_0=\abs{\tau}^{-1}$ and for $\vartheta\in\cD^\bullet$ with
  $r_\vartheta=\abs{\tau}^{-w}$, i.e., we have \emph{contracting
    similarities}. So our set can be rewritten as
  \begin{equation*}
    \cF = \bigcup_{\vartheta\in\cD} \f{f_\vartheta}{\cF}
    = \frac{1}{\tau} \cF \cup \bigcup_{\vartheta\in\cD^\bullet} 
    \left(  \frac{\vartheta}{\tau} + \frac{1}{\tau^w} \cF \right).
  \end{equation*}

  Furthermore, if we have an imaginary quadratic algebraic integer $\tau$ and a
  minimal norm representatives digit set, the iterated function system
  $\ifs{f_\vartheta}{\vartheta\in\cD}$ fulfils \emph{Moran's open set
    condition}\footnote{``Moran's open set condition'' is sometimes just called
    ``open set condition''}, cf.\ Edgar~\cite{Edgar:2008:measur} or
  Barnsley~\cite{Barnsley:1988:fractals-ew}. The \emph{Moran open set} used is
  $\interior*{\cF}$. This set satisfies
  \begin{equation*}
    \f{f_\vartheta}{\interior*{\cF}}\cap\f{f_{\vartheta'}}{\interior*{\cF}}
    =\emptyset
  \end{equation*}
  for $\vartheta \neq \vartheta'\in\cD$ and
  \begin{equation*}
    \interior*{\cF} \supseteq \f{f_\vartheta}{\interior*{\cF}}
  \end{equation*}
  for all $\vartheta\in\cD$. We remark that the first condition follows directly
  from the tiling property in Corollary~\vref{cor:complex-plane-tiling} with
  $K=-1$. The second condition follows from the fact that $f_\vartheta$ is an
  open mapping.
\end{remark}

Now we want to have a look at a special case. 

\begin{remark}[Koch snowflake]
  Let $\tau=\frac{3}{2} + \frac{1}{2}\sqrt{-3}$ and $w=2$. Then our digit set
  consists of $0$ and powers of primitive sixth roots of unity, i.e., $\cD =
  \set{0} \cup \set*{\zeta^k}{\text{$k\in\N_0$ with $0 \leq k < 6$}}$ with
  $\zeta = e^{i\pi/3}$, cf.\ Koblitz~\cite{Koblitz:1998:ellip-curve}.

  We get
  \begin{equation*}
    \cF = \frac{1}{\tau} \cF \cup \bigcup_{0 \leq k < 6}
    \left(  \frac{\zeta^k}{\tau} + \frac{1}{\tau^2} \cF \right).
  \end{equation*}
  Since the digit set is invariant with respect to multiplication by $\zeta$,
  i.e., rotation by $\frac{\pi}{3}$, the same is true for $\cF$.  Using this and
  $\tau = \sqrt{3} e^{i\pi/6}$ yields
  \begin{equation*}
    \cF = \frac{e^{i\pi/2}}{\sqrt{3}} \cF \cup \bigcup_{0 \leq k < 6}
    \left(  \frac{e^{ik\pi/3+i\pi/2}}{\sqrt{3}} + \frac{1}{3} \cF \right).
  \end{equation*}
  This is an iterated function system of the \emph{Koch snowflake}\footnote{The
    fact that the Koch snowflake has the mentioned iterated function system
    seems to be commonly known, although we were not able to find a reference,
    where this statement is proved. Any hints are welcome.}, it is drawn in
  Figure~\vref{fig:w-weta:3}.
\end{remark}


Next we want to have a look at the Hausdorff dimension of the boundary of
$\cF$. We will need the following characterisation of the boundary, which is an
extension to Proposition~\vref{pro:char-boundary:part1}. 

\begin{proposition}[Characterisation of the Boundary]
  \label{pro:char-boundary}

  Let $z\in\cF$. Then $z\in\boundary*{\cF}$ if and only if there exists a
  \wNAF{} $\bfxi_I\bfldot\bfxi_F\in\wNAFsetfininf$ with $\bfxi_I\neq\bfzero$,
  such that $z=\NAFvalue{\bfxi_I\bfldot\bfxi_F}$.
\end{proposition}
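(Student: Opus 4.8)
The plan is to prove the remaining ``only if'' direction of Proposition~\ref{pro:char-boundary}, since the ``if'' direction is exactly Proposition~\ref{pro:char-boundary:part1}. So let $z\in\boundary*{\cF}$; I want to produce a \wNAF{} $\bfxi_I\bfldot\bfxi_F\in\wNAFsetfininf$ with $\bfxi_I\neq\bfzero$ and $z=\NAFvalue{\bfxi_I\bfldot\bfxi_F}$. The natural approach is via the iterated function system of Remark~\ref{rem:ifs}: $\cF=\bigcup_{\vartheta\in\cD}\f{f_\vartheta}{\cF}$, and since this is a \emph{finite} union of compact sets, a boundary point $z$ of $\cF$ must lie in $\f{f_\vartheta}{\cF}$ for at least one $\vartheta$, and indeed in $\boundary*{\f{f_\vartheta}{\cF}}$ unless it is an interior point of some piece. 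The key dichotomy I would exploit: either $z$ lies in the interior of one of the pieces $\f{f_\vartheta}{\cF}$, or it lies on the boundary of every piece containing it. By the open set condition (Remark~\ref{rem:ifs}), the interiors $\f{f_\vartheta}{\interior*{\cF}}$ are pairwise disjoint and contained in $\interior*{\cF}$; so if $z\in\f{f_\vartheta}{\interior*{\cF}}$ for some $\vartheta$, then $z\in\interior*{\cF}$, contradicting $z\in\boundary*{\cF}$. Hence $z\in\f{f_\vartheta}{\boundary*{\cF}}$ for every $\vartheta$ with $z\in\f{f_\vartheta}{\cF}$, and there is at least one such $\vartheta$.

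Now I iterate. Starting from $z_0:=z\in\boundary*{\cF}$, pick $\vartheta_1\in\cD$ with $z_0\in\f{f_{\vartheta_1}}{\boundary*{\cF}}$, i.e., $z_1:=f_{\vartheta_1}^{-1}(z_0)\in\boundary*{\cF}$; note $z_0=f_{\vartheta_1}(z_1)$ contributes the digit $\vartheta_1$ (at position $-1$, followed by $w-1$ zeros if $\vartheta_1\ne0$, or just a shift if $\vartheta_1=0$). Repeating, I obtain sequences $z_k\in\boundary*{\cF}$ and digits $\vartheta_k\in\cD$ with $z_{k-1}=f_{\vartheta_k}(z_k)$, which assembles into a \wNAF{} $\bfxi_F:=0\bfldot\xi_{-1}\xi_{-2}\ldots$ with $\NAFvalue{\bfxi_F}=z$ (this is just re-deriving $z\in\cF$). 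The crucial point is to argue that \emph{not all} digits $\vartheta_k$ can be chosen to be $0$; equivalently, that the choice can be steered so that some $\vartheta_k\neq0$, which then furnishes the nonzero integer part (after multiplying through by the appropriate power of $\tau$). If $\vartheta_k=0$ were forced for all $k$, then $z=\NAFvalue{0\bfldot 0^\infty}=0$, but $0\in\interior*{\cF}$ by Proposition~\ref{pro:upper-bound-fracnafs} (the ball $\ball*{0}{\frac12}$ is contained in $\cF$ up to scaling, or more directly $0$ is an interior point since a whole neighbourhood has \wNAF{}s vanishing left of the point), contradicting $z\in\boundary*{\cF}$. More carefully: the map $f_0$ is just $z\mapsto z/\tau$, and $f_0(\boundary*{\cF})\subseteq\boundary*{\cF}$; I need to rule out that the orbit of $z$ stays in the ``$f_0$-part'' of the boundary forever. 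Here I would use that $\f{f_0}{\cF}=\tau^{-1}\cF$ is a \emph{strict} subset of $\cF$ (by the measure computation — $\lmeas{\f{f_0}{\cF}}=\abs\tau^{-2}\lmeas{\cF}<\lmeas{\cF}$, so the other pieces $\f{f_\vartheta}{\cF}$, $\vartheta\ne0$, cover a region outside $\tau^{-1}\cF$), together with the fact that $\bigcap_{k\ge0}f_0^k(\cF)=\{0\}$ (since $f_0$ is a contraction with fixed point $0$). So if $z\ne0$, only finitely many initial $f_0$-steps are possible before a nonzero digit must be used, and $z=0$ is excluded since $0\in\interior*{\cF}$.

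The main obstacle I anticipate is the bookkeeping to turn ``a nonzero digit appears somewhere in the orbit'' into a genuine \wNAF{} of the form $\bfxi_I\bfldot\bfxi_F$ with $\bfxi_I\ne\bfzero$: if the first nonzero digit $\vartheta_{k_0}$ appears at orbit-step $k_0\geq1$, then $\tau^{k_0-1}z=\NAFvalue{\bfxi_I\bfldot\bfxi_F}$ where $\bfxi_I$ starts with $\vartheta_{k_0}\neq0$, and I must check that this rescaled expansion is still a legal \wNAF{} (the $w-1$ zeros following $\vartheta_{k_0}$ are built into $f_{\vartheta_{k_0}}$, so the \wNAF{}-condition at the \taupoint{} is automatic) and that $z$ itself has the form $0\bfldot\xi_{-1}\ldots$ so that multiplying by $\tau^{k_0-1}$ indeed moves the nonzero digit to the integer part. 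A second subtlety is the non-uniqueness of the choice of $\vartheta_k$ when $z_{k-1}$ lies on an overlap $\f{f_\vartheta}{\boundary*{\cF}}\cap\f{f_{\vartheta'}}{\boundary*{\cF}}$ — but since I only need \emph{some} valid expansion with nonzero integer part, I simply make a choice that yields a nonzero digit as soon as one is available, which the argument above guarantees happens. Once the nonzero digit is placed, the rest of the tail can be completed arbitrarily by following the IFS, and continuity of $\NAFvaluename$ (Proposition~\ref{pro:value-continuous}) together with compactness of $\wNAFsetinf$ (Proposition~\ref{pro:naf-compact}) ensures the resulting infinite \wNAF{} has value exactly $z$.
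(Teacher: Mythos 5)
There is a genuine gap in your ``only if'' direction. The IFS iteration you describe writes $z$ as $\NAFvalue{0\bfldot\vartheta_1\ldots}$ (with the blocks of $w-1$ zeros inserted after each non-zero $\vartheta_k$); every digit it produces sits at a \emph{negative} index, so it only ever yields expansions of $z$ whose integer part is $\bfzero$. The proposition asks for an expansion of $z$ \emph{itself} of the form $\bfxi_I\bfldot\bfxi_F$ with $\bfxi_I\neq\bfzero$, i.e.\ with a non-zero digit at a non-negative index --- such an expansion witnesses that $z$ also lies in a \emph{neighbouring} tile of the tiling of $\C$. Your step of ``multiplying through by the appropriate power of $\tau$'' produces an expansion of $\tau^{k_0}z$ with non-zero integer part, which is a statement about a different point and says nothing about $z$. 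Worse, the criterion your argument actually establishes --- some $\vartheta_k$ is non-zero unless $z=0$ --- holds for \emph{every} non-zero $z\in\cF$, interior or not; if it characterised the boundary you would conclude $\boundary*{\cF}=\cF\setminus\set{0}$, contradicting $\lmeas{\boundary*{\cF}}=0$ (Proposition~\ref{pro:boundary-fund-dom-dim-upper}) together with $\lmeas{\cF}>0$ (Remark~\ref{rem:meas-fund-domain}). The internal subdivision $\cF=\bigcup_{\vartheta\in\cD}\f{f_\vartheta}{\cF}$ cannot detect $\boundary*{\cF}$, which is determined by how $\cF$ meets its complement in $\C$. (Your observation that the open set condition forces $z\in\f{f_\vartheta}{\boundary*{\cF}}$ whenever $z\in\boundary*{\cF}\cap\f{f_\vartheta}{\cF}$ is correct, but it proves sub-self-similarity of the boundary, not the proposition.)

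What the ``only if'' direction needs is an approximation of $z$ from \emph{outside} $\cF$: since $z\in\boundary*{\cF}$, choose $y_j\to z$ with $y_j\notin\cF$. By Theorem~\ref{thm:C-has-wnaf-exp} each $y_j$ has an expansion $\bfeta_j\in\wNAFsetfininf$, and its integer part must be non-zero, since otherwise $y_j\in\cF$. The tiling property (Corollary~\ref{cor:complex-plane-tiling}) shows that only finitely many distinct integer parts can occur for points in a bounded neighbourhood of $\cF$, so one may pass to a subsequence with a fixed integer part $\bfxi_I\neq\bfzero$; compactness of the relevant space of expansions (Proposition~\ref{pro:naf-compact}) and continuity of $\NAFvaluename$ (Proposition~\ref{pro:value-continuous}) then produce a limit expansion $\bfxi_I\bfldot\bfxi_F$ of $z$ with $\bfxi_I\neq\bfzero$. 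This external-approximation step is the missing idea, and no amount of bookkeeping inside the IFS coding of $\cF$ will substitute for it.
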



\begin{proof}
  Let $z\in\boundary*{\cF}$. For every $\eps>0$, there is a
  $y\in\ball{z}{\eps}$, such that $y\not\in\cF$. Thus we have a sequence
  $\sequence{y_j}_{j \geq 1}$ converging to $z$, where the $y_j$ are not in
  $\cF$. Therefore each $y_j$ has a \wNAF{}-representation
  $\bfeta_j\in\wNAFsetfininf$ with non-zero integer part.  Now we will use the
  tiling property stated in Corollary~\vref{cor:complex-plane-tiling}. The
  fundamental domain $\cF$ can be surrounded by only finitely many scaled
  versions of $\cF$. So there is a subsequence
  $\sequence{\bfvartheta_j}_{j\in\N_0}$ of $\sequence{\bfeta_j}_{j\in\N_0}$ with
  fixed integer part $\bfxi_I\neq\bfzero$. Due to compactness of $\cF$, cf.\
  Proposition~\vref{pro:fund-domain-compact}, we find a
  $\bfxi=\bfxi_I\bfldot\bfxi_F$ with value~$z$ as limit of a converging
  subsequence of $\sequence{\bfvartheta_j}_{j\in\N_0}$.

  The other direction is just Proposition~\vref{pro:char-boundary:part1}, thus
  the proof is finished.
\end{proof}


The following proposition deals with the Hausdorff dimension of the boundary of
$\cF$.


\begin{proposition}
  \label{pro:boundary-fund-dom-dim-upper}

  For the Hausdorff dimension of the boundary of the fundamental domain we get
  $\hddim \boundary*{\cF} < 2$.
\end{proposition}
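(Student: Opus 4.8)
**Plan for proving $\hddim \boundary*{\cF} < 2$.**

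The plan is to exhibit $\boundary*{\cF}$ as a subset of the attractor of a finite iterated function system built from the ``boundary words'' identified in Proposition~\vref{pro:char-boundary}, and then to bound its Hausdorff dimension by a standard counting/covering argument. First I would recall that, by Proposition~\ref{pro:char-boundary}, a point $z\in\cF$ lies on $\boundary*{\cF}$ exactly when $z = \NAFvalue{\bfxi_I\bfldot\bfxi_F}$ for some $\bfxi_I\bfldot\bfxi_F\in\wNAFsetfininf$ with $\bfxi_I\neq\bfzero$. Equivalently, there is a nonzero \wNAF{} $\bfeta\in\wNAFsetinf$ and an index shift $k\geq1$ such that $\tau^k \NAFvalue{\bfeta}\in\cF$, and then $z=\tau^k\NAFvalue{\bfeta}$; in other words $\boundary*{\cF}$ is covered by the finitely many sets $\tau^k\cF \cap \cF$ for a bounded range of $k$ (boundedness of the relevant $k$ follows from compactness of $\cF$, Proposition~\vref{pro:fund-domain-compact}, together with the value bounds in Corollary~\vref{cor:bounds-value}: only finitely many scaled/shifted copies of $\cF$ can meet $\cF$). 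So it suffices to bound the dimension of each $\cF_u := (u+\cF)\cap\cF$ where $u$ ranges over a finite ``neighbour set'' $\cN\subseteq\Zinvtau$.

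Next I would set up the self-referential structure. Using the IFS decomposition $\cF = \bigcup_{\vartheta\in\cD} f_\vartheta(\cF)$ from Remark~\vref{rem:ifs}, a point in $\cF_u$ has expansions starting with some $\vartheta$ in the first copy and some $\vartheta'$ in the second; applying $f_\vartheta^{-1}$ (or $f_{\vartheta'}^{-1}$, whichever corresponds to the larger contraction) shows that $\cF_u$ is mapped, under a contracting similarity of ratio $\abs\tau^{-1}$ or $\abs\tau^{-w}$, into some $\cF_{u'}$ with $u'$ again in a finite neighbour set $\cN$ (closing $\cN$ under these transition rules; finiteness of $\cN$ is exactly the statement that only boundedly many lattice-type translates of $\cF$ can overlap, which is where the geometric estimates of Sections~\ref{sec:voronoi} and~\ref{sec:bounds-value} enter). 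This gives a ``graph-directed'' IFS on the finite set $\cN$: each $\cF_u$ is a finite union of scaled copies of various $\cF_{u'}$, where each elementary step contracts by at least $\abs\tau^{-1}$ and the \wNAF{}-condition restricts which digit-blocks can occur.

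From the graph-directed structure I would extract a bound on covering numbers: let $N(\cF_u,\rho)$ denote the number of balls of radius $\rho$ needed to cover $\cF_u$. Iterating the subdivision $n$ times produces at most $M^n$ pieces (where $M$ bounds the out-degree in the graph, i.e.\ the number of admissible digit/block choices times $\card\cN$), each of diameter at most $C\abs\tau^{-n}$. The crucial point is that $\boundary*{\cF}$ has \emph{empty interior} — indeed $\lmeas{\boundary*{\cF}}=0$ will follow from the open set condition noted in Remark~\ref{rem:ifs} — so not every branch of the subdivision tree can survive: a positive proportion of the sub-copies lie in the interior of $\cF$ (by the tiling property, Corollary~\vref{cor:complex-plane-tiling}, a copy surrounded on all sides by other tiles contributes nothing to the boundary) and hence drop out of the covering of $\boundary*{\cF}$. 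Quantifying this — say each subdivision step multiplies the number of surviving pieces by at most $M' < \abs\tau^{2}$ — yields $N(\boundary*{\cF},\abs\tau^{-n}) \leq C' (M')^n$, whence
\begin{equation*}
  \hddim\boundary*{\cF} \leq \limsup_{n\to\infty}
  \frac{\log\bigl(C'(M')^n\bigr)}{\log\abs\tau^{n}}
  = \frac{\log M'}{\log\abs\tau} < 2 .
\end{equation*}

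The main obstacle will be the second step: making precise the finiteness of the neighbour set $\cN$ and the transition graph, i.e.\ showing that intersections of $\cF$ with its translates that start inside a single $f_\vartheta$-copy again land in a controlled finite family. This is essentially a careful bookkeeping of which $\Zinvtau$-translates of $\cF$ can overlap, and it is here that the explicit size bounds for $\cF$ (from $f_U$, Proposition~\vref{pro:upper-bound-fracnafs}) and for the digits (Remark~\vref{rem:digitsets}) do the real work. The counting estimate $M' < \abs\tau^2$ that makes the final inequality strict is the other delicate point: one needs at least one ``dead'' branch per subdivision cycle, which is guaranteed because some digit-block placements force the resulting sub-copy into $\interior*{\cF}$; identifying such a block concretely (or arguing its existence abstractly from $\lmeas{\boundary*\cF}=0$ plus self-similarity) is the crux. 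Everything else — the reduction via Proposition~\ref{pro:char-boundary}, the similarity ratios, and the passage from covering numbers to Hausdorff dimension — is routine.
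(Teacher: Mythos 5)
Your overall strategy --- reduce $\boundary*{\cF}$ to intersections of $\cF$ with a finite neighbour set of its scaled translates, set up a graph-directed system on that finite set, and bound the box dimension by counting surviving pieces --- is a legitimate and well-known route (it is the standard approach for boundaries of self-affine tiles) and is genuinely different from the paper's argument. But as written it has a real gap at exactly the point you yourself flag as the crux: the strict inequality $M'<\abs{\tau}^2$ is asserted, not proved. Worse, the fallback you propose --- deducing the existence of dead branches from $\lmeas{\boundary*{\cF}}=0$ --- is circular in the logical structure of this paper: there, $\lmeas{\boundary*{\cF}}=0$ is obtained as a \emph{consequence} of this very proposition (it is invoked in the proof of Proposition~\ref{pro:prop-of-w} precisely by citing Proposition~\ref{pro:boundary-fund-dom-dim-upper}), and it does not come for free from the open set condition of Remark~\ref{rem:ifs}; for self-affine tiles whose similarity dimension equals the ambient dimension, vanishing boundary measure is a theorem in its own right. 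Even granting measure zero, passing from ``some pieces eventually die'' to a uniform multiplicative loss per subdivision step requires a quantitative, uniform criterion for a sub-copy to land in $\interior{\cF}$, and that is what is missing.

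The paper supplies exactly this criterion, and it is the real content of its proof: by Lemma~\ref{lem:choosing-k0}, any \wNAF{} containing a block of $k=k_0+w-1$ consecutive zeros to the right of the \taupoint{} represents a point all of whose expansions agree up to that block, hence (by the characterisation in Proposition~\ref{pro:char-boundary}) an interior point of $\cF$. Therefore $\boundary*{\cF}$ is contained in the set $U$ of values of \wNAF{}s avoiding the pattern $0^k$, the number $\card{U_j}$ of admissible length-$j$ prefixes is counted by an explicit automaton/generating function, and the strict inequality $\sigma<1$ (your $M'<\abs{\tau}^2$) is verified by the concrete computation $\f{q}{\abs{\tau}^{-2}}=\left(\abs{\tau}^2-1\right)\abs{\tau}^{2(w-k-2)}>0$. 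To complete your neighbour-graph argument you would need an analogous concrete forbidden pattern, or a direct proof that the spectral radius of your transition matrix is strictly below $\abs{\tau}^2$; without one of these the final inequality does not follow.
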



The idea of this proof is similar to a proof in Heuberger and
Prodinger~\cite{Heuberger-Prodinger:2006:analy-alter}.


\begin{proof}
  Set $k:=k_0+w-1$ with $k_0$ from Lemma~\vref{lem:choosing-k0}. For $j\in\N$
  define
  \begin{equation*}
    U_j := \set*{\bfxi\in\wNAFsetellell{0}{j}}{
      \text{$\xi_{-\ell}\xi_{-(\ell+1)}\ldots\xi_{-(\ell+k-1)} \neq 0^k$ 
        for all $\ell$ with $1 \leq \ell \leq j-k+1$}}.
  \end{equation*}
  The elements of $U_j$ --- more precisely the digits from index $-1$ to
  $-j$ --- can be described by the regular expression
  \begin{equation*}
    \left( \eps + \sum_{d\in\cD^\bullet} \sum_{\ell=0}^{w-2} 0^\ell d \right)
    \left( \sum_{d\in\cD^\bullet} \sum_{\ell=w-1}^{k-1} 0^\ell d \right)^\bfast
    \left( \sum_{\ell=0}^{k-1} 0^\ell \right).
  \end{equation*}
  This can be translated to the generating function
  \begin{equation*}
    \f{G}{Z} = \sum_{j\in\N} \card*{U_j} Z^j
    = \left( 1 + \card*{\cD^\bullet} \sum_{\ell=0}^{w-2} Z^{\ell+1} \right)
    \frac{1}{1 - \card*{\cD^\bullet} \sum_{\ell=w-1}^{k-1} Z^{\ell+1}}
    \left( \sum_{\ell=0}^{k-1} Z^\ell \right)
  \end{equation*}
  used for counting the number of elements in $U_j$. Rewriting yields
  \begin{equation*}
    \f{G}{Z} = \frac{1-Z^k}{1-Z} 
    \frac{1 + (\card*{\cD^\bullet} - 1) Z - \card*{\cD^\bullet} Z^w}{
      1 - Z - \card*{\cD^\bullet} Z^w + \card*{\cD^\bullet} Z^{k+1}},
  \end{equation*}
  and we set
  \begin{equation*}
    \f{q}{Z} := 1 - Z - \card*{\cD^\bullet} Z^w + \card*{\cD^\bullet} Z^{k+1}.
  \end{equation*}

  \begin{figure}
    \centering
    \includegraphics{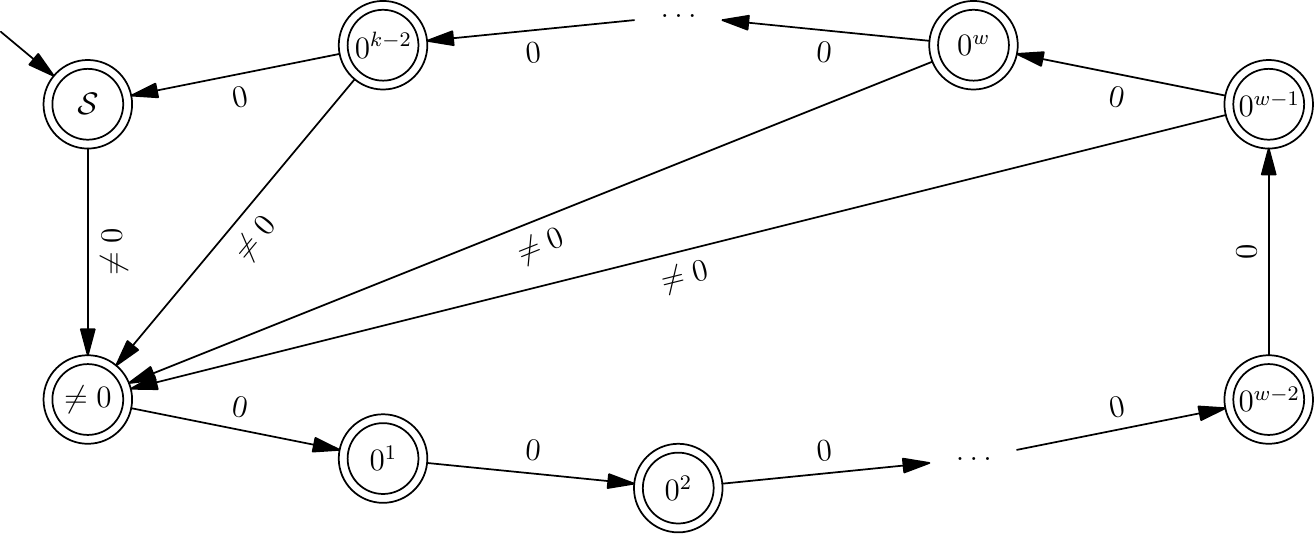}
    \caption[Automaton recognising $\bigcup_{j\in\N} \wt{U}_j$]{Automaton $\cA$
      recognising $\bigcup_{j\in\N} \wt{U}_j$ from right to left, see proof of
      Proposition~\ref{pro:boundary-fund-dom-dim-upper}. The state $\cS$ is the
      starting state, all states are valid end states. An edges marked with
      $\neq0$ means one edge for each non-zero digit in the digit set $\cD$. The
      state $\neq0$ means that there was an non-zero digit read, a state
      $0^\ell$ means that $\ell$ zeros have been read.}
    \label{fig:uj-automata}
  \end{figure}
  
  Now we define
  \begin{equation*}
    \wt{U}_j := \set*{\bfxi \in U_j}{\xi_{-j} \neq 0}
  \end{equation*}
  and consider $\wt{U} := \bigcup_{j\in\N} \wt{U}_j$. The \wNAF{}s in this set
  --- more precisely the finite strings from index $-1$ to the index of the
  largest non-zero digit --- will be recognised by the automaton $\cA$ which
  reads its input from right to left, see Figure~\vref{fig:uj-automata}. It is
  easy to see that the underlying directed graph $G_\cA$ of the automaton $\cA$
  is strongly connected, therefore its adjacency matrix $M_\cA$ is
  irreducible. Since there are cycles of length $w$ and $w+1$ in the graph and
  $\f{\gcd}{w,w+1}=1$, the adjacency matrix is primitive. Thus, using the
  Perron-Frobenius theorem we obtain
  \begin{align*}
    \card*{\wt{U}_j} &= \card{\text{walks in $G_\cA$ of length $j$ from starting
        state $\cS$ to some other state}} \\
    &= \begin{pmatrix} 1 & 0 & \dots & 0 \end{pmatrix}
    M_\cA^j
    \begin{pmatrix} 1 \\ \vdots \\ 1 \end{pmatrix}
    = \wt{c} \left(\sigma \abs\tau^2 \right)^j \left(1+\Oh{s^j}\right)
  \end{align*}
  for a $\wt{c}>0$, a $\sigma>0$, and an $s$ with $0 \leq s < 1$. Since the
  number of \wNAF{}s of length $j$ is $\Oh{\abs\tau^{2j}}$, see
  Theorem~\vref{th:w-naf-distribution}, we get $\sigma\leq1$.
  
  We clearly have
  \begin{equation*}
    U_j = \biguplus_{\ell=j-k+1}^j \wt{U}_\ell,
  \end{equation*}
  so we get 
  \begin{equation*}
    \card*{U_j} = \coefficient{Z^j} \f{G}{Z}
    = c \left(\sigma\abs\tau^2\right)^j \left( 1 + \Oh{s^j} \right)
  \end{equation*}
  for some constant $c>0$.
 
  To rule out $\sigma=1$, we insert the ``zero'' $\abs\tau^{-2}$ in
  $\f{q}{Z}$. We obtain
  \begin{align*}
    \f{q}{\abs\tau^{-2}} 
    &= 1 - \abs\tau^{-2} - \card*{\cD^\bullet} \abs\tau^{-2w} 
    + \card*{\cD^\bullet} \abs\tau^{-2(k+1)} \\
    &= 1 - \abs\tau^{-2} 
    - \abs\tau^{2(w-1)}\left(\abs\tau^2-1\right) \abs\tau^{-2w} 
    + \abs\tau^{2(w-1)}\left(\abs\tau^2-1\right) \abs\tau^{-2(k+1)} \\
    &= \left(\abs\tau^2-1\right) \abs\tau^{2(w-k-2)} > 0,
  \end{align*}
  where we used the cardinality of $\cD^\bullet$ from
  Lemma~\vref{lem:complete-res-sys} and $\abs\tau>1$. Therefore we get
  $\sigma<1$. 

  Define
  \begin{equation*}
    U := \set*{\NAFvalue{\bfxi}}{
      \text{$\bfxi\in\wNAFsetinf$ with 
        $\xi_{-\ell}\xi_{-(\ell+1)}\ldots\xi_{-(\ell+k-1)} \neq 0^k$ 
        for all $\ell \geq 1$}}.
  \end{equation*}
  We want to cover $U$ with squares. Let $S$ be the closed paraxial square with
  centre $0$ and width $2$. Using Proposition~\vref{pro:upper-bound-fracnafs}
  yields
  \begin{equation*}
    U \subseteq 
    \bigcup_{z\in\NAFvalue{U_j}} \left(z + f_U\abs\tau^{-j} S\right)
  \end{equation*}
  for all $j\in\N$, i.e., $U$ can be covered with $\card*{U_j}$ boxes of size
  $2f_U\abs\tau^{-j}$. Thus we get for the upper box dimension, cf.\
  Edgar~\cite{Edgar:2008:measur},
  \begin{equation*}
     \uboxdim U \leq \lim_{j\to\infty} 
     \frac{\log \card*{U_j}}{- \log (2f_U\abs\tau^{-j})}.
  \end{equation*}
  Inserting the cardinality $\card*{U_j}$ from above, using the logarithm to
  base $\abs\tau$ and $0 \leq s < 1$ yields
  \begin{equation*}
    \uboxdim U \leq \lim_{j\to\infty}  
    \frac{\log_{\abs\tau} c + j \log_{\abs\tau} (\sigma\abs\tau^2)
      + \log_{\abs\tau} (1 + \Oh{s^j}) }{j + \Oh{1}}
    = 2 + \log_{\abs\tau} \sigma.
  \end{equation*}
  Since $\sigma<1$, we get $\uboxdim U < 2$.

  Now we will show that $\boundary*{\cF} \subseteq U$. Clearly $U \subseteq
  \cF$, so the previous inclusion is equivalent to $\cF \setminus U \subseteq
  \interior{\cF}$. So let $z \in \cF \setminus U$. Then there is a
  $\bfxi\in\wNAFsetinf$ such that $z=\NAFvalue{\bfxi}$ and $\bfxi$ has a block
  of at least $k$ zeros somewhere on the right hand side of the \taupoint{}.
  Let $\ell$ denote the starting index of this block, i.e.,
  \begin{equation*}
    \bfxi=0\bfldot\underbrace{\xi_{-1}\ldots\xi_{-(\ell-1)}}_{=: \bfxi_A}
    0^k\xi_{-(\ell+k)}\xi_{-(\ell+k+1)}\ldots.
  \end{equation*}
  Let $\bfvartheta =
  \bfvartheta_I\bfldot\bfvartheta_A\vartheta_{-\ell}\vartheta_{-(\ell+1)}\ldots
  \in \wNAFsetfininf$ with $\NAFvalue{\bfvartheta}=z$. We have
  \begin{equation*}
    z = \NAFvalue{0\bfldot\bfxi_A} + \tau^{-\ell-w} z_\xi
    = \NAFvalue{\bfvartheta_I\bfldot\bfvartheta_A} 
    + \tau^{-\ell-w} z_\vartheta
  \end{equation*}
  for appropriate $z_\xi$ and $z_\vartheta$. By Lemma~\vref{lem:choosing-k0},
  all expansions of $z_\xi$ are in $\wNAFsetinf$. Thus all expansions of
  \begin{equation*}
    \NAFvalue{\bfvartheta_I\bfvartheta_A}
    + \tau^{-(w-1)} z_\vartheta
    - \NAFvalue{\bfxi_A} 
    = \tau^{\ell-1} z - \NAFvalue{\bfxi_A}
    = \tau^{-(w-1)} z_\xi
  \end{equation*}
  start with $0.0^{w-1}$, since our choice of $k$ is $k_0+w-1$. As the unique
  NAF of $\NAFvalue{\bfvartheta_I\bfvartheta_A} - \NAFvalue{\bfxi_A}$
  concatenated with any NAF of $\tau^{-(w-1)}z_\vartheta$ gives rise to such an expansion, we
  conclude that $\NAFvalue{\bfvartheta_I\bfvartheta_A} - \NAFvalue{\bfxi_A} = 0$
  and therefore $\bfvartheta_I=\bfzero$ and $\bfvartheta_A=\bfxi_A$.  So we
  conclude that all representations of $z$ as a \wNAF{} have to be of the form
  $0\bfldot\bfxi_A0^{w-1}\bfeta$ for some \wNAF{} $\bfeta$. Thus, by using
  Proposition~\vref{pro:char-boundary}, we get $z\not\in\boundary*{\cF}$ and
  therefore $z\in\interior{\cF}$.

  Until now we have proved
  \begin{equation*}
    \uboxdim \boundary*{\cF} \leq \uboxdim U < 2.
  \end{equation*}
  Because the Hausdorff dimension of a set is at most its upper box dimension,
  cf.\ Edgar~\cite{Edgar:2008:measur} again, the desired result follows.
\end{proof}


\section{Cell Rounding Operations}
\label{sec:cell-rounding-op}


Let $\tau\in\C$ be an algebraic integer, imaginary quadratic. In this section,
we define operators working on subsets (regions) of the complex plane. These
will use the lattice $\Ztau$ and the Voronoi cells defined in
Section~\ref{sec:voronoi}. They will be a very useful concept to prove
Theorem~\vref{thm:countdigits}.


\begin{definition}[Cell Rounding Operations]
  \label{def:round-v}

  Let $B\subseteq\C$ and $j\in\R$. We define the \emph{cell packing of $B$}
  (``floor $B$'')
  \begin{align*}
    \floorV{B} &:=  \bigcup_{\substack{z \in \Ztau \\ V_z \subseteq  B}} V_z 
    & &\text{and} &
    \floorV[j]{B} &:= \frac{1}{\tau^j} \floorV{\tau^j B},
    \intertext{the \emph{cell covering of $B$} (``ceil $B$'')}
    \ceilV{B} &:= \closure{\floorV{B^C}^C}
    & &\text{and} &
    \ceilV[j]{B} &:= \frac{1}{\tau^j} \ceilV{\tau^j B}, 
    \intertext{the \emph{fractional cells of $B$}}
    \fracpartV{B} &:= B \setminus \floorV{B}
    & &\text{and} &
    \fracpartV[j]{B} &:= \frac{1}{\tau^j} \fracpartV{\tau^j B},
    \intertext{the \emph{cell covering of the boundary of $B$}}
    \boundaryV{B} &:= \closure{\ceilV{B} \setminus \floorV{B}}
    & &\text{and} &
    \boundaryV[j]{B} &:= \frac{1}{\tau^j} \boundaryV{\tau^j B},
    \intertext{the \emph{cell covering of the lattice points inside $B$}}
    \coverV{B} &:= \bigcup_{\substack{z\in B \cap \Z[\tau]}} V_z
    & &\text{and} &
    \coverV[j]{B} &:= \frac{1}{\tau^j} \coverV{\tau^j B}
    \intertext{and the \emph{number of lattice points inside $B$} as}
    \cardV{B} &:= \card{B \cap \Ztau}
    & &\text{and} &
    \cardV[j]{B} &:= \cardV{\tau^j B}
  \end{align*}
\end{definition}


To get a slight feeling what those operators do, have a look at
Figure~\vref{fig:v-cells-op}. There brief examples are given. For the cell
covering of a set $B$ an alternative, perhaps more intuitive description can be
given by
\begin{equation*}
  \ceilV{B} :=  \bigcup_{\substack{z \in \Ztau \\ V_z \cap B \neq \emptyset}} V_z.
\end{equation*}


\begin{figure}
  \centering
  \includegraphics{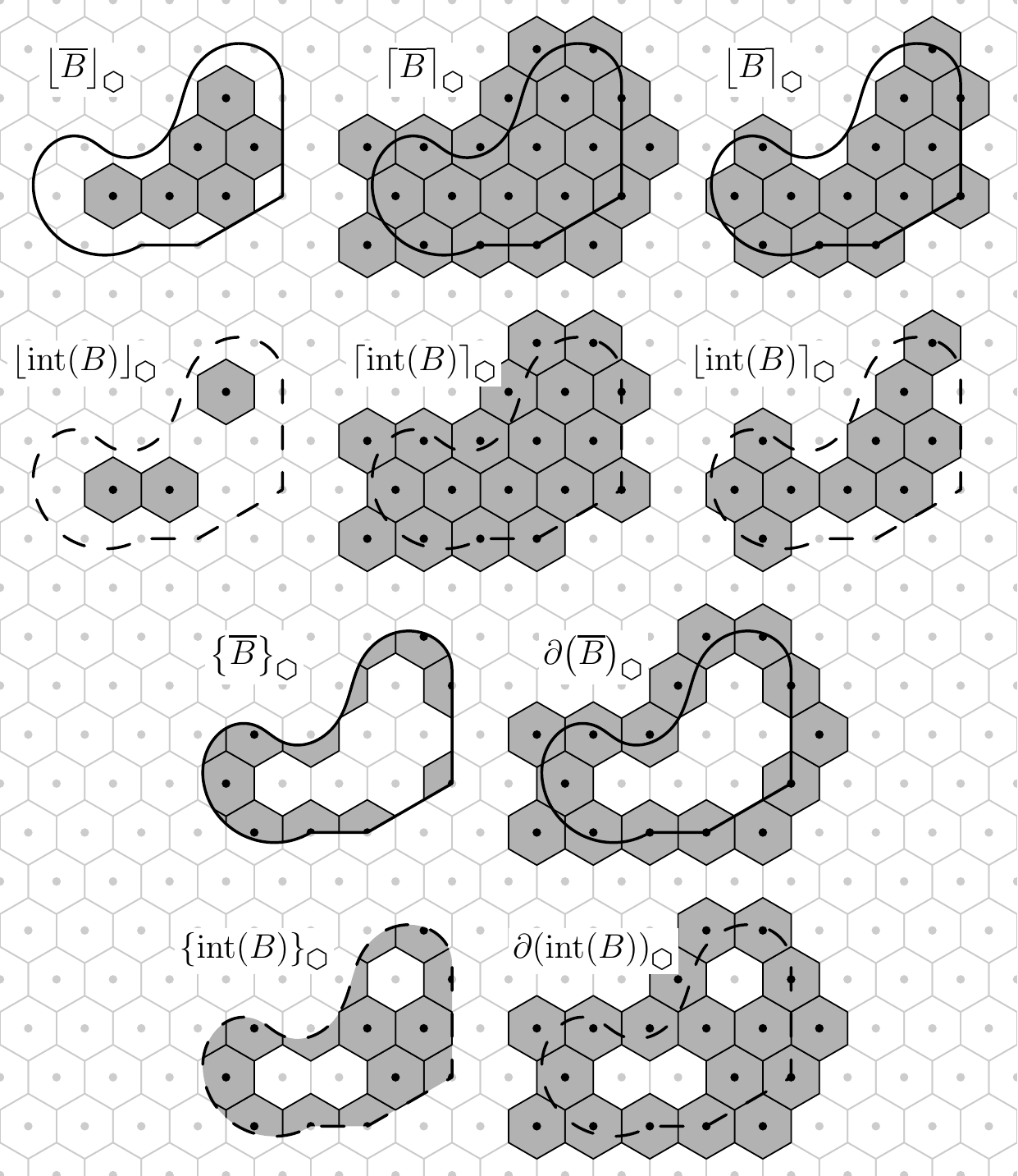}
  \caption[Examples of the cell rounding operators]{Examples of the cell
    rounding operators of Definition~\vref{def:round-v}. As lattice $\Ztau$ with
    $\tau = \frac{3}{2}+\frac{1}{2}\sqrt{-3}$ was used here.}
  \label{fig:v-cells-op}
\end{figure}


The following proposition deals with some basic properties that will be
helpful, when working with those operators.


\begin{proposition}[Basic Properties of Cell Rounding Operations]
  \label{pro:round-v-basic-prop}

  Let $B\subseteq\C$ and $j\in\R$.
  \begin{enumerate}[(a)]
    
  \item \label{enu:round-v-basic-prop:incl}
    We have the inclusions
    \begin{subequations}
      \begin{equation}
        \floorV[j]{B} \subseteq B \subseteq \closure{B} \subseteq \ceilV[j]{B}
      \end{equation}
      and
      \begin{equation}
        \floorV[j]{B} \subseteq \coverV[j]{B} \subseteq \ceilV[j]{B}.    
      \end{equation}
    \end{subequations}
    For $B' \subseteq \C$ with $B \subseteq B'$ we get $\floorV[j]{B} \subseteq
    \floorV[j]{B'}$, $\coverV[j]{B} \subseteq \coverV[j]{B'}$ and $\ceilV[j]{B}
    \subseteq \ceilV[j]{B'}$, i.e., monotonicity with respect to inclusion

  \item \label{enu:round-v-basic-prop:frac-boundary}
    The inclusion
    \begin{equation}
      \fracpartV[j]{B} \subseteq \boundaryV[j]{B}
    \end{equation}
    holds.

  \item \label{enu:round-v-basic-prop:boundary} 
    $\boundary*{B} \subseteq \boundaryV[j]{B}$ and for each cell $V'$ in
    $\boundaryV[j]{B}$ we have $V' \cap \boundary*{B} \neq \emptyset$.
    
  \item \label{enu:round-v-basic-prop:card} 
    For $B' \subseteq \C$ with $B'$ disjoint from $B$, we get
    \begin{equation}
      \cardV[j]{B \cup B'} = \cardV[j]{B} + \cardV[j]{B'},
    \end{equation}
    and therefore the number of lattice points operation is monotonic with
    respect to inclusion, i.e., for $B'' \subseteq \C$ with $B'' \subseteq B$ we
    have $\cardV[j]{B''} \leq \cardV[j]{B}$. Further we get
    \begin{equation}
      \cardV[j]{B} 
      = \cardV[j]{\coverV[j]{B}}
      = \abs\tau^{2j} \frac{\lmeas{\coverV[j]{B}}}{\lmeas{V}}
    \end{equation}
    
  \end{enumerate}
\end{proposition}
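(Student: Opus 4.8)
The plan is to reduce everything to the case $j=0$ and then argue directly from elementary properties of Voronoi cells. For fixed $j$, the map $\Phi\colon z\mapsto\tau^{-j}z$ is a homeomorphism of $\C$ that commutes with closure and complementation and scales Lebesgue measure by $\abs{\tau}^{-2j}$, and each of the six operations satisfies $\mathrm{op}_j(B)=\Phi\bigl(\mathrm{op}_0(\Phi^{-1}(B))\bigr)$ by construction. Hence any identity or inclusion proved for $j=0$ and arbitrary $B$ transfers verbatim to general $j$, and the factor $\abs{\tau}^{2j}$ in part~(d) is exactly the Jacobian of $\Phi^{-1}$. Before the main argument I would record two standing facts. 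First, \emph{a lattice point lies only in its own cell}: if $u,z\in\Ztau$ with $u\in V_z$, then $\abs{u-z}\le\abs{u-u}=0$, so $z=u$. Second, by Proposition~\ref{pro:voronoi-prop} each $V_z$ is a convex polygon with $V_z=\closure{\interior{V_z}}$, $\interior{V_z}\cap V_{z'}=\emptyset$ for $z\ne z'$, $\ball*{0}{1/2}\subseteq V\subseteq\ball*{0}{\abs{\tau}c_V}$, and $\lmeas{V}=\abs{\im{\tau}}>0$; since all cells have diameter at most $2\abs{\tau}c_V$, the tiling $\set*{V_z}{z\in\Ztau}$ is locally finite, so any union of cells is closed. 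Using this I would first verify the description $\ceilV{B}=\bigcup\set*{V_z}{V_z\cap B\ne\emptyset}$ recorded after Definition~\ref{def:round-v}: writing $W$ for the right-hand side (closed, by local finiteness), a point outside $\floorV{B^C}$ lies in some cell which, not being $\subseteq B^C$, meets $B$, so $\floorV{B^C}^C\subseteq W$ and $\ceilV{B}=\closure{\floorV{B^C}^C}\subseteq W$; conversely $\interior{V_z}\subseteq\floorV{B^C}^C$ whenever $V_z\cap B\ne\emptyset$ (an interior point of $V_z$ lies in no other cell, and $V_z\not\subseteq B^C$), whence $V_z=\closure{\interior{V_z}}\subseteq\closure{\floorV{B^C}^C}$ and $W\subseteq\ceilV{B}$.

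With these in hand, part~(a) is routine: $\floorV{B}\subseteq B$ as a union of subsets of $B$; $B\subseteq\closure{B}$ trivially; $\closure{B}\subseteq\ceilV{B}$ since $\floorV{B^C}\subseteq B^C$ gives $B\subseteq\floorV{B^C}^C$ and one takes closures; $\floorV{B}\subseteq\coverV{B}$ since a cell $V_z\subseteq B$ has centre $z\in V_z\subseteq B$, so $z\in B\cap\Ztau$; and $\coverV{B}\subseteq\ceilV{B}$ since each $V_z$ with $z\in B\cap\Ztau$ meets $B$ at $z$ and so lies in $\ceilV{B}$ by the description just proved. Monotonicity in $B$ is immediate, all three defining unions ranging over larger index sets when $B$ grows. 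Part~(b) then follows at once: $\fracpartV{B}=B\setminus\floorV{B}\subseteq\ceilV{B}\setminus\floorV{B}\subseteq\closure{\ceilV{B}\setminus\floorV{B}}=\boundaryV{B}$, using $B\subseteq\ceilV{B}$ from~(a).

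For part~(c) the key step is to identify $\boundaryV{B}$ exactly as the union of the \emph{straddling} cells, those $V_z$ with $V_z\cap B\ne\emptyset$ and $V_z\cap B^C\ne\emptyset$. For ``$\subseteq$'': if $p\in\ceilV{B}\setminus\floorV{B}$ then $p\in V_z$ with $V_z\cap B\ne\emptyset$ and $V_z\not\subseteq B$ (else $V_z\subseteq\floorV{B}$), so $V_z$ straddles; the union of straddling cells is closed, hence contains $\boundaryV{B}=\closure{\ceilV{B}\setminus\floorV{B}}$. For ``$\supseteq$'': if $V_z$ straddles then $\interior{V_z}\subseteq\ceilV{B}$ but $\interior{V_z}\cap\floorV{B}=\emptyset$ (an interior point lies in no other cell and $V_z\not\subseteq B$), so $\interior{V_z}\subseteq\ceilV{B}\setminus\floorV{B}$ and $V_z=\closure{\interior{V_z}}\subseteq\boundaryV{B}$. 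A straddling cell is connected and meets both $B$ and $B^C$, hence meets $\boundary*{B}$, which gives the second assertion of~(c). For the first, let $y\in\boundary*{B}$ and let $V_{z_1},\dots,V_{z_m}$ be the finitely many cells containing $y$; their union contains a neighbourhood of $y$. If none straddled, each would be $\subseteq B$ or $\subseteq B^C$; they cannot all be $\subseteq B$ (then $y\in\interior{B}$, contradicting $y\in\closure{B^C}$) nor all $\subseteq B^C$; and if one is $\subseteq B$ and another $\subseteq B^C$, then $y$, being in both, satisfies $y\in B\cap B^C$ — contradiction. So some $V_{z_i}$ straddles and $y\in V_{z_i}\subseteq\boundaryV{B}$.

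Finally part~(d). Additivity: for disjoint $B,B'$ the sets $\tau^j B$ and $\tau^j B'$ are disjoint, so $\Ztau\cap\tau^j(B\cup B')$ is the disjoint union of $\Ztau\cap\tau^j B$ and $\Ztau\cap\tau^j B'$ and cardinalities add; monotonicity follows by writing $B=B''\uplus(B\setminus B'')$. For the displayed chain, the first equality holds because a lattice point lies only in its own cell: $\coverV{\tau^j B}\cap\Ztau=\tau^j B\cap\Ztau$, which after the defining rescaling says $\cardV[j]{\coverV[j]{B}}=\cardV[j]{B}$. The second equality is a measure count: $\coverV[j]{B}$ is the union of the $\cardV[j]{B}$ distinct cells $\tau^{-j}V_z$ with $z\in\tau^j B\cap\Ztau$, which have pairwise overlaps of measure zero (contained in polygon boundaries) and each of measure $\abs{\tau}^{-2j}\lmeas{V}$ (as $\lmeas{V_z}=\lmeas{V}$ and $\Phi$ scales area by $\abs{\tau}^{-2j}$); hence $\lmeas{\coverV[j]{B}}=\cardV[j]{B}\,\abs{\tau}^{-2j}\lmeas{V}$, and rearranging using $\lmeas{V}=\abs{\im{\tau}}>0$ gives the claim (both sides being $+\infty$ when $B$ is unbounded). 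I expect the only genuinely delicate point to be the characterization of $\boundaryV{B}$ as the union of straddling cells together with the inclusion $\boundary*{B}\subseteq\boundaryV{B}$; everything else is bookkeeping once the three preliminary facts — lattice points lie only in their own cell, the alternative description of $\ceilV{\cdot}$, and local finiteness of the tiling — are in place.
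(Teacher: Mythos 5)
Your proof is correct and follows essentially the same route as the paper: reduction to $j=0$ by the scaling homeomorphism, complement arguments for the ceiling operator, the finitely-many-cells-around-a-boundary-point argument for part~(c), and area counting for part~(d). You are in fact somewhat more explicit than the paper (proving the alternative description of $\ceilV{\cdot}$ and identifying $\boundaryV{B}$ exactly as the union of straddling cells), but this is a tidier packaging of the same argument rather than a different approach.
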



\begin{proof}
  \begin{enumerate}[(a)]

  \item $\floorV[j]{B} \subseteq B$ follows directly from the definition. Since
    $\floorV[j]{B^C} \subseteq B^C$, we get
    \begin{equation*}
      \ceilV[j]{B} = 
      \closure{\floorV[j]{B^C}^C} \supseteq
      \closure{\left(B^C\right)^C}
      = \closure{B}.
    \end{equation*}

    The inclusion $\floorV[j]{B} \subseteq \coverV[j]{B}$ follows directly from
    the definitions and $\coverV[j]{B} \subseteq \ceilV[j]{B}$ again by
    considering the complement, because
    $\closure{\coverV[j]{B^C}^C}=\coverV[j]{B}$. Similarly, the monotonicity can
    be shown.

  \item We have
    \begin{equation*}
      \fracpartV[j]{B} 
      = B \setminus \floorV[j]{B}
      \subseteq \ceilV[j]{B} \setminus \floorV[j]{B}
      = \boundaryV[j]{B}.
    \end{equation*}

  \item We assume $j=0$. Using~\itemref{enu:round-v-basic-prop:incl} yields
    $\boundary*{B} \subseteq \closure{B} \subseteq \ceilV{B}$.  Let
    $x\in\boundary*{B}$. If $x \not\in B$, then $\floorV{B}\subseteq B$ implies
    that $x \notin\floorV{B}$. So we get
    \begin{equation*}
      x \in \ceilV{B}\setminus\floorV{B}
      \subseteq \closure{\ceilV{B} \setminus \floorV{B}} 
      = \boundaryV{B}.
    \end{equation*}

    Now suppose $x \in B$. Consider all Voronoi cells $V_i$, $i \in I$, for a
    suitable finite index set $I$, such that $x\in V_i$. We get $x \in
    \interior{\bigcup_{i \in I} V_i}$. If all of the $V_i$ are a subset of
    $\floorV{B}$, then $x\in\interior{B}$, which is a contradiction to
    $x\in\boundary*{B}$. So there is at least one cell $V'$ that is not a subset
    of $\floorV{B}$. Since
    \begin{equation*}
      \ceilV{B}^C = \closure{\floorV{B^C}^C}^C
      = \interior{\bigcup_{\substack{z \in \Ztau \\ V_z \subseteq  B^C}} V_z}
    \end{equation*}
    and $x \not\in B^C$, $V'$ is not in this union of cells. So $V'$
    is in the complement, i.e., $V' \subseteq \ceilV{B}$. And therefore the
    statement follows.

    Now we want to show that there is a subset of the boundary in each
    $V$\nbd-cell $V'$ of $\boundaryV{B}$. Assume $V' \cap \boundary*{B} =
    \emptyset$.  If $V' \cap B = \emptyset$, then $V' \subseteq B^C$, so
    $V'$ is not a subset of $\ceilV{B}$, contradiction. If $V' \cap B
    \neq \emptyset$, then $V' \subseteq B$, since $V'$ does not contain
    any boundary. But then, $V' \subseteq \floorV{B}$, again a
    contradiction.

  \item Since the operator just counts the number of lattice points, the first
    statement follows.

    In the other statement, the first equality follows, because $z\in B \cap
    \Ztau \equivalent V_z \subseteq \coverV{B}$ holds. Since $\coverV[j]{B}$
    consists of cells each with area $\lmeas{\tau^{-j}V}$, the second equality
    is just, after multiplying by $\lmeas{\tau^{-j}V}$, the equality of the
    areas.  \qedhere
  \end{enumerate}
\end{proof}


We will need some more properties concerning cardinality. We want to know the
number of points inside a region after using one of the operators. Especially we
are interested in the asymptotic behaviour, i.e., if our region becomes scaled
very large. The following proposition provides information about that.
 

\begin{proposition}
  \label{pro:set-nu}

  Let $\delta\in\R$ with $\delta>0$, and let $U \subseteq \C$ bounded,
  measurable and such that
  \begin{equation}
    \label{eq:pro:set-nu:covercond}
    \cardV{\boundaryV{NU}} = \Oh{\abs{N}^\delta}
  \end{equation}
  for $N\in\C$.

  \begin{enumerate}[(a)]
    
  \item \label{enu:set-nu:floor-ceil-cover}
    We get
    \begin{equation*}
      \cardV{\floorV{NU}} 
      = \abs{N}^2 \frac{\lmeas{U}}{\lmeas{V}} + \Oh{\abs{N}^\delta},
    \end{equation*}
    \begin{equation*}
      \cardV{\ceilV{NU}} 
      = \abs{N}^2 \frac{\lmeas{U}}{\lmeas{V}} + \Oh{\abs{N}^\delta}
    \end{equation*}
    and
    \begin{equation*}
      \cardV{NU} = \cardV{\coverV{NU}} 
      = \abs{N}^2 \frac{\lmeas{U}}{\lmeas{V}} + \Oh{\abs{N}^\delta}.
    \end{equation*}
    
  \item \label{enu:set-nu:difference} 
    We get
    \begin{equation*}
      \cardV{(N+1)U \setminus NU} = \Oh{\abs{N}^\delta}.
    \end{equation*}

  \end{enumerate}
\end{proposition}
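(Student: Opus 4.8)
The plan is to reduce everything to the area estimate encoded in the hypothesis~\eqref{eq:pro:set-nu:covercond}. First I would prove part~\itemref{enu:set-nu:floor-ceil-cover}. By Proposition~\vref{pro:round-v-basic-prop}\itemref{enu:round-v-basic-prop:card} we have
\begin{equation*}
  \cardV{\floorV{NU}} = \abs{N}^0 \cdot \frac{\lmeas{\floorV{NU}}}{\lmeas{V}} \cdot \abs{N}^{\,0}
\end{equation*}
— more precisely, applying that formula with $j=0$ gives $\cardV{B} = \lmeas{\coverV{B}}/\lmeas{V}$, and since $\coverV{\floorV{NU}} = \floorV{NU}$ (the floor is already a union of cells), we get $\cardV{\floorV{NU}} = \lmeas{\floorV{NU}}/\lmeas{V}$; similarly $\cardV{\ceilV{NU}} = \lmeas{\ceilV{NU}}/\lmeas{V}$ and $\cardV{NU} = \cardV{\coverV{NU}} = \lmeas{\coverV{NU}}/\lmeas{V}$. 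By the inclusions in Proposition~\vref{pro:round-v-basic-prop}\itemref{enu:round-v-basic-prop:incl}, all three of $\floorV{NU}$, $\coverV{NU}$, $\ceilV{NU}$ lie between $\floorV{NU}$ and $\ceilV{NU} = \floorV{NU} \uplus \bigl(\ceilV{NU}\setminus\floorV{NU}\bigr)$, and the difference set $\ceilV{NU}\setminus\floorV{NU}$ is contained in $\boundaryV{NU}$, a union of $\cardV{\boundaryV{NU}} = \Oh{\abs{N}^\delta}$ Voronoi cells, hence has Lebesgue measure $\Oh{\abs{N}^\delta \lmeas{V}} = \Oh{\abs{N}^\delta}$. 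On the other hand $\floorV{NU} \subseteq NU \subseteq \ceilV{NU}$ forces $\lmeas{\floorV{NU}} = \lmeas{NU} + \Oh{\abs{N}^\delta} = \abs{N}^2 \lmeas{U} + \Oh{\abs{N}^\delta}$, using the scaling $\lmeas{NU} = \abs{N}^2\lmeas{U}$ for the measurable bounded set $U$. Dividing by $\lmeas{V}$ yields all three asymptotic formulas.

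For part~\itemref{enu:set-nu:difference}, I would write $(N+1)U \setminus NU \subseteq \bigl(\ceilV{(N+1)U} \setminus \floorV{NU}\bigr)$ and try to bound the number of lattice points there. The cleanest route: any lattice point $z \in (N+1)U \setminus NU$ has its cell $V_z$ meeting $(N+1)U$ but not contained in $NU$; I want to argue $V_z \subseteq \boundaryV{(N+1)U} \cup \boundaryV{NU}$ (up to a bounded-multiplicity overcount coming from the annulus between the two scaled regions), so that $\cardV{(N+1)U\setminus NU} \le \cardV{\boundaryV{(N+1)U}} + \cardV{\boundaryV{NU}} + \Oh{1} = \Oh{\abs{N+1}^\delta} + \Oh{\abs{N}^\delta} = \Oh{\abs{N}^\delta}$. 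Alternatively, and perhaps more robustly, one applies part~\itemref{enu:set-nu:floor-ceil-cover} twice: $\cardV{NU} = \abs{N}^2\lmeas{U}/\lmeas{V} + \Oh{\abs{N}^\delta}$ and $\cardV{(N+1)U} = \abs{N+1}^2\lmeas{U}/\lmeas{V} + \Oh{\abs{N+1}^\delta}$, and since $U \supseteq \emptyset$ one would like to say $NU \subseteq (N+1)U$ to subtract via additivity in Proposition~\vref{pro:round-v-basic-prop}\itemref{enu:round-v-basic-prop:card}. That last inclusion holds when $0 \in U$ (or more generally when $U$ is star-shaped about $0$); in general it need not hold, so I would instead embed both into a common large ball $\ball*{0}{c(\abs N+1)}$ for a constant $c$ with $U \subseteq \ball*{0}{c}$, and estimate $\cardV{(N+1)U\setminus NU}$ by the difference of the two $\cardV{\cdot}$ values plus the contribution of the boundary cells, all of which are $\Oh{\abs N^\delta}$ once one notes $\abs{N+1}^2 - \abs{N}^2 = \Oh{\abs N}$ and $\delta$ may be assumed $\ge 1$ without loss of generality (if~\eqref{eq:pro:set-nu:covercond} holds with exponent $\delta < 1$ it holds with exponent $1$).

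The main obstacle I anticipate is the bookkeeping in part~\itemref{enu:set-nu:difference}: making precise the claim that a lattice point in the ``annular'' set $(N+1)U\setminus NU$ contributes a cell lying in the union of the two boundary-cell coverings, while controlling the overlap so the count is genuinely $\Oh{\abs N^\delta}$ and not, say, $\Oh{\abs N}$. The honest fix is to use Proposition~\vref{pro:round-v-basic-prop}\itemref{enu:round-v-basic-prop:boundary} — every cell of $\boundaryV{NU}$ meets $\boundary*{NU}$ — together with the diameter bound $\diam V \le 2\abs\tau c_V$ from Proposition~\vref{pro:voronoi-prop}\itemref{enu:voronoi-bounds}, to show that a lattice point of $(N+1)U\setminus NU$ lies within bounded distance of $\boundary*{NU}\cup\boundary*{(N+1)U}$ and hence its cell is among the $\Oh{\abs N^\delta}$ boundary cells; combining with additivity then closes the argument. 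Everything else is a direct appeal to the already-proven basic properties and the scaling of Lebesgue measure.
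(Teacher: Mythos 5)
Part~\itemref{enu:set-nu:floor-ceil-cover} of your plan is correct and is essentially the paper's argument: sandwich $\lmeas{NU}=\abs{N}^2\lmeas{U}$ between $\cardV{\floorV{NU}}\lmeas{V}$ and $\cardV{\ceilV{NU}}\lmeas{V}$ and absorb the difference into $\cardV{\boundaryV{NU}}=\Oh{\abs{N}^\delta}$.

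Part~\itemref{enu:set-nu:difference} is where you have a genuine gap. The crux of the whole statement is the claim you flag as your target but never prove: that every point of $(N+1)U\setminus NU$ lies within a \emph{bounded} (independent of $N$) distance of $\boundary*{NU}$. Neither of the tools you propose for this --- Proposition~\vref{pro:round-v-basic-prop}\itemref{enu:round-v-basic-prop:boundary} (boundary cells meet the boundary) nor the diameter bound on $V$ --- says anything about where the set $(N+1)U\setminus NU$ sits; they only control cells once you already know you are near $\boundary*{NU}$. The missing idea is a scaling argument that uses the boundedness of $U$: for $y\in(N+1)U\setminus NU$ one has $y/(N+1)\in U$ and $y/N\notin U$, so the segment joining these two points crosses $\boundary*{U}$ at some $z$; since $U\subseteq\ballo{0}{d}$, the segment has length $\abs{y}\,\abs{1/N-1/(N+1)}\le d/\abs{N}$, whence $\abs{Nz-y}\le d$ with $Nz\in\boundary*{NU}$. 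Only then does the cell bookkeeping finish the job --- and note that even then the correct conclusion is not that $V_y$ ``is among the boundary cells'', but that $y$ lies in the $d$-neighbourhood of one of the $\Oh{\abs{N}^\delta}$ cells covering $\boundary*{NU}$, each such neighbourhood containing $\Oh{1}$ lattice points.

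Your fallback via subtraction does not repair this. Besides the inclusion $NU\subseteq(N+1)U$ failing in general (as you note), the difference of the two counts from part~\itemref{enu:set-nu:floor-ceil-cover} is $(\abs{N+1}^2-\abs{N}^2)\lmeas{U}/\lmeas{V}+\Oh{\abs{N}^\delta}=\Oh{\abs{N}}$, which is weaker than the claimed $\Oh{\abs{N}^\delta}$ whenever $\delta<1$; and ``assume $\delta\ge1$ without loss of generality'' silently replaces the assertion $\Oh{\abs{N}^\delta}$ by $\Oh{\abs{N}^{\max(1,\delta)}}$, i.e.\ it proves a different (weaker) statement. The direct geometric argument above yields $\Oh{\abs{N}^\delta}$ for every $\delta>0$ and is the route the paper takes.
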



\begin{proof}
  \begin{enumerate}[(a)]

  \item Considering the areas yields
    \begin{equation*}
      \cardV{\floorV{NU}} \lmeas{V} 
      \leq \lmeas{NU} = \abs{N}^2 \lmeas{U}
      \leq \cardV{\ceilV{NU}} \lmeas{V},
    \end{equation*}
    since $\floorV{NU} \subseteq NU \subseteq \ceilV{NU}$, see
    Proposition~\vref{pro:round-v-basic-prop}. If we use $\ceilV{NU} =
    \floorV{NU} \cup \boundaryV{NU}$, we obtain
    \begin{equation*}
      0 \leq \abs{N}^2 \frac{\lmeas{U}}{\lmeas{V}} - \cardV{\floorV{NU}} 
      \leq \cardV{\boundaryV{NU}}
    \end{equation*}
    Because $\cardV{\boundaryV{NU}} = \Oh{\abs{N}^\delta}$ we get
    \begin{equation*}
      \abs{\abs{N}^2 \frac{\lmeas{U}}{\lmeas{V}} - \cardV{\floorV{NU}}}
      = \Oh{\abs{N}^\delta},
    \end{equation*}
    and thus the result follows.

    Combining the previous result and Proposition~\vref{pro:round-v-basic-prop}
    proves the other two statements.

  \item Let $d\in\R$ such that $U\subseteq\ballo{0}{d}$. Let $y\in
    (N+1)U\setminus NU$. Obviously, this is equivalent to $y/(N+1)\in U$ and
    $y/N\notin U$, so there is a $z\in\boundary*{U}$ on the line from $y/N$ to
    $y/(N+1)$. We get
    \begin{equation*}
      \abs{z-\frac{y}{N}} 
      \leq \abs{y} \cdot \abs{\frac{1}{N} - \frac{1}{N+1}}
      = \frac{\abs{y}}{\abs{N+1}}\cdot\frac{1}{\abs{N}}
      \leq \frac{d}{\abs{N}},
    \end{equation*}
    and therefore
    \begin{equation*}
      (N+1)U\setminus NU\subseteq 
      \bigcup_{z\in\boundary{NU}} \ballo{z}{d}.
    \end{equation*}
    Since the boundary $\boundary{NU}$ can be covered by $\Oh{\abs{N}^\delta}$
    cells, cf.\ \itemref{enu:round-v-basic-prop:boundary} of
    Proposition~\vref{pro:round-v-basic-prop} and the discs in
    $\bigcup_{z\in\boundary{NU}} \ballo{z}{d}$ have a fixed size, the result
    follows. \qedhere
  \end{enumerate}
\end{proof}


If the geometry of $U$ is simple, e.g.\ $U$ is a disc or $U$ is a polygon, then
we can check the covering condition~\eqref{eq:pro:set-nu:covercond} of
Proposition~\vref{pro:set-nu} by means of the following proposition.


\begin{proposition}
  \label{pro:simple-geometries}

  Let $U\subseteq\C$ such that the boundary of $U$ consists of finitely many
  rectifiable curves. Then we get
  \begin{equation*}
    \cardV{\boundaryV{NU}} = \Oh{\abs{N}}
  \end{equation*}
  for $N\in\C$.
\end{proposition}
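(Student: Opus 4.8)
The plan is to reduce the statement to a metric fact about rectifiable curves: a rectifiable curve of length $\ell$ can meet at most $\Oh{1+\ell}$ of the Voronoi cells $V_z$, $z\in\Ztau$, with an implied constant depending only on $\tau$. Granting this, since $\boundary*{NU}=N\,\boundary*{U}$ is a union of finitely many rectifiable curves whose lengths are $\abs{N}$ times those of the curves making up $\boundary*{U}$, the boundary $\boundary*{NU}$ meets only $\Oh{\abs{N}}$ cells. On the other hand, by Proposition~\ref{pro:round-v-basic-prop}\itemref{enu:round-v-basic-prop:boundary} together with the descriptions of $\floorV{\cdot}$ and $\ceilV{\cdot}$ in Definition~\ref{def:round-v}, the set $\boundaryV{NU}=\closure{\ceilV{NU}\setminus\floorV{NU}}$ is contained in the union of those finitely many cells that meet $\boundary*{NU}$: any cell occurring in $\ceilV{NU}\setminus\floorV{NU}$ meets both $NU$ and its complement and, being connected, therefore meets $\boundary*{NU}$. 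Since every cell $V_z=z+V$ contains exactly $\kappa:=\card{V\cap\Ztau}<\infty$ lattice points, a number not depending on $z$ or on $N$, this yields
\begin{equation*}
  \cardV{\boundaryV{NU}}\le\kappa\cdot M(N),\qquad
  M(N):=\card*{\set*{z\in\Ztau}{V_z\cap\boundary*{NU}\neq\emptyset}},
\end{equation*}
so it remains to prove $M(N)=\Oh{\abs{N}}$.

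For the metric fact I would first record, from Proposition~\ref{pro:voronoi-prop}\itemref{enu:voronoi-bounds}, that every cell $V_z=z+V$ has diameter at most $d:=2\abs{\tau}c_V$ and, again from Proposition~\ref{pro:voronoi-prop}, that $\lmeas{V_z}=\abs{\im{\tau}}$; moreover the cells $V_z$, $z\in\Ztau$, tile the plane and hence have pairwise disjoint interiors. It follows that if $A\subseteq\C$ has diameter at most $d$ and $a\in A$ is fixed, then every cell meeting $A$ is contained in $\ball*{a}{2d}$, so the number of cells meeting $A$ is at most $c_0:=\lmeas{\ball*{a}{2d}}/\abs{\im{\tau}}=4\pi d^2/\abs{\im{\tau}}$, a constant depending only on $\tau$. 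Now, given a rectifiable curve $\gamma$ of length $\ell$, I would parametrise it by arc length and cut its parameter interval into at most $1+\ell/d$ subintervals each of length at most $d$; each corresponding subarc has diameter at most $d$ and hence meets at most $c_0$ cells, so $\gamma$ meets at most $c_0(1+\ell/d)=\Oh{1+\ell}$ cells.

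To finish, I would write $\boundary*{U}=\bigcup_{i=1}^{r}\gamma_i$ with $\gamma_1,\dots,\gamma_r$ rectifiable of lengths $\ell_1,\dots,\ell_r$, which is possible by hypothesis. Since multiplication by $N$ is a similarity of ratio $\abs{N}$ and in particular a homeomorphism of $\C$, we have $\boundary*{NU}=N\,\boundary*{U}=\bigcup_{i=1}^{r}N\gamma_i$, where $N\gamma_i$ is rectifiable of length $\abs{N}\ell_i$. Any cell meeting $\boundary*{NU}$ meets some $N\gamma_i$, so by the previous step
\begin{equation*}
  M(N)\le\sum_{i=1}^{r}c_0\left(1+\frac{\abs{N}\ell_i}{d}\right)
  =c_0 r+\frac{c_0}{d}\left(\sum_{i=1}^{r}\ell_i\right)\abs{N}=\Oh{\abs{N}},
\end{equation*}
and combining this with the reduction above gives $\cardV{\boundaryV{NU}}\le\kappa\,M(N)=\Oh{\abs{N}}$, which is the assertion. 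Here the $\Oh{\cdot}$ is understood for $\abs{N}\to\infty$; for $N$ ranging over a bounded set the left-hand side is trivially $\Oh{1}$.

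The main difficulty will be precisely the passage from mere rectifiability to a cell count \emph{linear} in the length, i.e.\ the estimate of the previous paragraph; this is where the geometry enters, and it rests on the uniform upper bound on the diameter of the cells and the uniform value $\abs{\im{\tau}}$ of their area, both from Proposition~\ref{pro:voronoi-prop}. Everything else is routine manipulation of the cell operators from Section~\ref{sec:cell-rounding-op}; the one point requiring a little care is that $\boundaryV{NU}$ is only contained in, not equal to, a union of whole cells, which is why the reduction is phrased via the incidence bound $\cardV{\boundaryV{NU}}\le\kappa\,M(N)$ rather than by a direct bijection between cells and lattice points.
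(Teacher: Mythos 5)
Your proof is correct and follows essentially the same route as the paper: decompose the rectifiable boundary curve of $NU$ into $\Oh{\abs{N}}$ pieces of uniformly bounded diameter and bound the number of Voronoi cells met by each piece by a constant (the paper uses discs of radius $\frac12$ and the bound of $4$ cells per disc; you use subarcs of length $2\abs{\tau}c_V$ and an area-packing bound). Your write-up is in fact slightly more complete, as you make explicit the final passage from ``cells meeting $\boundary{NU}$'' to the lattice-point count $\cardV{\boundaryV{NU}}$ via Proposition~\ref{pro:round-v-basic-prop}, a step the paper leaves implicit.
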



\begin{proof}
  Without loss of generality, we may assume that the boundary of $U$ is a
  rectifiable curve $\gamma\colon \intervalcc{0}{L} \fto \C$, which is
  parametrised by arc length. For any $t\in
  \intervalcc{1/(2\abs{N})}{L-1/(2\abs{N})}$, we have
  \begin{equation*}
    \f{\gamma}{\intervalcc{t-\frac1{2\abs{N}}}{t+\frac{1}{2\abs{N}}}}
    \subseteq \ballo{\f{\gamma}{t}}{\frac{1}{2\abs{N}}},
  \end{equation*}
  as the straight line from $\f{\gamma}{t}$ to $\f{\gamma}{t'}$ is never longer
  than the arc-length of $\f{\gamma}{\intervalcc{t}{t'}}$. Thus $\boundary*{U}$
  can be covered by $\Oh{L\abs{N}}$ discs of radius $1/(2\abs{N})$ and
  consequently, $\boundary{NU}$ can be covered by $\Oh{L\abs{N}}$ discs of
  radius $\frac12$. As $\Ztau$ is a lattice, each disc with radius $\frac12$
  is contained in at most $4$~Voronoi-cells, cf.\
  Proposition~\vref{pro:voronoi-prop}. Therefore, $\Oh{N}$ cells suffice to
  cover $\boundary{NU}$.
\end{proof}


\section{The Characteristic Sets \texorpdfstring{$W_\eta$}{W\_eta}}
\label{sec:sets-w_eta}


Let $\tau\in\C$ be an algebraic integer, imaginary quadratic.  Suppose that
$\abs{\tau}>1$. Let $w\in\N$ with $w\geq2$. Further let $\cD$ be a minimal norm
representatives digit set modulo $\tau^w$ as in
Definition~\vref{def:min-norm-digit-set}. We denote the norm function by
$\normsymbol \colon \Ztau \fto \Z$, and we simply have $\abs{\normtau} =
\abs{\tau}^2$. Again for simplicity we set $\cD^\bullet := \cD \setminus
\set{0}$.


In this section we define characteristic sets for a digit at a specified
position in the \wNAF{} expansion and prove some basic properties of them. Those
will be used in the proof of Theorem~\vref{thm:countdigits}.


\begin{definition}[Characteristic Sets]
  \label{def:w-wk-beta}

  Let $\eta\in\cD^\bullet$. For $j\in\N_0$ define 
  \begin{equation*}
    \cW_{\eta,j} :=
    \set*{\NAFvalue{\bfxi}}{\text{
        $\bfxi\in\wNAFsetellell{0}{j+w}$ with $\xi_{-w}=\eta$}}.
  \end{equation*}
  We call $\coverV[j+w]{\cW_{\eta,j}}$ the \emph{$j$th approximation of the
    characteristic set for $\eta$}, and we define
  \begin{equation*}
    W_{\eta,j} := \fracpartZtau{\coverV[j+w]{\cW_{\eta,j}}}.
  \end{equation*}
  Further we define the \emph{characteristic set for $\eta$}
  \begin{equation*}
    \cW_\eta := 
    \set*{\NAFvalue{\bfxi}}{\text{
        $\bfxi\in \wNAFsetinf$ with $\xi_{-w}=\eta$}}.
  \end{equation*}
  and 
  \begin{equation*}
    W_\eta := \fracpartZtau{\cW_\eta}.
  \end{equation*}

  For $j\in\N_0$ we set
  \begin{equation*}
    \beta_{\eta,j} := \lmeas{\coverV[j+w]{\cW_{\eta,j}}} - \lmeas{\cW_\eta}.
  \end{equation*}
\end{definition}


\begin{figure}
  \centering 
  \subfloat[$\cW_{\eta,j}$ for $\tau=\frac{1}{2} + \frac{1}{2}
  \sqrt{-7}$, $w=2$ and $j=11$]{
    \includegraphics[height=7cm]{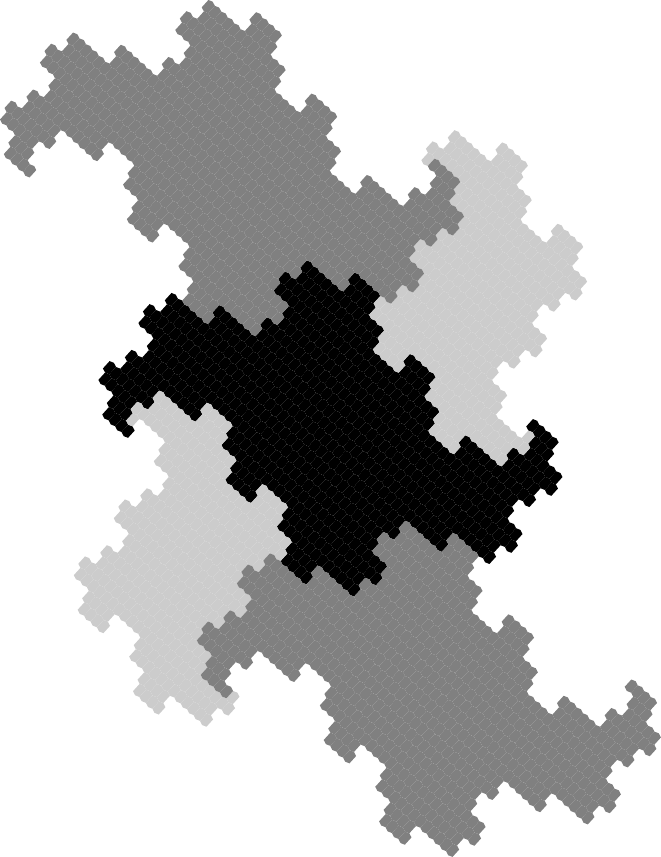}
    \label{fig:w-weta:1}}
  \quad 
  \subfloat[$\cW_{\eta,j}$ for $\tau=1 + \sqrt{-1}$, $w=4$ and $j=11$]{
    \includegraphics[height=7cm]{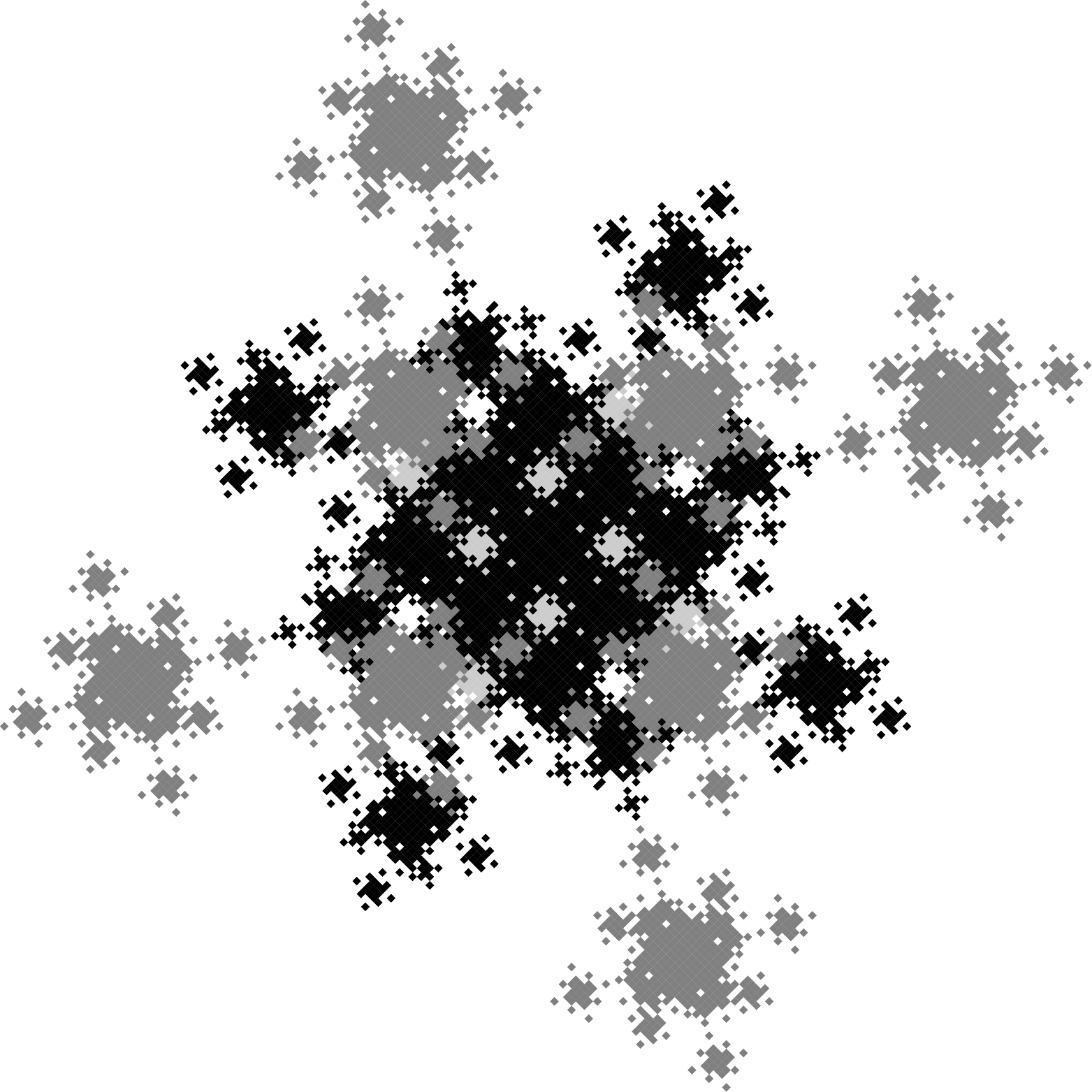}
    \label{fig:w-weta:2}}
  \\
  \subfloat[$\cW_{\eta,j}$ for $\tau=\frac{3}{2} + \frac{1}{2}
  \sqrt{-3}$, $w=2$ and $j=7$]{
    \includegraphics[height=7cm]{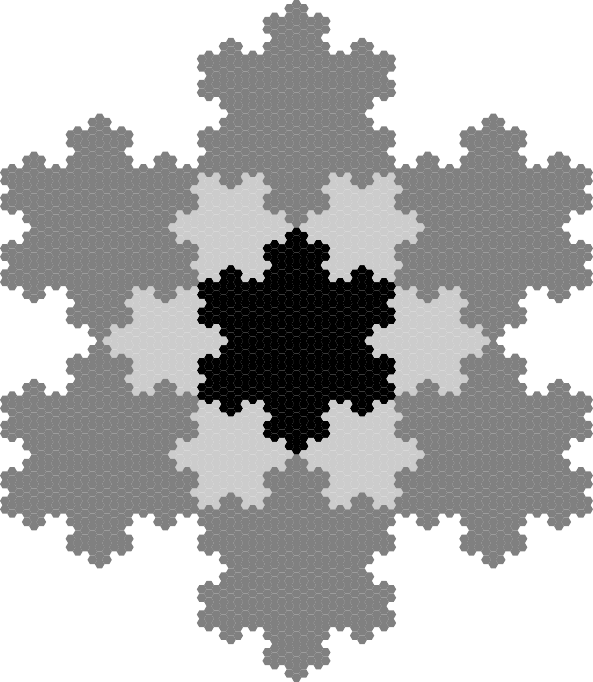}
    \label{fig:w-weta:3}}
  \quad
  \subfloat[$\cW_{\eta,j}$ for $\tau=\frac{3}{2} + \frac{1}{2}
  \sqrt{-3}$, $w=3$ and $j=6$]{
    \includegraphics[height=7cm]{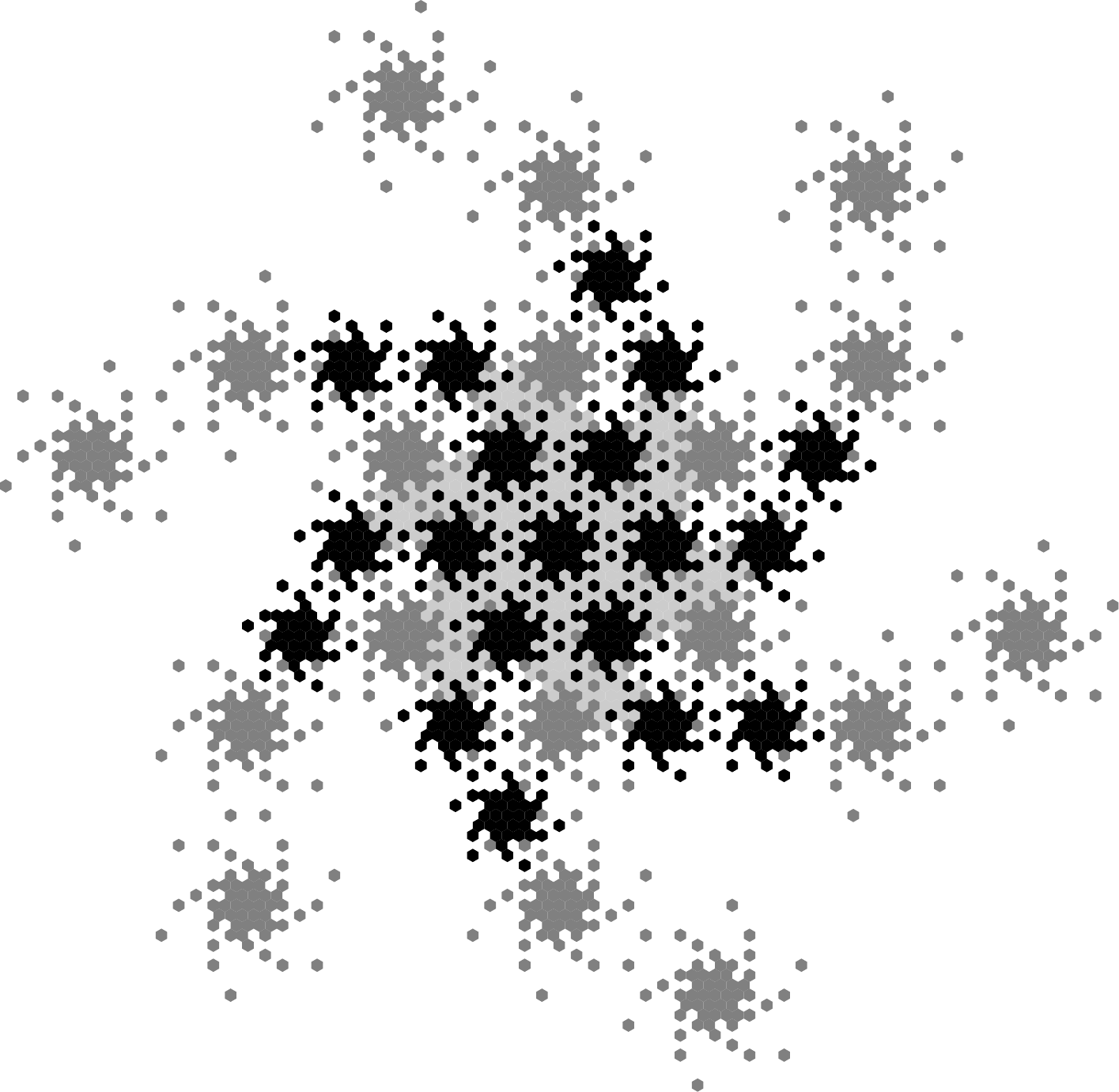}
    \label{fig:w-weta:4}}

  \caption[Characteristic sets $\cW_\eta$]{Characteristic sets $\cW_\eta$. Each
    figure can either be seen as approximation $\cW_{\eta,j}$ for $\cW_\eta$, or
    as values of \wNAF{}s of length $j$, where a scaled Voronoi cell is drawn
    for each point. Different colours correspond to the digits $1$ and $w$ from
    the left in the \wNAF{}. They are ``marked'' whether they are zero or
    non-zero.}
  \label{fig:w-weta}
\end{figure}


Note that sometimes the set $W_\eta$ will also be called \emph{characteristic
  set for $\eta$}, and analogously for the set $W_{\eta,j}$. In
Figure~\vref{fig:w-weta} some of these characteristic sets --- more precisely
some approximations of the characteristic sets --- are shown. The following
proposition will deal with some properties of those defined sets,

\begin{proposition}[Properties of the Characteristic Sets]
  \label{pro:prop-of-w}

  Let $\eta\in\cD^\bullet$.
  \begin{enumerate}[(a)]
  
  \item \label{enu:prop-of-w:fund-domain} We have 
    \begin{equation*}
      \cW_\eta = \eta \tau^{-w} + \tau^{-2w+1} \cF.
    \end{equation*}
  
  \item \label{enu:prop-of-w:compact}  The set $\cW_\eta$ is compact.

  \item \label{enu:prop-of-w:w-is-union} We get
    \begin{equation*}
      \cW_\eta 
      = \closure{\bigcup_{j\in\N_0} \cW_{\eta,j}}
      = \closure{\lim_{j\to\infty} \cW_{\eta,j}}.
    \end{equation*}

  \item \label{enu:prop-of-w:lim} The set $\coverV[j+w]{\cW_{\eta,j}}$ is indeed
    an approximation of $\cW_\eta$, i.e., we have
    \begin{equation*}
      \cW_\eta 
      = \closure{\liminf_{j\in\N_0} \coverV[j+w]{\cW_{\eta,j}}}
      = \closure{\limsup_{j\in\N_0} \coverV[j+w]{\cW_{\eta,j}}}.
    \end{equation*}

  \item \label{enu:prop-of-w:int-in-liminf} We have $\interior*{\cW_\eta}
    \subseteq \liminf_{j\in\N_0} \coverV[j+w]{\cW_{\eta,j}}$.

  \item \label{enu:prop-of-w:w-in-v} We get $\cW_\eta - \eta \tau^{-w} \subseteq
    V$, and for $j\in\N_0$ we obtain $\coverV[j+w]{\cW_{\eta,j}} - \eta
    \tau^{-w} \subseteq V$.

  \item \label{enu:prop-of-w:lmeas} For the Lebesgue measure of the
    characteristic set we obtain $\lmeas{\cW_\eta} = \lmeas{W_\eta}$ and for its
    approximation $\lmeas{\coverV[j+w]{\cW_{\eta,j}}} = \lmeas{W_{\eta,j}}$.

  \item \label{enu:prop-of-w:area-wj}
    Let $j\in\N_0$. If $j<w-1$, then the area of $\coverV[j+w]{\cW_{\eta,j}}$ is
    \begin{equation*}
      \lmeas{\coverV[j+w]{\cW_{\eta,j}}} = \abs\tau^{-2(j+w)} \lmeas{V}.
    \end{equation*}
    If $j \geq w-1$, then the area of $\coverV[j+w]{\cW_{\eta,j}}$ is
    \begin{equation*}
      \lmeas{\coverV[j+w]{\cW_{\eta,j}}} = \lmeas{V} e_w + \Oh{\rho^j}
    \end{equation*}
    with $e_w$ and $\rho$ from Theorem~\vref{th:w-naf-distribution}.    
    
  \item \label{enu:prop-of-w:area-w}
    The area of $W_\eta$ is
    \begin{equation*}
      \lmeas{W_\eta} = \lmeas{V} e_w,
    \end{equation*}
    again with $e_w$ from Theorem~\vref{th:w-naf-distribution}.

  \item \label{enu:prop-of-w:beta}
    Let $j\in\N_0$. We get
    \begin{equation*}
      \beta_{\eta,j} = \int_{x \in V} \f{\left( 
          \indicator*{W_{\eta,j}} - \indicator*{W_\eta} \right)}{x} \dd x.
    \end{equation*}
    If $j<w-1$, then its value is
    \begin{equation*}
      \beta_{\eta,j} = \left(\abs\tau^{-2(j+w)}-e_w\right) \lmeas{V}.
    \end{equation*}
    If $j \geq w-1$, then we get
    \begin{equation*}
      \beta_{\eta,j} = \Oh{\rho^j}.
    \end{equation*}
    Again $e_w$ and $\rho$ can be found in Theorem~\vref{th:w-naf-distribution}.

  \end{enumerate}
\end{proposition}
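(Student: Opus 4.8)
The plan is to establish the items roughly in the order listed, since the later ones build on the earlier ones. For \itemref{enu:prop-of-w:fund-domain}, I would simply unravel the definitions: a \wNAF{} $\bfxi\in\wNAFsetinf$ with $\xi_{-w}=\eta$ has, by the \wNAF{} condition, $\xi_{-1}=\dots=\xi_{-(w-1)}=0$ and $\xi_{-(w+1)}=\dots=\xi_{-(2w-1)}=0$, so its value is $\eta\tau^{-w}+\tau^{-2w+1}\NAFvalue{\bfxi'}$ where $\bfxi'$ ranges over all of $\wNAFsetinf$; hence $\cW_\eta=\eta\tau^{-w}+\tau^{-2w+1}\cF$. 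Item~\itemref{enu:prop-of-w:compact} is then immediate from compactness of $\cF$ (Proposition~\vref{pro:fund-domain-compact}) since $\cW_\eta$ is an affine image of $\cF$. For~\itemref{enu:prop-of-w:w-is-union}, note $\cW_{\eta,j}$ consists of values of $\bfxi\in\wNAFsetellell{0}{j+w}$ with $\xi_{-w}=\eta$; each such finite \wNAF{} can be extended by zeros to an element of $\wNAFsetinf$, so $\cW_{\eta,j}\subseteq\cW_\eta$, the union is increasing in $j$, and density follows because truncating an infinite expansion after $j$ digits (which is what $\cW_{\eta,j}$ captures, plus trailing digits up to index $-(j+w)$) approximates any element of $\cW_\eta$ arbitrarily well by Proposition~\vref{pro:value-continuous}; taking closures gives the claim, and the $\lim$ equals the $\bigcup$ since the sequence is monotone.

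For the approximation statements \itemref{enu:prop-of-w:lim} and \itemref{enu:prop-of-w:int-in-liminf}, the point is to compare $\cW_{\eta,j}$ with $\coverV[j+w]{\cW_{\eta,j}}$, which fattens each point of $\cW_{\eta,j}$ by a Voronoi cell scaled by $\tau^{-(j+w)}$, i.e. of diameter $O(\abs\tau^{-j})$. So $\coverV[j+w]{\cW_{\eta,j}}$ lies within an $O(\abs\tau^{-j})$-neighbourhood of $\cW_\eta$; conversely, since $\cW_{\eta,j}\subseteq\coverV[j+w]{\cW_{\eta,j}}$ and the $\cW_{\eta,j}$ fill out a dense subset of $\cW_\eta$, $\limsup$ and $\liminf$ both have closure equal to $\cW_\eta$. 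For~\itemref{enu:prop-of-w:int-in-liminf}, if $z\in\interior*{\cW_\eta}$, a whole ball $\ball{z}{\eps}$ lies in $\cW_\eta$; using Proposition~\vref{pro:upper-bound-fracnafs} (the inclusion $\NAFvalue{\bfxi}+\tau^{-\ell}V\subseteq\tau^{2w-1}V$ type estimates, suitably translated) one shows that for $j$ large enough the lattice point nearest to $z$ at scale $\tau^{-(j+w)}$ has its whole Voronoi cell inside $\cW_\eta$ and carries an admissible \wNAF{} with $\xi_{-w}=\eta$, so $z\in\coverV[j+w]{\cW_{\eta,j}}$ eventually. Item~\itemref{enu:prop-of-w:w-in-v}: from~\itemref{enu:prop-of-w:fund-domain} and the upper bounds of Proposition~\vref{pro:upper-bound-fracnafs} one gets $\cW_\eta-\eta\tau^{-w}=\tau^{-2w+1}\cF\subseteq\tau^{-2w+1}\tau^{2w-1}V=V$, using $\cF=\NAFvalue{\wNAFsetinf}\subseteq\tau^{2w-1}V$; the same bound applied to the finite truncations gives the statement for the approximations (here one also needs $\tau^{-1}V\subseteq V$ from Proposition~\vref{pro:voronoi-prop}).

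Item~\itemref{enu:prop-of-w:lmeas} follows because the fractional part operator $\fracpartZtau{\cdot}$ is, by~\itemref{enu:prop-of-w:w-in-v}, just a translation (both $\cW_\eta$ and its approximation lie in a single translate $\eta\tau^{-w}+V$ of the fundamental cell, up to a null set on the Voronoi boundary), hence Lebesgue-measure preserving. For \itemref{enu:prop-of-w:area-wj} and \itemref{enu:prop-of-w:area-w}, the case $j<w-1$ is a direct count: there is exactly one admissible configuration (all other digits forced to zero), so $\cW_{\eta,j}$ is a single point and $\coverV[j+w]{\cW_{\eta,j}}$ a single scaled cell of area $\abs\tau^{-2(j+w)}\lmeas{V}$. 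For $j\ge w-1$, I would count lattice points: by the cardinality identity in Proposition~\vref{pro:round-v-basic-prop}\itemref{enu:round-v-basic-prop:card}, $\lmeas{\coverV[j+w]{\cW_{\eta,j}}}=\abs\tau^{-2(j+w)}\lmeas{V}\cdot\card{\cW_{\eta,j}\cap\tau^{-(j+w)}\Ztau}$, and the number of such lattice points is the number of \wNAF{}s of the relevant length with a prescribed non-zero digit at position $-w$ — which, shifting, is the number of \wNAF{}s of length about $j$ having a fixed digit, and this is governed by the generating-function computation in the proof of Theorem~\vref{th:w-naf-distribution}: it equals $e_w$ times the total count $C_{j+?,w}$ up to relative error $O(\rho^j)$. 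Dividing by the total gives $\lmeas{V}e_w+O(\rho^j)$, and letting $j\to\infty$ combined with~\itemref{enu:prop-of-w:lim} (so areas converge) yields~\itemref{enu:prop-of-w:area-w}. Finally~\itemref{enu:prop-of-w:beta}: the integral formula is just $\beta_{\eta,j}=\lmeas{W_{\eta,j}}-\lmeas{W_\eta}$ rewritten via indicator functions over $V$ (valid since both sets sit inside one translate of $V$ by~\itemref{enu:prop-of-w:w-in-v}, and using~\itemref{enu:prop-of-w:lmeas}); the explicit values then drop out of~\itemref{enu:prop-of-w:area-wj} and~\itemref{enu:prop-of-w:area-w}.

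The main obstacle I anticipate is~\itemref{enu:prop-of-w:int-in-liminf} (and the closely related $\liminf$ half of~\itemref{enu:prop-of-w:lim}): going from ``$z$ is interior to $\cW_\eta$'' to ``$z$ lies in the \emph{cell covering} of $\cW_{\eta,j}$ for all large $j$'' requires controlling not just that $z$ is approximated by points of $\cW_{\eta,j}$, but that the nearby lattice point at the right scale is \emph{itself} in $\cW_{\eta,j}$, i.e. that it is the value of an admissible finite \wNAF{} beginning $0\bfldot 0^{w-1}\eta$; this is exactly where the quantitative inclusions of Proposition~\vref{pro:upper-bound-fracnafs}\itemref{enu:upper-bound:approx-in-v} (and the existence results of Section~\ref{sec:exist-uniqu-nafs}) do the real work, and one must be careful that the error terms $\abs\tau^{-j}f_U$ shrink faster than the fixed distance $\eps$ from $z$ to $\boundary*{\cW_\eta}$.
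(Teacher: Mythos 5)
Your overall route coincides with the paper's: items \itemref{enu:prop-of-w:fund-domain}--\itemref{enu:prop-of-w:w-is-union}, \itemref{enu:prop-of-w:w-in-v}, \itemref{enu:prop-of-w:lmeas} and \itemref{enu:prop-of-w:beta} are handled the same way, and your lattice-point count in \itemref{enu:prop-of-w:area-wj} is the paper's computation (the admissible words are exactly a forced prefix $0^{w-1}\eta0^{w-1}$ followed by an arbitrary \wNAF{} of length $j-w+1$, giving $C_{j-w+1,w}$ cells). However, two steps rest on the wrong tool or on no tool, and they are precisely where the work sits. For \itemref{enu:prop-of-w:int-in-liminf}: knowing that the lattice point $y\in\tau^{-(j+w)}\Ztau$ whose cell contains $z$ admits \emph{some} finite \wNAF{} (Theorem~\vref{thm:wnaf-exist-unique}) and that its cell lies in $\ball{z}{\eps}\subseteq\interior*{\cW_\eta}$ does not yet tell you that this expansion begins $0\bfldot0^{w-1}\eta$; the inclusions of Proposition~\vref{pro:upper-bound-fracnafs} that you invoke go the other way (they constrain where values of admissible expansions lie, not which expansions a given lattice point admits). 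The missing ingredient is Proposition~\vref{pro:char-boundary}: a point of $\cF$ lies on $\boundary*{\cF}$ if and only if it has a representation with non-zero integer part, so every representation of an \emph{interior} point of $\cW_\eta=\eta\tau^{-w}+\tau^{-2w+1}\cF$ --- in particular the unique finite one of $y$ --- has integer part $\bfzero$ and digit $\eta$ at index $-w$, whence $y\in\cW_{\eta,j}$ and $z\in\coverV[j+w]{\cW_{\eta,j}}$.

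The second gap is in \itemref{enu:prop-of-w:area-w}, where you pass from the set-theoretic convergence of \itemref{enu:prop-of-w:lim} to ``so areas converge''. That implication is false in general: the closures taken in \itemref{enu:prop-of-w:lim} may add a set of positive measure, so $\lmeas{\cW_\eta}=\lim_j\lmeas{\coverV[j+w]{\cW_{\eta,j}}}$ is not automatic. The paper's argument sandwiches $\lmeas{\liminf_j\coverV[j+w]{\cW_{\eta,j}}}$ below the limit of the measures and bounds $\lmeas{\cW_\eta}$ above by $\lmeas{\liminf_j\coverV[j+w]{\cW_{\eta,j}}}+\lmeas{\boundary*{\cW_\eta}}$ using \itemref{enu:prop-of-w:int-in-liminf}, and then needs $\lmeas{\boundary*{\cW_\eta}}=0$; this comes from the Hausdorff-dimension bound $\hddim\boundary*{\cF}<2$ of Proposition~\vref{pro:boundary-fund-dom-dim-upper} together with \itemref{enu:prop-of-w:fund-domain}. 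Without citing that result the interchange of limit and measure in your plan is unjustified. Everything else is sound and matches the paper.
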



\begin{proof}
  \begin{enumerate}[(a)]

  \item Is clear, since we have the digit $\eta$ at index $-w$ and an arbitrary
    \wNAF{} starting with index $-2w$. Note that the elements in $\wNAFsetinf$
    start with index $-1$.

  \item Follows directly from~\itemref{enu:prop-of-w:fund-domain}, because $\cF$
    is compact according to Proposition~\vref{pro:fund-domain-compact}.
    
  \item Clearly we have $\cW_{\eta,j} \subseteq \cW_\eta$. Thus
    $\bigcup_{j\in\N_0} \cW_{\eta,j} \subseteq \cW_\eta$, and because $\cW_\eta$
    is closed, the inclusion $\closure{\bigcup_{j\in\N_0} \cW_{\eta,j}}
    \subseteq \cW_\eta$ follows. Now let $z\in\cW_\eta$, and let $\bfxi \in
    \wNAFsetinf$, such that $\NAFvalue{\bfxi}=z$. Then there is a sequence of
    \wNAF{}s $\sequence{\bfxi_\ell}_{\ell\in\N_0}$ with finite right-lengths
    that converges to $\bfxi$ and clearly
    \begin{equation*}
      \NAFvalue{\bfxi_\ell} \in \bigcup_{k\in\N_0} \cW_{\eta,k}.
    \end{equation*}
    Since evaluating the value is a continuous function, see
    Proposition~\vref{pro:value-continuous}, we get
    \begin{equation*}
      z = \NAFvalue{\bfxi} 
      = \NAFvalue{ \lim_{\ell\to\infty} \bfxi_\ell }
      = \lim_{\ell\to\infty} \NAFvalue{\bfxi_\ell}
      \in \closure{\bigcup_{k\in\N_0} \cW_{\eta,k}}.
    \end{equation*}

    The equality $\bigcup_{j\in\N_0} \cW_{\eta,j} = \lim_{j\to\infty}
    \cW_{\eta,j}$ is obvious, since $\cW_{\eta,j}$ is monotonic increasing.

  \item First we show that we have
    \begin{equation*}
      \limsup_{j\to\infty} \coverV[j+w]{\cW_{\eta,j}} \subseteq \cW_\eta.
    \end{equation*}
    Let 
    \begin{equation*}
      z \in \limsup_{j\to\infty} \coverV[j+w]{\cW_{\eta,j}} 
      = \bigcap_{j \in \N_0} \bigcup_{k \geq j} \coverV[k+w]{\cW_{\eta,k}}.
    \end{equation*}
    Then there is a $j_0 \geq 0$ such that $z \in
    \coverV[j_0+w]{\cW_{\eta,j_0}}$. Further, for $j_{\ell-1}$ there is a $j_\ell
    \geq j_{\ell-1}$, such that $z \in \coverV[j_\ell+w]{\cW_{\eta,j_\ell}}$. For
    each $\ell\in\N_0$ there is a $z_\ell \in \cW_{\eta,j_\ell} \subseteq
    \cW_\eta$ with
    \begin{equation*}
      \abs{z-z_\ell} \leq c_V \abs{\tau} \abs{\tau}^{-j_\ell-w},
    \end{equation*}
    since $\coverV[j_\ell+w]{\cW_{\eta,j_\ell}}$ consists of cells
    $\abs{\tau}^{-j_\ell-w} V$ with centres out of $\cW_{\eta,j_\ell}$. Refer to
    Proposition~\vref{pro:voronoi-prop} for the constant $c_V \abs{\tau}$. Thus
    we get $z = \lim_{\ell\to\infty} z_\ell \in \cW_\eta$, since
    $\abs{\tau}^{-j_\ell-w}$ tends to $0$ for large $\ell$ and $\cW_\eta$ is
    closed.
    
    Using the closeness property of $\cW_\eta$ again yields 
    \begin{equation*}
      \closure{\limsup_{j\to\infty} \coverV[j+w]{\cW_{\eta,j}}}
      \subseteq \cW_\eta.
    \end{equation*}
    Now we are ready to show the stated equalities. We obtain
    \begin{equation*}
      \cW_\eta 
      = \closure{\lim_{j\to\infty} \cW_{\eta,j}}
      = \closure{\liminf_{j\to\infty} \cW_{\eta,j}}
      \subseteq \closure{\liminf_{j\to\infty} \coverV[j+w]{\cW_{\eta,j}}}
      \subseteq \closure{\limsup_{j\to\infty} \coverV[j+w]{\cW_{\eta,j}}}
      \subseteq \cW_\eta,
    \end{equation*}
    so equality holds everywhere.

  \item Let $z\in\interior*{\cW_\eta}$. Then there exists an $\eps>0$ such that
    $\ballo{z}{\eps}\subseteq\interior*{\cW_\eta}$. For each $k\in\N_0$ there is
    a $y\in\tau^{-k-w}\Ztau$ with the property that $z$ is in the corresponding
    Voronoi cell, i.e., $z\in y+\tau^{-k-w}V$. For this $y$, there is also an
    $\bfxi\in\wNAFsetfinell{k+w}$ such that $y=\NAFvalue{\bfxi}$. 

    Clearly, if $k$ is large enough, say $k\geq j$, we obtain $y+\tau^{-k-w}V
    \subseteq \ballo{z}{\eps}$. From Proposition~\vref{pro:char-boundary}
    (combined with~\itemref{enu:prop-of-w:fund-domain}) we know that all
    \wNAF{}s corresponding to the values in $\interior*{\cW_\eta}$ must have
    $\eta$ at digit $-w$ and integer part $\bfzero$. But this means that
    $\NAFvalue{\bfxi}\in\cW_{\eta,k}$ and therefore
    $z\in\coverV[k+w]{\cW_{\eta,k}}$. So we conclude
    \begin{equation*}
      z \in \bigcup_{j\in\N_0} \bigcap_{k\geq j} \coverV[k+w]{\cW_{\eta,k}}
      = \liminf_{j\to\infty} \coverV[j+w]{\cW_{\eta,j}}.
    \end{equation*}

  \item Each \wNAF{} $\bfxi\in\wNAFsetinf$ corresponding to a value in $\cW_\eta
    - \eta \tau^{-w}$ starts with $2w-1$ zeros from the left. Therefore
    \begin{equation*}
      \NAFvalue{\bfxi} = \tau^{1-2w} \NAFvalue{\bfvartheta}
    \end{equation*}
    for an appropriate \wNAF{} $\bfvartheta\in\wNAFsetinf$. Thus, using
    $\NAFvalue{\bfvartheta}\in\tau^{2w-1}V$ from
    Proposition~\vref{pro:upper-bound-fracnafs}, the desired inclusion follows.
    
    The set $\coverV[j+w]{\cW_{\eta,j}} - \eta \tau^{-w} \subseteq V$ consists
    of cells of type $\tau^{-j-w}V$, where their centres are the fractional
    value of an element $\bfxi\in\wNAFsetellell{0}{j+w}$. Again the first $2w-1$
    digits are zero, so
    \begin{equation*}
      \NAFvalue{\bfxi} = \tau^{1-2w} \NAFvalue{\bfvartheta}
    \end{equation*}
    for an appropriate \wNAF{} $\bfvartheta\in\wNAFsetellell{0}{j-w+1}$. Suppose
    $j \geq w-1$. Using $\NAFvalue{\bfvartheta} + \tau^{-(j-w+1)}V \subseteq
    \tau^{2w-1}V$ again from Proposition~\vref{pro:upper-bound-fracnafs}, the
    statement follows. If $j<w-1$, then $\NAFvalue{\bfvartheta}=0$ and it
    remains to show that $\tau^{-j-w}V \subseteq V$. But this is clearly true,
    since $\tau^{-1}V \subseteq V$ according to
    Proposition~\vref{pro:voronoi-prop}.

  \item As a shifted version of the sets $\cW_\eta$ and
    $\coverV[j+w]{\cW_{\eta,j}}$ is contained in $V$ by
    \itemref{enu:prop-of-w:w-in-v}, so the equality of the Lebesgue measures
    follows directly.

  \item The set $\coverV[j+w]{\cW_{\eta,j}}$ consists of of cells of type
    $\tau^{-j-w}V$, where their centres are the value of an element
    $\bfxi\in\wNAFsetellell{0}{j+w}$. The intersection of two
    different cells is contained in the boundary of the cells, so a set of
    Lebesgue measure zero.
    
    Suppose $j \geq w-1$. Since the digit $\xi_{-w}=\eta$ is fixed, the first
    $2w-1$ digits from the left are fixed, too. The remaining word
    $\xi_{-2w}\ldots\xi_{-(j+w)}$ can be an arbitrary \wNAF{} of length $j-w+1$,
    so there are $C_{j-w+1,w}$ choices, see
    Theorem~\vref{th:w-naf-distribution}.

    Thus we obtain
    \begin{equation*}
      \lmeas{\coverV[j+w]{\cW_{\eta,j}}} = C_{j-w+1,w} \lmeas{\tau^{-(w+j)} V}
      = C_{j-w+1,w} \abs\tau^{-2(w+j)} \lmeas{V}.
    \end{equation*}
    Inserting the results of Theorem~\vref{th:w-naf-distribution} yields
    \begin{equation*}
      \begin{split}
        \lmeas{\coverV[j+w]{\cW_{\eta,j}}}  
        &= \left( \frac{\abs\tau^{2(j-w+1+w)}}{\left(\abs\tau^2-1\right) w + 1} 
          + \Oh{\left(\rho\abs\tau^2\right)^{j-w+1}} \right)
        \abs\tau^{-2(w+j)} \lmeas{V} \\
        &= \lmeas{V} \underbrace{\frac{1}{\abs\tau^{2(w-1)}
          \left(\left(\abs\tau^2-1\right) w + 1\right)}}_{= e_w}
          + \Oh{\rho^j}.
      \end{split}
    \end{equation*}
    
    If $j<w-1$, then $\coverV[j+w]{\cW_{\eta,j}}$ consists of only one cell of
    size $\tau^{-j-w}V$, so the stated result follows directly. 

  \item Using \itemref{enu:prop-of-w:lim}, \itemref{enu:prop-of-w:int-in-liminf}
    and the continuity of the Lebesgue measure yields
    \begin{equation*}
      \begin{split}
        \lmeas{\liminf_{j\in\N_0} \coverV[j+w]{\cW_{\eta,j}}}
        &\leq \liminf_{j\in\N_0} \lmeas{\coverV[j+w]{\cW_{\eta,j}}}
        \leq \limsup_{j\in\N_0} \lmeas{\coverV[j+w]{\cW_{\eta,j}}} \\
        &\leq \lmeas{\limsup_{j\in\N_0} \coverV[j+w]{\cW_{\eta,j}}}
        \leq \lmeas{\closure{\limsup_{j\in\N_0} \coverV[j+w]{\cW_{\eta,j}}}} \\
        &= \lmeas{\cW_\eta}
        \leq \lmeas{\interior*{\cW_\eta}} + \lmeas{\boundary*{\cW_\eta}} \\
        &\leq \lmeas{\liminf_{j\in\N_0} \coverV[j+w]{\cW_{\eta,j}}}
        + \lmeas{\boundary*{\cW_\eta}}.
      \end{split}
    \end{equation*}
    Since $\lmeas{\boundary*{\cW_\eta}}=0$,
    combine~\itemref{enu:prop-of-w:fund-domain} and
    Proposition~\vref{pro:boundary-fund-dom-dim-upper} to see this, we have
    equality everywhere, so
    \begin{equation*}
      \lmeas{\cW_\eta} = \lim_{j\in\N_0} \lmeas{\coverV[j+w]{\cW_{\eta,j}}}.
    \end{equation*}
    Thus the desired result follows from \itemref{enu:prop-of-w:area-wj},
    because $\rho < 1$.

  \item Using \itemref{enu:prop-of-w:w-in-v} and \itemref{enu:prop-of-w:lmeas}
    yields the first statement. The other result follows directly by using
    \itemref{enu:prop-of-w:area-wj} and \itemref{enu:prop-of-w:area-w}.
    \qedhere
  \end{enumerate}
\end{proof}


Using the results of the previous proposition, we can finally determine the
Lebesgue measure of the fundamental domain $\cF$ defined in
Section~\ref{sec:fundamental-domain}.


\begin{remark}[Lebesgue Measure of the Fundamental Domain]
  \label{rem:meas-fund-domain}

  We get
  \begin{equation*}
    \lmeas{\cF} = \abs\tau^{2(2w-1)} e_w \lmeas{V}
    = \frac{\abs\tau \abs{\im{\tau}}}{(\abs\tau^2-1)w+1} ,
  \end{equation*}
  using~\itemref{enu:prop-of-w:fund-domain} and~\itemref{enu:prop-of-w:area-w}
  from Proposition~\vref{pro:prop-of-w}, $e_w$ from
  Theorem~\vref{th:w-naf-distribution}, and $\lmeas{V}=\abs{\im{\tau}}$ from
  Proposition~\vref{pro:voronoi-prop}.
\end{remark}


The next lemma makes the connection between the \wNAF{}s of elements of the
lattice $\Ztau$ and the characteristic sets $W_{\eta,j}$.


\begin{lemma}
  \label{lem:equiv-digit-char-set}

  Let $\eta\in\cD^\bullet$, $j\geq0$. Let $n\in\Ztau$ and let
  $\bfn\in\wNAFsetfin$ be its \wNAF{}. Then the following statements are
  equivalent:
  \begin{enumequivalences}
  \item \label{enu:e-d-cs:1} 
    The $j$th digit of $\bfn$ equals $\eta$.
  \item \label{enu:e-d-cs:2} 
    The condition $\fracpartZtau{\tau^{-(j+w)}n} \in W_{\eta,j}$ holds.
  \item \label{enu:e-d-cs:3}
    The inclusion $\fracpartZtau{\tau^{-(j+w)}V_n} \subseteq W_{\eta,j}$ holds.
  \end{enumequivalences}
\end{lemma}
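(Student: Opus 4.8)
The plan is to establish the implication cycle \itemref{enu:e-d-cs:1}$\Rightarrow$\itemref{enu:e-d-cs:2}$\Rightarrow$\itemref{enu:e-d-cs:1} together with \itemref{enu:e-d-cs:2}$\Leftrightarrow$\itemref{enu:e-d-cs:3}, after a preliminary normalisation that isolates the digit window around position~$j$. Writing $\bfn=\sequence{n_i}_{i\in\Z}$ with $n_i=0$ for $i<0$, I would split
\begin{equation*}
  \tau^{-(j+w)}n = A + B, \qquad
  A := \sum_{i\ge j+w} n_i\tau^{i-j-w} \in \Ztau, \qquad
  B := \sum_{i=0}^{j+w-1} n_i\tau^{i-j-w},
\end{equation*}
so that $B=\NAFvalue{\bfxi}$ for $\bfxi=0\bfldot n_{j+w-1}\cdots n_1 n_0\in\wNAFsetellell{0}{j+w}$ (a contiguous block of a \wNAF{} is again a \wNAF{}), and the digit of $\bfxi$ at position $-w$ is exactly $n_j$. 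Since $\fracpartZtau{z}$ depends only on the residue class of $z$ modulo $\Ztau$ and $A\in\Ztau$, one has $\fracpartZtau{\tau^{-(j+w)}n}=\fracpartZtau{B}$ and, as $V_n=n+V$, also $\fracpartZtau{\tau^{-(j+w)}V_n}=\fracpartZtau{B+\tau^{-(j+w)}V}$. By injectivity of $\NAFvaluename$ on $\wNAFsetfinfin$ (Lemma~\vref{lem:value-injective}), $\bfxi$ is \emph{the} \wNAF{} of $B$, so \itemref{enu:e-d-cs:1} is equivalent to $B\in\cW_{\eta,j}$. I would also record that $\cW_{\eta,j}\subseteq\tau^{-(j+w)}\Ztau$ and that $\coverV[j+w]{\cW_{\eta,j}}=\bigcup_{B'\in\cW_{\eta,j}}\bigl(B'+\tau^{-(j+w)}V\bigr)$ is a union of scaled Voronoi cells centred at these points.

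With this in hand, \itemref{enu:e-d-cs:1}$\Rightarrow$\itemref{enu:e-d-cs:2} is immediate: if $B\in\cW_{\eta,j}$ then $B\in\coverV[j+w]{\cW_{\eta,j}}$ (because $0\in V$), hence $\fracpartZtau{\tau^{-(j+w)}n}=\fracpartZtau{B}\in\fracpartZtau{\coverV[j+w]{\cW_{\eta,j}}}=W_{\eta,j}$. Likewise, once \itemref{enu:e-d-cs:2}$\Rightarrow$\itemref{enu:e-d-cs:1} is available it gives $B\in\cW_{\eta,j}$, so $B+\tau^{-(j+w)}V$ is one of the constituent cells of $\coverV[j+w]{\cW_{\eta,j}}$, whence $\fracpartZtau{\tau^{-(j+w)}V_n}=\fracpartZtau{B+\tau^{-(j+w)}V}\subseteq W_{\eta,j}$; this is \itemref{enu:e-d-cs:3}. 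Conversely \itemref{enu:e-d-cs:3}$\Rightarrow$\itemref{enu:e-d-cs:2} holds because $n\in V_n$, so $\tau^{-(j+w)}n\in\tau^{-(j+w)}V_n$.

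The heart of the matter is \itemref{enu:e-d-cs:2}$\Rightarrow$\itemref{enu:e-d-cs:1}. From $\fracpartZtau{B}\in W_{\eta,j}$ I would pick $y=B'+\tau^{-(j+w)}v\in\coverV[j+w]{\cW_{\eta,j}}$ with $B'\in\cW_{\eta,j}$, $v\in V$ and $\fracpartZtau{B}=\fracpartZtau{y}$, i.e.\ $B-y\in\Ztau$. Multiplying $B-B'-\tau^{-(j+w)}v\in\Ztau$ by $\tau^{j+w}$ and using $\tau^{j+w}B,\tau^{j+w}B'\in\Ztau$ forces $v\in\Ztau$, whence $v=0$ since $V\cap\Ztau=\set{0}$. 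Therefore $m:=\tau^{j+w}B$ and $m':=\tau^{j+w}B'$ lie in $\Ztau$ with $m\equiv m'\pmod{\tau^{j+w}}$; here the \wNAF{} of $m$ is the length-$(j+w)$ truncation of $\bfn$, which carries $n_j$ at position~$j$, while the \wNAF{} of $m'$ is the shift of the witnessing \wNAF{} of $B'$, which carries $\eta$ at position~$j$. Running Algorithm~\vref{alg:get-wnaf} on $m$ and on $m'$ in parallel then completes the argument: the digit emitted at each step is a function of the current value modulo $\tau^w$ only, and each step decreases the modulus of the running congruence by exactly one power of $\tau$, so from $m\equiv m'\pmod{\tau^{j+w}}$ the two \wNAF{}s agree at positions $0,1,\dots,j$; comparing position~$j$ yields $n_j=\eta$.

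I expect this last step --- the parallel run of Algorithm~\vref{alg:get-wnaf} and the bookkeeping of how far the congruence propagates (including the sub-case where one of the two runs terminates before step~$j$) --- to be the main obstacle: one has to confirm that the greedy digit choice depends only on the residue modulo $\tau^w$ and that after $j$ iterations the surviving congruence (modulus $\tau^w$) still pins down the digit at position~$j$. Everything else --- the splitting of $\tau^{-(j+w)}n$, the $\Ztau$-invariance of $z\mapsto\fracpartZtau{z}$, the identity $V\cap\Ztau=\set{0}$, and the cell decomposition of $\coverV[j+w]{\cW_{\eta,j}}$ --- should be routine.
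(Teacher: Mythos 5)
Your proposal is correct and follows essentially the same route as the paper: truncate $n$ to its $j+w$ least significant digits, use that $\fracpartZtau{\cdot}$ only sees the class modulo $\Ztau$, identify the lattice points of $\coverV[j+w]{\cW_{\eta,j}}$ with the cell centres (your $v=0$ step is the paper's ``$m'\in\Ztau$ forces $m'\in\tau^{j+w}\cW_{\eta,j}$''), and conclude from $m\equiv m'\pmod{\tau^{j+w}}$ that the digits at position $j$ coincide. The only difference is that you spell out, via a parallel run of Algorithm~\vref{alg:get-wnaf}, the final step that the paper merely asserts; that elaboration is sound, including the early-termination sub-case.
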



\begin{proof}
  Define $\bfm$ by
  \begin{equation*}
    m_k := 
    \begin{cases} 
      n_k & \text{if $k<j+w$,} \\ 
      0 & \text{if $k \geq j+w$}
    \end{cases}
  \end{equation*}
  and $m := \NAFvalue{\bfm}$. Then, by definition, $m \equiv n
  \pmod{\tau^{j+w}}$,
      
  \begin{equation*}
    \fracpartZtau{\tau^{-(j+w)}n} = \fracpartZtau{\tau^{-(j+w)}m}
  \end{equation*}
  and $\tau^{-(j+w)}m \in \cF$.  As the $j$th digit of $\bfn$ only
  depends on the $j+w$ least significant digits of $\bfn$, it is sufficient to
  show the equivalence of the assertions when $\bfn$ and $n$ are replaced by
  $\bfm$ and $m$, respectively.

  By definition, $m_j=\eta$ is equivalent to $\tau^{-(j+w)}m \in \cW_{\eta,j}$.
  
  \begin{descproofequivalences}

  \item[\labelproofequivalent{\ref{enu:e-d-cs:1}}{\ref{enu:e-d-cs:3}}] 

    Assume now that $\tau^{-(j+w)}m \in \cW_{\eta,j}$. Then $m \in
    \tau^{j+w} \cW_{\eta,j}$ and 
    \begin{equation*}
      \tau^{-(j+w)}V_m \subseteq \coverV[j+w]{\cW_{\eta,j}}.
    \end{equation*}
    This implies $\fracpartZtau{\tau^{-(j+w)}V_m} \subseteq W_j$.

    \item[\labelproofequivalent{\ref{enu:e-d-cs:3}}{\ref{enu:e-d-cs:2}}] 

      This implication holds trivially.

    \item[\labelproofequivalent{\ref{enu:e-d-cs:2}}{\ref{enu:e-d-cs:1}}] 

      So now assume that $\fracpartZtau{\tau^{-(j+w)}m} \in W_{\eta,j}$. Thus
      there is an $m'$ such that
      \begin{equation*}
        \tau^{-(j+w)}m' \in \coverV[j+w]{\cW_{\eta,j}}
      \end{equation*}
      and
      \begin{equation*}
        \tau^{-(j+w)}m - \tau^{-(j+w)}m' \in \Ztau.
      \end{equation*}
      This immediately implies $m'\in\Ztau$ and $m\equiv
      m'\pmod{\tau^{j+w}}$. We also conclude that $m'\in \tau^{j+w}
      \coverV[j+w]{\cW_{\eta,j}}$. As $m'\in\Ztau$, this is equivalent to $m'\in
      \tau^{j+w}\cW_{\eta,j}$ and therefore $\tau^{-(j+w)}m'\in\cW_{\eta,j}$. By
      definition of $\cW_{\eta,j}$, there is a $0.\bfm'\in
      \wNAFsetellell{0}{j+w}$ such that $\tau^{-(j+w)}m'=\NAFvalue{0.\bfm'}$,
      i.e., $m'=\NAFvalue{\bfm'}$, and $m'_j=\eta$. From $m'\equiv
      m\pmod{\tau^{j+w}}$ we conclude that $m_j=\eta$, too. (In fact, one can
      now easily show that we have $\bfm'=\bfm$, but this is not really needed.)
      \qedhere
    \end{descproofequivalences}
\end{proof}


\section{Counting the Occurrences of a non-zero Digit in a Region}
\label{sec:counting-digits-region}


Let $\tau\in\C$ be an algebraic integer, imaginary quadratic.  Suppose that
$\abs{\tau}>1$. Let $w\in\N$ with $w\geq2$. Further let $\cD$ be a minimal norm
representatives digit set modulo $\tau^w$ as in
Definition~\vref{def:min-norm-digit-set}.


We denote the norm function by $\normsymbol \colon \Ztau \fto \Z$, and we
simply have $\abs{\normtau} = \abs{\tau}^2$. We write $\tau = \abs{\tau}
e^{i\theta}$ for $\theta \in \intervaloc{-\pi}{\pi}$. Further, recall Iverson's
notation $\iverson{\var{expr}}=1$ if $\var{expr}$ is true and
$\iverson{\var{expr}}=0$ otherwise, cf.\ Graham, Knuth and
Patashnik~\cite{Graham-Knuth-Patashnik:1994}, and that the Lebesgue measure is
denoted by $\lambda$.


In this section we will prove our main result on the asymptototic number of
occurrences of a digit in a given region.

\begin{theorem}[Counting Theorem]
  \label{thm:countdigits}

  Let $0 \neq \eta \in \cD$ and $N\in\R$ with $N\geq0$. Further let $U \subseteq
  \C$ be measurable with respect to the Lebesgue measure, $U \subseteq
  \ball{0}{d}$ with $d$ finite, i.e., $U$ bounded, and set $\delta$ such that
  $\cardV{\boundaryV{NU}} = \Oh{N^\delta}$. Assume $1\leq\delta<2$. We denote
  the number of occurrences of the digit $\eta$ in all width\nbd-$w$
  non-adjacent forms with value in the region~$NU$ by
  \begin{equation*}
    \f{Z_{\tau,w,\eta}}{N}
    = \sum_{z \in NU\cap\Ztau} \sum_{j\in\N_0} 
    \iverson*{$j$th digit of $z$ in its \wNAF{}-expansion equals $\eta$}.
  \end{equation*}
  Then we get  
  \begin{equation*}
      \f{Z_{\tau,w,\eta}}{N} 
      = e_w N^2 \lmeas{U} \log_{\abs\tau} N
      + N^2 \f{\psi_\eta}{\log_{\abs\tau} N}
      + \Oh{N^{\alpha} \log_{\abs\tau} N} 
      + \Oh{N^\delta \log_{\abs\tau} N},
  \end{equation*}
  in which the following expressions are used. We have the constant of the
  expectation
  \begin{equation*}
    e_w = \frac{1}{\abs\tau^{2(w-1)} 
      \left(\left(\abs\tau^2-1\right) w + 1\right)},
  \end{equation*}
  cf.\ Theorem~\vref{th:w-naf-distribution}. Then there is the function
  \begin{equation*}
    \f{\psi_\eta}{x} = \f{\psi_{\eta,\cM}}{x} 
    + \f{\psi_{\eta,\cP}}{x} + \f{\psi_{\eta,\cQ}}{x},
  \end{equation*}
  where
  \begin{equation*}
    \f{\psi_{\eta,\cM}}{x} 
    = \lmeas{U} \left( c + 1 - \fracpart{x} \right) e_w,
  \end{equation*}
  \begin{equation*}
    \f{\psi_{\eta,\cP}}{x} =
    \frac{\abs\tau^{2(c-\fracpart{x})}}{\lmeas{V}} \sum_{j=0}^\infty 
    \int_{y \in \fracpartV[j-w]{\abs\tau^{\fracpart{x}-c} \f{\wh{\theta}}{\floor{x}} U}}
    \left( \indicator{W_\eta}{\fracpartZtau{y \tau^{j-w}}} 
      - \lmeas{W_\eta} \right) \dd y,
  \end{equation*}
  with the rotation $\f{\wh{\theta}}{x} = e^{-i \theta x - i \theta c}$,
  and
  \begin{equation*}
    \f{\psi_{\eta,\cQ}}{x} = \psi_{\eta,\cQ} 
    = \frac{\lmeas{U}}{\lmeas{V}}
    \sum_{j=0}^\infty \frac{\beta_j}{\lmeas{V}}.
  \end{equation*}
  We have $\alpha = 2 + \log_{\abs\tau}\rho < 2$, with $\rho = \left( 1 +
    \frac{1}{\abs\tau^2 w^3} \right)^{-1} < 1$, and
  \begin{equation*}
    c = \floor{ \log_{\abs\tau} d - \log_{\abs\tau} f_L } + 1 
  \end{equation*}
  with the constant $f_L$ of Proposition~\vref{pro:lower-bound-fracnafs}.

  Further, if there is a $p\in\N$, such that $e^{2 i \theta p} U = U$, then
  $\psi_\eta$ is \periodic{p} and continuous.
\end{theorem}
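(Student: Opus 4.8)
The proof is a Delange-style decomposition of the counting sum $\f{Z_{\tau,w,\eta}}{N}$ according to the digit position $j$. By Lemma~\vref{lem:equiv-digit-char-set}, the condition ``the $j$th digit of $z$ equals $\eta$'' is equivalent to $\fracpartZtau{\tau^{-(j+w)}z}\in W_{\eta,j}$, and in fact to the inclusion $\fracpartZtau{\tau^{-(j+w)}V_z}\subseteq W_{\eta,j}$. So I would rewrite the inner count over $z\in NU\cap\Ztau$ with the $j$th digit equal to $\eta$ as the number of lattice points $z$ whose scaled-and-rotated Voronoi cell $\tau^{-(j+w)}V_z$ lands, after taking fractional part modulo $\Ztau$, inside the characteristic set $W_{\eta,j}$. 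Multiplying by $\lmeas{V}$ turns each such count into the Lebesgue measure of a union of cells, i.e.\ into an integral of $\indicator{W_{\eta,j}}$ against a suitable geometric region obtained by scaling/rotating $NU$. The key geometric object is that $\tau^{-j}NU$, for $N=\abs\tau^x$, is $\abs\tau^{x-j}$ times a rotation of $U$ by the angle $-\theta j$; writing $j$ relative to $\floor x$ and $\fracpart x$ produces the rotation $\f{\wh\theta}{x}=e^{-i\theta x-i\theta c}$ appearing in $\psi_{\eta,\cP}$. The constant $c$ is chosen (via $f_L$ from Proposition~\vref{pro:lower-bound-fracnafs}) precisely so that only positions $j$ with $0\le j\lesssim\log_{\abs\tau}N+c$ contribute: a digit at position $j$ that is too large forces the value to exceed $f_L\abs\tau^j$, which leaves the region $NU\subseteq\ball{0}{Nd}$. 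Summing the geometric contributions over these $\Oh{\log_{\abs\tau}N}$ positions, using Proposition~\vref{pro:set-nu} to replace each lattice-point count by $\abs N^2\lmeas{\cdot}/\lmeas V$ up to $\Oh{N^\delta}$, and using part~\itemref{enu:prop-of-w:area-w} of Proposition~\vref{pro:prop-of-w} that $\lmeas{W_\eta}=e_w\lmeas V$, yields the main term $e_wN^2\lmeas U\log_{\abs\tau}N$; the $\beta_j$-corrections between $W_{\eta,j}$ and $W_\eta$ assemble into $\psi_{\eta,\cQ}$ (convergent because $\beta_{\eta,j}=\Oh{\rho^j}$ by part~\itemref{enu:prop-of-w:beta}), the ``half-open boundary'' terms from $\fracpart x$ give $\psi_{\eta,\cM}$, and the genuinely oscillating fractional-cell integrals give $\psi_{\eta,\cP}$; the error $\Oh{N^\alpha\log_{\abs\tau}N}$ with $\alpha=2+\log_{\abs\tau}\rho$ comes from truncating the $j$-sum in $\psi_{\eta,\cP}$ and from the full-block-length error term in Theorem~\vref{th:w-naf-distribution}.

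For the final statement — periodicity and continuity of $\psi_\eta$ under the hypothesis $e^{2i\theta p}U=U$ — I would argue termwise. First, $\psi_{\eta,\cQ}$ is constant, hence trivially $\periodic p$ and continuous. For $\psi_{\eta,\cM}$, the only $x$-dependence is through $\fracpart x$, which is $\periodic 1$ (hence $\periodic p$) but is \emph{not} continuous at integers; continuity of the full $\psi_\eta$ must therefore come from a cancellation between the jump of $\psi_{\eta,\cM}$ and a compensating jump of $\psi_{\eta,\cP}$ at integer arguments. This is exactly the familiar phenomenon in Delange's method, so the genuine work is in $\psi_{\eta,\cP}$: I would show (i) that replacing $x$ by $x+p$ leaves $\psi_{\eta,\cP}$ invariant — here one uses $\fracpart{x+p}=\fracpart x$, $\floor{x+p}=\floor x+p$, so the extra factor is a rotation of $U$ by $e^{-i\theta p}$ composed with another by $e^{-i\theta p}$, i.e.\ by $e^{-2i\theta p}$, and the hypothesis $e^{2i\theta p}U=U$ (equivalently $e^{-2i\theta p}U=U$, since these are inverse rotations) absorbs it, while the index shift $j\mapsto j+p$ in the infinite sum is harmless; and (ii) that $\psi_{\eta,\cP}$ is continuous in $x$, including across integers where it compensates the jump of $\psi_{\eta,\cM}$. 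For (ii) I would use that each summand is an integral of a bounded function over the fractional-cell region $\fracpartV[j-w]{\abs\tau^{\fracpart x-c}\f{\wh\theta}{\floor x}U}$, whose boundary has measure zero and which varies continuously in measure with $x$ away from integers; at an integer $x=n$ the region for $j$ at $\fracpart x\to 1^-$ matches the region for $j+1$ at $\fracpart x\to 0^+$ (a cell at scale $\abs\tau^{-(j-w)}$ versus $\abs\tau^{-(j+1-w)}$ with the compensating change of $\floor x$ by $1$), and the leftover ``$j=0$ vs.\ nothing'' slot is precisely the term that the $-\fracpart x$ in $\psi_{\eta,\cM}$ was carrying. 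Uniform convergence of the $j$-series (from $\abs\tau^{-j}$-decay of the region's measure, $\indicator{W_\eta}-\lmeas{W_\eta}$ being bounded, and $\lmeas{\fracpartV[j-w]{\cdot}}=\Oh{\abs\tau^{-j}}$) then transfers continuity and periodicity from the summands to the sum.

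The main obstacle is step (ii): establishing continuity of $\psi_{\eta,\cP}$ and, in particular, verifying the exact bookkeeping of the jump cancellation at integer $x$. This requires carefully tracking how the fractional-cell operator $\fracpartV[j-w]{\cdot}$ behaves as $\fracpart x$ crosses the value $1$ (equivalently $0$): the scale of the cells changes by a factor $\abs\tau^{-1}$, the floor index jumps, and one has to match the region indexed by $(j,x)$ with $\fracpart x$ near $1$ to the region indexed by $(j+1,x)$ with $\fracpart x$ near $0$, all while the rotation $\f{\wh\theta}{\floor x}$ changes by $e^{-i\theta}$. Getting the indicator integrals to line up — using that $\fracpartZtau{y\tau^{j-w}}$ is invariant under $y\mapsto \tau y$ composed with an index shift, together with $\tau^{-1}V\subseteq V$ from Proposition~\vref{pro:voronoi-prop} so that the cell decompositions are compatible — is the delicate part; everything else is routine once the pieces $\psi_{\eta,\cM},\psi_{\eta,\cP},\psi_{\eta,\cQ}$ have been identified. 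I would also record, as a byproduct of the uniform-convergence bound, that $\psi_\eta$ is bounded, which is what legitimises writing the oscillation term as $N^2\psi_\eta(\log_{\abs\tau}N)$ of order $N^2$.
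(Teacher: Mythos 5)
Your plan for the asymptotic formula itself follows the paper's proof essentially step for step: the conversion of digit occurrences into membership of $\fracpartZtau{\tau^{-(j+w)}V_z}$ in $W_{\eta,j}$ via Lemma~\vref{lem:equiv-digit-char-set}, the passage to integrals over cells, the extraction of the main term from $\lmeas{W_\eta}=e_w\lmeas{V}$, the $\beta_j$-corrections assembling into $\psi_{\eta,\cQ}$, and the fractional-cell oscillation giving $\psi_{\eta,\cP}$ are all exactly the paper's decomposition (its parts $\cM_\eta,\cZ_\eta,\cP_\eta,\cQ_\eta,\cS_\eta,\cF_\eta$). The genuine gap is the last claim of the theorem, the continuity of $\psi_\eta$. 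You correctly identify the jump cancellation at integer $x$ between $\psi_{\eta,\cM}$ and $\psi_{\eta,\cP}$ as ``the main obstacle'' and then leave it unproved: the direct bookkeeping you sketch --- matching the region indexed by $(j,x)$ with $\fracpart{x}\to 1^-$ to the region indexed by $(j+1,x)$ with $\fracpart{x}\to 0^+$, and identifying the leftover $j=0$ slot with the $-\fracpart{x}$ term of $\psi_{\eta,\cM}$ --- is precisely the delicate verification, and nothing in the proposal actually carries it out. As written, the continuity statement is asserted, not proved.

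Two further observations. First, half of the difficulty you anticipate is illusory: since the integral of $\indicator{W_\eta}{\cdot}-\lmeas{W_\eta}$ over any full cell vanishes (the ``zero part''), each summand of $\psi_{\eta,\cP}$ equals the integral over the entire region $\abs\tau^{\fracpart{x}-c}\f{\wh{\theta}}{\floor{x}}U$ rather than over $\fracpartV[j-w]{\cdot}$ of it, so the behaviour of the cell decomposition as the scale crosses a power of $\abs\tau$ is irrelevant; the only true discontinuity candidates are the jumps of $\fracpart{x}$ and $\floor{x}$ at integers. The paper then settles these not by region matching but by an indirect counting argument (Lemma~\vref{lem:psi-continous}): for $M=\abs\tau^{m}$ it bounds $\f{Z_{\tau,w,\eta}}{M}-\f{Z_{\tau,w,\eta}}{M-1}$ above by $\cardV{MU\setminus(M-1)U}\cdot\Oh{m}=\Oh{M^\delta m}$ using Proposition~\vref{pro:set-nu}, compares this with the asymptotic formula evaluated at $M$ and at $M-1$, and concludes that $\f{\psi_\eta}{x}-\f{\psi_\eta}{x+L}\to 0$ as $L=\log_{\abs\tau}(1-M^{-1})\to 0^-$, which is exactly the missing one-sided continuity at integers. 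Adopting that argument (or genuinely executing your reindexing computation) is needed to close the proof. Second, your periodicity argument manufactures the factor $e^{-2i\theta p}$ by ``composing two rotations by $e^{-i\theta p}$''; in fact replacing $x$ by $x+p$ changes only $\f{\wh{\theta}}{\floor{x}}$ by a single factor $e^{-i\theta p}$ and requires no shift of the summation index $j$, so the condition actually used (as in Lemma~\vref{lem:the-periodic-part}) is $e^{i\theta p}U=U$; the doubling you describe does not occur.
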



\begin{remark}
  \label{rem:simple-geom}

  Using a disc as region $U$, e.g.\ $U = \ball{0}{1}$, yields that $\psi_\eta$
  is \periodic{1} and continuous for all valid $\tau$. The reason is that the
  condition $e^{i \theta p} U = U$ is then clearly fulfilled for every $p$,
  especially for $p=1$. 
  
  The parameter $\delta$ is $1$ for simple geometries like a disc or a
  polygon. See Proposition~\vref{pro:simple-geometries} for details.
\end{remark}


\begin{remark}
  \label{rem:delta2}

  If $\delta=2$ in the theorem, then the statement stays true, but degenerates
  to
  \begin{equation*}
    \f{Z_{\tau,w,\eta}}{N} = \Oh{N^2 \log_{\abs\tau} N}.
  \end{equation*}
\end{remark}


The proof of Theorem~\vref{thm:countdigits} follows the ideas used by
Delange~\cite{Delange:1975:chiffres}. By Remark~\vref{rem:delta2} we restrict
ourselves to the case $\delta<2$.

We will use the following abbreviations. We set $\f{Z}{N} :=
\f{Z_{\tau,w,\eta}}{N}$, and we set $W:=W_\eta$ and $W_j:=W_{\eta,j}$ for our
fixed $\eta$ of Theorem~\vref{thm:countdigits}. Further we set
$\beta_j:=\beta_{\eta,j}$, cf.\ Proposition~\vref{pro:prop-of-w}.  By $\log$ we
will denote the logarithm to the base $\abs{\tau}$, i.e., $\log
x=\log_{\abs\tau} x$. These abbreviations will be used throughout the remaining
section.


\begin{proof}[Proof of Theorem~\ref{thm:countdigits}]
  We know from Theorem~\vref{thm:wnaf-exist-unique} that every element of
  $\Ztau$ is represented by a unique element of $\wNAFsetfin$. To count the
  occurrences of the digit $\eta$ in $NU$, we sum up over all lattice points $n
  \in NU \cap \Ztau$ and for each $n$ over all digits in the corresponding
  \wNAF{} equal to $\eta$. Thus we get
  \begin{equation*}
    \f{Z}{N} 
    = \sum_{n \in NU \cap \Ztau}
    \sum_{j\in\N_0} \iverson {\f{\eps_j}{\NAFw{n}} = \eta},
  \end{equation*}
  where $\eps_j$ denotes the extraction of the $j$th digit, i.e., for a
  \wNAF{} $\bfxi$ we define $\f{\eps_j}{\bfxi} := \xi_j$. The inner sum over
  $j\in\N_0$ is finite, we will choose a large enough upper bound $J$ later in
  Lemma~\vref{lem:choosing-j}.

  Using
  \begin{equation*}
    \iverson{\f{\eps_j}{\NAFw{n}} = \eta}
    = \indicator{W_j}{\fracpartZtau{\frac{n}{\tau^{j+w}}}}
  \end{equation*}
  from Lemma~\vref{lem:equiv-digit-char-set} yields
  \begin{equation*}
      \f{Z}{N} = \sum_{j=0}^J \sum_{n\in NU \cap \Ztau} 
      \indicator{W_j}{\fracpartZtau{\frac{n}{\tau^{j+w}}}},
  \end{equation*}
  where additionally the order of summation was changed. This enables us to
  rewrite the sum over $n$ as integral
  \begin{equation*}
    \begin{split}
      \f{Z}{N} &= \sum_{j=0}^J \sum_{n\in NU \cap \Ztau} 
      \frac{1}{\lmeas{V_n}} \int_{x\in V_n}
      \indicator{W_j}{\fracpartZtau{\frac{x}{\tau^{j+w}}}} \dd x \\
      &= \frac{1}{\lmeas{V}} \sum_{j=0}^J \int_{x\in \coverV{NU}} 
      \indicator{W_j}{\fracpartZtau{\frac{x}{\tau^{j+w}}}} \dd x.
    \end{split}
  \end{equation*}
  We split up the integrals into the ones over $NU$ and others over the
  remaining region and get
  \begin{equation*}
    \f{Z}{N} = \frac{1}{\lmeas{V}} \sum_{j=0}^J \int_{x \in NU} 
    \indicator{W_j}{\fracpartZtau{\frac{x}{\tau^{j+w}}}} \dd x 
    + \f{\cF_\eta}{N},
  \end{equation*}
  in which $\f{\cF_\eta}{N}$ contains all integrals (with appropriate signs)
  over regions $\coverV{NU} \setminus NU$ and $NU \setminus \coverV{NU}$.

  Substituting $x = \tau^J y$, $\dd x = \abs\tau^{2J}\dd y$ we obtain
  \begin{equation*}
    \f{Z}{N} = \frac{\abs\tau^{2J}}{\lmeas{V}} \sum_{j=0}^J 
    \int_{y \in \tau^{-J}NU} \indicator{W_j}{\fracpartZtau{y \tau^{J-j-w}}} \dd y  
    + \f{\cF_\eta}{N}.
  \end{equation*}
  Reversing the order of summation yields
  \begin{equation*}
    \f{Z}{N} = \frac{\abs\tau^{2J}}{\lmeas{V}} \sum_{j=0}^J 
    \int_{y \in \tau^{-J}NU} 
    \indicator{W_{J-j}}{\fracpartZtau{y \tau^{j-w}}} \dd y 
    + \f{\cF_\eta}{N}.
  \end{equation*}
  We rewrite this as
  \begin{equation*}
    \begin{split}
      \f{Z}{N} 
      &= \frac{\abs\tau^{2J}}{\lmeas{V}} (J+1) 
      \lmeas{W} \int_{y \in \tau^{-J}NU} \dd y \\
      &+ \frac{\abs\tau^{2J}}{\lmeas{V}} \sum_{j=0}^J \int_{y \in \tau^{-J}NU} 
      \left( \indicator{W}{\fracpartZtau{y \tau^{j-w}}} 
        - \lmeas{W} \right) \dd y \\
      &+ \frac{\abs\tau^{2J}}{\lmeas{V}} \sum_{j=0}^J \int_{y \in \tau^{-J}NU} 
      \left( \indicator{W_{J-j}}{\fracpartZtau{y \tau^{j-w}}}
        - \indicator{W}{\fracpartZtau{y \tau^{j-w}}} \right) \dd y \\
      &+ \f{\cF_\eta}{N}.    
    \end{split}
  \end{equation*}
  \begin{figure}
    \centering
    \includegraphics{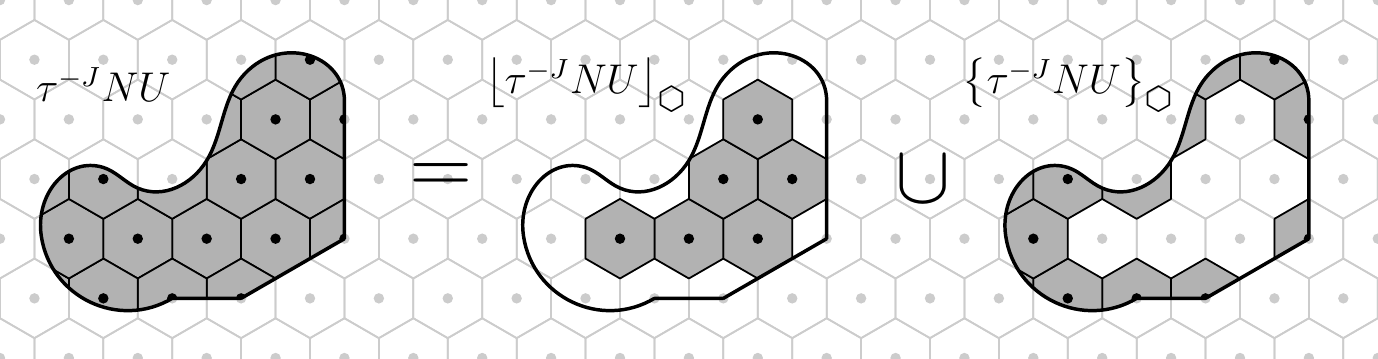}
    \caption{Splitting up the region of integration $\tau^{-J}NU$.}
    \label{fig:region-splitting}
  \end{figure}
  With $\tau^{-J}NU = \floorV[j-w]{\tau^{-J}NU} \cup
  \fracpartV[j-w]{\tau^{-J}NU}$, see Figure~\vref{fig:region-splitting}, for
  each integral region we get
  \begin{equation*}
    \f{Z}{N} 
    = \f{\cM_\eta}{N} 
    + \f{\cZ_\eta}{N} 
    + \f{\cP_\eta}{N} 
    + \f{\cQ_\eta}{N} 
    + \f{\cS_\eta}{N}
    + \f{\cF_\eta}{N},
  \end{equation*}
  in which $\cM_\eta$ is \emph{``The Main Part''}, see
  Lemma~\vref{lem:the-main-part},
  \begin{subequations}
    \label{eq:mainth:parts}
    \begin{align}
      \label{eq:mainth:main-part}
      \f{\cM_\eta}{N} 
      &= \frac{\abs\tau^{2J}}{\lmeas{V}} (J+1) \lmeas{W} 
      \int_{y \in \tau^{-J}NU} \dd y,
      \intertext{$\cZ_\eta$ is \emph{``The Zero Part''}, see 
        Lemma~\vref{lem:the-zero-part},}
      \label{eq:mainth:zero-part}
      \f{\cZ_\eta}{N} 
      &= \frac{\abs\tau^{2J}}{\lmeas{V}} \sum_{j=0}^J 
      \int_{y \in \floorV[j-w]{\tau^{-J}NU}}
      \left( \indicator{W}{\fracpartZtau{y \tau^{j-w}}} 
        - \lmeas{W} \right) \dd y,
      \intertext{$\cP_\eta$ is \emph{``The Periodic Part''}, see 
        Lemma~\vref{lem:the-periodic-part},}
      \label{eq:mainth:periodic-part}
      \f{\cP_\eta}{N} 
      &= \frac{\abs\tau^{2J}}{\lmeas{V}} \sum_{j=0}^J 
      \int_{y \in \fracpartV[j-w]{\tau^{-J}NU}}
      \left( \indicator{W}{\fracpartZtau{y \tau^{j-w}}} 
        - \lmeas{W} \right) \dd y,
      \intertext{$\cQ_\eta$ is \emph{``The Other Part''}, see 
        Lemma~\vref{lem:the-other-part},}
      \label{eq:mainth:other-part}
      \f{\cQ_\eta}{N} 
      &= \frac{\abs\tau^{2J}}{\lmeas{V}} \sum_{j=0}^J 
      \int_{y \in \floorV[j-w]{\tau^{-J}NU}} 
      \f{\left( \indicator*{W_{J-j}} - \indicator*{W} 
        \right)}{\fracpartZtau{y \tau^{j-w}}} \dd y,
      \intertext{$\cS_\eta$ is \emph{``The Small Part''}, see 
        Lemma~\vref{lem:the-small-part},}
      \label{eq:mainth:small-part}
      \f{\cS_\eta}{N}
      &= \frac{\abs\tau^{2J}}{\lmeas{V}} \sum_{j=0}^J 
      \int_{y \in \fracpartV[j-w]{\tau^{-J}NU}} 
      \f{\left( \indicator*{W_{J-j}} - \indicator*{W} 
        \right)}{\fracpartZtau{y \tau^{j-w}}} \dd y
      \intertext{and $\cF_\eta$ is \emph{``The Fractional Cells Part''}, see 
          Lemma~\vref{lem:the-fractionalcells-part},}
      \label{eq:mainth:fraccells-part}
      \begin{split}
        \f{\cF_\eta}{N}
        &= \frac{1}{\lmeas{V}} \sum_{j=0}^J
        \int_{x \in \coverV{NU} \setminus NU} 
        \indicator{W_j}{\fracpartZtau{\frac{x}{\tau^{j+w}}}} \dd x \\
        &\phantom{=}- \frac{1}{\lmeas{V}} \sum_{j=0}^J
        \int_{x \in NU \setminus \coverV{NU}} 
        \indicator{W_j}{\fracpartZtau{\frac{x}{\tau^{j+w}}}} \dd x.
      \end{split}
    \end{align}
  \end{subequations}

  To complete the proof we have to deal with the choice of $J$, see
  Lemma~\vref{lem:choosing-j}, as well as with each of the parts in
  \eqref{eq:mainth:parts}, see
  Lemmata~\vrefrange{lem:the-main-part}{lem:the-fractionalcells-part}. The
  continuity of $\psi_\eta$ is checked in Lemma~\vref{lem:psi-continous}.
\end{proof}


\begin{lemma}[Choosing $J$]
  \label{lem:choosing-j}
  
  Let $N \in \R_{\geq0}$. Then every \wNAF{} of $\wNAFsetfin$ with value in $NU$
  has at most $J+1$ digits, where
  \begin{equation*}
    J = \floor{\log N} + c
  \end{equation*}
  with
  \begin{equation*}
    c = \floor{ \log d - \log f_L } + 1
  \end{equation*}
  with $f_L$ of Proposition~\vref{pro:lower-bound-fracnafs}.
\end{lemma}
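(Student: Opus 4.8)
The plan is to bound the left-length of any integer \wNAF{} whose value lies in $NU$ by translating the geometric bound $NU \subseteq \ball{0}{Nd}$ into a digit-length bound via Corollary~\vref{cor:bounds-value}. First I would take an arbitrary $\bfxi \in \wNAFsetfin$ with $\NAFvalue{\bfxi} \in NU$. Since $U \subseteq \ball{0}{d}$, we have $\abs{\NAFvalue{\bfxi}} < Nd$ (if $N=0$ the statement is trivial, as then $\bfxi = \bfzero$ and the $0$ digit count is $1 \le J+1$; so assume $N>0$).

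Next I would invoke the lower bound from Corollary~\ref{cor:bounds-value}, namely $\NAFd{\bfxi}{\bfzero} f_L \leq \abs{\NAFvalue{\bfxi}}$, where $f_L$ is the constant of Proposition~\vref{pro:lower-bound-fracnafs} (valid for all combinations of $\tau$ and $w$, including the ``problematic'' ones). Combining the two inequalities gives $\NAFd{\bfxi}{\bfzero} < Nd / f_L$. By the definition of the metric $\NAFdname$, we have $\NAFd{\bfxi}{\bfzero} = \abs{\tau}^{M}$ where $M$ is the largest index with $\xi_M \neq 0$ (and $\NAFd{\bfxi}{\bfzero} = 0$ if $\bfxi = \bfzero$). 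Taking logarithms to base $\abs\tau$ yields $M < \log N + \log d - \log f_L$. Since the left-length of $\bfxi$ (the number of digits $\xi_0, \ldots, \xi_M$) is $M + 1$, and since digits are indexed from $0$, the expansion has at most $M+1$ digits.

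Finally I would convert the strict real inequality into an integer bound. From $M < \log N + \log d - \log f_L$ and $M \in \N_0$, we get $M \leq \floor{\log N + \log d - \log f_L}$. Using $\floor{a+b} \leq \floor{a} + \floor{b} + 1$ with $a = \log N$ and $b = \log d - \log f_L$, this gives $M \leq \floor{\log N} + \floor{\log d - \log f_L} + 1 = \floor{\log N} + c = J$, where $c = \floor{\log d - \log f_L} + 1$ as in the statement. Hence the \wNAF{} has at most $M + 1 \leq J + 1$ digits, which is exactly the claim.

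I do not expect any serious obstacle here; the only mild subtlety is making sure the definition of ``left-length'' and the number-of-digits count line up (an expansion $\xi_{\ell-1}\ldots\xi_0$ with top non-zero index $M = \ell - 1$ has $\ell = M+1$ digits), and being slightly careful with the floor inequality $\floor{a+b} \leq \floor a + \floor b + 1$ rather than the (false) $\floor{a+b} = \floor a + \floor b$. The degenerate case $N=0$ (forcing $\bfxi=\bfzero$, which has left-length $0 \le J+1$) should be dispatched first so that the logarithms are well-defined.
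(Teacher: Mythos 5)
Your proposal is correct and follows essentially the same route as the paper's proof: bound $\abs{\NAFvalue{\bfxi}}$ below by $\abs{\tau}^M f_L$ via Corollary~\ref{cor:bounds-value} and above by $Nd$, take logarithms, and apply $\floor{a+b}\leq\floor{a}+\floor{b}+1$. Your explicit handling of the degenerate case $N=0$ and of the left-length bookkeeping is a minor (welcome) addition that the paper leaves implicit.
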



\begin{proof}
  Let $z \in NU$, $z\neq0$, with its corresponding \wNAF{}
  $\bfxi\in\wNAFsetfin$, and let $j\in\N_0$ be the largest index, such that the
  digit $\xi_j$ is non-zero. By using Corollary~\vref{cor:bounds-value}, we
  conclude that
  \begin{equation*}
    \abs{\tau}^j f_L \leq \abs{z} < Nd.
  \end{equation*}
  This means
  \begin{equation*}
    j < \log N + \log d - \log f_L,
  \end{equation*}
  and thus we have
  \begin{equation*}
    j \leq \floor{\log N + \log d - \log f_L} 
    \leq \floor{\log N} + \floor{\log d - \log f_L} + 1.
  \end{equation*}
  Defining the right hand side of this inequality as $J$ finishes the proof.
\end{proof}


\begin{remark}
  \label{rem:auxcalc-taujn}
  
  For the parameter used in the region of integration in the proof of
  Theorem~\vref{thm:countdigits} we get
  \begin{equation*}
    \tau^{-J} N = \abs\tau^{\fracpart{\log N}-c} \f{\wh{\theta}}{\log N},
  \end{equation*}
  with the rotation $\f{\wh{\theta}}{x} = e^{-i \theta \floor{x} - i \theta
    c}$. In particular we get $\abs{\tau^{-J} N} = \Oh{1}$.
\end{remark}




\begin{remark}
  \label{rem:auxcalc-gammaj}

  Let $\gamma\in\R$ with $\gamma \geq 1$, then
  \begin{equation*}
    \gamma^J = N^{\log \gamma} \gamma^{c-\fracpart{\log N}} 
    = \Oh{N^{\log \gamma}}.
  \end{equation*}
  In particular $\abs\tau^{2J} = \Oh{N^2}$ and $\abs\tau^J =
  \Oh{N}$.
\end{remark}




\begin{lemma}[The Main Part]
  \label{lem:the-main-part}

  For \eqref{eq:mainth:main-part} in the proof of Theorem~\vref{thm:countdigits}
  we get
  \begin{equation*}
    \f{\cM_\eta}{N}
    = e_w  N^2 \lmeas{U} \log N
    + N^2 \f{\psi_{\eta,\cM}}{\log N} 
  \end{equation*}
  with a \periodic{1} function $\psi_{\eta,\cM}$,
  \begin{equation*}  
    \f{\psi_{\eta,\cM}}{x} 
    = \lmeas{U} \left( c + 1 - \fracpart{x} \right) e_w
  \end{equation*}
  and
  \begin{equation*}
    e_w = \frac{1}{\abs\tau^{2(w-1)}
      \left(\left(\abs\tau^2-1\right) w + 1\right)}.
  \end{equation*}
\end{lemma}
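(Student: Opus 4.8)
The plan is to evaluate \eqref{eq:mainth:main-part} by a direct computation, substituting the known values of the two ingredients: the area of the scaled region $\tau^{-J}NU$ and the Lebesgue measure of the characteristic set $W$. First I would note that multiplication by the complex scalar $\tau^{-J}N$ scales two\nbd-dimensional Lebesgue measure by the factor $\abs{\tau^{-J}N}^2$, and since $N\geq0$ is real this gives
\begin{equation*}
  \int_{y\in\tau^{-J}NU}\dd y = \lmeas{\tau^{-J}NU} = \abs\tau^{-2J}N^2\lmeas{U}
\end{equation*}
(compare also Remark~\vref{rem:auxcalc-taujn}). Next I would use $\lmeas{W} = \lmeas{W_\eta} = e_w\lmeas{V}$, which is precisely \itemref{enu:prop-of-w:area-w} of Proposition~\vref{pro:prop-of-w}.

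Plugging both into \eqref{eq:mainth:main-part}, the factors $\abs\tau^{2J}$ and $\lmeas{V}$ cancel, leaving
\begin{equation*}
  \f{\cM_\eta}{N} = \frac{\abs\tau^{2J}}{\lmeas{V}}\,(J+1)\,e_w\lmeas{V}\cdot\abs\tau^{-2J}N^2\lmeas{U} = (J+1)\,e_w\,N^2\lmeas{U}.
\end{equation*}
Then I would insert the value $J=\floor{\log N}+c$ from Lemma~\vref{lem:choosing-j} and write $\floor{\log N}=\log N-\fracpart{\log N}$, so that $J+1 = \log N + c + 1 - \fracpart{\log N}$. Separating the term proportional to $\log N$ from the bounded oscillation term yields
\begin{equation*}
  \f{\cM_\eta}{N} = e_w N^2\lmeas{U}\log N + N^2\lmeas{U}\bigl(c+1-\fracpart{\log N}\bigr)e_w,
\end{equation*}
which is the claimed identity with $\f{\psi_{\eta,\cM}}{x} = \lmeas{U}(c+1-\fracpart{x})e_w$; the formula for $e_w$ is the one recalled from Theorem~\vref{th:w-naf-distribution}. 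Since $x\mapsto\fracpart{x}$ is $1$\nbd-periodic, so is $\psi_{\eta,\cM}$, finishing the proof.

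There is essentially no real obstacle here: the only points needing a little care are the \emph{quadratic} scaling of planar measure under multiplication by a complex scalar and citing the correct reference for $\lmeas{W_\eta}$; the rest is bookkeeping with $\floor{\cdot}$ and $\fracpart{\cdot}$. I would only remark in passing that $\psi_{\eta,\cM}$ itself is discontinuous at integers because of the term $\fracpart{x}$; continuity is asserted only for the full $\psi_\eta$ in Theorem~\vref{thm:countdigits}, where these jumps cancel against those of $\psi_{\eta,\cP}$, as will be handled in Lemma~\vref{lem:psi-continous}.
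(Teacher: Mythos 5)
Your proof is correct and follows exactly the same route as the paper: compute $\lmeas{\tau^{-J}NU}=\abs\tau^{-2J}N^2\lmeas{U}$, insert $\lmeas{W}=e_w\lmeas{V}$ from Proposition~\vref{pro:prop-of-w}, cancel, and substitute $J=\floor{\log N}+c$ with $\floor{x}=x-\fracpart{x}$. Nothing to add.
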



\begin{proof}
  We have
  \begin{equation*}
    \f{\cM_\eta}{N}
    = \frac{\abs\tau^{2J}}{\lmeas{V}} (J+1) \lmeas{W} 
    \int_{y \in \tau^{-J}NU} \dd y.
  \end{equation*}
  As $\lmeas{\tau^{-J}NU} = \abs\tau^{-2J}N^2\lmeas{U}$ we obtain
  \begin{equation*}
    \f{\cM_\eta}{N}
    = \frac{\lmeas{W}}{\lmeas{V}} (J+1) N^2 \lmeas{U}.
  \end{equation*}
  By taking $\lmeas{W} = \lmeas{V} e_w$ from \itemref{enu:prop-of-w:area-w} of
  Proposition~\vref{pro:prop-of-w} and $J$ from Lemma~\vref{lem:choosing-j} we
  get
  \begin{equation*}
    \f{\cM_\eta}{N}
    = N^2 \lmeas{U} e_w \left( \floor{\log N} 
        + c + 1 \right).
  \end{equation*}
  Finally, the desired result follows by using $x = \floor{x} + \fracpart{x}$.
\end{proof}


\begin{lemma}[The Zero Part]
  \label{lem:the-zero-part}

  For \eqref{eq:mainth:zero-part} in the proof of Theorem~\vref{thm:countdigits}
  we get
  \begin{equation*}
    \f{\cZ_\eta}{N} = 0.
  \end{equation*}
\end{lemma}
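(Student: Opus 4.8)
The plan is to show that the inner integral in \eqref{eq:mainth:zero-part} vanishes for every index $j\in\set{0,\dots,J}$ separately; summing over $j$ then gives $\f{\cZ_\eta}{N}=0$. So fix $j$ and consider
\[
  \int_{y\in\floorV[j-w]{\tau^{-J}NU}}
  \left(\indicator{W}{\fracpartZtau{y\tau^{j-w}}}-\lmeas{W}\right)\dd y .
\]

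The first step is to unwind the cell packing. By the definition of the cell rounding operations (Definition~\vref{def:round-v}) we have $\floorV[j-w]{\tau^{-J}NU}=\tfrac1{\tau^{j-w}}\floorV{\tau^{j-w-J}NU}$, and $\floorV{\tau^{j-w-J}NU}$ is, by definition, a finite union of Voronoi cells $V_z$ with $z\in\Ztau$, pairwise disjoint up to a Lebesgue-null set. Hence $\floorV[j-w]{\tau^{-J}NU}$ is a finite, essentially disjoint union of cells $\tau^{-(j-w)}V_z$, and it suffices to show that each such cell contributes $0$ to the integral above, i.e.\ that $\int_{\tau^{-(j-w)}V_z}\bigl(\indicator{W}{\fracpartZtau{y\tau^{j-w}}}-\lmeas{W}\bigr)\dd y=0$.

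On a single cell I would substitute $u=y\tau^{j-w}$, a similarity with Jacobian $\abs\tau^{2(j-w)}$, which maps $\tau^{-(j-w)}V_z$ onto the genuine Voronoi cell $V_z$ of $\Ztau$. Since $z\in\Ztau$, the fractional-part operator $\fracpartZtau{\cdot}$ of Definition~\vref{def:frac-voronoi} agrees on $V_z$, outside the null set $V_z\setminus\wt{V}_z$, with the translation $u\mapsto u-z$, which is a measure-preserving bijection of $V_z$ onto the restricted Voronoi cell $\wt{V}$. Consequently $y\mapsto\fracpartZtau{y\tau^{j-w}}$ transports, after the rescaling, Lebesgue measure on the cell to Lebesgue measure on $\wt{V}$; moreover $W=W_\eta\subseteq\wt{V}$ by the very definition of $W_\eta$ as a fractional part, so $\lmeas{W}=\int_{\wt V}\indicator{W}{v}\dd v$. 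Feeding this back, a short direct computation shows that $\int_{\tau^{-(j-w)}V_z}\indicator{W}{\fracpartZtau{y\tau^{j-w}}}\dd y$ equals $\int_{\tau^{-(j-w)}V_z}\lmeas{W}\dd y$, so the cell contributes $0$; summing over the cells of the packing and then over $j$ yields $\f{\cZ_\eta}{N}=0$.

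The part that needs care — and the main obstacle — is the measure-theoretic bookkeeping at the boundaries: one must keep track of the null sets $V_z\setminus\wt{V}_z$ and of the measure-zero interfaces between adjacent cells of the packing, and one should check that the argument runs uniformly in the two regimes $j\ge w$ (where $\tau^{-(j-w)}V_z$ is a contracted copy of $V_z$) and $j<w$ (where it is a dilated copy). In both regimes the point is the same, namely that $y\mapsto y\tau^{j-w}$ still maps the cell onto a true $\Ztau$-Voronoi cell, which is all the argument uses; everything else is a routine change of variables.
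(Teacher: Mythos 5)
Your proposal is correct and follows essentially the same route as the paper: decompose $\floorV[j-w]{\tau^{-J}NU}$ into cells $\tau^{-(j-w)}V_z$, rescale by $\tau^{j-w}$, translate each $V_z$ back to $V$ to eliminate $\fracpartZtau{\cdot}$, and observe that $\int_{V}\bigl(\indicator{W}{x}-\lmeas{W}\bigr)\dd x=0$ on each cell. The extra bookkeeping you mention (null sets at cell interfaces, the regimes $j\ge w$ versus $j<w$) is harmless and does not change the argument.
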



\begin{proof}
  Consider the integral
  \begin{equation*}    
      I_j
      := \int_{y \in \floorV[j-w]{\tau^{-J}NU}}
      \left( \indicator{W}{\fracpartZtau{y \tau^{j-w}}} 
        - \lmeas{W} \right) \dd y.
  \end{equation*}
  We can rewrite the region of integration as
  \begin{equation*}
    \floorV[j-w]{\tau^{-J}NU} 
    = \frac{1}{\tau^{j-w}} \floorV{\tau^{j-w} \tau^{-J}NU}
    = \frac{1}{\tau^{j-w}} \bigcup_{z\in T_{j-w}} V_z
  \end{equation*}
  for some appropriate $T_{j-w} \subseteq \Ztau$. Substituting $x =
  \tau^{j-w} y$, $\dd x = \abs\tau^{2(j-w)} \dd y$ yields
  \begin{equation*}
      I_j
      = \frac{1}{\abs\tau^{2(j-w)}} \int_{x \in \bigcup_{z\in T_{j-w}} V_z}
      \left( \indicator{W}{\fracpartZtau{x}} 
        - \lmeas{W} \right) \dd x.    
  \end{equation*}
  We split up the integral and eliminate the fractional part $\fracpartZtau{x}$
  by translation to get
  \begin{equation*}
      I_j
      = \frac{1}{\abs\tau^{2(j-w)}} \sum_{z\in T_{j-w}} 
      \underbrace{\int_{x \in V} \left( \indicator{W}{x}  
          - \lmeas{W} \right) \dd x}_{=0}.    
  \end{equation*}
  Thus, for all $j\in\N_0$ we obtain $I_j=0$, and therefore
  $\f{\cZ_\eta}{N}=0$.
\end{proof}


\begin{lemma}[The Periodic Part]
  \label{lem:the-periodic-part}

  For \eqref{eq:mainth:periodic-part} in the proof of
  Theorem~\vref{thm:countdigits} we get
  \begin{equation*}
    \f{\cP_\eta}{N} = 
    N^2 \f{\psi_{\eta,\cP}}{\log N} + \Oh{N^\delta}
  \end{equation*}
  with a function $\psi_{\eta,\cP}$,
  \begin{equation*}
    \f{\psi_{\eta,\cP}}{x} =
    \frac{\abs\tau^{2(c-\fracpart{x})}}{\lmeas{V}} \sum_{j=0}^\infty 
    \int_{y \in \fracpartV[j-w]{\abs\tau^{\fracpart{x}-c} \f{\wh{\theta}}{\floor{x}} U}}
    \left( \indicator{W}{\fracpartZtau{y \tau^{j-w}}} - \lmeas{W} \right) \dd y,
  \end{equation*}
  with the rotation $\f{\wh{\theta}}{x} = e^{-i \theta x - i \theta c}$.

  If there is a $p\in\N$, such that $e^{i \theta p} U = U$, then
  $\psi_{\eta,\cP}$ is \periodic{p}.
\end{lemma}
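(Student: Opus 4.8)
The plan is to rewrite $\f{\cP_\eta}{N}$ from \eqref{eq:mainth:periodic-part} in the claimed closed form by inserting the explicit value of $J$ and then discarding a negligible tail of the resulting series. First I would substitute $J=\floor{\log N}+c$ from Lemma~\vref{lem:choosing-j}: by Remarks~\vref{rem:auxcalc-gammaj} and~\vref{rem:auxcalc-taujn} this gives $\abs\tau^{2J}=N^2\abs\tau^{2(c-\fracpart{\log N})}$ and $\tau^{-J}N=\abs\tau^{\fracpart{\log N}-c}\,e^{-i\theta\floor{\log N}-i\theta c}=\abs\tau^{\fracpart{\log N}-c}\f{\wh\theta}{\floor{\log N}}$ in the notation of the lemma. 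Writing $x=\log N$ and $M:=\abs\tau^{\fracpart{x}-c}\f{\wh\theta}{\floor x}$, the region $\fracpartV[j-w]{\tau^{-J}NU}$ occurring in \eqref{eq:mainth:periodic-part} is literally the region $\fracpartV[j-w]{MU}$ appearing in $\psi_{\eta,\cP}$ and the integrand is the same function of $y$, so no change of variables is needed and we obtain
\begin{equation*}
  \f{\cP_\eta}{N}
  = N^2\,\frac{\abs\tau^{2(c-\fracpart{x})}}{\lmeas{V}}\sum_{j=0}^{J}
  \int_{y\in\fracpartV[j-w]{MU}}
  \bigl(\indicator{W}{\fracpartZtau{y\tau^{j-w}}}-\lmeas{W}\bigr)\dd y,
\end{equation*}
which is exactly $N^2$ times the partial sum $\sum_{j=0}^{J}$ of the series defining $\f{\psi_{\eta,\cP}}{x}$.

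It then remains to bound the error made by extending the sum to $j=\infty$. The integrand is bounded in modulus by a constant $C_W$ depending only on $\tau$, $w$, $\eta$ (as $\indicator{W}{\cdot}$ is $\set{0,1}$-valued and $\lmeas{W}=\lmeas{V}e_w$ is a fixed constant by \itemref{enu:prop-of-w:area-w} of Proposition~\vref{pro:prop-of-w}), so the $j$th term is at most $C_W\,\lmeas{\fracpartV[j-w]{MU}}$, and $\abs{M}=\abs\tau^{\fracpart{x}-c}$ is bounded uniformly in $N$. By \itemref{enu:round-v-basic-prop:frac-boundary} of Proposition~\vref{pro:round-v-basic-prop} we have $\fracpartV[j-w]{MU}\subseteq\boundaryV[j-w]{MU}=\tfrac1{\tau^{j-w}}\boundaryV{\tau^{j-w}MU}$, and $\boundaryV{\,\cdot\,}$ is a union of Voronoi cells, each of area $\lmeas{V}$, whose number is at most $\cardV{\boundaryV{\tau^{j-w}MU}}$. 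With the covering hypothesis $\cardV{\boundaryV{NU}}=\Oh{\abs{N}^\delta}$ (taken in its natural form for complex dilation parameters, cf.\ Proposition~\vref{pro:set-nu}) this gives $\lmeas{\fracpartV[j-w]{MU}}\le\abs\tau^{-2(j-w)}\cardV{\boundaryV{\tau^{j-w}MU}}\lmeas{V}=\Oh{\abs\tau^{(j-w)(\delta-2)}}$ for $j\ge w$, while the finitely many terms with $j<w$ are crudely $\Oh{\abs{M}^2\lmeas{U}}=\Oh{1}$ since $\fracpartV[j-w]{MU}\subseteq MU$. As $\delta<2$ and $\abs\tau>1$ the factor $\abs\tau^{(j-w)(\delta-2)}$ decays geometrically, so $\psi_{\eta,\cP}$ is well defined and $\sum_{j>J}C_W\lmeas{\fracpartV[j-w]{MU}}=\Oh{\abs\tau^{J(\delta-2)}}=\Oh{N^{\delta-2}}$, using $\abs\tau^{J}=\Theta(N)$. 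Multiplying by the prefactor $N^2\abs\tau^{2(c-\fracpart{x})}/\lmeas{V}=\Oh{N^2}$ yields $\f{\cP_\eta}{N}=N^2\f{\psi_{\eta,\cP}}{\log N}+\Oh{N^\delta}$.

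For the periodicity statement, suppose $e^{i\theta p}U=U$ for some $p\in\N$; applying $e^{-i\theta p}$ to both sides also gives $e^{-i\theta p}U=U$. For any real $x$ one has $\fracpart{x+p}=\fracpart{x}$ and $\floor{x+p}=\floor{x}+p$, hence $\abs\tau^{2(c-\fracpart{x+p})}=\abs\tau^{2(c-\fracpart{x})}$ and $\f{\wh\theta}{\floor{x+p}}=e^{-i\theta p}\f{\wh\theta}{\floor{x}}$, so $\abs\tau^{\fracpart{x+p}-c}\f{\wh\theta}{\floor{x+p}}U=\abs\tau^{\fracpart{x}-c}\f{\wh\theta}{\floor{x}}\bigl(e^{-i\theta p}U\bigr)=\abs\tau^{\fracpart{x}-c}\f{\wh\theta}{\floor{x}}U$; as the integrand in each summand is otherwise unchanged, every summand of $\f{\psi_{\eta,\cP}}{x+p}$ coincides with the corresponding summand for $x$, so $\psi_{\eta,\cP}$ is \periodic{p}. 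The step I expect to require the most care is keeping the $\Oh{\cdot}$ constant in $\cardV{\boundaryV{\tau^{j-w}MU}}=\Oh{\abs\tau^{(j-w)\delta}}$ uniform both in $j$ and in the rotation $\f{\wh\theta}{\floor{x}}$; this is immediate once the covering hypothesis is read for complex scaling factors, and for the simple regions of Proposition~\vref{pro:simple-geometries} it follows from the rotation-insensitive ball-covering argument given there.
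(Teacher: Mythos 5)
Your proposal is correct and follows essentially the same route as the paper's proof: bound each summand via $\fracpartV[j-w]{\cdot}\subseteq\boundaryV[j-w]{\cdot}$ and the covering hypothesis to get a geometrically decaying bound $\Oh{\abs\tau^{(\delta-2)j}}$, extend the sum to infinity with total error $\Oh{\abs\tau^{\delta J}}=\Oh{N^\delta}$, rewrite $\abs\tau^{2J}$ and $\tau^{-J}N$ via Lemma~\ref{lem:choosing-j} and Remarks~\ref{rem:auxcalc-taujn} and~\ref{rem:auxcalc-gammaj}, and deduce periodicity from $e^{-i\theta p}U=U$ together with the fact that all other occurrences of $x$ enter through $\fracpart{x}$. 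The only difference is cosmetic ordering (you substitute the explicit $J$ first and then bound the tail, the paper does the reverse), and your remark about needing the covering hypothesis uniformly in the rotation is a point the paper also implicitly relies on.
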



\begin{proof}
  Consider
  \begin{equation*}
    I_j
    := \int_{y \in \fracpartV[j-w]{\tau^{-J}NU}}
      \left( \indicator{W}{\fracpartZtau{y \tau^{j-w}}}
        - \lmeas{W} \right) \dd y.
  \end{equation*}
  The region of integration satisfies
  \begin{equation}
    \label{eq:proof-periodic-part:region}
    \begin{split}
      \fracpartV[j-w]{\tau^{-J}NU} 
      \subseteq \boundaryV[j-w]{\tau^{-J}NU} 
      = \frac{1}{\tau^{j-w}} \bigcup_{z\in T_{j-w}} V_z
    \end{split}
  \end{equation}
  for some appropriate $T_{j-w} \subseteq \Ztau$. 
  
  We use the triangle inequality and substitute $x = \tau^{j-w} y$, $\dd x =
  \abs\tau^{2(j-w)} \dd y$ in the integral to get
  \begin{equation*}
    \abs{I_j} \leq 
    \frac{1}{\abs\tau^{2(j-w)}} \int_{x \in \bigcup_{z\in T_{j-w}} V_z}
      \underbrace{\abs{ \indicator{W}{\fracpartZtau{x}} 
          - \lmeas{W}}}_{\leq 1+\lmeas{W}} \dd x.
  \end{equation*}
  After splitting up the integral and using translation to eliminate the
  fractional part, we get
  \begin{equation*}
    \abs{I_j} \leq  \frac{1+\lmeas{W}}{\abs\tau^{2(j-w)}} \
    \sum_{z\in T_{j-w}} \int_{x \in V} \dd x 
    = \frac{1+\lmeas{W}}{\abs\tau^{2(j-w)}} \lmeas{V} \card{T_{j-w}}.
  \end{equation*}
  Using $\cardV{\boundaryV{NU}} = \Oh{N^\delta}$ as assumed and
  Equation~\eqref{eq:proof-periodic-part:region} we gain
  \begin{equation*}
    \card{T_{j-w}} = \Oh{\abs{\tau}^{(j-w)\delta}\abs{\tau^{-J}N}^\delta}
    = \Oh{\abs{\tau}^{(j-w)\delta}},
  \end{equation*}
  because $\abs{\tau^{-J}N} = \Oh{1}$, see Remark~\vref{rem:auxcalc-taujn}, and
  thus
  \begin{equation*}
    \abs{I_j} = \Oh{\abs\tau^{\delta (j-w) - 2(j-w)}} 
    = \Oh{\abs\tau^{(\delta-2)j}}.
  \end{equation*}

  Now we want to make the summation in $\cP_\eta$ independent from
  $J$, so we consider 
  \begin{equation*}
    I := \frac{\abs\tau^{2J}}{\lmeas{V}} \sum_{j=J+1}^\infty I_j
  \end{equation*}
  Again we use triangle inequality and we calculate the sum to obtain
  \begin{equation*}
    \abs{I} 
    = \Oh{\abs\tau^{2J}} \sum_{j=J+1}^\infty \Oh{\abs\tau^{(\delta-2) j}}
    = \Oh{\abs\tau^{2J} \abs\tau^{(\delta-2) J}}
    = \Oh{\abs\tau^{\delta J}}.
  \end{equation*}
  Note that $\Oh{\abs\tau^J} = \Oh{N}$, see Remark~\vref{rem:auxcalc-gammaj}, so
  we obtain $\abs{I} = \Oh{N^\delta}$.

  Let us look at the growth of 
  \begin{equation*}
    \f{\cP_\eta}{N} 
    = \frac{\abs\tau^{2J}}{\lmeas{V}} \sum_{j=0}^J I_j.
  \end{equation*}
  We get
  \begin{equation*}
    \abs{\f{\cP_\eta}{N}} 
    = \Oh{\abs\tau^{2J}} \sum_{j=0}^J \Oh{\abs\tau^{(\delta-2) j}}
    = \Oh{\abs\tau^{2J}} = \Oh{N^2},
  \end{equation*}
  using $\delta<2$, and, to get the last equality,
  Remark~\vref{rem:auxcalc-gammaj}.

  Finally, inserting the result of Remark~\vref{rem:auxcalc-taujn} for the
  region of integration, rewriting $\abs\tau^{2J}$ according to
  Remark~\vref{rem:auxcalc-gammaj} and extending the sum to infinity, as above
  described, yields
  \begin{equation*}
    \begin{split}
      \f{\cP_\eta}{N}
      &= \frac{\abs\tau^{2J}}{\lmeas{V}} \sum_{j=0}^J 
      \int_{y \in \fracpartV[j-w]{\tau^{-J}NU}}
      \left( \indicator{W}{\fracpartZtau{y \tau^{j-w}}} 
        - \lmeas{W} \right) \dd y \\
      &= N^2 \underbrace{
        \frac{\abs\tau^{2(c-\fracpart{\log N})}}{\lmeas{V}} 
        \sum_{j=0}^\infty \int_{y \in 
          \fracpartV[j-w]{\abs\tau^{\fracpart{\log N}-c} 
            \f{\wh{\theta}}{\floor{\log N}} U}}
      \left( \indicator{W}{\fracpartZtau{y \tau^{j-w}}} 
        - \lmeas{W} \right) 
      \dd y}_{=: \f{\psi_{\eta,\cP}}{\log N}} 
    \\ &\phantom{=}+ \Oh{N^\delta},
    \end{split}
  \end{equation*}
  with the rotation $\f{\wh{\theta}}{x} = e^{-i \theta x - i \theta c}$.

  Now let
  \begin{equation*}
    e^{i \theta p} U = U
    \equivalent
    e^{- i \theta p} U = e^{- i \theta 0} U.
  \end{equation*}
  Clearly the region of integration in $\f{\psi_{\eta,\cP}}{x}$ is \periodic{p},
  since $x$ occurs as $\fracpart{x}$ and $\floor{x}$. All other occurrences of
  $x$ are of the form $\fracpart{x}$, i.e., \periodic{1}, so period $p$ is
  obtained.
\end{proof}


\begin{lemma}[The Other Part]
  \label{lem:the-other-part}

  For \eqref{eq:mainth:other-part} in the proof of
  Theorem~\vref{thm:countdigits} we get
  \begin{equation*}
    \f{\cQ_\eta}{N} = N^2 \psi_{\eta,\cQ}
    + \Oh{N^{\alpha} \log N}  + \Oh{N^\delta},
  \end{equation*}
  with
  \begin{equation*}
    \psi_{\eta,\cQ} = \frac{\lmeas{U}}{\lmeas{V}}
    \sum_{j=0}^\infty \frac{\beta_j}{\lmeas{V}}
  \end{equation*}
  and $\alpha = 2+\log\rho<2$, where $\rho<1$ can be found in
  Theorem~\vref{th:w-naf-distribution}.
\end{lemma}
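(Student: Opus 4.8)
The plan is to analyse the sum
\[
  \f{\cQ_\eta}{N}
  = \frac{\abs\tau^{2J}}{\lmeas{V}} \sum_{j=0}^J
  \int_{y \in \floorV[j-w]{\tau^{-J}NU}}
  \f{\left( \indicator*{W_{J-j}} - \indicator*{W}
    \right)}{\fracpartZtau{y \tau^{j-w}}} \dd y
\]
by the same device already used for the Zero Part: on each cell of the packing $\floorV[j-w]{\tau^{-J}NU}$ the fractional part $\fracpartZtau{\,\cdot\,}$ is, after the substitution $x=\tau^{j-w}y$, just a translation, so the inner integral over one cell becomes $\int_{x\in V}\f{(\indicator*{W_{J-j}}-\indicator*{W})}{x}\dd x$, which by definition equals $\beta_{J-j}=\beta_{\eta,J-j}$ (recall $\lmeas{W_{J-j}}=\lmeas{\coverV[\cdot]{\cW_{\eta,J-j}}}$ and $\lmeas{W}=\lmeas{\cW_\eta}$, see Proposition~\vref{pro:prop-of-w}). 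Hence
\[
  \f{\cQ_\eta}{N}
  = \frac{\abs\tau^{2J}}{\lmeas{V}} \sum_{j=0}^J
  \abs\tau^{-2(j-w)} \card{T_{j-w}}\, \beta_{J-j}
  = \frac{\abs\tau^{2w}}{\lmeas{V}} \sum_{j=0}^J
  \abs\tau^{2(J-j)} \card{T_{j-w}}\, \beta_{J-j},
\]
where $T_{j-w}\subseteq\Ztau$ indexes the cells of $\floorV{\tau^{j-w}\tau^{-J}NU}$, so that $\abs\tau^{2(j-w)}\card{T_{j-w}}\lmeas{V}$ is the area of the packing, which by Proposition~\vref{pro:set-nu} (with the substituted region) equals $\lmeas{\tau^{j-w}\tau^{-J}NU}+\Oh{\cdot}=\abs\tau^{2(j-w)}\abs{\tau^{-J}N}^2\lmeas{U}+\Oh{\abs\tau^{(j-w)\delta}}$, using $\abs{\tau^{-J}N}=\Oh1$ from Remark~\vref{rem:auxcalc-taujn}. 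Reindexing $k=J-j$ this rewrites $\f{\cQ_\eta}{N}$ as a main term
\[
  \frac{\lmeas{U}}{\lmeas{V}}\, N^2\! \sum_{k=0}^{J}\frac{\beta_k}{\lmeas{V}}
\]
(after using $\abs\tau^{2J}\cdot\abs{\tau^{-J}N}^2=N^2$ from Remark~\vref{rem:auxcalc-gammaj}) plus an error coming from the $\Oh{\abs\tau^{(j-w)\delta}}$ term in $\card{T_{j-w}}$.

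Next I would control both the tail of the series and the two error contributions. From Proposition~\vref{pro:prop-of-w}\,\itemref{enu:prop-of-w:beta} we have $\beta_k=\Oh{\rho^k}$ for $k\geq w-1$, so $\sum_{k=0}^\infty \beta_k/\lmeas{V}$ converges and $\sum_{k>J}\beta_k/\lmeas{V}=\Oh{\rho^{J}}=\Oh{N^{\log\rho}}$ by Remark~\vref{rem:auxcalc-gammaj}; multiplying by $N^2$ gives the $\Oh{N^{\alpha}}$ piece with $\alpha=2+\log\rho<2$ — actually $\Oh{N^\alpha}$, which is absorbed in $\Oh{N^\alpha\log N}$. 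This yields the claimed $\psi_{\eta,\cQ}=\frac{\lmeas U}{\lmeas V}\sum_{j=0}^\infty\beta_j/\lmeas V$. For the error term, the $\Oh{\abs\tau^{(j-w)\delta}}$ deviation of $\card{T_{j-w}}$ from the area contributes, after multiplication by $\abs\tau^{2w-2(j-w)}\abs\tau^{2J}/\lmeas V\cdot\abs{\beta_{J-j}}$ and summation over $j$, a bound of the shape $\abs\tau^{2J}\sum_{j}\abs\tau^{(\delta-2)j}\abs{\beta_{J-j}}$; splitting this sum at $j\approx J/2$ and using $\delta<2$ on one half and $\beta_k=\Oh{\rho^k}$ on the other half gives $\Oh{\abs\tau^{\delta J}}=\Oh{N^\delta}$, possibly with a factor $\log N$ from the number of summands. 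Collecting the pieces produces exactly
\[
  \f{\cQ_\eta}{N} = N^2 \psi_{\eta,\cQ}
  + \Oh{N^{\alpha}\log N} + \Oh{N^\delta},
\]
as stated.

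The step I expect to be the main obstacle is the careful bookkeeping of the error terms: one must simultaneously track the $\Oh{\abs\tau^{(j-w)\delta}}$ boundary error from Proposition~\vref{pro:set-nu}, the geometric decay $\beta_k=\Oh{\rho^k}$, and the large prefactor $\abs\tau^{2J}=\Oh{N^2}$, and show that after the convolution-type summation over $j$ the result is genuinely $\Oh{N^\alpha\log N}+\Oh{N^\delta}$ rather than something larger. In particular one has to be slightly delicate near $j$ small (where $\beta_{J-j}$ is tiny but the prefactor $\abs\tau^{-2(j-w)}\card{T_{j-w}}$ is of constant order) versus $j$ near $J$ (where $\card{T_{j-w}}$ is large but $\beta_{J-j}=\beta_0,\dots,\beta_{w-2}$ are the "exceptional" non-decaying values) — this is why the $\log N$ appears. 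Everything else is a routine repetition of the translation-invariance argument from the Zero Part together with the measure estimates already established in Propositions~\vref{pro:round-v-basic-prop}, \vref{pro:set-nu} and~\vref{pro:prop-of-w}.
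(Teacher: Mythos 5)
Your outline reproduces the paper's proof in all its structural steps: the translation-invariance argument that collapses each cell integral to $\beta_{J-j}$ via \itemref{enu:prop-of-w:beta} of Proposition~\vref{pro:prop-of-w}, the use of Proposition~\vref{pro:set-nu} to replace $\card{T_{j-w}}/\abs\tau^{2(j-w)}$ by $\abs{\tau^{-J}N}^2\lmeas{U}/\lmeas{V}+\Oh{\abs\tau^{(\delta-2)j}}$, the reindexing $k=J-j$, and the extension of $\sum_{k\le J}\beta_k$ to the full series with tail $N^2\Oh{\rho^J}=\Oh{N^\alpha}$. All of this is exactly what the paper does.

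The one step that would fail is your treatment of the convolution error sum $E=\abs\tau^{2J}\sum_{j=0}^J\abs\tau^{(\delta-2)j}\,\Oh{\rho^{J-j}}$ by splitting at $j\approx J/2$. On the range $j\le J/2$ you can only use $\rho^{J-j}\le\rho^{J/2}$ together with $\sum_j\abs\tau^{(\delta-2)j}=\Oh{1}$, which gives a contribution $\Oh{\abs\tau^{2J}\rho^{J/2}}=\Oh{N^{2+\frac12\log\rho}}$; on the range $j>J/2$ you get $\Oh{\abs\tau^{2J}\abs\tau^{(\delta-2)J/2}}=\Oh{N^{1+\delta/2}}$. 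The exponent $2+\frac12\log\rho$ is strictly larger than $\alpha=2+\log\rho$ (and exceeds $\delta$ whenever $\rho$ is close to $1$ and $\delta$ is not), and $1+\delta/2>\delta$ because $\delta<2$; so the split does \emph{not} yield the claimed $\Oh{N^\alpha\log N}+\Oh{N^\delta}$. The correct move, which is what the paper does, is to factor the sum as $\rho^J\sum_{j=0}^J\bigl(\rho\abs\tau^{2-\delta}\bigr)^{-j}$ and compare the ratio $\rho\abs\tau^{2-\delta}$ with $1$: if it is at least $1$ the sum is $J\cdot\Oh{1}$ and $E=\Oh{\abs\tau^{2J}\rho^J J}=\Oh{N^\alpha\log N}$; if it is less than $1$ the sum is $\Oh{\rho^{-J}\abs\tau^{(\delta-2)J}}$ and $E=\Oh{\abs\tau^{\delta J}}=\Oh{N^\delta}$ with no logarithm. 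With that replacement your argument is complete and coincides with the paper's.
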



\begin{proof}
  Consider
  \begin{equation*}
      I_{j,\ell}
      := \int_{y \in \floorV[j-w]{\tau^{-J}NU}} 
      \f{\left( \indicator*{W_{\eta,\ell}} - \indicator*{W} 
        \right)}{\fracpartZtau{y \tau^{j-w}}} \dd y.
  \end{equation*}
  We can rewrite the region of integration and get
  \begin{equation*}
    \floorV[j-w]{\tau^{-J}NU} 
    = \frac{1}{\tau^{j-w}} \floorV{\tau^{j-w} \tau^{-J}NU}
    = \frac{1}{\tau^{j-w}} \bigcup_{z\in T_{j-w}} V_z
  \end{equation*}
  for some appropriate $T_{j-w} \subseteq \Ztau$, as in the proof of
  Lemma~\vref{lem:the-zero-part}. Substituting $x = \tau^{j-w} y$, $\dd x =
  \abs\tau^{2(j-w)} \dd y$ yields
  \begin{equation*}
      I_{j,\ell}
      = \frac{1}{\abs\tau^{2(j-w)}} \int_{x \in \bigcup_{z\in T_{j-w}} V_z} 
      \f{\left( \indicator*{W_{\eta,\ell}} - \indicator*{W} 
        \right)}{\fracpartZtau{x}} \dd x
  \end{equation*}
  and further
  \begin{equation*}
      I_{j,\ell}
      = \frac{1}{\abs\tau^{2(j-w)}} \sum_{z\in T_{j-w}} 
      \underbrace{\int_{x \in V} \f{\left( \indicator*{W_{\eta,\ell}} 
            - \indicator*{W} \right)}{x} \dd x}_{= \beta_\ell}
      = \frac{1}{\abs\tau^{2(j-w)}} \card{T_{j-w}} \beta_\ell,
  \end{equation*}
  by splitting up the integral, using translation to eliminate the fractional
  part and taking $\beta_\ell$ according to \itemref{enu:prop-of-w:beta} of
  Proposition~\vref{pro:prop-of-w}.  From Proposition~\vref{pro:set-nu} we
  obtain
  \begin{equation*}
    \frac{\card{T_{j-w}}}{\abs\tau^{2(j-w)}}
    = \frac{\abs{\tau^{j-w} \tau^{-J} N}^2}{\abs\tau^{2(j-w)}} 
    \frac{\lmeas{U}}{\lmeas{V}}
    + \Oh{\frac{\abs{\tau^{j-w} \tau^{-J} N}^\delta}{\abs\tau^{2(j-w)}}}
    = \abs{\tau^{-J} N}^2 \frac{\lmeas{U}}{\lmeas{V}}
    + \Oh{\abs\tau^{(\delta-2) j}},
  \end{equation*}
  because $\abs{\tau^{-J}N} = \Oh{1}$, see Remark~\vref{rem:auxcalc-taujn}. 

  Now let us have a look at
  \begin{equation*}
      \f{\cQ_\eta}{N} 
      = \frac{\abs\tau^{2J}}{\lmeas{V}} \sum_{j=0}^J I_{j,J-j}.
  \end{equation*}
  Inserting the result above and using $\beta_{\ell} = \Oh{\rho^\ell}$, see
  \itemref{enu:prop-of-w:beta} of Proposition~\vref{pro:prop-of-w}, yields
  \begin{equation*}
    \f{\cQ_\eta}{N} 
    = \abs\tau^{2J} \abs{\tau^{-J} N}^2
    \frac{\lmeas{U}}{\lmeas{V}}
    \sum_{j=0}^J \frac{\beta_{J-j}}{\lmeas{V}}
    + \abs\tau^{2J} \sum_{j=0}^J 
    \Oh{\abs\tau^{(\delta-2)j}} \Oh{\rho^{J-j}}
  \end{equation*}
  We notice that $\abs\tau^{2J}\abs{\tau^{-J}N}^2 = N^2$.

  Therefore, after reversing the order of the first summation, we obtain
  \begin{equation*}
    \f{\cQ_\eta}{N} 
    = N^2 \frac{\lmeas{U}}{\lmeas{V}}
    \sum_{j=0}^J \frac{\beta_{j}}{\lmeas{V}}
    + \abs\tau^{2J} \rho^J \sum_{j=0}^J 
    \Oh{\left(\rho\abs\tau^{2-\delta}\right)^{-j}}.
  \end{equation*}
  If $\rho\abs\tau^{2-\delta}\geq1$, then the second sum is $J \Oh{1}$,
  otherwise the sum is $\Oh{\rho^{-J} \abs\tau^{(\delta-2)J}}$. So we obtain
  \begin{equation*}
    \f{\cQ_\eta}{N} 
    = N^2 \frac{\lmeas{U}}{\lmeas{V}}
    \sum_{j=0}^J \frac{\beta_{j}}{\lmeas{V}}
    + \Oh{\abs\tau^{2J} \rho^J J}
    + \Oh{\abs\tau^{\delta J}}.
  \end{equation*}
  Using $J = \f{\Theta}{\log N}$, see Lemma~\vref{lem:choosing-j},
  Remark~\vref{rem:auxcalc-gammaj}, and defining $\alpha = 2 + \log\rho$ yields
  \begin{equation*}
    \f{\cQ_\eta}{N} 
    = N^2 \frac{\lmeas{U}}{\lmeas{V}}
    \sum_{j=0}^J \frac{\beta_{j}}{\lmeas{V}}
    + \underbrace{\Oh{N^{2 + \log\rho} \log N}}_{=\Oh{N^\alpha \log N}}
    + \Oh{N^\delta}.
  \end{equation*}

  Now consider the first sum. Since $\beta_j = \Oh{\rho^j}$, see
  \itemref{enu:prop-of-w:beta} of Proposition~\vref{pro:prop-of-w}, we obtain
  \begin{equation*}
    N^2 \sum_{j=J+1}^\infty \beta_j = N^2 \Oh{\rho^J} 
    = \Oh{N^\alpha}.
  \end{equation*}
  Thus the lemma is proved, because we can extend the sum to infinity.
\end{proof}


\begin{lemma}[The Small Part]
  \label{lem:the-small-part}

  For \eqref{eq:mainth:small-part} in the proof of
  Theorem~\vref{thm:countdigits} we get
  \begin{equation*}
    \f{\cS_\eta}{N} = \Oh{N^{\alpha} \log N} + \Oh{ N^\delta }
  \end{equation*}  
  with $\alpha = 2 + \log\rho<2$ and $\rho<1$ from
  Theorem~\vref{th:w-naf-distribution}.
\end{lemma}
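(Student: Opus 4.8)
The plan is to follow the proof of Lemma~\ref{lem:the-other-part} (``The Other Part'') almost verbatim, adapting only the two places where the fractional-cell region $\fracpartV[j-w]{\tau^{-J}NU}$ behaves differently from the floor-cell region used there. Writing
\begin{equation*}
  I_{j,\ell} := \int_{y \in \fracpartV[j-w]{\tau^{-J}NU}}
  \f{\left( \indicator*{W_{\eta,\ell}} - \indicator*{W} \right)}{\fracpartZtau{y \tau^{j-w}}} \dd y,
\end{equation*}
we have $\f{\cS_\eta}{N} = \frac{\abs\tau^{2J}}{\lmeas{V}}\sum_{j=0}^J I_{j,J-j}$, and the whole point is to estimate $\abs{I_{j,J-j}}$.

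First I would pass to absolute values and, exactly as in the proof of Lemma~\ref{lem:the-periodic-part}, use $\fracpartV[j-w]{\tau^{-J}NU} \subseteq \boundaryV[j-w]{\tau^{-J}NU} = \tau^{-(j-w)}\bigcup_{z\in T_{j-w}}V_z$ with $\card{T_{j-w}} = \cardV{\boundaryV{\tau^{j-w}\tau^{-J}NU}} = \Oh{\abs\tau^{(j-w)\delta}}$ (from the hypothesis $\cardV{\boundaryV{NU}} = \Oh{N^\delta}$, scaling, and $\abs{\tau^{-J}N} = \Oh{1}$ of Remark~\ref{rem:auxcalc-taujn}). Substituting $x = \tau^{j-w}y$ and splitting the integral over the cells $V_z$, the fractional part translates each cell's domain onto $V$, so the contribution of each cell is bounded by $\int_{x\in V}\abs{\indicator*{W_{\eta,\ell}} - \indicator*{W}}(x)\,\dd x = \lmeas{W_{\eta,\ell}\bigtriangleup W}$. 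Hence $\abs{I_{j,\ell}} = \Oh{\abs\tau^{(\delta-2)(j-w)}\,\lmeas{W_{\eta,\ell}\bigtriangleup W}}$. (In contrast, in Lemma~\ref{lem:the-other-part} the cells are \emph{full}, so the per-cell integral is the signed quantity $\beta_{\eta,\ell}$ and $\card{T_{j-w}}$ is a volume count; here it is a symmetric-difference measure and a boundary count.)

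The crux is therefore the estimate $\lmeas{W_{\eta,\ell}\bigtriangleup W} = \Oh{\rho^\ell}$, the symmetric-difference refinement of $\beta_{\eta,\ell} = \Oh{\rho^\ell}$ from Proposition~\ref{pro:prop-of-w}\itemref{enu:prop-of-w:beta}. Since $\cW_{\eta,\ell}\subseteq\cW_\eta$ (Proposition~\ref{pro:prop-of-w}\itemref{enu:prop-of-w:w-is-union}), the sets $\coverV[\ell+w]{\cW_{\eta,\ell}}$ and $\cW_\eta$ differ only inside Voronoi cells of side $\Oh{\abs\tau^{-\ell}}$ that meet $\boundary*{\cW_\eta} = \eta\tau^{-w} + \tau^{1-2w}\boundary*{\cF}$ (cf.\ Proposition~\ref{pro:prop-of-w}\itemref{enu:prop-of-w:fund-domain}); the number of such cells is controlled by the same automaton/Perron--Frobenius counting used in the proof of Proposition~\ref{pro:boundary-fund-dom-dim-upper}, and combined with $\lmeas{\coverV[\ell+w]{\cW_{\eta,\ell}}} - \lmeas{\cW_\eta} = \beta_{\eta,\ell} = \Oh{\rho^\ell}$ this pins the symmetric difference down. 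I expect this bound to be the main obstacle; if extracting the clean rate $\Oh{\rho^\ell}$ turns out to be awkward, an alternative is to show that $\coverV[\ell+w]{\cW_{\eta,\ell}}$ and $\cW_\eta$ are nested modulo a Lebesgue-null set, so that $\lmeas{W_{\eta,\ell}\bigtriangleup W} = \abs{\beta_{\eta,\ell}}$ immediately.

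Finally, inserting $\lmeas{W_{\eta,J-j}\bigtriangleup W} = \Oh{\rho^{J-j}}$ gives
\begin{equation*}
  \abs{\f{\cS_\eta}{N}} = \Oh{\abs\tau^{2J}}\sum_{j=0}^J \Oh{\abs\tau^{(\delta-2)(j-w)}\rho^{J-j}}
  = \Oh{\abs\tau^{2J}\rho^J}\sum_{j=0}^J \Oh{\bigl(\rho\,\abs\tau^{2-\delta}\bigr)^{-j}},
\end{equation*}
which is precisely the geometric sum already treated in the proof of Lemma~\ref{lem:the-other-part}: if $\rho\,\abs\tau^{2-\delta}\ge 1$ it is $\Oh{\abs\tau^{2J}\rho^J J} = \Oh{N^{2+\log\rho}\log N} = \Oh{N^\alpha\log N}$, and otherwise it telescopes to $\Oh{\abs\tau^{\delta J}} = \Oh{N^\delta}$. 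Using $J = \floor{\log N} + c$ (Lemma~\ref{lem:choosing-j}) and Remark~\ref{rem:auxcalc-gammaj} to convert powers of $\abs\tau^J$ into powers of $N$, both cases are of the asserted form $\Oh{N^{\alpha}\log N} + \Oh{N^\delta}$, which proves the lemma.
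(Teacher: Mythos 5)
Your skeleton is exactly the paper's: the same $I_{j,\ell}$, the same inclusion of the fractional cells into $\boundaryV[j-w]{\tau^{-J}NU}$ with $\card{T_{j-w}}=\Oh{\abs\tau^{(j-w)\delta}}$, and the identical geometric sum at the end. The one place you diverge is the per-cell bound, and that is also where your proposal is incomplete. The paper bounds each cell's contribution by $\abs{\beta_{\eta,\ell}}$: it treats each cell as a full cell, so that after translation the integral over a cell is the signed quantity $\beta_{\eta,\ell}=\int_V(\indicator*{W_{\eta,\ell}}-\indicator*{W_{\eta}})$, and applies the triangle inequality only across cells. You instead move the absolute value inside the integral and arrive at $\lmeas{W_{\eta,\ell}\bigtriangleup W_\eta}$ per cell. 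Your instinct is sound --- since the integration really runs over \emph{partial} cells, the signed quantity $\beta_{\eta,\ell}$ does not obviously dominate the integral over a sub-region of a cell, so the symmetric difference is the honest bound --- but you then need $\lmeas{W_{\eta,\ell}\bigtriangleup W_\eta}=\Oh{\rho^\ell}$, which you do not prove and which is strictly more than Proposition~\ref{pro:prop-of-w} gives you: $\beta_{\eta,\ell}$ is a difference of measures, not the measure of a difference.

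Of your two suggested repairs, the nesting one fails: neither $\cW_\eta\subseteq\coverV[\ell+w]{\cW_{\eta,\ell}}$ nor the reverse inclusion holds for fixed $\ell$, even modulo null sets, since points of $\interior*{\cW_\eta}$ lying in scale-$(\ell+w)$ cells that meet $\boundary*{\cW_\eta}$ need not be covered by the approximation, and those cells have positive total measure. The boundary-counting route does work: both $W_{\eta,\ell}\setminus W_\eta$ and $W_\eta\setminus W_{\eta,\ell}$ are contained in the union of scale-$(\ell+w)$ cells meeting $\boundary*{\cW_\eta}=\eta\tau^{-w}+\tau^{1-2w}\boundary*{\cF}$, and the automaton count in the proof of Proposition~\ref{pro:boundary-fund-dom-dim-upper} bounds the number of such cells by $\Oh{(\sigma\abs\tau^2)^{\ell}}$, giving $\lmeas{W_{\eta,\ell}\bigtriangleup W_\eta}=\Oh{\sigma^\ell}$. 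The catch is that $\sigma$ is only shown there to be $<1$ and is never compared with $\rho$, so this route proves the lemma with exponent $2+\log\max(\rho,\sigma)$ rather than the stated $\alpha=2+\log\rho$. That still yields an error term $\oh{N^2}$ and leaves Theorem~\ref{thm:countdigits} qualitatively intact, but it does not establish the lemma exactly as stated; to recover the stated rate you would have to either justify the paper's use of $\abs{\beta_{\eta,\ell}}$ on partial cells or show $\sigma\le\rho$.
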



\begin{proof}
  Consider
  \begin{equation*}
    I_{j,\ell}
    := \int_{y \in \fracpartV[j-w]{\tau^{-J}NU}} 
    \f{\left( \indicator*{W_\ell} - \indicator*{W} 
      \right)}{\fracpartZtau{y \tau^{j-w}}} \dd y.
    \end{equation*}
    Again, as in the proof of Lemma~\vref{lem:the-periodic-part}, the region of
    integration satisfies
  \begin{equation}
    \label{eq:proof-small-part:region}
    \fracpartV[j-w]{\tau^{-J}NU} 
    \subseteq \boundaryV[j-w]{\tau^{-J}NU}
    = \frac{1}{\tau^{j-w}} \bigcup_{z\in T_{j-w}} V_z,
  \end{equation}
  for some appropriate $T_{j-w} \subseteq \Ztau$. 

  We substitute $x = \tau^{j-w} y$, $\dd x = \abs\tau^{2(j-w)} \dd y$ in the
  integral to get
  \begin{equation*}
    \abs{I_{j,\ell}} =
    \frac{1}{\abs\tau^{2(j-w)}} \abs{\int_{x \in \bigcup_{z\in T_{j-w}} V_z}
    \f{\left(\indicator*{W_\ell} 
      - \indicator*{W}\right)}{\fracpartZtau{x}} \dd x}.
  \end{equation*}
  Again, after splitting up the integral, using translation to eliminate the
  fractional part and the triangle inequality, we get
  \begin{equation*}
    \abs{I_{j,\ell}} \leq  \frac{1}{\abs\tau^{2(j-w)}}
    \sum_{z\in T_{j-w}} \underbrace{\abs{\int_{x \in V}
      \f{\left(\indicator*{W_\ell} - \indicator*{W}\right)}{x} 
      \dd x}}_{= \abs{\beta_\ell}}
    = \frac{1}{\abs\tau^{2(j-w)}} \card{T_{j-w}} \abs{\beta_\ell} ,
  \end{equation*}
  in which $\abs{\beta_\ell} = \Oh{\rho^\ell}$ is known from
  \itemref{enu:prop-of-w:beta} of Proposition~\vref{pro:prop-of-w}. Using
  $\cardV{\boundaryV{NU}} = \Oh{N^\delta}$, Remark~\vref{rem:auxcalc-taujn}, and
  Equation~\eqref{eq:proof-small-part:region} we get
  \begin{equation*}
    \card{T_{j-w}} = \Oh{\abs{\tau}^{(j-w)\delta} \abs{\tau^{-J}N}^\delta} 
    = \Oh{\abs\tau^{\delta (j-w)}},
  \end{equation*}
  because $\abs{\tau^{-J}N} = \Oh{1}$. Thus
  \begin{equation*}
    \abs{I_{j,\ell}} 
    = \Oh{ \rho^\ell \abs\tau^{(\delta - 2)(j-w)} } 
    = \Oh{ \rho^\ell \abs\tau^{(\delta-2) j} } 
  \end{equation*}
  follows by assembling all together.

  Now we are ready to analyse
  \begin{equation*}
    \f{\cS_\eta}{N}
    = \frac{\abs\tau^{2J}}{\lmeas{V}} \sum_{j=0}^J I_{j,J-j}.    
  \end{equation*}
  Inserting the result above yields
  \begin{equation*}
    \abs{\f{\cS_\eta}{N}}
    = \frac{\abs\tau^{2J}}{\lmeas{V}} \sum_{j=0}^J 
    \Oh{ \rho^{J-j} \abs\tau^{(\delta-2) j} } 
    = \frac{\rho^J \abs\tau^{2J}}{\lmeas{V}} \sum_{j=0}^J 
    \Oh{ \left( \rho \abs\tau^{2-\delta}\right)^{-j} } 
  \end{equation*}
  and thus, by the same argument as in the proof of
  Lemma~\vref{lem:the-other-part},
  \begin{equation*}
    \abs{\f{\cS_\eta}{N}}
    = \rho^J \abs\tau^{2J} \Oh{J + \rho^{-J} \abs\tau^{(\delta - 2)J}} 
    = \Oh{\rho^J \abs\tau^{2J} J} + \Oh{ \abs\tau^{\delta J} },
  \end{equation*}
  Finally, using Lemma~\vref{lem:choosing-j} and
  Remark~\vref{rem:auxcalc-gammaj}, we obtain
  \begin{equation*}
    \abs{\f{\cS_\eta}{N}} = \Oh{N^{\alpha} \log N} + \Oh{ N^\delta }
  \end{equation*}
  with $\alpha = 2+\log\rho$. Since $\rho<1$, we have $\alpha<2$.
\end{proof}


\begin{lemma}[The Fractional Cells Part]
  \label{lem:the-fractionalcells-part}

  For \eqref{eq:mainth:fraccells-part} in the proof of
  Theorem~\vref{thm:countdigits} we get
  \begin{equation*}
    \f{\cF_\eta}{N} = \Oh{N^\delta \log N}
  \end{equation*}    
\end{lemma}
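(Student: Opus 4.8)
The plan is to bound $\f{\cF_\eta}{N}$ by brute force: the integrands $\indicator{W_j}{\fracpartZtau{x/\tau^{j+w}}}$ never exceed $1$, the outer sum in \eqref{eq:mainth:fraccells-part} has only $\Oh{\log N}$ terms, and the two regions of integration are small — both are contained in the cell covering $\boundaryV{NU}$ of the boundary, which has Lebesgue measure $\Oh{N^\delta}$ by the standing hypothesis.

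First I would localise the regions of integration. By Proposition~\vref{pro:round-v-basic-prop}\itemref{enu:round-v-basic-prop:incl} we have $\floorV{NU}\subseteq NU\subseteq\ceilV{NU}$ and $\floorV{NU}\subseteq\coverV{NU}\subseteq\ceilV{NU}$. Hence both $\coverV{NU}\setminus NU$ and $NU\setminus\coverV{NU}$ are subsets of $\ceilV{NU}\setminus\floorV{NU}\subseteq\boundaryV{NU}$.

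Next I would estimate $\lmeas{\boundaryV{NU}}$. The set $\boundaryV{NU}$ is a finite union of Voronoi cells, overlapping only in a set of Lebesgue measure zero, and the centre of each such cell is a lattice point lying in $\boundaryV{NU}$ itself; thus the number of these cells is at most $\cardV{\boundaryV{NU}}$, and therefore $\lmeas{\boundaryV{NU}}\le\cardV{\boundaryV{NU}}\,\lmeas{V}=\Oh{N^\delta}$, using $\cardV{\boundaryV{NU}}=\Oh{N^\delta}$. (Equivalently, Proposition~\vref{pro:round-v-basic-prop}\itemref{enu:round-v-basic-prop:card} gives $\lmeas{\coverV{\boundaryV{NU}}}=\cardV{\boundaryV{NU}}\,\lmeas{V}$, and $\boundaryV{NU}$ is a union of whole cells.)

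Finally I would assemble the estimate: bounding each integrand by $1$, every one of the $2(J+1)$ integrals occurring in \eqref{eq:mainth:fraccells-part} is at most $\lmeas{\boundaryV{NU}}=\Oh{N^\delta}$, and since $J=\floor{\log N}+c=\Oh{\log N}$ by Lemma~\vref{lem:choosing-j}, summing over $j=0,\dots,J$ and dividing by the constant $\lmeas{V}$ yields $\f{\cF_\eta}{N}=\Oh{N^\delta\log N}$. There is no genuine obstacle here — this is the most elementary of the partial estimates feeding the proof of Theorem~\vref{thm:countdigits} — and the only point needing a moment's care is recognising that $\coverV{NU}\setminus NU$ and $NU\setminus\coverV{NU}$ lie inside $\boundaryV{NU}$, which is immediate once one combines $\floorV{NU}\subseteq\coverV{NU}$ with $\coverV{NU}\subseteq\ceilV{NU}$.
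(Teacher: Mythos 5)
Your argument is correct and is essentially the paper's own proof: localise both difference regions inside $\boundaryV{NU}$ via $\floorV{NU}\subseteq NU,\coverV{NU}\subseteq\ceilV{NU}$, bound the integrand by $1$, count the $\Oh{N^\delta}$ cells making up $\boundaryV{NU}$, and multiply by the $J+1=\Oh{\log N}$ summands. No substantive difference from the paper's treatment.
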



\begin{proof}
  For the regions of integration in $\cF_\eta$ we obtain
  \begin{subequations}
    \begin{align*}
      NU \setminus \coverV{NU}
      &\subseteq \ceilV{NU} \setminus \floorV{NU}
      = \boundaryV{NU}
      = \bigcup_{z\in T} V_z
      \intertext{and}
      \coverV{NU} \setminus NU
      &\subseteq \ceilV{NU} \setminus \floorV{NU}
      = \boundaryV{NU}
      = \bigcup_{z\in T} V_z  
    \end{align*}
  \end{subequations}
  for some appropriate $T \subseteq \Ztau$ using
  Proposition~\vref{pro:round-v-basic-prop}. Thus we get
  \begin{equation*}
    \abs{\f{\cF_\eta}{N}} 
    \leq \frac{2}{\lmeas{V}} \sum_{j=0}^J
    \int_{x \in \bigcup_{z\in T} V_z} 
    \indicator{W_j}{\fracpartZtau{\frac{x}{\tau^{j+w}}}} \dd x
    \leq \frac{2}{\lmeas{V}} \sum_{j=0}^J \sum_{z\in T}
    \int_{x \in V_z} \dd x,
  \end{equation*}
  in which the indicator function was replaced by $1$. Dealing with the sums and
  the integral, which is $\Oh{1}$, we obtain
  \begin{equation*}
    \abs{\f{\cF_\eta}{N}} = (J+1) \card*{T} \Oh{1}.
  \end{equation*}
  Since $J = \Oh{\log N}$, see Lemma~\vref{lem:choosing-j}, and
  $\card*{T} = \Oh{N^\delta}$, the desired result
  follows.
\end{proof}


\begin{lemma}
  \label{lem:psi-continous}

  If the $\psi_\eta$ from Theorem~\vref{thm:countdigits} is \periodic{p},
  then $\psi_\eta$ is also continuous.
\end{lemma}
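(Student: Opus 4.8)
The goal is to show that the $p$-periodic fluctuation $\psi_\eta = \psi_{\eta,\cM} + \psi_{\eta,\cP} + \psi_{\eta,\cQ}$ is continuous on $\R$. I will treat each of the three summands separately, since continuity of a finite sum follows from continuity of each term. The term $\psi_{\eta,\cQ}$ is constant (it does not depend on $x$ at all), hence trivially continuous. The term $\psi_{\eta,\cM}(x) = \lmeas{U}\,(c+1-\fracpart{x})\,e_w$ is \emph{not} continuous as a function on all of $\R$ — it jumps at integers — so the only way the statement can hold is that the jump of $\psi_{\eta,\cM}$ at each integer is cancelled by an opposite jump of $\psi_{\eta,\cP}$. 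Thus the real content of the lemma is: (i) $\psi_{\eta,\cP}$ is continuous at every non-integer point, and (ii) at each integer $x_0$, the one-sided limits of $\psi_{\eta,\cP}$ differ by exactly $+\lmeas{U}\,e_w$ (matching the $-\fracpart{x}$ drop of $\psi_{\eta,\cM}$ from $1$ to $0$, i.e.\ a downward jump of size $\lmeas{U}\,e_w$ in $\psi_{\eta,\cM}$ cancelled by an upward jump of the same size in $\psi_{\eta,\cP}$).

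**Continuity of $\psi_{\eta,\cP}$ away from integers.** First I would fix a compact interval of $x$-values avoiding $\Z$, so that $\floor{x}$ is locally constant and the rotation $\wh\theta(\floor{x})$ is fixed; only $\fracpart{x}=x-\floor{x}$ varies, continuously. The summand-$j$ integrand is $\indicator{W_\eta}{\fracpartZtau{y\tau^{j-w}}} - \lmeas{W_\eta}$, which is bounded by $1+\lmeas{W_\eta}$, and the region of integration $\fracpartV[j-w]{\abs\tau^{\fracpart{x}-c}\wh\theta(\floor{x})U}$ is contained in $\boundaryV[j-w]{(\cdots)U}$, which (using Proposition~\vref{pro:set-nu}, since $\cardV{\boundaryV{NU}}=\Oh{N^\delta}$ with $\delta<2$) has measure $\Oh{\abs\tau^{(j-w)(\delta-2)}}$. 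Hence the $j$th term is $\Oh{\abs\tau^{(\delta-2)j}}$, so the series converges uniformly in $x$ and it suffices to prove continuity of each individual term. For a single $j$, continuity in $x$ reduces to: as $\fracpart{x}$ varies, the scaled region $R(x):=\abs\tau^{\fracpart{x}-c}\wh\theta(\floor{x})U$ moves continuously (dilation by a continuous factor, composed with the fixed rotation), and the map $R\mapsto \int_{\fracpartV[j-w]{R}}(\indicator{W_\eta}\circ\fracpartZtau\circ(\cdot\,\tau^{j-w}) - \lmeas{W_\eta})$ is continuous under small perturbations of $R$. This last point I would argue via dominated convergence: the symmetric difference of $\fracpartV[j-w]{R(x)}$ and $\fracpartV[j-w]{R(x')}$ is contained in an $O(|x-x'|)$-neighbourhood of $\boundary{R(x)}$ intersected with the relevant cells, whose measure goes to $0$; since the integrand is bounded, the integral varies continuously. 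Here one uses that $\boundary U$ is a $\lmeas{}$-null set (which is implicit in $\delta<2$, because $\boundary{NU}$ is covered by $\Oh{N^\delta}=o(N^2)$ cells of area $\Oh{1}$).

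**The jump at integers.** This is the crux. As $x\nearrow x_0\in\Z$, $\fracpart{x}\to 1^-$, $\floor{x}=x_0-1$, and the region $R(x)\to \abs\tau^{1-c}e^{-i\theta(x_0-1)-i\theta c}U = \abs\tau^{-c}e^{-i\theta x_0 - i\theta c}\cdot\abs\tau\, e^{i\theta}U = \abs\tau^{-c}\wh\theta(x_0)(\tau U)$. As $x\searrow x_0$, $\fracpart{x}\to 0^+$, $\floor{x}=x_0$, and $R(x)\to \abs\tau^{-c}\wh\theta(x_0)U$. So the two one-sided limits of $\psi_{\eta,\cP}$ correspond to the same prefactor $\abs\tau^{2c}/\lmeas V$ but regions $\tau\widetilde U$ versus $\widetilde U$, where $\widetilde U := \abs\tau^{-c}\wh\theta(x_0)U$. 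The difference of the two limit values is therefore
\begin{equation*}
  \frac{\abs\tau^{2c}}{\lmeas V}\sum_{j=0}^\infty\left(
    \int_{\fracpartV[j-w]{\tau\widetilde U}} g_j(y)\dd y
    - \int_{\fracpartV[j-w]{\widetilde U}} g_j(y)\dd y\right),
\end{equation*}
where $g_j(y) = \indicator{W_\eta}{\fracpartZtau{y\tau^{j-w}}} - \lmeas{W_\eta}$. The plan is to substitute $y = \tau y'$ in the first integral. Then $\fracpartV[j-w]{\tau\widetilde U}$ becomes $\tau\fracpartV[j-w-1]{\widetilde U}$ (rescaling the cell index by one), $\dd y = \abs\tau^2\dd y'$, and $g_j(\tau y') = \indicator{W_\eta}{\fracpartZtau{y'\tau^{j-w+1}}} - \lmeas{W_\eta} = g_{j+1}(y')$. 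Hence the first sum is $\abs\tau^2\sum_{j=0}^\infty \int_{\fracpartV[j+1-w]{\widetilde U}} g_{j+1}(y')\dd y' = \abs\tau^2\sum_{j=1}^\infty\int_{\fracpartV[j-w]{\widetilde U}} g_j(y)\dd y$. Therefore the bracketed difference telescopes: the two series almost cancel, and I am left with
\begin{equation*}
  \frac{\abs\tau^{2c}}{\lmeas V}\left((\abs\tau^2-1)\sum_{j=1}^\infty\int_{\fracpartV[j-w]{\widetilde U}} g_j(y)\dd y \;-\; \int_{\fracpartV[-w]{\widetilde U}} g_0(y)\dd y\right).
\end{equation*}
Hmm — this does not obviously reduce to a clean constant, so I expect I have to be more careful with the telescoping bookkeeping (perhaps re-examining exactly which summands pair up, or whether the sum should instead be organized so that $\int_{\floorV{\cdots}}$-type terms reappear). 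The cleanest route is probably: instead of comparing $\psi_{\eta,\cP}$ alone, combine the jump of $\psi_{\eta,\cP}$ with the \emph{entire} decomposition — recall that $\cZ_\eta \equiv 0$ used $\int_V g = 0$ — and use $\tau^{-J}NU = \floorV[j-w]{\cdots}\uplus\fracpartV[j-w]{\cdots}$ together with $\abs\tau^{2J}\lmeas{\fracpartV[j-w]{\tau^{-J}NU}} = \Oh{N^\delta}$ to rewrite $\int_{\fracpartV} g_j = -\int_{\floorV} g_j + \int_{\tau^{-J}NU} g_j$ and the last integral is $\lmeas U \cdot(\text{something})$; the $\floorV$-integral vanishes by the argument of Lemma~\vref{lem:the-zero-part}. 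Running this substitution should turn the telescoped leftover into $\frac{\abs\tau^{2c}}{\lmeas V}\cdot\lmeas{\widetilde U}\cdot(\text{geometric factor}) = \lmeas U\, e_w$, matching exactly the jump of $-\psi_{\eta,\cM}$. The main obstacle, as anticipated, is getting this telescoping/substitution bookkeeping exactly right so that the leftover boundary term is precisely $\lmeas U\,e_w$ and not merely $O(1)$; everything else (uniform convergence, null boundary, dominated convergence) is routine given the earlier propositions.
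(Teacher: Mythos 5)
Your reduction to the three summands is sound, and your diagnosis of where the difficulty lies is correct: $\psi_{\eta,\cQ}$ is constant, $\psi_{\eta,\cM}$ jumps by $\lmeas{U}e_w$ at each integer, and the entire content of the lemma is that $\psi_{\eta,\cP}$ has exactly the opposite jump there. But that is precisely the step you do not carry out. After fixing the prefactors (the left limit carries $\abs\tau^{2(c-1)}$, not $\abs\tau^{2c}$; the Jacobian $\abs\tau^{2}$ of $y=\tau y'$ then restores the match) the two one-sided series telescope cleanly and what survives is the single $j=0$ term
\begin{equation*}
  \frac{\abs\tau^{2c}}{\lmeas{V}}\int_{y\in\fracpartV[-w]{\widetilde U}}
  \left(\indicator{W_\eta}{\fracpartZtau{y\tau^{-w}}}-\lmeas{W_\eta}\right)\dd y ,
  \qquad \widetilde U:=\abs\tau^{-c}\f{\wh\theta}{x}U .
\end{equation*}
This is an integral over the fractional boundary cells of $\widetilde U$; nothing in your argument identifies it with $-\lmeas{U}e_w$, and it is far from clear that a termwise identification is even possible, since the quantity depends on how the set $\tau^{w}(W_\eta+\Ztau)$ meets $\boundary*{\widetilde U}$. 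You acknowledge this yourself (``getting this telescoping/substitution bookkeeping exactly right \dots is the main obstacle''), so the proposal is an outline whose central step is unresolved. A further symptom that the route is off track: your argument nowhere uses the hypothesis that $\psi_\eta$ has period $p$, yet the lemma is stated conditionally on it.

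The paper avoids the direct computation entirely. It first notes, as you do only at the very end, that the $\fracpartV[j-w]{\cdot}$ in the region of integration causes no discontinuity because the corresponding $\floorV$-integral vanishes (the argument of Lemma~\vref{lem:the-zero-part}). For integer $x$ it then argues indirectly: take $m\in x+p\Z$, set $M=\abs\tau^{m}$, and compare $\f{Z_\eta}{M}$ with $\f{Z_\eta}{M-1}$. The asymptotic formula gives $\f{Z_\eta}{M}-\f{Z_\eta}{M-1}=M^{2}\left(\f{\psi_\eta}{x}-\f{\psi_\eta}{x+L}\right)+\Oh{M^{\alpha}m}+\Oh{M^{\delta}m}$ with $L=\f{\log}{1-M^{-1}}\to 0^{-}$, where the periodicity is used exactly here to replace $\f{\psi_\eta}{m}$ by $\f{\psi_\eta}{x}$; on the other hand the difference is trivially at most the number of lattice points in $MU\setminus(M-1)U$ times the maximal digit length, which is $\Oh{M^{\delta}m}$ by Proposition~\vref{pro:set-nu}. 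Dividing by $M^{2}$ and letting $m\to\infty$ along the progression (using $\alpha<2$ and $\delta<2$) forces $\f{\psi_\eta}{x}=\lim_{\eps\to 0^{-}}\f{\psi_\eta}{x+\eps}$. If you want to complete your proof, this comparison of the counting function at two nearby radii is the missing idea.
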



\begin{proof}
  There are two possible parts of $\psi_\eta$, where an discontinuity could
  occur. The first is $\fracpart{x}$ for an $x\in\Z$, the second is building
  $\fracpartV[j-w]{\dots}$ in the region of integration in $\psi_{\eta,\cP}$.

  The latter is no problem, i.e., no discontinuity, since 
  \begin{multline*}
    \int_{y \in \fracpartV[j-w]{\abs\tau^{\fracpart{x}-c} \f{\wh{\theta}}{\floor{x}} U}}
    \left( \indicator{W}{\fracpartZtau{y \tau^{j-w}}} - \lmeas{W} \right) 
    \dd y \\
    = \int_{y \in \abs\tau^{\fracpart{x}-c} \f{\wh{\theta}}{\floor{x}} U}
    \left( \indicator{W}{\fracpartZtau{y \tau^{j-w}}} - \lmeas{W} \right) \dd y,
  \end{multline*}
  because the integral of the region $\floorV[j-w]{\abs\tau^{\fracpart{x}-c}
    \f{\wh{\theta}}{\floor{x}} U}$ is zero, see proof of
  Lemma~\vref{lem:the-zero-part}.

  Now we deal with the continuity for $x\in\Z$. Let $m \in x + p\Z$, let
  $M=\abs\tau^m$, and consider
  \begin{equation*}
    \f{Z_\eta}{M} - \f{Z_\eta}{M-1}.
  \end{equation*}
  For an appropriate $a\in\R$ we get
  \begin{equation*}
    \f{Z_\eta}{M} 
    = a M^2 \log M + M^2 \f{\psi_\eta}{\log M}
    + \Oh{M^{\alpha} \log M}
    + \Oh{M^{\delta} \log M},
  \end{equation*}
  and thus
  \begin{equation*}
    \f{Z_\eta}{M} = a M^2 m 
    + M^2 \underbrace{\f{\psi_\eta}{m}}_{= \f{\psi_\eta}{x}}
    + \Oh{M^\alpha m}
    + \Oh{M^\delta m}.
  \end{equation*}
  Further we obtain
  \begin{multline*}
    \f{Z_\eta}{M-1} 
    = a \left(M-1\right)^2 \f{\log}{M-1}
    + \left(M-1\right)^2
    \f{\psi_\eta}{\f{\log}{M-1}} \\
    + \Oh{\left(M-1\right)^{\alpha} \f{\log}{M-1}}
    + \Oh{\left(M-1\right)^{\delta} \f{\log}{M-1}},
  \end{multline*}
  and thus, using the abbreviation $L = \f{\log}{1 - M^{-1}}$ and $\delta\geq1$,
  \begin{equation*}
    \f{Z_\eta}{M-1}
    = a M^2 m 
    + M^2 \underbrace{\f{\psi_\eta}{m + L}}_{= \f{\psi_\eta}{x+L}}
    + \Oh{M^\alpha m}
    + \Oh{M^\delta m}.
  \end{equation*}
  Therefore we obtain
  \begin{equation*}
    \frac{\f{Z_\eta}{M} - \f{Z_\eta}{M-1}} {M^2} 
    = \f{\psi_\eta}{x} - \f{\psi_\eta}{x + L}
    + \Oh{M^{\alpha-2} m}
    + \Oh{M^{\delta-2} m}.
  \end{equation*}
  Since $\cardV{M U \setminus \left(M-1\right) U}$ is clearly an upper bound for
  the number of \wNAF{}s with values in $M U \setminus \left(M-1\right) U$ and
  each of these \wNAF{}s has less than $\floor{\log M} + c$ digits, see
  Lemma~\vref{lem:choosing-j}, we obtain
  \begin{equation*}
    \f{Z_\eta}{M} - \f{Z_\eta}{M-1}
    \leq \cardV{M U \setminus \left(M-1\right) U}
    \left( m + c \right).
  \end{equation*}
  Using \itemref{enu:set-nu:difference} of Proposition~\vref{pro:set-nu} yields
  then
  \begin{equation*}
    \f{Z_\eta}{M} - \f{Z_\eta}{M-1} = \Oh{M^\delta m}.
  \end{equation*}
  Therefore we get
  \begin{equation*}
    \f{\psi_\eta}{x} - \f{\psi_\eta}{x + L} 
    = \Oh{M^{\delta-2} m}
    + \Oh{M^{\alpha-2} m}
    + \Oh{M^{\delta-2} m}.
  \end{equation*}
  Taking the limit $m\to\infty$ in steps of $p$, thus $L$ tends to $0$, and
  using $\alpha<2$ and $\delta<2$ yields
  \begin{equation*}
    \f{\psi_\eta}{x} - \lim_{\eps \to 0^-} \f{\psi_\eta}{x+\eps} = 0, 
  \end{equation*}
  i.e., $\psi_\eta$ is continuous for $x\in\Z$.
\end{proof}




\section*{Acknowledgements}

We thank Stephan Wagner for contributing the proof of
Lemma~\vref{le:wagner-lemma}.


\renewcommand{\MR}[1]{}

\bibliographystyle{amsplain}
\bibliography{../../bib/cheub}


\end{document}

